\documentclass[reqno,english]{amsart}
\usepackage[english]{babel}
\usepackage{amsfonts,amsmath,amssymb,amsthm,latexsym,amscd}
\usepackage{mathrsfs,color,array,graphicx}
\usepackage{booktabs,longtable,enumitem}
\IfFileExists{microtype.sty}{\usepackage{microtype}}{}
\usepackage[hidelinks,bookmarksnumbered]{hyperref}

\let\oldbibliography\thebibliography
\renewcommand{\thebibliography}[1]{%
  \oldbibliography{#1}%
  \setlength{\itemsep}{0pt}%
}

\newtheorem{theorem}{Theorem}[section]
\newtheorem{lemma}[theorem]{Lemma}
\newtheorem{proposition}[theorem]{Proposition}
\newtheorem{corollary}[theorem]{Corollary}

\theoremstyle{definition}

\newtheorem{open problem}[theorem]{Open Problem}
\theoremstyle{remark}
\newtheorem{remark}[theorem]{Remark}
\newcommand{\bremark}{\begin{remark}}
\newcommand{\eremark}{\end{remark}}
\theoremstyle{plain}

\newcommand{\R}{{\mathbb R}}

\newcommand{\BE}{\begin{equation}}
\newcommand{\BEN}{\begin{equation*}}
\newcommand{\EE}{\end{equation}}
\newcommand{\EEN}{\end{equation*}}
\newcommand{\BL}{\begin{lemma}}
\newcommand{\EL}{\end{lemma}}
\newcommand{\BT}{\begin{theorem}}
\newcommand{\ET}{\end{theorem}}
\newcommand{\BP}{\begin{proposition}}
\newcommand{\EP}{\end{proposition}}
\newcommand{\BC}{\begin{corollary}}
\newcommand{\EC}{\end{corollary}}

\DeclareMathOperator*{\argmin}{arg\,min}

\numberwithin{equation}{section}
\allowdisplaybreaks[2]

\newcommand{\Z}{\mathbb Z}

\newcommand{\FCC}{L_{\mathrm F}}
\newcommand{\B}{L_{\mathrm B}}
\newcommand{\BCC}{\B}
\newcommand{\YB}{Y_{\mathrm B}}
\newcommand{\YF}{Y_{\mathrm F}}
\newcommand{\HH}{\mathcal H}
\newcommand{\PP}{\mathcal P_3^1}
\newcommand{\calP}{\PP}
\newcommand{\tr}{\operatorname{tr}}
\newcommand{\diag}{\operatorname{diag}}
\newcommand{\covol}{\operatorname{covol}}

\newcommand{\norm}[1]{\lVert#1\rVert_{\mathrm F}}
\newcommand{\opnorm}[1]{\lVert#1\rVert_{\mathrm{op}}}
\newcommand{\op}{\mathrm{op}}
\newcommand{\frob}{\mathrm F}
\newcommand{\lam}{\lambda_{\mathrm F}}

\newcommand{\Th}{\Theta}
\newcommand{\Id}{I}
\newcommand{\calK}{\mathcal K}
\newcommand{\epsref}{\varepsilon_{\rm ref}}
\newcommand{\gFE}{g_{\mathrm F,\mathrm E}}
\newcommand{\gFT}{g_{\mathrm F,\Theta}}
\newcommand{\gBT}{g_{\mathrm B,\Theta}}
\newcommand{\Symzero}{\operatorname{Sym}_0(3)}
\newcommand{\Lone}{\mathscr L_1}
\newcommand{\gBE}{g_{\mathrm B,\mathrm E}}
\newcommand{\delog}{\vartheta}
\hypersetup{
  pdftitle={On the minima of theta and Epstein zeta functions in dimension three},
  pdfauthor={Senping Luo and Juncheng Wei},
  pdfsubject={Theta and Epstein minimization: analytic estimates and proved computational lemmas}
}

\begin{document}

\title[Theta and Epstein energies in dimension three]{On Sarnak--Str\"{o}mbergsson conjecture}
\author[S. Luo]{Senping Luo}
\address[S.~Luo]{School of Mathematics and Statistics, Jiangxi Normal University, Nanchang, 330022, China; and Jiangxi Applied Mathematical Research Center.}
\email[S.~Luo]{luosp1989@163.com}
\author[J. Wei]{Juncheng Wei}
\address[J.~Wei]{Department of Mathematics, Chinese University of Hong
Kong, Shatin, NT, Hong Kong.}
\email[J.~Wei]{wei@math.cuhk.edu.hk}

\keywords{Theta function, Epstein zeta function, face-centred cubic lattice,
body-centred cubic lattice, lattice energy, global minima, phase transitions}
\date{}
\begin{abstract}
Let $\Th(\alpha,L)=\sum_{v\in L}e^{-\pi\alpha|v|^2}$ for
       $\alpha>0$ and
 $E(L,s)=\sum_{v\in L\setminus\{0\}}|v|^{-2s}$ for
       $s>3/2$ be the theta and Epstein zeta functions associated to the lattice $L$, respectively. We are particularly interested in physically relevant dimension three.
Fix the covolume of the lattice $L$ to $1$. Up to an orthogonal transformation, we prove that
\begin{equation}\nonumber
\argmin_{|L|=1}\Th(\alpha,L)=
\begin{cases}
 \boldsymbol{\mathrm{FCC}}\;\;\mathrm{lattice},\;\;\;\;\;\;\;\;\; \;\;\;\;\;\;&\text{if}\;\; \alpha>1,\\
 \boldsymbol{\mathrm{BCC}}\;\;\mathrm{lattice}, \;\;\;\;\;\;\;\;\;\;\;\;\;\;\;&\text{if}\;\; \alpha<1,\\
 \boldsymbol{\mathrm{FCC}}\;\;\mathrm{or}\;\; \boldsymbol{\mathrm{BCC}}\;\;\mathrm{lattice},\;\;&\text{if}\;\;\alpha=1,
\end{cases}
\end{equation}
and
\begin{equation}\nonumber
\argmin_{|L|=1}E(L,s)=
 \boldsymbol{\mathrm{FCC}}\;\;\mathrm{lattice},\;\;\;\;\;\;\;\;\; \;\;\;\;\;\;\text{if}\;\; s>3/2.
\end{equation}
Therefore, we prove the
Sarnak--Str\"{o}mbergsson conjecture \cite[Inequalities (43)--(44), Section 5]{SS}.

\end{abstract}

\maketitle
\enlargethispage{12pt}
\setcounter{equation}{0}

\section{Introduction and main results}\label{sec:introduction}
In this paper, we study the minimization of theta and Epstein zeta
functions among three-dimensional lattices of fixed covolume. Our aim is
to determine the minimizing lattices for all positive Gaussian scales and
all convergent Epstein exponents, and to describe the different roles of
FCC and BCC in the two problems.

To state the main results, we give some notations. Vectors are columns, and $|v|=|v|_2=(v_1^2+v_2^2+v_3^2)^{1/2}$
denotes Euclidean length on $\R^3$. For a matrix $g$, the symbols
$g^T$ and $g^{-T}=(g^{-1})^T$ denote its transpose and inverse transpose.
Let $L=g\Z^3\subset\R^3$, where $g\in GL(3,\R)$, and set
$\covol(L)=|\det g|$. Unless otherwise stated, throughout this paper, we assume that $\covol(L)=1$.
Lattices are identified up to orthogonal equivalence: $L$ and $L'$ are
equivalent if $L'=UL$ for a real orthogonal matrix $U$, that is,
$U^TU=I$. Here and below $I$ is the identity matrix of the indicated
dimension, usually three. We write $L^*=g^{-T}\Z^3$ for the dual lattice.

Define the theta and Epstein functions as
\begin{align}
 \Th(\alpha,L)&=\sum_{v\in L}e^{-\pi\alpha|v|^2},\;\;\;\;\;\;\;\;\;\;
       \alpha>0,\label{T:eq:theta}\\
 E(L,s)&=\sum_{v\in L\setminus\{0\}}|v|^{-2s},
       \;\;\;\;\;\;\;\;s>3/2.\label{E:eq:Epstein}
\end{align}
They are the $f$-potential lattice energy of the formally discrete forms
 \begin{equation}\aligned\label{EFL}
E_f(L):=\sum_{\mathbb{P}\in L \backslash\{0\}} f(|\mathbb{P}|^2),\; |\cdot|\;\hbox{is the Euclidean norm on}\;\mathbb{R}^n,
\;\;L\;\;\hbox{is a lattice}
\endaligned\end{equation}
for Gaussian and Riesz potentials, respectively.
Lattice minimization problems, rooted in crystallization problems, analytic number theory, and solid state physics (see the reviews by Blanc-Lewin \cite{Bla2015}, Lewin \cite{Lewin2022,Lewin2025}, Radin \cite{Radin1987,Radin1991}), have attracted considerable attention in recent years;
see B\'etermin \cite{Bet2016,Betermin2021AHP,Betermin2021JPA,Betermin2021SIAM}, B\'{e}termin-Petrache \cite{Betermin2017,Betermin2019b}, B\'{e}termin-Friedrich-Stefanelli \cite{Betermin2021LMP}, B\'etermin-\v{S}amaj-Trav\v{e}nec \cite{Betermin2023SAM,Betermin2024AMP}.
In particular, in dimension two, we have extensively studied the two-component Coulomb energy (with Ren \cite{Luo2019}), Mueller-Ho energy (\cite{Luo2022}), Gaussian-related energy (\cite{LW2022,Luo2023,LW2025b,LW2026}), Lennard-Jones energy (\cite{LW2025}); still in dimension two, we also refer to the hexagonal lattice in diblock copolymers (Chen-Oshita \cite{Che2007}), the Abrikosov triangular vortex in superconductivity (Sandier-Serfaty \cite{Serfaty2010,Serfaty2012}, Serfaty \cite{Serfaty2018}) and in 2D crystallization (Theil \cite{Theil2006}).
In higher dimensions, Cohn and de Courcy-Ireland (\cite{Cohn2018}) obtained asymptotically sharp results. Moreover, Cohn--Kumar--Miller--Radchenko--Viazovska (\cite{Cohn2022}) established global universal optimality of $E_8$ and Leech lattices in dimensions $8$ and $24$, respectively. Building on these results, Petrache-Serfaty \cite{Petr2020} obtained the corresponding optimality of $E_8$ and Leech lattices in dimensions $8$ and $24$ for Coulomb and Riesz renormalized energies, as well as jellium and periodic jellium energies.

The theta and Epstein conventions agree with $\Th(L,i\alpha)$ and
$E(L,s)$ in Sarnak--Str\"{o}mbergsson \cite[(39) and (1)]{SS}.
Deleting the zero-vector contribution does not change the theta minimizers.
The unit covolume constraint is for convenience: dilation by $c>0$ replaces $\alpha$
by $c^2\alpha$ and multiplies the Epstein energy by $c^{-2s}$.

If $Y=g^Tg$, then
\begin{equation}
\label{p31}
 \PP=\{Y\in\R^{3\times3}:Y=Y^T>0,\ \det Y=1\},
 \qquad Y\longmapsto\gamma^TY\gamma\quad(\gamma\in GL(3,\Z)).
\end{equation}
Following \cite[Section~1]{SS}, write $\Lone=\PP/GL(3,\Z)$ for the
space of covolume-one lattice classes modulo orthogonal equivalence,
and $[L]$ for the class of $L$. We use $E(Y,s)$ and $\Th(\alpha,Y)$ for the corresponding
lattice energies. Here $Y>0$ means positive definite, and
$Y[m]:=m^TYm$ for $m\in\Z^3$; in particular,
$Y^{-1}[m]=m^TY^{-1}m$. We write $\tr A=\sum_i A_{ii}$ for the trace.
Logarithmic volume-preserving deformations are parametrized by
\[
 \Symzero=\{X\in\R^{3\times3}:X=X^T,\ \tr X=0\}.
\]
The Frobenius and operator norms are denoted by $\norm{\cdot}$ and
$\opnorm{\cdot}$, respectively.
For $A=(a_{ij})\in\R^{3\times3}$, the Frobenius norm is
\[
 \norm{A}
 :=\left(\sum_{i,j=1}^3a_{ij}^2\right)^{1/2}
 =\bigl(\tr(A^TA)\bigr)^{1/2}.
\]
The operator norm is induced by the Euclidean vector norm:
\[
 \opnorm{A}
 :=\sup_{v\in\R^3\setminus\{0\}}\frac{|Av|_2}{|v|_2}
 =\max_{|v|_2=1}|Av|_2
 =\sqrt{\lambda_{\max}(A^TA)},
\]
where $\lambda_{\max}(A^TA)$ is the largest eigenvalue of the positive
semidefinite matrix $A^TA$.

Set $\rho=2^{1/3}$ and $\lam=\rho$. The covolume-one FCC and BCC lattices are
\begin{align}
 \FCC&=\rho^{-1}\{m\in\Z^3:m_1+m_2+m_3\equiv0\pmod2\},\label{E:eq:FCC}\\
 \B=\FCC^*&=\rho^{-2}\{m\in\Z^3:m_1\equiv m_2\equiv m_3\pmod2\}.
 \label{E:eq:BCC}
\end{align}
Their shortest squared lengths are $\rho$ and $3\rho^2/4$.
For the Epstein deformation formulas we use
\begin{equation}\label{eq:epstein-frame}
 \gFE=\rho^{-1}\begin{pmatrix}0&1&1\\1&0&1\\1&1&0\end{pmatrix},
 \qquad
 \YF=\rho\begin{pmatrix}1&1/2&1/2\\1/2&1&1/2\\1/2&1/2&1\end{pmatrix}.
\end{equation}
Both determinants are one. The physical frame used in the theta branch
is given in Section~\ref{sec:common-geometry}; it has the same Gram matrix.

\subsection{Main results}
We first state the result for the theta function. Its minimizer changes
with the Gaussian scale, as described in the following theorem.
\begin{theorem}[Minima of 3D Theta functions]\label{T:thm:main}
Let $L\subset\R^3$ be a lattice of covolume one, and let
$\alpha>0$. The following statements hold.
\begin{itemize}
  \item [(1)] If $\alpha>1$, then $\Th(\alpha,L)\ge\Th(\alpha,\FCC)$, equality holds if and only if $L$ is orthogonally equivalent
to $\FCC$.
  \item [(2)] If $\alpha<1$, then $\Th(\alpha,L)\ge\Th(\alpha,\BCC)$, equality holds if and only if $L$ is orthogonally equivalent
to $\BCC$.
  \item [(3)] If $\alpha=1$, then $\Th(\alpha,L)\ge\Th(\alpha,\FCC)=\Th(\alpha,\BCC)$, equality holds if and only if $L$ is orthogonally
equivalent to $\FCC$ or $\BCC$.
\end{itemize}
\end{theorem}
Note that the cases $\alpha>1$ and $0<\alpha<1$ are related by duality.
Indeed, the Poisson identity \cite[(41)]{SS} gives
\begin{equation}\label{T:eq:poisson}
 \Th(\alpha,L)=\alpha^{-3/2}\Th(\alpha^{-1},L^*),
\end{equation}
and hence
\begin{equation}\label{T:eq:endpointtie}
 \Th(1,\FCC)=\Th(1,\BCC).
\end{equation}

In contrast to the theta function, the Epstein zeta function has the same
minimizer throughout its convergence range. More precisely, we have the
following result.
\begin{theorem}[Minima of 3D Epstein zeta functions]\label{E:thm:main}
For every lattice $L\subset\R^3$ of covolume one and every
real $s>3/2$,
\begin{equation}\label{E:eq:main-all}
 E(L,s)\ge E(\FCC,s),
\end{equation}
with equality if and only if $L$ is orthogonally equivalent to $\FCC$.

\end{theorem}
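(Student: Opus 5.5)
The natural route to Theorem~\ref{E:thm:main} is to derive it from Theorem~\ref{T:thm:main} through the Mellin transform. For $s>3/2$ and any covolume-one $L$,
\[
 E(L,s)=\frac{\pi^{s}}{\Gamma(s)}\int_0^\infty\bigl(\Th(\alpha,L)-1\bigr)\alpha^{s-1}\,d\alpha .
\]
Split the integral at $\alpha=1$. On $[1,\infty)$ we use part (1) directly, which gives $\Th(\alpha,L)-1\ge\Th(\alpha,\FCC)-1$. On $(0,1)$ we rewrite with the Poisson identity \eqref{T:eq:poisson}, $\Th(\alpha,L)=\alpha^{-3/2}\Th(\alpha^{-1},L^*)$, and substitute $\alpha\mapsto1/\alpha$. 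This produces the classical Riemann-type representation
\[
 \frac{\Gamma(s)}{\pi^s}E(L,s)=\frac{1}{s-3/2}-\frac1s+\int_1^\infty\bigl(\Th(\alpha,L)-1\bigr)\alpha^{s-1}d\alpha+\int_1^\infty\bigl(\Th(\alpha,L^*)-1\bigr)\alpha^{1/2-s}d\alpha ,
\]
valid for all $s>3/2$. Here the singular terms come from the constant contributions and do not depend on $L$.

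Applying part (1) to $L$ bounds the first integral below by its value at $\FCC$. The obstacle is the second integral. Part (1) applied to $L^*$ only gives the lower bound $\Th(\alpha,\FCC)$, while the $\FCC$ value of that integral involves $\FCC^*=\BCC$. Since $\Th(\alpha,\BCC)>\Th(\alpha,\FCC)$ for $\alpha>1$ by part (1), the naive comparison goes the wrong way. So a pure Mellin argument from the theta result cannot close, and this is the step I expect to be the main difficulty. Indeed, universal optimality in the Cohn--Kumar sense fails here, since the theta minimizer switches.

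The plan is therefore to treat the Epstein problem directly on $\Lone$, following the same architecture as the theta proof. The steps, in order, are:
\begin{enumerate}
\item[(i)] \emph{Reduction.} Reduce $Y\in\PP$ to a Minkowski/Selling fundamental domain.
\item[(ii)] \emph{Far region.} Exclude lattices far from $\YF$ by a cheap lower bound. Write $\frac{\Gamma(s)}{\pi^s}E(Y,s)$ via the split representation above, taken at a balanced splitting point $t$ rather than at $1$. Bound the first few shells: any $Y$ whose minimum is well below $\rho$, or with few short vectors, has energy exceeding $E(\YF,s)$. This needs uniformity in $s$. As $s\to\infty$ the problem becomes the kissing/packing problem, where $\FCC$ is optimal by Gauss. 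As $s\to3/2$ the normalized energy is dominated by $\frac{1}{s-3/2}$ plus a constant-term comparison, where $\FCC$ is known optimal among the relevant candidates.
\item[(iii)] \emph{Local minimality.} Near $\FCC$, write $Y=\YF^{1/2}e^{X}\YF^{1/2}$ with $X\in\Symzero$. Cubic symmetry kills the gradient. The Hessian restricted to $\Symzero$ splits into two irreducible pieces under the octahedral group, of dimensions $2$ and $3$, with two scalar eigenvalues expressible through lattice sums over $\FCC$. One must show both are positive for every $s>3/2$. This is done using the Mellin formula, reducing to positivity of Gaussian second-derivative sums that are already controlled in the theta analysis for $\alpha\ge1$, together with a dual term handled by explicit numerical bounds.
\item[(iv)] \emph{Intermediate annulus.} Control the annulus between the local neighbourhood and the far region with a Taylor remainder estimate in $\norm{X}$, with the third derivative bounded uniformly in $s$ on compact $s$-ranges. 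Cover the remaining $s$-ranges by the asymptotic regimes $s\downarrow3/2$ and $s\to\infty$.
\end{enumerate}
Uniqueness then follows from strictness of the inequalities in each region.
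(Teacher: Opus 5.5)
Your first observation is right, and the paper makes the same point. Because the theta minimizer switches to $\BCC$ below $\alpha=1$, the Mellin integral of Theorem~\ref{T:thm:main} cannot yield Theorem~\ref{E:thm:main}. Your fallback architecture (reduction, uniform cusp exclusion, a two-block cubic-symmetric Hessian at $\FCC$) is also the paper's.

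The gap is that nothing in steps (ii)--(iv) compares energies on the compact set of shapes left after the cusp is removed.
\begin{itemize}
\item A bound from the first few shells only excludes lattices with a very short vector. Note also that ``few short vectors'' lowers the shell contributions, so that criterion points the wrong way.
\item The Taylor remainder in (iv) is centred at $\FCC$ and is effective only on small balls; the certified radii in the paper lie between $1/25$ and $7/50$.
\item The rest of the five-dimensional shape set is untouched. In particular it contains $\BCC$, which is a critical point of $\HH_s$ for every $s$ by the same cubic-symmetry argument. Its energy lies barely above that of $\FCC$ near $s=3/2$: the certified margin on the first cell of Table~\ref{tab:fresh-local-cells} is only about $4\cdot10^{-4}$.
\item ``Cover the remaining $s$-ranges by the asymptotic regimes'' is not an argument. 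The height theorem at $s=3/2$ plus a nondegenerate Hessian gives the result only on some unquantified interval $[3/2,3/2+\epsilon]$. Gauss's lattice-packing theorem, even combined with a local estimate uniform in $s$, gives it only for $s\ge S$ with $S$ unquantified. The compact exponent range in between is exactly where the work lies.
\end{itemize}

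Even the large-$s$ end needs an idea you do not supply. The Hessian at $\FCC$ grows like $s^2$ and the third derivative like $s^3$. So a fixed-order Taylor bound holds only on balls of radius $O(1/s)$, and gives no fixed neighbourhood of $\FCC$ on a half-line in $s$. The paper handles this in Theorem~\ref{E:thm:local20}:
\begin{itemize}
\item it works with the scaled radius $s\norm X$;
\item it applies the six-exponential inequality of Lemma~\ref{E:lem:exp6} to the first shell;
\item it switches to contact shortening (Lemma~\ref{E:lem:shortening}) once $s\norm X>26$.
\end{itemize}
For the exterior region the paper uses:
\begin{itemize}
\item positivity of every summand in the Ewald sum (Lemma~\ref{E:lem:Ewald}), so finite subsums bound competitors from below while complete tails bound $\FCC$ from above;
\item a separate $\BCC$ neighbourhood exclusion (Theorem~\ref{E:thm:bcc-ball});
\item rigorous interval coverings of the rectangle \eqref{E:eq:root-rectangle} times $[3/2,8]$, with the exponent kept as an interval variable via upper-curvature interpolation (Lemma~\ref{E:lem:interpolation}, Proposition~\ref{E:prop:cell-test});
\item convex power-sum criteria on $[8,20]$ and $[20,\infty)$ (Lemma~\ref{E:lem:power-certificates}), all assembled in Lemma~\ref{hyp:epstein}.
\end{itemize}
Without some substitute for these exterior comparisons and for the uniform high-exponent local bound, your plan does not prove the theorem.
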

The defining Epstein series converges for $s>3/2$. At the critical
exponent, the comparison concerns the finite part
\[
 C(L):=\lim_{s\to3/2}
 \left(E(L,s)-\frac{2\pi}{s-3/2}\right),\qquad C(Y):=C(L)
 \quad\text{when }Y\text{ represents }L,
\]
rather than the divergent series. Section~\ref{E:sec:ewald} proves the
existence of this limit and derives its relation to spectral height,
whose FCC minimum was proved in \cite[Theorem~2]{SS}.

Theorems \ref{T:thm:main} and \ref{E:thm:main} give a confirmative answer to  the
Sarnak--Str\"{o}mbergsson conjecture \cite[Inequalities (43)--(44), Section 5]{SS}.

Theorem \ref{T:thm:main} stands in sharp contrast to the classical two-dimensional case (Montgomery \cite{Mon1988}), where the hexagonal lattice is optimal for all $\alpha>0$.
 The first rigorous results on three-dimensional theta functions from a different viewpoint were established by B\'etermin and Petrache \cite{Betermin2017} using dimension reduction techniques; in contrast, our approach relies on the physically and chemically motivated Bain path. Furthermore, while the bcc and fcc lattices are known to be critical points for a wide class of lattice energies \cite{Betermin2019}, B\'etermin \cite{Betermin2019} further proved that the simple cubic lattice is a saddle point of the three-dimensional Gaussian energy. The three-dimensional Gaussian energy has attracted significant attention; see, e.g., the series of works by B\'etermin and collaborators \cite{Betermin2019b, Betermin2021M3AS, Betermin2021AHP, Betermin2021JPA, Betermin2021LMP}. Regarding the local minimality of the fcc lattice for 3D Epstein zeta functions, see Ennola \cite{Ennola}.
Notable progress on the three-dimensional Lennard-Jones lattice energy has also been made by B\'etermin, \v{S}amaj, and Trav\v{e}nec \cite{Betermin2023SAM}.

As noted in a recent review by Ca\~{n}izo and Ramos-Lora \cite{Canizo2024}, ``From the point of view of rigorous results, the situation in three dimensions is almost completely open.''
Theorems \ref{T:thm:main} and \ref{E:thm:main} provide a comprehensive result in dimension three on optimization problems arising in number theory, condensed matter physics, and phase transitions in real-life situations; see the review by physicists Grimvall--K\"{o}pe--Ozoli\c{n}\v{s}--Persson \cite{Gri2012}, Craievich-Weinert-Sanchez-Watson \cite{Crai1994}, Radin \cite{Radin1987, Radin1991}, and the insightful mathematical reviews by Blanc--Lewin \cite{Bla2015} and Lewin \cite{Lewin2022}, as well as the references therein.
Sarnak--Str\"{o}mbergsson conjecture \cite{SS} (Section~5), which was
highlighted by Blanc and Lewin \cite{Bla2015} as ``a very
important conjecture: its proof would be an important advance both in analytic number
theory and in solid-state physics.''; see also B\'{e}termin--Petrache \cite{Betermin2017}, the reviews by Blanc--Lewin \cite{Bla2015} and Lewin \cite{Lewin2022,Lewin2025}.

The fcc lattice is widely observed in nature. It is renowned as the optimal sphere packing in three dimensions---a result known as Kepler's conjecture, proved for lattices by Gauss \cite{Gauss1840} and in the general case by Hales \cite{Hales} using a computer-assisted proof.
It also arises as the ground state for three-dimensional crystallization of atomistic configurations (Flatley and Theil \cite{Theil2015}).
More recently, the bcc lattice has been identified as the energy minimizer for certain long-range interaction potentials (Ren and Wei \cite{Ren2023}).
The mcc lattice can be viewed as the geometric mean of the fcc and bcc lattices, corresponding to the period matrix of a hyperelliptic Riemann surface \cite{Conwaybook}.

\subsection{Proof strategy and context}\label{sec:proof-strategy}
The proofs consist of a local comparison and a comparison away from the
reference lattices, extending the method of Sarnak--Str\"{o}mbergsson \cite{SS}
from $s=3/2$ to $s>3/2$ up to the singular part. For $A\in\PP$, let $A^{1/2}$ be its positive definite
symmetric square root and write $A^{1/2}L=\{A^{1/2}v:v\in L\}$.
For the theta function, we prove in Theorem~\ref{T:thm:local} that
\[
 \Th(\alpha,A^{1/2}\FCC)-\Th(\alpha,\FCC)
 \ge\frac{u}{50}e^{-u}\|A-I\|_{\frob}^2,
 \qquad u=\pi\rho\alpha,
\]
for every $A\in\PP$ with $\|A-I\|_{\op}\le1/100$ and $u\ge39/10$.
For $\alpha\ge2$, the radius increases to $1/25$.
Here the second shell is needed to control the diagonal Hessian near the
self-dual scale. For large scales, we use an exponential-moment inequality
to keep the neighbourhood fixed. By duality, we also obtain the local
comparison at BCC near $\alpha=1$.

For the Epstein zeta function, we introduce the positive kernel
of \cite[(24)]{SS}:
\begin{equation}
\label{E:eq:G}
 G(a,x):=\int_1^\infty e^{-xt}t^{a-1}\,dt,
 \qquad a\in\R,\quad x>0.
\end{equation}
For $s\in\R$ and $Y\in\PP$, define the positive Ewald sum
\[
 \HH_s(Y):=\sum_{m\in\Z^3\setminus\{0\}}
 \bigl[G(s,\pi Y[m])+G(3/2-s,\pi Y^{-1}[m])\bigr].
\]
For a lattice $L$ with Gram matrix $Y$, we also write
$\HH_s(L)=\HH_s(Y)$; an integral change of basis only reindexes the sums.
This sum converges for every real $s$. In Section~\ref{E:sec:ewald}, we
relate its differences to those of the Epstein energy and, at $s=3/2$,
to the critical finite part. Cubic symmetry makes the first variation
vanish. We then estimate both Hessian coefficients and the third-order
remainder, obtaining Lemma~\ref{E:lem:third-criterion}. Keeping the signs
of the cubic and quartic terms gives larger neighbourhoods at low
exponents. For $8\le s\le20$ and $s\ge20$, the uniform local estimates
are given in Theorems~\ref{E:thm:local8} and~\ref{E:thm:local20}.

It remains to compare the energies outside these neighbourhoods. A
short-vector estimate removes the cusp, leaving a compact set of lattice
shapes. We bound the competitor from below by a finite positive sum and
the reference from above by adding its full tail. For theta energy,
convexity and a positive scale derivative at $\alpha=5$ give the comparison
for every $\alpha\ge5$. For Epstein energy, we use upper-curvature
interpolation at low exponents and convex power sums at higher exponents.
These estimates hold on complete parameter intervals, not only at the
values used to evaluate their endpoints.

The local--global strategy follows the height proof of Sarnak and
Str\"ombergsson \cite[Theorem~2 and Section~4]{SS}: cusp exclusion and
symmetry-adapted local estimates leave a compact comparison. They state the
theta conjecture in \cite[(43)]{SS}; their Epstein conjecture
\cite[(44)]{SS} also includes exponents below $3/2$, which are not covered
by Theorem~\ref{E:thm:main}.
In this convention, the spectral height is $h(L):=Z_L'(0)$, where
$Z_L(w)=(4\pi^2)^{-w}E(L^*,w)$ is the spectral zeta function of the
flat torus $\R^3/L$, continued meromorphically to a neighbourhood of
$w=0$. With $\gamma$ denoting Euler's constant, \cite[(3), (24)--(26)]{SS}
gives
\begin{align}\label{T:eq:heightintegral}
 h(L)&=\log(4\pi)-\gamma-\frac23\notag
+\int_1^\infty(\Th(t,L)-1)t^{1/2}\,dt
       +\int_1^\infty(\Th(t,L^*)-1)\frac{dt}{t},
\end{align}
The height minimum does not imply a pointwise theta comparison: the
formula integrates over scales and also involves the dual lattice.
Likewise, the FCC theta inequality cannot simply be integrated to prove
Theorem~\ref{E:thm:main}, since the theta minimizer changes below the
self-dual scale.

The modified Grenier domain and cubic invariant bases are taken from
\cite[Sections~3--4]{SS}; the kernel-specific estimates are developed here.
Ennola \cite{Ennola} treats local Epstein optimality. These results do not replace
the exterior comparisons.

\subsection{Outline of the paper}\label{sec:paper-outline}
The proof is divided into the following steps. Sections~\ref{sec:analytic-geometry}
and~\ref{sec:shell-compactness} give the common preparations. We prove the
theta theorem in Sections~\ref{sec:theta-local}--\ref{sec:theta-global}
and the Epstein theorem in
Sections~\ref{sec:epstein-local}--\ref{sec:epstein-global}.

In Section~\ref{sec:introduction}, we fix the notation and state the main
results. We also explain the relation with the height problem and the
need for separate theta and Epstein arguments.

In Section~\ref{sec:analytic-geometry}, we introduce the analytic and
geometric preparations. Poisson summation gives the Ewald representation
and the critical finite part. We then describe the modified Grenier
fundamental domain and the deformation coordinates used near FCC and BCC.

In Section~\ref{sec:shell-compactness}, we derive the shell estimates and
compute the two Gaussian Hessian components. We then use short-vector
bounds to reduce both minimization problems to compact sets. In
particular, the minima are attained before their locations are determined.

In Section~\ref{sec:theta-local}, we prove the local theta estimates. The
first two shells and the determinant constraint give the FCC comparison
on bounded scale intervals. A six-point exponential-moment inequality
treats large scales, and duality gives the BCC endpoint comparison.

In Section~\ref{sec:theta-global}, we complete the proof of
Theorem~\ref{T:thm:main}. We first reduce the unbounded scale range to value
and derivative inequalities at one scale. Finite Gaussian sums and
whole-box derivative estimates give the remaining comparisons; combining
these with the local results yields all equality cases.

In Section~\ref{sec:epstein-local}, we estimate the Ewald kernel and its
complete shell tails. We use these bounds to control the first three
variations at FCC, and then retain the signed cubic and quartic terms to
enlarge the local neighbourhood for low exponents.

In Section~\ref{sec:epstein-low-comparisons}, we derive the comparison on
complete low-exponent intervals from endpoint values and an upper bound
for the second exponent derivative. We also estimate the BCC--FCC gap and
exclude a full neighbourhood of BCC.

In Section~\ref{sec:epstein-high-local}, we prove the local FCC comparison
for $s\ge8$. We first treat $s\ge20$ by contact-direction and power-tail
estimates, and then use the resulting reference bound in the Taylor
argument for $8\le s\le20$.

In Section~\ref{sec:epstein-global}, we establish the exterior power-sum
comparisons and combine the exponent ranges to prove
Theorem~\ref{E:thm:main}. We conclude with the critical finite part, other
covolumes, and positive Gaussian mixtures supported on one side of the
self-dual scale.

In Appendix~\ref{app:numerical}, we give the evaluation of the finite
inequalities, the arithmetic error bounds and the complete-covering
checks. The four divisions of the appendix also collect the retained
quantitative bounds. Operating instructions remain in the separate
reproducibility supplement.

\section{Ewald representation and lattice geometry}\label{sec:analytic-geometry}\label{sec:common-geometry}\label{E:sec:reduction}

The purpose of this section is to set up the analytic and geometric framework used in both minimization problems. The main analytic conclusions are the pole-cancellation identity in Lemma~\ref{E:lem:Ewald} and the regularity statement in Proposition~\ref{E:prop:finitepart}; the main geometric input is the modified Grenier reduction together with the logarithmic deformation coordinates around FCC and BCC. We first introduce the Ewald kernel and the regularized energy, then describe the reduced parameter domain and finally record the deformation metric used in the local arguments.

\subsection{The Ewald representation and the critical finite part}\label{E:sec:ewald}
With the convention $Y[m]=m^TYm$ fixed in the introduction, write
\[
 \Theta_Y(t):=\Th(t,Y)=\sum_{m\in\Z^3}e^{-\pi tY[m]},\qquad t>0.
\]
For $a\in\R$ and $x>0$, we first recall the positive kernel (\ref{E:eq:G}) introduced in
Section~\ref{sec:proof-strategy}. It agrees with the kernel $G$ in \cite[(24)]{SS}. Directly from the
integral, it is positive, increasing in $a$ and decreasing in $x$.
Differentiating under the integral sign, we obtain
\begin{equation}
 \partial_x^jG(a,x)=(-1)^jG(a+j,x),\qquad
 \partial_a^kG(a,x)=\int_1^\infty e^{-xt}t^{a-1}(\log t)^k\,dt.
 \label{E:eq:G-derivatives}
\end{equation}
In particular, $\partial_a^kG(a,x)>0$ for every integer $k\ge0$,
whereas $(-1)^j\partial_x^jG(a,x)>0$ for every integer $j\ge0$.

The Ewald sum defined in Section~\ref{sec:proof-strategy} is therefore
\begin{equation}
 \HH_s(Y)=\sum_{m\ne0}\bigl[G(s,\pi Y[m])+G(3/2-s,\pi Y^{-1}[m])\bigr].
 \label{E:eq:H}
\end{equation}
Here and in later lattice sums, an index $m\ne0$ with no other range
specified runs over $\Z^3\setminus\{0\}$. The positive sum denoted
$F_a(Y)$ in \cite[(25)]{SS} is related to our convention by
\[
 \HH_s(Y)=F_{3/2-s}(Y)=F_s(Y^{-1}).
\]
In particular, $\HH_{3/2}(Y)$ agrees with their $F_0(Y)$. We shall use
this correspondence when passing between the Epstein energy and height.
The series and their derivatives of any fixed order converge locally
uniformly for $(Y,s)\in\PP\times\R$, where $\PP$ is defined at (\ref{p31}). To see this, fix a compact set of
matrices. Both quadratic forms then have a common lower bound
$c|m|^2$, and the differentiated terms are bounded by summable Gaussian
majorants. The same estimates hold in a sufficiently small complex
neighbourhood, where the real parts of the quadratic forms remain
positive. Local normal convergence of these holomorphic extensions also
proves real analyticity.

We use the complete gamma function
\[
 \Gamma(w):=\int_0^\infty e^{-t}t^{w-1}\,dt,\qquad w>0.
\]
The next lemma gives the identity used in the Epstein comparison. It is
\cite[(23)]{SS}, or equivalently the split Mellin formula (42), written
in our notation. We give the proof to specify the pole terms and the
contribution of the dual lattice.
\begin{lemma}[Pole cancellation]\label{E:lem:Ewald}
For every $Y\in\PP$ and $s>3/2$,
\begin{equation}
 \pi^{-s}\Gamma(s)E(Y,s)=\frac1{s-3/2}-\frac1s+\HH_s(Y),
 \label{E:eq:Ewald}
\end{equation}
and consequently
\begin{equation}
 \pi^{-s}\Gamma(s)[E(Y,s)-E(\YF,s)]=\HH_s(Y)-\HH_s(\YF).
 \label{E:eq:Ewald-gap}
\end{equation}
\end{lemma}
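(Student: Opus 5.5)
The approach is the classical Riemann splitting of the Mellin integral at $t=1$, combined with Poisson summation (\ref{T:eq:poisson}) on the lower piece. For $Q=\pi Y[m]>0$ we have $\int_0^\infty e^{-Qt}t^{s-1}\,dt=\Gamma(s)Q^{-s}$. Summing over $m\ne0$ (Tonelli, since everything is positive and $s>3/2$ gives convergence) yields
\[
 \pi^{-s}\Gamma(s)E(Y,s)=\int_0^\infty(\Theta_Y(t)-1)t^{s-1}\,dt .
\]
We then split this integral into $\int_1^\infty$ and $\int_0^1$.

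\emph{Upper piece.} On $[1,\infty)$, interchanging sum and integral (again by positivity) gives $\sum_{m\ne0}G(s,\pi Y[m])$ directly from (\ref{E:eq:G}).

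\emph{Lower piece.} On $(0,1]$, substitute $t=1/\tau$ and apply the Poisson identity in the Gram form $\Theta_Y(1/\tau)=\tau^{3/2}\Theta_{Y^{-1}}(\tau)$. This is (\ref{T:eq:poisson}), using that $\det Y=1$ and that $Y^{-1}$ is the Gram matrix of $L^*$. Writing $\Theta_{Y^{-1}}(\tau)=1+(\Theta_{Y^{-1}}(\tau)-1)$, we get
\[
 \int_0^1(\Theta_Y(t)-1)t^{s-1}\,dt=\int_1^\infty\bigl(\tau^{3/2}(\Theta_{Y^{-1}}(\tau)-1)+\tau^{3/2}-1\bigr)\tau^{-s-1}\,d\tau .
\]
The elementary terms integrate to $\int_1^\infty\tau^{1/2-s}\,d\tau-\int_1^\infty\tau^{-s-1}\,d\tau=\frac1{s-3/2}-\frac1s$, and convergence of the first requires exactly $s>3/2$. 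The remaining term is $\int_1^\infty(\Theta_{Y^{-1}}(\tau)-1)\tau^{(3/2-s)-1}\,d\tau$. Interchanging sum and integral once more by positivity, it equals $\sum_{m\ne0}G(3/2-s,\pi Y^{-1}[m])$. Adding the two pieces gives (\ref{E:eq:Ewald}).

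\emph{The gap identity.} Since $\YF\in\PP$, (\ref{E:eq:Ewald}) applies to both $Y$ and $\YF$ with the same pole terms $\frac1{s-3/2}-\frac1s$. Subtracting the two identities, these terms cancel and we obtain (\ref{E:eq:Ewald-gap}).

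The only delicate point is justifying the interchanges and the splitting. Each integrand is nonnegative, so Tonelli applies throughout. The identity $\pi^{-s}\Gamma(s)E=\int_0^\infty(\Theta_Y-1)t^{s-1}\,dt$ is itself a Tonelli interchange, and it shows the integral is finite precisely because $E(Y,s)$ converges for $s>3/2$. Hence splitting at $t=1$ and separating the $\tau^{3/2}-1$ term are legitimate: each separated piece is a convergent integral of a nonnegative function, and $\tau^{3/2}(\Theta_{Y^{-1}}-1)\ge0$. I expect no real obstacle; the main care is in bookkeeping the sign and exponent in $\tau^{3/2}\tau^{-s-1}=\tau^{(3/2-s)-1}$.
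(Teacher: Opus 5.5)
Your proposal is correct and follows the paper's proof: you split the Mellin integral at $t=1$, apply Poisson summation $\Theta_Y(1/\tau)=\tau^{3/2}\Theta_{Y^{-1}}(\tau)$ on the lower piece, collect the lattice-independent terms into $\frac1{s-3/2}-\frac1s$, expand the two remaining theta sums into $\sum_{m\ne0}G(s,\pi Y[m])$ and $\sum_{m\ne0}G(3/2-s,\pi Y^{-1}[m])$, and subtract the identity at $\YF$ to get the gap formula. Your Tonelli justification, using that $\tau^{3/2}-1\ge0$ on $[1,\infty)$ so every separated piece is nonnegative, is a slightly more explicit version of the paper's appeal to absolute convergence.
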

\begin{proof}
Since $\det Y=1$, Poisson summation gives
$\Theta_Y(t)=t^{-3/2}\Theta_{Y^{-1}}(1/t)$. On the other hand, for
$s>3/2$, the Mellin representation is
\[
 \pi^{-s}\Gamma(s)E(Y,s)=\int_0^\infty(\Theta_Y(t)-1)t^{s-1}\,dt.
\]
We split the integral at one and apply Poisson summation on the first
interval. After the change of variables from $t$ to $1/t$, the
lattice-independent terms sum to $(s-3/2)^{-1}-s^{-1}$. Expanding the
two remaining theta functions yields \eqref{E:eq:Ewald}. Absolute
convergence justifies the changes of variables and the interchange of
summation and integration. Subtracting the identity at FCC proves
\eqref{E:eq:Ewald-gap}.
\end{proof}

An important consequence of \eqref{E:eq:H} is that every summand is
positive. Thus a finite subsum gives a lower bound for a competitor,
whereas an upper bound for the FCC value requires the omitted tail as
well. We shall use this distinction in all exterior comparisons.

We next determine the finite part $C(Y)$ and its relation to height.
For $w>0$, we write $\psi(w)=\Gamma'(w)/\Gamma(w)$ for the logarithmic
derivative of the complete gamma function.
\begin{proposition}[Regularity of the critical finite part]\label{E:prop:finitepart}
For every $Y\in\PP$, the finite part $C(Y)$ exists and is real analytic in
$Y$. With $s_0=3/2$ and $\psi=\Gamma'/\Gamma$,
\begin{equation}\label{E:eq:critical-expansion}
 E(Y,s)={}\frac{2\pi}{s-s_0}\notag
 +2\pi\bigl(\HH_{s_0}(Y)-2/3-\psi(s_0)+\log\pi\bigr)
   +O(s-s_0).
\end{equation}
The remainder is uniform for $Y$ in a fixed compact subset of $\PP$.
In particular, for any $Y,Y_0\in\PP$,
\begin{equation}\label{E:eq:finitepart-gap}
 C(Y)-C(Y_0)=2\pi\bigl(\HH_{3/2}(Y)-\HH_{3/2}(Y_0)\bigr).
\end{equation}
This identity follows directly from the Laurent expansion.
\end{proposition}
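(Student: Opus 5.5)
Starting from the identity \eqref{E:eq:Ewald} of Lemma~\ref{E:lem:Ewald}, I solve for the Epstein energy:
\[
 E(Y,s)=\frac{\pi^{s}}{\Gamma(s)}\Bigl(\frac1{s-s_0}-\frac1s+\HH_s(Y)\Bigr),\qquad s>s_0=3/2 .
\]
The prefactor $f(s):=\pi^s/\Gamma(s)$ is real analytic and positive near $s_0$. Hence the only singular contribution is $f(s)/(s-s_0)$, and everything else is analytic in $s$.

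I Taylor expand the prefactor as $f(s)=f(s_0)\bigl(1+(s-s_0)(\log\pi-\psi(s_0))+O((s-s_0)^2)\bigr)$. Since $\Gamma(3/2)=\sqrt\pi/2$, we get $f(s_0)=\pi^{3/2}\cdot 2/\sqrt\pi=2\pi$. Multiplying out gives
\[
 E(Y,s)=\frac{2\pi}{s-s_0}+2\pi\bigl(\log\pi-\psi(s_0)\bigr)+2\pi\Bigl(-\frac1{s_0}+\HH_{s_0}(Y)\Bigr)+O(s-s_0).
\]
Here $-1/s_0=-2/3$, which produces exactly the constant term in \eqref{E:eq:critical-expansion}. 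The remainder collects three pieces: $f(s)(1/s-1/s_0)$, $f(s)\bigl(\HH_s(Y)-\HH_{s_0}(Y)\bigr)$, and the $O((s-s_0)^2)/(s-s_0)$ term from the pole. The first and third are clearly $O(s-s_0)$ uniformly in $Y$.

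The step that needs care is the uniform bound $|\HH_s(Y)-\HH_{s_0}(Y)|\le C|s-s_0|$ for $Y$ in a compact $K\subset\PP$ and $|s-s_0|\le 1/2$. I will use the mean value theorem together with \eqref{E:eq:G-derivatives}:
\[
 \partial_s\HH_s(Y)=\sum_{m\ne0}\Bigl[\int_1^\infty e^{-\pi Y[m]t}t^{s-1}\log t\,dt-\int_1^\infty e^{-\pi Y^{-1}[m]t}t^{1/2-s}\log t\,dt\Bigr].
\]
On $K$, both $Y[m]$ and $Y^{-1}[m]$ are at least $c|m|^2$ with $c>0$. Each integral is then bounded by $\int_1^\infty e^{-\pi c|m|^2t}t^{2}\,dt\le C'e^{-\pi c|m|^2}$, which is summable over $m$. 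This is the local uniform convergence already noted after \eqref{E:eq:H}.

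Defining $C(Y)$ as the limit of $E(Y,s)-2\pi/(s-s_0)$ then gives $C(Y)=2\pi\bigl(\HH_{3/2}(Y)-2/3-\psi(3/2)+\log\pi\bigr)$. Real analyticity of $C$ in $Y$ follows from the real analyticity of $\HH_{3/2}$ on $\PP$, proved by the holomorphic normal-convergence argument stated after \eqref{E:eq:H}. Subtracting the formula at $Y_0$ cancels the constants and yields \eqref{E:eq:finitepart-gap}.
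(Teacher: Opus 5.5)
Your proposal is correct and follows the paper's proof: multiply the Ewald identity \eqref{E:eq:Ewald} by $\pi^s/\Gamma(s)$, expand at $s_0$ using $f(s_0)=2\pi$ and $f'(s_0)=2\pi(\log\pi-\psi(s_0))$, and obtain uniformity from the local normal convergence of $\HH_s$ and its $s$-derivative, which you make explicit. Your list of remainder pieces omits the cross term $(f(s)-f(s_0))(\HH_{s_0}(Y)-2/3)$. This term is also uniformly $O(s-s_0)$, because $\HH_{s_0}$ is bounded on compact subsets of $\PP$.
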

\begin{proof}
Set $a(s)=\pi^s/\Gamma(s)$. This function is analytic near $s_0$, and
$a(s_0)=2\pi$, while $a'(s_0)=2\pi(\log\pi-\psi(s_0))$.
Multiplying \eqref{E:eq:Ewald} by $a(s)$ and expanding at the critical
exponent gives \eqref{E:eq:critical-expansion}. The local normal
convergence of $\HH_s(Y)$ and its derivatives makes the remainder
uniform on compact matrix sets. Its constant term proves the existence
and real analyticity of $C$. Subtracting the two constant terms gives
\eqref{E:eq:finitepart-gap}, as required.
\end{proof}

Recall the spectral height $h(L)=Z_L'(0)$ and spectral zeta function
$Z_L(w)=(4\pi^2)^{-w}E(L^*,w)$ defined in Section~\ref{sec:proof-strategy}.
The continuation obtained
from \eqref{E:eq:Ewald} satisfies $E(L,0)=-1$; the functional equation
\cite[(2)]{SS} and height formula \cite[(26)]{SS} give
\begin{equation}
 h(L)=\log(4\pi)-\gamma-\frac23+\HH_{3/2}(L)
      =\frac{C(L)}{2\pi}+2-2\gamma.
 \label{E:eq:height-finitepart}
\end{equation}
The second equality follows from $\psi(3/2)=2-\gamma-2\log2$.
Consequently, comparing $\HH_{3/2}$ is equivalent to comparing heights
or critical finite parts. This formulation avoids evaluating the
defining Epstein series at its pole.

\subsection{Reduction and reference lattices}\label{sec:reduced-representatives}
We now fix the geometric domain for the minimization. We use the
modified Grenier fundamental domain for the congruence action of
$GL(3,\Z)$ on $\PP$ adopted by Sarnak and Str\"ombergsson
\cite[Section~4, p.~15]{SS}. Their underlying reduction reference is
Terras \cite[Section~4.4.3, (4.34)]{TerrasII}. For our purposes, the
essential property is that every lattice class has a representative in
the closed domain. We retain all boundary faces and do not use
uniqueness of representatives on them.

In the Iwasawa parametrization \cite[(27)]{SS}, their variables
$t_{12},t_{13},t_{23}$ are denoted here by $a,b,c$, respectively.
We write $\diag(d_1,d_2,d_3)$ for the diagonal matrix with these entries:

\begin{equation}\label{T:eq:iwasawa}
 Y=N^TDN,\quad
 N=\begin{pmatrix}1&a&b\\0&1&c\\0&0&1\end{pmatrix},\quad
 D=\diag(d_1,d_2,d_3),
\end{equation}
where $y_1,y_2>0$, $a,b,c\in\R$, and the logarithmic coordinates are
$\ell_1=\log y_1$ and $\ell_2=\log y_2$. The diagonal factors are
\begin{equation}\label{T:eq:di}
 d_1=e^{(2\ell_1+\ell_2)/3},\qquad
 d_2=e^{(-\ell_1+\ell_2)/3},\qquad
 d_3=e^{(-\ell_1-2\ell_2)/3}.
\end{equation}
Every matrix in $\calP=\{Y=Y^T>0:\det Y=1\}$ has a unique Iwasawa
parametrization before integral changes of basis are imposed. Thus the
logarithmic coordinates only reparametrize the positive variables in
\cite[(27)]{SS}. In \cite{SS}, Sarnak and Str\"ombergsson have obtained the modified domain
by translating one half of Grenier's domain under
$Y\mapsto T_2^TYT_2$, where
\[
 T_2=\begin{pmatrix}1&0&1\\0&1&0\\0&0&1\end{pmatrix}\in GL(3,\Z).
\]
This is their modification preceding the inequalities on p.~15 of
\cite{SS}. In our notation, those inequalities are
\begin{align}\label{T:eq:grenier}
 1&\le(1+a-b)^2+e^{-\ell_1}\bigl((1-c)^2+e^{-\ell_2}\bigr),\notag\\
 1&\le(a-b)^2+e^{-\ell_1}\bigl((1-c)^2+e^{-\ell_2}\bigr),\notag\\
 1&\le a^2+e^{-\ell_1},\notag\\
 1&\le b^2+e^{-\ell_1}(c^2+e^{-\ell_2}),\\
 1&\le(b-1)^2+e^{-\ell_1}(c^2+e^{-\ell_2}),\notag\\
 1&\le c^2+e^{-\ell_2},\notag
\end{align}
with
\begin{equation}\label{T:eq:abcbox}
 0\le a\le\frac12,\qquad0\le b\le1,\qquad0\le c\le\frac12.
\end{equation}
By \cite[Section~4, p.~15]{SS}, every covolume-one lattice class has a
representative satisfying \eqref{T:eq:grenier}--\eqref{T:eq:abcbox}.
Hence the reduction applies to the full five-dimensional space of
lattice shapes, not merely to a symmetric family. Only this covering
property is needed below.

Equivalently, in the nonlogarithmic coordinates used for the Epstein
comparison,
\[
 d_1=y_1^{2/3}y_2^{1/3},\qquad d_2=d_1/y_1,\qquad d_3=d_2/y_2.
\]
For the Epstein argument, we replace $e^{-\ell_i}$ by $y_i^{-1}$ in
\eqref{T:eq:grenier}. This gives the same reduced domain in the
nonlogarithmic coordinates. We denote the resulting Gram matrix by
$Y(\xi)$, with $\xi=(\ell_1,\ell_2,a,b,c)$ or
$\xi=(y_1,y_2,a,b,c)$ as specified in each argument. A shape box is a
closed coordinate rectangle. The notation $Y\in B$ means
$Y=Y(\xi)$ for some $\xi\in B$.

The distinguished representatives are those listed immediately after
the domain inequalities in \cite[Section~4, p.~15]{SS}:
\begin{align}\label{T:eq:reps}
 \FCC:&\quad (y_1,y_2,a,b,c)=(4/3,9/8,1/2,1/2,1/3),\notag\\
 \BCC:&\quad (y_1,y_2,a,b,c)=(9/8,4/3,1/3,2/3,1/2).
\end{align}
For the local estimates, we choose the following physical bases. The
FCC basis is the matrix $g_0$ of \cite[Section~4, p.~17]{SS}; the BCC
basis is chosen compatibly with duality, which we verify below:
\begin{equation}\label{T:eq:bases}
 \gFT=\rho^{-1}\begin{pmatrix}1&0&1\\-1&-1&0\\0&1&1\end{pmatrix},\qquad
 \gBT=\rho^{-2}\begin{pmatrix}1&-1&0\\1&1&0\\1&1&2\end{pmatrix}.
\end{equation}
Then $\gFT\Z^3=\FCC$, $\gBT\Z^3=\BCC$, and their Gram matrices are
\begin{equation}\label{T:eq:grams}
 \YF=\rho\begin{pmatrix}1&1/2&1/2\\1/2&1&1/2\\1/2&1/2&1\end{pmatrix},\qquad
 \YB=\frac{\rho^2}{4}\begin{pmatrix}3&1&2\\1&3&2\\2&2&4\end{pmatrix}.
\end{equation}
For any Gram matrix $Y$, define the physical FCC strain
\begin{equation}\label{T:eq:AF}
 A_F(Y)=\gFT^{-T}Y\gFT^{-1}.
\end{equation}
The lattice with Gram matrix $Y$ is orthogonally equivalent to $A_F(Y)^{1/2}\FCC$. Near BCC it is more convenient to use the dual strain
\begin{equation}\label{T:eq:ABinv}
 A_B(Y)=\gBT^{-T}Y\gBT^{-1},\qquad
 A_B(Y)^{-1}=\gBT Y^{-1}\gBT^T.
\end{equation}
These identities allow us to apply the local inequalities in the
reduced coordinates, with the reference lattice and deformation fixed.

For a real symmetric matrix $X$, the matrix exponential is defined by
the series $e^X:=\sum_{j=0}^\infty X^j/j!$. For a positive definite
matrix with spectral decomposition
$A=U\diag(\lambda_1,\lambda_2,\lambda_3)U^T$, where $U$ is orthogonal
and $\lambda_i>0$, its symmetric logarithm and square root are
\[
 \begin{aligned}
  \log A&:=U\diag(\log\lambda_1,\log\lambda_2,\log\lambda_3)U^T,\\
  A^{1/2}&=U\diag(\sqrt{\lambda_1},\sqrt{\lambda_2},\sqrt{\lambda_3})U^T.
 \end{aligned}
\]

\begin{remark}[Two frames and two neighbourhood conventions]
The matrices $\gFE$ and $\gFT$ generate the same physical FCC lattice and
have the same Gram matrix. They are kept distinct because the two arguments use different matrix
formulas for strain containment.
The theta estimates bound $\|A-I\|_{\mathrm{op}}$, whereas the Epstein
estimates bound $\|\log A\|_{\mathrm F}$. Radii in these two conventions are not identified with one another.
\end{remark}

\subsection{Deformations and neighbourhood containment}\label{E:sec:geometry}\label{T:app:matrices}
We next describe the logarithmic coordinates used in the local Epstein
estimates. For a cubic reference lattice with basis $g_0$, set
\begin{equation}
 Y(X)=g_0^Te^Xg_0,\qquad X=X^T,\quad\tr X=0.
 \label{E:eq:exp-coordinates}
\end{equation}
For $U,V\in T_Y\PP$, use the invariant metric
\[
 \langle U,V\rangle_Y=\tr(Y^{-1}UY^{-1}V),\qquad
 T_Y\PP=\{U=U^T:\tr(Y^{-1}U)=0\}.
\]
The normalization is that of \cite[Section~2, p.~7]{SS}. By the
geodesic and distance formulas \cite[(30)--(31)]{SS}, the distance in
$\PP$ from $g_0^Tg_0$ to $Y(X)$ equals $\norm X$. We work with these
explicit representatives when testing neighbourhood containment. Set
$C=g_0^{-T}Yg_0^{-1}$. If $\opnorm{C-I}<1$, the logarithm series gives
\begin{equation}
 \norm{\log C}\le\frac{\norm{C-I}}{1-\opnorm{C-I}}.
 \label{E:eq:log-norm-bound}
\end{equation}
If, in addition, $\norm{C-I}<1$, then
\[
 \norm{\log C}\le\frac{\norm{C-I}}{1-\norm{C-I}}.
\]
The stronger Frobenius-norm condition ensures that the last denominator
is positive. Keeping the operator-norm bound in the logarithm series
also gives the sharper estimate
\begin{equation}\label{eq:strain-log-bridge}
 \norm{\log C}\le\delog(\delta)\norm{C-I},\qquad
 \delog(\delta)=
 \begin{cases}
 -\log(1-\delta)/\delta,&0<\delta<1,\\
 1,&\delta=0,
 \end{cases}
\end{equation}
whenever $\opnorm{C-I}\le\delta<1$. Indeed,
$\|(C-I)^k\|_{\frob}\le\delta^{k-1}\|C-I\|_{\frob}$, and summation of
the logarithm series gives \eqref{eq:strain-log-bridge}. Conversely, for
$X\in\Symzero$ and $r=\norm X$,
\begin{equation}\label{eq:log-strain-bridge}
 \opnorm{e^X-I}\le e^{\sqrt{2/3}\,r}-1.
\end{equation}
Indeed, if $\lambda$ is an eigenvalue of $X$, the other two eigenvalues
sum to $-\lambda$. Hence $\lambda^2\le2(r^2-\lambda^2)$, and
$|\lambda|\le\sqrt{2/3}\,r$. Applying the scalar exponential proves
the claimed estimate. These bounds relate the two notions of radius;
they do not identify the strain and logarithmic neighbourhoods.

We shall use the following enclosure convention. On a fixed domain,
$-$ and $+$ as subscripts or superscripts indicate lower and upper
bounds: $z^-\le z\le z^+$, or equivalently $z_-\le z\le z_+$.
They do not denote positive and negative parts. For a signed quantity,
$|z|_+$ denotes an upper bound for its absolute value.

To control an entire neighbourhood, fix a box $B$ in
$(y_1,y_2,a,b,c)$ and a centre in $B$. Rather than estimating the
entries of $Y-Y_0$ separately, we estimate the length of the coordinate
segment from the centre to each point. Let $r_1,r_2,r_a,r_b,r_c$
bound the distances from the centre to both endpoints in the respective
coordinates, and put
\[
 u=r_1/(y_1)_-,\quad v=r_2/(y_2)_-,\quad t_c=\max_B|c|.
\]
The straight coordinate segment from the centre to any point of $B$ has length at most the square root of
\begin{equation}
 \frac23(u^2+uv+v^2)
 +2(y_1)_+r_a^2+2(y_2)_+r_c^2
 +2(y_1)_+(y_2)_+(r_b+t_cr_a)^2.
 \label{E:eq:metric-path}
\end{equation}
To derive the bound, set $\Omega=dN\,N^{-1}$. Its upper-triangular
entries are $da,db-c\,da,dc$, and the squared line element is
\[
 \sum_i(d\log d_i)^2+2\sum_{i<j}(d_i/d_j)\Omega_{ij}^2.
\]
Substituting the three factors $d_i$ gives the first quadratic term in
\eqref{E:eq:metric-path}. Bounding the remaining factors by their
coordinate endpoints gives the other terms. Integration over the unit
parameter interval proves the length estimate.

Combining \eqref{E:eq:log-norm-bound} with \eqref{E:eq:metric-path}
and the triangle inequality, we obtain the required containment bound
for every point of the shape box. In particular, checking the centre
alone is not sufficient.

For FCC, let $J$ be the all-ones matrix and set $M_F=J-2I$.
Then $\gFE^{-1}=\rho M_F/2$, and the relative strain is
$\rho^2M_F^TYM_F/4$. For the BCC representative in Section~\ref{E:sec:reduction}, set
\[
 B_0=\begin{pmatrix}-1&0&1\\-1&0&-1\\1&1&0\end{pmatrix}.
\]
Define $\gBE$ by $\gBE^{-1}=\rho^{-1}B_0$. Then
$\gBE\Z^3=\BCC$, $\gBE^T\gBE=\YB$, and the BCC relative strain is
$\rho^{-2}B_0^TYB_0$. Thus both formulae equal $I$ at their respective
reference Gram matrices. In the Epstein branch we write
\[
 Y_{\mathrm B}(X)=\gBE^Te^X\gBE,\qquad X\in\Symzero.
\]
Thus a local assertion about a lattice requires the existence of a
representative of this form within the stated norm bound. No condition
is imposed on an arbitrary choice of lattice basis.

Finally, we record the inverse matrices needed later. They verify the
duality of the chosen FCC and BCC bases and reduce the neighbourhood
calculations to explicit scalar formulas.

The inverse FCC basis used in \eqref{T:eq:AF} is
\[
 \gFT^{-1}=\frac\rho2
 \begin{pmatrix}1&-1&-1\\-1&-1&1\\1&1&1\end{pmatrix}.
\]
The duality between the two physical bases can be checked without approximating $\rho$:
\[
 \gFT^T\gBT=
 \begin{pmatrix}0&-1&0\\0&0&1\\1&0&1\end{pmatrix}\in\operatorname{GL}_3(\Z).
\]
Together with unit covolumes, this verifies $\gBT\Z^3=(\gFT\Z^3)^*$.

For the Iwasawa matrix \eqref{T:eq:iwasawa}, write $z=ac-b$. Direct inversion gives
\begin{equation}\label{T:eq:inverseY}
 Y^{-1}=\begin{pmatrix}
 d_1^{-1}+a^2d_2^{-1}+z^2d_3^{-1}
 &-ad_2^{-1}-zcd_3^{-1}&zd_3^{-1}\\
 -ad_2^{-1}-zcd_3^{-1}
 &d_2^{-1}+c^2d_3^{-1}&-cd_3^{-1}\\
 zd_3^{-1}&-cd_3^{-1}&d_3^{-1}
 \end{pmatrix}.
\end{equation}
The inverse forms and relative strains can therefore be evaluated
without general matrix inversion. Their interval enclosures are
described in Appendix~\ref{app:numerical}.

\section{Leading-order estimates and compact reduction}\label{sec:shell-compactness}

The two main conclusions of this section are the following cusp-exclusion results. The first concerns the theta energy and the second the regularized Epstein energy. Once these propositions are established, all remaining competitors lie in a fixed compact part of the reduced lattice space; the rest of the section proves them from explicit shell estimates and monotone kernel bounds.

\begin{proposition}[Cusp exclusion]\label{T:prop:cusp}
Suppose that $L$ has a nonzero vector with squared length at most $2/3$. Then, for every $\alpha\ge1$,
\begin{equation}\label{T:eq:cuspstrict}
 \Th(\alpha,L)>\Th(\alpha,\FCC).
\end{equation}
\end{proposition}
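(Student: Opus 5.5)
The plan is to compare a crude lower bound for the competitor, coming only from the multiples of the short vector, with an upper bound for the FCC theta function built from its first shells plus an explicit tail. Let $v\in L\setminus\{0\}$ with $q:=|v|^2\le2/3$. Every summand of $\Th(\alpha,L)$ is positive, so we may keep only the terms $kv$ with $k\in\Z$. Since $e^{-\pi\alpha k^2q}$ decreases in $q$, this gives
\[
 \Th(\alpha,L)\ge1+2\sum_{k\ge1}e^{-\pi\alpha k^2q}\ge1+2e^{-2\pi\alpha/3}+2e^{-8\pi\alpha/3}.
\]
The competitor side therefore needs no structure beyond the single short vector.

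For FCC we write $\Th(\alpha,\FCC)=1+\sum_{n\ge1}N_ne^{-\pi\alpha\rho n}$. The shell counts are $N_n=r_3(2n)$, the number of representations of $2n$ as a sum of three squares, so $N_1=12$, $N_2=6$, $N_3=24$, and so on. Two ingredients give an upper bound.
\begin{itemize}
\item The first few shells are summed exactly.
\item The remaining shells are controlled by a crude count such as $N_n\le C n^{3/2}$, or a lattice-point bound $\#\{m:|m|^2\le R\}\le\frac43\pi(\sqrt R+\sqrt3/2)^3$.
\end{itemize}
This yields an explicit majorant $\Th(\alpha,\FCC)-1\le12e^{-\pi\rho\alpha}\bigl(1+\tau(\alpha)\bigr)$, where $\tau(\alpha)$ is decreasing in $\alpha$ and very small already at $\alpha=1$. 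The dominant correction there is $\frac12e^{-\pi\rho}\approx0.01$.

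The reduction to the single scale $\alpha=1$ uses monotonicity of a ratio. Consider
\[
 R(\alpha):=\frac{\Th(\alpha,\FCC)-1}{2e^{-2\pi\alpha/3}}\le6e^{-(\pi\rho-2\pi/3)\alpha}\bigl(1+\tau(\alpha)\bigr).
\]
Since $\pi\rho>2\pi/3$ and $\tau$ is decreasing, the right-hand side is strictly decreasing in $\alpha$. Hence the strict inequality $R(\alpha)<1+e^{-2\pi\alpha}$ for all $\alpha\ge1$ follows once it holds at $\alpha=1$. Alternatively, one can drop the $2e^{-8\pi\alpha/3}$ gain after $\alpha=1$ and prove $R(\alpha)<1$ directly. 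Combining this with the lower bound above gives \eqref{T:eq:cuspstrict}. Strictness holds because the competitor bound discards only positive terms, and the FCC bound is strict at $\alpha=1$.

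The main obstacle is the endpoint $\alpha=1$, where the margin is thin:
\[
 2e^{-2\pi/3}\approx0.247\qquad\text{versus}\qquad 12e^{-\pi\rho}\approx0.229.
\]
The FCC tail must therefore be bounded rigorously, using interval arithmetic for the values of $\rho$ and the exponentials. I would sum the exact shells up to $n=3$ or $4$ before switching to the crude majorant, and I would keep the $2e^{-8\pi/3}$ term on the competitor side to widen the gap. These finite evaluations would be recorded with the other numerical checks in Appendix~\ref{app:numerical}.
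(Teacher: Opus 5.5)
Your proposal is correct and follows essentially the paper's route: the paper bounds $C_{\FCC}(\alpha)=e^{\pi\rho\alpha}(\Th(\alpha,\FCC)-1)<25/2$ for all $\alpha\ge1$ using $N_n<30n$ and $e^{-\pi\rho}<1/50$, and uses $\alpha\ge1$ to get $2e^{\pi\alpha(\rho-2/3)}\ge2e^{37/20}>25/2$; this is your decreasing ratio $R(\alpha)$ checked at $\alpha=1$, with the constant $25/2$ inserted between the two sides. One slip: monotonicity only lets you reduce the target $R(\alpha)<1$ to $\alpha=1$, not $R(\alpha)<1+e^{-2\pi\alpha}$, because the right side also decreases; since $R(1)\approx0.94$, your fallback suffices, and neither the $k=\pm2$ terms nor interval arithmetic is needed.
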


The preceding proposition removes the cusp for the Gaussian problem by comparing the contribution of a shortest vector with the FCC reference. For the Epstein argument we need an analogous exclusion which is uniform in the exponent. The next proposition provides precisely this second compactness input, now for the regularized Ewald functional.

\begin{proposition}[Uniform cusp exclusion]\label{E:prop:cusp}
If a covolume-one lattice has a nonzero vector of squared length $q<71/100$, then
\[
 \HH_s(L)>\HH_s(\FCC)\qquad(s\ge3/2).
\]
In particular this holds whenever $d_1<71/100$ in \eqref{T:eq:iwasawa}.
\end{proposition}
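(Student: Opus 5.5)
Since every summand of $\HH_s$ is positive (the remark after Lemma~\ref{E:lem:Ewald}), the competitor can be bounded below by a very small subsum. We keep only the direct-lattice terms coming from a shortest vector $v$ and its negative. If $m$ is the coefficient vector of $v$, then $Y[\pm m]=q$, and
\[
 \HH_s(L)\ge 2G(s,\pi q)>2G(s,\pi\cdot 71/100).
\]
The strict inequality holds because $G$ is decreasing in $x$. For FCC we need an upper bound on the whole of $\HH_s(\YF)$. We split it into a direct part $\sum_{m\ne0}G(s,\pi\YF[m])$ and a dual part $\sum_{m\ne0}G(3/2-s,\pi\YF^{-1}[m])$. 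The dual form $\YF^{-1}$ is the Gram matrix of $\BCC$, whose minimum is $3\rho^2/4\approx1.19$. The FCC shells have squared lengths $\rho k$, with $12$ vectors at $k=1$ and $6$ at $k=2$. The BCC shells start with $8$ vectors at $3\rho^2/4$ and $6$ at $\rho^2$.

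The goal is the ratio inequality
\[
 \frac{\HH_s(\YF)}{G(s,\pi\cdot 71/100)}<2\qquad(s\ge 3/2).
\]
For the direct part, the factor $e^{-xt}$ gives
\[
 \frac{G(s,x)}{G(s,x_0)}\le e^{-(x-x_0)}\qquad(x\ge x_0).
\]
This holds because both integrals are over $t\ge1$: write $G(s,x)=\int_1^\infty e^{-x_0t}e^{-(x-x_0)t}t^{s-1}\,dt$ and use $e^{-(x-x_0)t}\le e^{-(x-x_0)}$ for $t\ge1$. With $x_0=0.71\pi$, the direct FCC part is then at most $G(s,0.71\pi)$ times
\[
 \sum_{m\ne0}e^{-\pi(\YF[m]-0.71)}=e^{0.71\pi}\bigl(\Th(1,\FCC)-1\bigr).
\]
The shell counts show this is dominated by $12e^{-\pi(1.26-0.71)}\approx 2.1$. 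That is already too big, so this crude bound fails. We must therefore use the growth in $s$. For large $s$ the mass of $G(s,x)$ concentrates at $t\approx s/x$, and the ratio behaves like $(x_0/x)^s$, which is much smaller than $e^{-(x-x_0)}$.

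The sharper tool is the scaling form $G(s,x)=x^{-s}\Gamma(s,x)$, with $\Gamma(s,x)$ the upper incomplete gamma function. This gives
\[
 \frac{G(s,x)}{G(s,x_0)}=\Bigl(\frac{x_0}{x}\Bigr)^s\frac{\Gamma(s,x)}{\Gamma(s,x_0)}\le\Bigl(\frac{x_0}{x}\Bigr)^s,
\]
and also the earlier exponential bound. For each shell we use the minimum of the two. The first FCC shell then contributes at most $12\,(0.71/1.26)^{s}$, which falls below $1$ once $s$ is moderately large. For $s$ near $3/2$ we need a finer comparison. There we use the exact bound
\[
 \frac{G(s,x)}{G(s,x_0)}\le\sup_{t\ge1}\frac{e^{-xt}}{e^{-x_0t}}\cdot\text{(weight)},
\]
but evaluated via $\Gamma(s,x)/\Gamma(s,x_0)$ at the interval endpoints. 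The monotonicity of this ratio in $s$ comes from log-concavity arguments, with $\partial_a$ of $\log G$ increasing in $x$.

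The dual part is handled with $G(3/2-s,y)\le G(0,y)\le e^{-y}/y$ for $s\ge3/2$. This uses that $G$ is increasing in $a$ and that $3/2-s\le0$. The dual part is therefore uniformly bounded by a fixed Gaussian-type sum over $\BCC$, roughly $8e^{-3.74}/3.74\approx0.05$ plus a tail. This must be compared with $G(s,0.71\pi)$. The comparison is favourable at small $s$, but for large $s$ it needs $G(s,2.23)$ to be bounded below, which is clear since $G(s,x)\ge\int_1^2e^{-xt}\,dt$, and $G(s,x)$ increases in $s$.

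The main obstacle is the interval $s\in[3/2,s_1]$ with $s_1$ around $4$. There the first FCC shell ratio is close to the threshold, and the certified constant $71/100$ was presumably chosen to make the margin exist. On this interval I would split it into finitely many subintervals and use monotonicity in $s$ of each side to bound them at the endpoints. These are $G(s,\pi q)$ upper and lower enclosures, in the style of Appendix~\ref{app:numerical}. The shell tails beyond $k=2$ are bounded by the Gaussian tail of $\Th(1,\FCC)$. Finally, the statement about $d_1<71/100$ follows because $d_1=Y[e_1]$ is the squared length of the first basis vector.
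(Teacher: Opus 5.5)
Your opening reduction is sound. Keeping the two primal terms for $\pm m$ gives $\HH_s(L)\ge2G(s,\pi q)>2G(s,71\pi/100)$, and the target $\HH_s(\YF)<2G(s,71\pi/100)$ for all $s\ge3/2$ is in fact true. However, the proposal does not prove it on the delicate range, and one of its concrete steps fails there.

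Your uniform dual estimate $G(3/2-s,y)\le G(0,y)\le e^{-y}/y$ is too lossy near $s=3/2$. On the first BCC shell, $y=3\pi2^{-4/3}\approx3.740$, one has $G(0,y)\approx0.00519$ but $e^{-y}/y\approx0.00635$. So your bound for the whole dual part is about $0.059$, while its true value is about $0.0485$. Adding the primal FCC part at $s=3/2$, about $0.0648$, gives roughly $0.124$. This exceeds $2G(3/2,71\pi/100)\approx0.1148$. With this bound the comparison is therefore not ``favourable at small $s$'', and no subdivision of $[3/2,s_1]$ can repair it unless the dual kernels are enclosed sharply. The other ingredients for this range are also missing: the displayed ``finer comparison'' with a ``(weight)'' factor is not an actual inequality, and the endpoint enclosures are only announced, not carried out.

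The missing idea is the monotonicity you allude to, but with the opposite sign. For the probability density proportional to $e^{-xt}t^{a-1}$ on $[1,\infty)$, one has $\partial_a\log G(a,x)=\mathbb E_x[\log t]$ and $\partial_x\mathbb E_x[\log t]=-\operatorname{Cov}_x(t,\log t)\le0$. Hence $\partial_a\log G$ \emph{decreases} in $x$. It follows that for $x\ge x_0$ the ratio $G(s,x)/G(s,x_0)$ is nonincreasing in $s$. Equivalently, $G(s,x)/G(3/2,x)$ is nonincreasing in $x$, which is Lemma~\ref{E:lem:ratio}.

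Put $R=G(s,\pi q)/G(3/2,\pi q)\ge1$. Every FCC vector has $|v|^2\ge\rho>q$, so $G(s,\pi|v|^2)\le R\,G(3/2,\pi|v|^2)$. The dual terms satisfy $G(3/2-s,y)\le G(0,y)\le R\,G(0,y)$. Therefore $\HH_s(\FCC)\le R\,\HH_{3/2}(\FCC)$, whereas $\HH_s(L)\ge2R\,G(3/2,\pi q)$.

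The entire half-line $s\ge3/2$ thus collapses to the single endpoint check $\HH_{3/2}(\FCC)<0.11336<0.114<2G(3/2,71\pi/100)$ of Lemma~\ref{E:lem:cusp-inputs}. No $(x_0/x)^s$ analysis at large $s$ and no subdivision in $s$ are needed. That one check does, however, require rigorous enclosures of the complete sum $\HH_{3/2}(\FCC)$, including both the FCC and BCC parts and their tails. Crude bounds such as $e^{-y}/y$ lose more than the available margin.
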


We now establish the estimates used in these two propositions.

\subsection{Cubic shells and Gaussian variations}\label{sec:cubic-shells}\label{T:app:constants}
For a positive definite matrix $Y$, there is a constant $c>0$ such
that $z^TYz\ge c|z|^2$. This Gaussian majorant proves absolute and
local uniform convergence of \eqref{T:eq:theta} in $(\alpha,Y)$,
together with all derivatives of any fixed order on compact sets with
$\alpha>0$. It therefore justifies the termwise differentiations and
shell decompositions below.

We begin with the multiplicities of the FCC shells. Write
\begin{equation}\label{T:eq:shells}
 N_n=\#\{v\in\FCC:|v|^2=n\rho\},\qquad n\ge1.
\end{equation}
By the integer realization \eqref{E:eq:FCC}, the multiplicity is the
number of representations of $2n$ as a sum of three squares. The
required parity condition holds automatically, since this sum is even.
In particular,
\begin{equation}\label{T:eq:firstcounts}
 N_1=12,\qquad N_2=6.
\end{equation}
The first shell consists of the permutations of $\rho^{-1}(\pm1,\pm1,0)$, and the second consists of $\rho^{-1}(\pm2,0,0)$ and its coordinate permutations.

\begin{lemma}[Elementary shell bound]\label{T:lem:shellbound}
For every $n\ge1$, $N_n<30n$. In addition, every FCC shell has second moment
\begin{equation}\label{T:eq:isotropic}
 \sum_{|v|^2=n\rho}vv^T=\frac{n\rho N_n}{3}\Id.
\end{equation}
\end{lemma}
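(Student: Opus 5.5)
The plan has two parts, one for each assertion. Both use the integer realization \eqref{E:eq:FCC}: a vector $v=\rho^{-1}m$ with $m_1+m_2+m_3$ even has $|v|^2=\rho^{-2}|m|^2=\rho^{-2}\cdot 2n'$. This equals $n\rho$ exactly when $|m|^2=n\rho^3=2n$. So $N_n=r_3(2n)$, the number of representations of $2n$ as an ordered, signed sum of three squares. The parity condition is automatic because $m_1+m_2+m_3\equiv m_1^2+m_2^2+m_3^2\pmod 2$.

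\emph{Shell bound.} I would count the representations of $k=2n$ directly. Choose $m_1$ with $|m_1|\le\sqrt{k}$; this gives at most $2\sqrt{k}+1$ choices. Then count the solutions of $m_2^2+m_3^2=k-m_1^2$. A crude bound of $4\sqrt{k}+\ldots$ for this second count gives $O(k)$, but the constants are too weak. Instead I would bound the number of representations of $j$ as a sum of two squares by $r_2(j)=4(d_1(j)-d_3(j))\le 4d(j)$, which is still awkward to make uniform. The cleaner route is geometric, so I would try that first. The points $m\in\Z^3$ with $|m|^2=k$ all lie on a sphere of radius $R=\sqrt k$. Around each such point, place the unit cube $m+[-\tfrac12,\tfrac12]^3$; these cubes are disjoint and lie in the shell $R-\tfrac{\sqrt3}{2}\le|x|\le R+\tfrac{\sqrt3}{2}$. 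The shell volume is $\tfrac{4\pi}{3}\bigl((R+\tfrac{\sqrt3}2)^3-(R-\tfrac{\sqrt3}2)^3\bigr)=4\pi\sqrt3R^2+\tfrac{\pi\sqrt3}{2}\cdot\ldots$, which is about $21.8\,k+O(1)$. With $k=2n$ this is roughly $43.5n$, which is too large. I would therefore sharpen the argument by using only the points with $m_1\ge0$ together with the parity/sign structure, or else fall back to the slicing count. In the slicing count, for each admissible $m_1$ the circle $m_2^2+m_3^2=j$ carries at most $4\cdot\#\{(m_2\ge0):\dots\}\le 4(\lfloor\sqrt{j/2}\rfloor+1)$ points, since each solution has one coordinate at most $\sqrt{j/2}$ and that coordinate determines the other up to sign. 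Summing over $m_1$ gives roughly $\int 4\sqrt{(k-x^2)/2}\,dx=\pi k/\sqrt2\approx2.22k=4.44n$, plus boundary terms of order $\sqrt k$. Bounding the boundary terms by $n$ and checking $n\le$ (a small threshold) by hand from the known values $N_1=12,N_2=6,\dots$ then gives $N_n<30n$ with room to spare. The main obstacle is only bookkeeping: making the $O(\sqrt n)$ error explicit so that the inequality holds for all $n\ge1$. The generous constant $30$ should absorb all of it, for example via $N_n\le(2\sqrt{2n}+1)\cdot4(\sqrt{n}+1)$ combined with a direct check for small $n$.

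\emph{Isotropy.} The shell $\{m:|m|^2=2n\}$ is invariant under all coordinate permutations and all sign changes, that is, under the hyperoctahedral group, which also preserves the FCC parity condition. The matrix $S=\sum vv^T$ commutes with every such symmetry. Invariance under the sign change of coordinate $i$ forces $S_{ij}=-S_{ij}$ for $j\ne i$, so $S$ is diagonal. Invariance under permutations makes its diagonal entries equal, so $S=\lambda I$. Taking traces gives $3\lambda=\sum|v|^2=n\rho N_n$, which is \eqref{T:eq:isotropic}.
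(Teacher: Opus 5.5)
Your isotropy argument is the paper's: sign changes kill the off-diagonal entries, permutations equalize the diagonal, and the trace fixes the constant. Your reduction $N_n=r_3(2n)$ with automatic parity is also correct. The shell bound, however, is not proved as written.

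\emph{The crude count already suffices.} You dismissed it too quickly. Fix $m_1,m_2$ with $|m_1|,|m_2|\le\sqrt{2n}$; this leaves at most two choices of $m_3$. Hence $N_n\le2(2\sqrt{2n}+1)^2=16n+8\sqrt{2n}+2\le(18+8\sqrt2)n<30n$ for every $n\ge1$. This is exactly the paper's proof, and it needs no small-$n$ checks. Your remark that its constants are too weak is mistaken.

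\emph{The refinement you chose has an unjustified step.} Suppose one of $|m_2|,|m_3|$ is at most $\sqrt{j/2}$ and determines the other up to sign. You must still choose \emph{which} coordinate is the small one. So the argument only gives $r_2(j)\le4(2\lfloor\sqrt{j/2}\rfloor+1)$, not $4(\lfloor\sqrt{j/2}\rfloor+1)$. For example, at $j=25$ the points $(\pm4,\pm3)$ are never reached if $m_2$ is always taken as the small coordinate.

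There is also an arithmetic slip: $\int_{-\sqrt k}^{\sqrt k}4\sqrt{(k-x^2)/2}\,dx=\sqrt2\pi k$, not $\pi k/\sqrt2$. After both corrections the heuristic leading constant is $4\sqrt2\pi\approx17.8$, not $4.44$, so there is less room to spare than you claim.

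Consequently your displayed bound $N_n\le(2\sqrt{2n}+1)\cdot4(\sqrt n+1)$ does not follow from your reasoning. It happens to be true, but proving it would need arithmetic input about $r_2$. The repaired bound is
$(2\sqrt{2n}+1)\cdot4(2\sqrt n+1)=16\sqrt2\,n+8(\sqrt2+1)\sqrt n+4$.
Without floors, this is below $30n$ only for $n\ge8$. You would then have to check $N_1,\dots,N_7$ directly; these are $12,6,24,12,24,8,48$, all far below $30n$.

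So the route can be salvaged, but it costs strictly more than the two-coordinate count you rejected, which gives the bound uniformly in one line.
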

\begin{proof}
Once the first two integer coordinates are chosen, there are at most two choices for the third. Hence
\[
 N_n\le2(2\sqrt{2n}+1)^2
 =16n+8\sqrt{2n}+2<30n.
\]
Dividing by $n$, the last inequality follows from $n\ge1$ and
$16+8\sqrt2+2<30$. To prove the second assertion, observe that each
shell is invariant under sign changes and coordinate permutations.
The mixed moments vanish, and the three diagonal moments coincide.
Taking the trace now gives \eqref{T:eq:isotropic}.
\end{proof}

To compare the theta energies, we normalize by the decay of the first
FCC shell. Put
\begin{equation}\label{T:eq:uCF}
 u=\pi\rho\alpha,\quad q=e^{-u},\quad
 C_{\FCC}(\alpha)=e^u\bigl(\Th(\alpha,\FCC)-1\bigr)
 =\sum_{n\ge1}N_n q^{n-1}.
\end{equation}
The function $C_{\FCC}$ is decreasing. We use the elementary inequalities
\begin{equation}\label{T:eq:basicnumbers}
 \pi\rho>\frac{79}{20}=3.95,
 \qquad e^{39/10}>49,
 \qquad e^{79/20}>50.
\end{equation}
The scalar inequalities used in Section~\ref{T:app:constants} are
collected below. Their rational evaluation is given in
Appendix~\ref{app:numerical}.

\begin{lemma}[Normalized FCC upper bounds]\label{T:lem:CFbounds}
For $\alpha\ge1$,
\begin{equation}\label{T:eq:CF125}
 C_{\FCC}(\alpha)<\frac{25}{2}.
\end{equation}
If $R_{12}(\alpha)=\sum_{n=1}^{12}N_n e^{-(n-1)u}$, then
\begin{equation}\label{T:eq:reftail}
 0<C_{\FCC}(\alpha)-R_{12}(\alpha)<2\cdot10^{-18}.
\end{equation}
\end{lemma}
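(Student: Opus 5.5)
Both inequalities follow from monotonicity of $C_{\FCC}$ in $\alpha$ together with the crude shell bound of Lemma~\ref{T:lem:shellbound}. For $\alpha\ge1$ we have $u=\pi\rho\alpha\ge\pi\rho>79/20$, and hence $q=e^{-u}<e^{-79/20}<1/50$ by \eqref{T:eq:basicnumbers}. Each term $N_nq^{n-1}$ decreases in $\alpha$, so it suffices to bound $C_{\FCC}$ at $\alpha=1$, or simply with $q$ replaced by the upper bound $1/50$.

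For \eqref{T:eq:CF125}, I keep the first two shells exactly, using $N_1=12$ and $N_2=6$ from \eqref{T:eq:firstcounts}, and bound the rest by $N_n<30n$:
\[
 C_{\FCC}(\alpha)<12+6q+30\sum_{n\ge3}nq^{n-1}.
\]
The last series equals $30\bigl(\frac{1}{(1-q)^2}-1-2q\bigr)=30q^2(3-2q)/(1-q)^2$. With $q<1/50$ this is at most about $30\cdot 3\cdot 4\cdot10^{-4}\cdot1.05<0.04$. The total is then below $12+0.12+0.04<25/2$. I would simply record the rational evaluation, noting that each bound is monotone in $q$, so that substituting $q=1/50$ is legitimate. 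A slightly sharper hand check would use $N_3=24$ and $N_4=12$, but this is not needed.

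For \eqref{T:eq:reftail}, positivity is immediate because every omitted term $N_nq^{n-1}$ with $n\ge13$ is positive. For the upper bound I use $N_n<30n$ again:
\[
 C_{\FCC}-R_{12}<30\sum_{n\ge13}nq^{n-1}\le 30\,q^{12}\,\frac{13-12q}{(1-q)^2}.
\]
The main obstacle is only numerical. With $q<1/50$ we get $q^{12}<50^{-12}\approx4.1\cdot10^{-21}$, and multiplying by $30\cdot13.3\approx400$ gives about $1.6\cdot10^{-18}<2\cdot10^{-18}$. The margin is small, so I would verify it with exact rationals: $50^{12}=2^{12}5^{24}$, and I would check $30\cdot(13-12/50)\cdot(50/49)^2<2\cdot10^{-18}\cdot50^{12}$. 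This amounts to checking $\approx398.6<488.3$, which holds. If the margin looked too tight, I would instead use $e^{-u}\le e^{-\pi\rho}$ with a sharper rational lower bound for $\pi\rho\approx3.958$, which gains roughly a factor $e^{-0.1}$. Both estimates are uniform in $\alpha\ge1$ by the monotonicity noted at the start.
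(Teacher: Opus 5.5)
Your proposal is correct and is essentially the paper's own argument: keep the shells $n=1,2$ exactly, bound the remaining shells by $N_n<30n$, sum $\sum_{n\ge3}nq^{n-1}$ and $\sum_{n\ge13}nq^{n-1}$ in closed form, and evaluate these bounds at $q=1/50$, which is legitimate because they increase with $q$. One small correction: the omitted terms are only nonnegative, not all positive (for example $N_{14}=0$, since $28$ is not a sum of three squares), but $N_{13}=72>0$ still gives the strict inequality $C_{\FCC}(\alpha)-R_{12}(\alpha)>0$.
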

\begin{proof}
Here $q<1/50$. The first two shells and Lemma~\ref{T:lem:shellbound} give
\[
 C_{\FCC}(\alpha)
 \le12+6q+30\sum_{n\ge3}nq^{n-1}
 =12+6q+30\frac{q^2(3-2q)}{(1-q)^2}.
\]
At $q=1/50$ the last expression equals $729723/60025<25/2$. Similarly,
\[
 C_{\FCC}-R_{12}
 \le30\sum_{n\ge13}nq^{n-1}
 =30\frac{q^{12}(13-12q)}{(1-q)^2}<2\cdot10^{-18}.
\]
Both right-hand sides increase with $q\in(0,1)$, so evaluation at $q=1/50$ is legitimate.
\end{proof}

We next compute the Hessian at FCC. This calculation explains which
shells must be retained in the nonlinear estimate. For a traceless
symmetric matrix $H$, set $L_t=e^{tH/2}\FCC$. Two differentiations
give
\begin{equation}\label{T:eq:hessian}
 \left.\frac{d^2}{dt^2}\Th(\alpha,L_t)\right|_{t=0}
 =\sum_{v\in\FCC}e^{-a|v|^2}
 \left(a^2(v^THv)^2-a\,v^TH^2v\right),\qquad a=\pi\alpha.
\end{equation}
By \eqref{T:eq:isotropic}, the first variation is zero. Under cubic
symmetry, the tangent space splits into the two-dimensional diagonal
trace-free subspace and the three-dimensional off-diagonal subspace.
The Hessian is determined by its coefficients on these two subspaces.

If
\[
 C_2=\sum v_1^2e^{-a|v|^2},\quad
 A_4=\sum v_1^4e^{-a|v|^2},\quad
 B_4=\sum v_1^2v_2^2e^{-a|v|^2},
\]
then the two Hessian eigenvalues, in the Frobenius metric, are
\begin{equation}\label{T:eq:blocks}
 \mu_D=a^2(A_4-B_4)-aC_2,\qquad
 \mu_O=2a^2B_4-aC_2.
\end{equation}
These coefficients follow by evaluating the Hessian on
$\diag(1,-1,0)/\sqrt2$ and on the unit matrix with entries
$H_{12}=H_{21}=1/\sqrt2$, respectively.

The first shell contributes
\begin{equation}\label{T:eq:hessfirst}
 \mu_D^{(1)}=u(u-4)e^{-u},\qquad
 \mu_O^{(1)}=2u(u-2)e^{-u}.
\end{equation}
At $\alpha=1$, $u=\pi\rho<4$, so the first diagonal contribution is negative. The second shell contributes
\begin{equation}\label{T:eq:hesssecond}
 \mu_D^{(2)}=4u(2u-1)e^{-2u},\qquad
 \mu_O^{(2)}=-4ue^{-2u}.
\end{equation}
The negative diagonal contribution of the first shell is the reason
for retaining the second shell near the transition. Moreover, local
positivity requires both Hessian components to be positive; a positive
sum of the two does not suffice. The invariant decomposition is that of
\cite[Section~3, (21)]{SS}, while the coefficients above correspond
to the Gaussian kernel.

For later use, we collect the elementary exponential estimates. They
follow by bounding the positive Taylor remainder; the rational checks
are given in Appendix~\ref{app:constant-checks}. For $x\ge0$ and
$N+2>x$, set
\[
 S_N(x)=\sum_{k=0}^N\frac{x^k}{k!}.
\]
The elementary ratio bound on the positive tail gives
\begin{equation}\label{T:eq:exp-rational}
 S_N(x)\le e^x\le S_N(x)+
 \frac{x^{N+1}}{(N+1)!}\frac1{1-x/(N+2)}.
\end{equation}
The lower bound is strict when $x>0$, with equality at $x=0$.
Taking $N=100$ reduces the inequalities in
Table~\ref{T:tab:constants} to rational comparisons.
Appendix~\ref{app:numerical} gives the evaluation and rounding-error
bounds.

The inequalities and their roles are listed in Table~\ref{T:tab:constants}.
\begin{table}[!htbp]
\centering
\caption{Scalar exponential inequalities used in the theta estimates.}\label{T:tab:constants}
\begin{tabular}{@{}ll@{}}
\toprule
Inequality & Role\\
\midrule
$49<e^{39/10}<50$ & FCC local shell bounds\\
$50<e^{79/20}$ & Uniform FCC reference bound\\
$e^{37/20}>25/4$ & Cusp exclusion\\
$e^{102/25}<60$ & First local subinterval\\
$e^{1071/250}<75$ & Second local subinterval\\
$e^{561/125}<100$ & Third local subinterval\\
$e^{79/250}<11/8$ & Larger local ball, lower endpoint\\
$e^{64/25}<13$ & Larger local ball, upper endpoint\\
$e^{31/25}>10/3$ & BCC derivative tail\\
$e^{7/10}>2$ & Small-exponential cutoff\\
\bottomrule
\end{tabular}
\end{table}
The rational inequalities $1.259<\rho<1.26$ follow by cubing. Together with $3.14<\pi<22/7$, they imply
\[
 \pi\rho>3.95,\qquad
 \pi(\rho-2/3)>1.85,\qquad
 31/25<\pi\rho^2/4<2.
\]
The corresponding arctangent-series bounds for $\pi$ and exact cube comparisons for $\rho$ are given in Appendix~\ref{app:numerical}.

\subsection{Cusp exclusion and compact comparison domains}\label{sec:cusps}
We now turn to compact reduction. For the theta energy, a sufficiently
short vector and its negative already give a lower bound exceeding the
FCC value. For $i=1,2,3$, let $e_i$ denote the $i$th standard coordinate
column vector of $\R^3$.

We now prove Proposition~\ref{T:prop:cusp}.

\begin{proof}
The vector and its negative give
\[
 \Th(\alpha,L)-1\ge2e^{-2\pi\alpha/3}
 =2e^{-u}e^{\pi\alpha(\rho-2/3)}.
\]
By $\pi(\rho-2/3)>37/20$ and $e^{37/20}>25/4$, this lower bound
exceeds $(25/2)e^{-u}$. Lemma~\ref{T:lem:CFbounds} then yields the
strict comparison. The same estimate holds uniformly in $\alpha$;
the selected vector need not be a shortest vector.
\end{proof}

Since $Y[(1,0,0)]=d_1$, Proposition~\ref{T:prop:cusp} reduces the
comparison to $d_1\ge2/3$. The third and sixth inequalities in
\eqref{T:eq:grenier} give $y_1,y_2\le4/3$. Substituting these bounds
in $d_1^3=y_1^2y_2$, we obtain
\begin{equation}\label{T:eq:ycompact}
 \frac{\sqrt2}{3}\le y_1\le\frac43,
 \qquad \frac16\le y_2\le\frac43.
\end{equation}
In particular, the remaining reduced domain is contained in
\begin{equation}\label{T:eq:rootmath}
 \mathcal R=[-1,1/3]\times[-2,1/3]\times[0,1/2]\times[0,1]\times[0,1/2]
\end{equation}
in the coordinates $(\ell_1,\ell_2,a,b,c)$. Let $\calK$ denote the closed subset of this rectangle satisfying \eqref{T:eq:grenier} and $d_1\ge2/3$. It is compact.

We first note that the minimum is attained. The compact set $\calK$
contains the FCC representative. For fixed $\alpha\ge1$, continuity
gives a minimum on $\calK$, while every reduced representative outside
$\calK$ has energy strictly greater than the FCC value by the cusp
estimate. Thus this compact minimum is global. For $0<\alpha<1$,
Poisson summation and the bijection $[L]\mapsto[L^*]$ transfer the
existence of a minimum from the reciprocal scale $1/\alpha$.
This proves attainment at every positive scale, before identifying
the minimizers.

For the Epstein energy, we use the cutoff $71/100$ from the height
argument in \cite[Section~4, pp.~15--16]{SS}. We first record its
endpoint bounds and then extend the exclusion to every exponent under
consideration. The following kernel-ratio lemma gives this extension.

\begin{lemma}[Monotone kernel ratio]\label{E:lem:ratio}
For $a\in\R$, $x>0$ and $u\ge0$, the ratio $G(a+u,x)/G(a,x)$ is at least one and is nonincreasing in $x$.
\end{lemma}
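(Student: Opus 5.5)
The first assertion is immediate, and the monotonicity follows from a total-positivity argument. The statement is about $G(a,x)=\int_1^\infty e^{-xt}t^{a-1}\,dt$ with $a\in\R$, $x>0$, $u\ge0$.

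\emph{Ratio at least one.} Since $t\ge1$ on the range of integration, $t^{a+u-1}\ge t^{a-1}$ pointwise. Hence $G(a+u,x)\ge G(a,x)$, which is just the monotonicity in $a$ already noted after \eqref{E:eq:G}. The ratio is therefore at least one.

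\emph{Monotonicity in $x$.} The plan is to differentiate the ratio and then apply a Chebyshev-type covariance argument. Write $R(x)=G(a+u,x)/G(a,x)$. By \eqref{E:eq:G-derivatives}, $\partial_xG(b,x)=-G(b+1,x)$, so
\[
 R'(x)=\frac{-G(a+u+1,x)G(a,x)+G(a+u,x)G(a+1,x)}{G(a,x)^2}.
\]
It therefore suffices to prove
\[
 G(a+u,x)\,G(a+1,x)\le G(a+u+1,x)\,G(a,x).
\]

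To see this, introduce the probability measure $d\mu_x(t)=e^{-xt}t^{a-1}\,dt/G(a,x)$ on $[1,\infty)$. Then $G(a+k,x)/G(a,x)=\mathbb E_{\mu_x}[t^k]$ for $k\ge0$, and the required inequality becomes
\[
 \mathbb E[t^u]\,\mathbb E[t]\le\mathbb E[t^{u+1}].
\]
This is Chebyshev's sum inequality: $t\mapsto t$ and $t\mapsto t^u$ are both nondecreasing on $[1,\infty)$ when $u\ge0$, so they are positively correlated. Concretely, symmetrizing gives
\[
 \iint (t^u-s^u)(t-s)\,d\mu_x(t)\,d\mu_x(s)\ge0,
\]
which is exactly the inequality above. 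All moments are finite because $x>0$, and differentiation under the integral sign is justified as in \eqref{E:eq:G-derivatives}.

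\emph{An alternative route.} One can also argue that $\log G(b,x)$ is convex in $b$ (Hölder's inequality) and use log-supermodularity of $e^{-xt}t^{b}$ in $(b,t)$. The covariance argument is the most direct, however, so I would use it.

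No step is a real obstacle. The only care needed is the justification of differentiating under the integral sign, which is already handled in the text after \eqref{E:eq:G}.
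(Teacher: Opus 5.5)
Your proposal is correct and follows the paper's proof essentially step for step. The paper also views the ratio as $\mathbb E_x(t^u)$ under the probability density proportional to $e^{-xt}t^{a-1}$ on $[1,\infty)$, which is at least one since $t\ge1$. It then shows that the $x$-derivative of this ratio equals $-\operatorname{Cov}_x(t,t^u)\le 0$, obtaining the sign by integrating $(t-v)(t^u-v^u)\ge0$ against the product measure, exactly as your Chebyshev symmetrization does.
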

\begin{proof}
Consider the probability density proportional to $e^{-xt}t^{a-1}$ on
$[1,\infty)$. The ratio in the lemma is $\mathbb E_x(t^u)$ and is
therefore at least one. Its derivative with respect to $x$ is
$-\operatorname{Cov}_x(t,t^u)\le0$: the sign follows by integrating
$(t-v)(t^u-v^u)\ge0$ against the product measure. Exponential
integrability justifies differentiation. This proves the lemma.
\end{proof}

\begin{lemma}[Critical reference bounds]\label{E:lem:cusp-inputs}
The following strict inequalities hold:
\begin{equation}
 \HH_{3/2}(\FCC)<0.11336,
 \qquad 2G(3/2,71\pi/100)>0.114
 \label{E:eq:cusp-values}
\end{equation}
\end{lemma}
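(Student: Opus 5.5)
Both inequalities in Lemma~\ref{E:lem:cusp-inputs} are rigorous numerical evaluations. The plan is to reduce each one to a finite rational or interval computation with an explicit tail bound.

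\emph{Lower bound for $G(3/2,x)$ at $x=71\pi/100$.} Substitute $t=w^2$ in \eqref{E:eq:G}. This gives
\[
G(3/2,x)=\int_1^\infty e^{-xt}t^{1/2}\,dt=\frac{e^{-x}}{x}+\frac{1}{2x}\int_1^\infty e^{-xt}t^{-1/2}\,dt=\frac{e^{-x}}{x}+\frac{\sqrt\pi}{2x^{3/2}}\operatorname{erfc}(\sqrt x).
\]
One can then use a certified lower bound for $\operatorname{erfc}$, for example the continued-fraction or Laplace-type bound $\operatorname{erfc}(y)\ge \frac{2}{\sqrt\pi}\frac{e^{-y^2}}{y+\sqrt{y^2+2}}$. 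Alternatively, since the integrand is positive, one can drop part of the integral and bound $\int_1^{T}$ below by a lower Riemann sum of the decreasing factor $e^{-xt}$ times $t^{1/2}\ge1$. Either route needs only enclosures $3.14<\pi<22/7$ and the exponential bounds \eqref{T:eq:exp-rational}. I expect $2G(3/2,71\pi/100)\approx0.1145$, so the margin is small but a sharp erfc bound suffices.

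\emph{Upper bound for $\HH_{3/2}(\FCC)$.} At $s=3/2$ both kernels in \eqref{E:eq:H} are $G(3/2,\cdot)$ and $G(0,\cdot)$. The Gram matrices are $\YF$ for $\FCC$ and $\YF^{-1}$, which is the BCC Gram matrix up to a unimodular change. So
\[
\HH_{3/2}(\FCC)=\sum_{v\in\FCC\setminus0}G(3/2,\pi|v|^2)+\sum_{w\in\BCC\setminus0}G(0,\pi|w|^2).
\]
I would group each sum by shells. The FCC shells have norms $n\rho$ with multiplicities $N_n$, as in \eqref{T:eq:shells}. The BCC shells come from \eqref{E:eq:BCC}: norms $\rho^{-4}k$ over integer vectors of equal parity, giving $8,6,12,24,\dots$ vectors. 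The plan is to evaluate the first several shells explicitly and bound the rest.

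For the explicit shells, I would use upper bounds on the kernels:
\[
G(a,x)\le e^{-x}/(x-\max(a-1,0))\quad\text{for } x>a-1,
\]
and $G(0,x)\le e^{-x}/x$. Alternatively, the erfc formula above can be used with an upper erfc bound.

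For the tail, I would use $N_n<30n$ from Lemma~\ref{T:lem:shellbound} and an analogous crude count for BCC. Together with $G(a,x)\le e^{-x}/(x-a+1)$ and the geometric series, as in Lemma~\ref{T:lem:CFbounds}, this should show the tails beyond about five shells are below $10^{-6}$. The dominant terms are the 12 FCC vectors at $x=\pi\rho\approx3.96$ and the 8 BCC vectors at $x=3\pi\rho^2/4\approx3.74$.

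The main obstacle is the tight margin in $0.11336$. The crude $e^{-x}/x$ bound overshoots by roughly a factor $1+1/x$, about $25\%$ for $G(3/2,\cdot)$. So for the first two shells of each lattice I would use a sharp two-sided evaluation, either the erfc closed form with certified bounds or the incomplete-gamma continued fraction truncated at several terms. These reduce to rational comparisons via \eqref{T:eq:exp-rational}, with rounding error bounds handled as in Appendix~\ref{app:numerical}.
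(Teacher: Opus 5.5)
\paragraph{The second inequality.} Your argument here works, and it is a closed-form alternative to the paper's single-kernel Jacobi-resolvent enclosure. The Mills-ratio bound $\operatorname{erfc}(y)>\frac{2}{\sqrt\pi}\frac{e^{-y^2}}{y+\sqrt{y^2+2}}$ turns your identity into $G(3/2,x)>\frac{e^{-x}}{x}\bigl(1+\frac{1}{x+\sqrt{x(x+2)}}\bigr)$. This lower bound is decreasing in $x$, so evaluating it at $x=0.71\cdot 22/7$ already gives $2G(3/2,71\pi/100)>0.1143$. Your fallback Riemann sum must keep the factor $t^{1/2}$ on each subinterval. Using $t^{1/2}\ge1$ alone only gives $2e^{-x}/x\approx0.096$.

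\paragraph{The first inequality.} Here the plan fails as stated, because it badly underestimates how tight the bound is. One finds $\HH_{3/2}(\FCC)\approx0.1133598$, so the margin below $0.11336$ is only about $2.5\times10^{-7}$. Your error budget exceeds this by more than an order of magnitude.
\begin{itemize}
\item You use sharp enclosures only on the first two shells of each lattice. The third BCC shell ($12$ vectors at $x=8\pi\rho^{-4}\approx9.974$) is then bounded by $G(0,x)\le e^{-x}/x$. Since $xe^{x}G(0,x)\approx0.915$ there, this one shell is overestimated by about $4.7\times10^{-6}$.
\item The next BCC shell ($24$ vectors at $x\approx13.71$) adds a further overestimate of about $1.2\times10^{-7}$.
\item The tail is no better. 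With $A=\pi\rho^{-4}\approx1.2467$ and the count $18n$, the tail after the fifth BCC shell (index $12$) is $\frac{18}{A}\frac{e^{-13A}}{1-e^{-A}}\approx1.9\times10^{-6}$. This is not below $10^{-6}$, and a $10^{-6}$ allowance would itself be four times the margin.
\end{itemize}
So the computed upper bound would come out above $0.11336$.

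Your diagnosis of which kernel bound is lossy is also reversed. The quantity $e^{-x}/x=G(1,x)$ is a lower bound for $G(3/2,x)$, not an upper bound. The upper bound $e^{-x}/(x-1/2)$ overshoots $G(3/2,x)$ by under $3\%$ at the first FCC shell. The $20\%$-level loss occurs for $G(0,\cdot)$ on the BCC side.

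\paragraph{How to fix it.} Use two-sided enclosures on every shell that matters at the $10^{-7}$ level, with accumulated error well below $2.5\times10^{-7}$. Your erfc or continued-fraction formulas would do, as would the bounds of Lemma~\ref{E:lem:stieltjes}. You must also carry the BCC sum much further before switching to the crude $18n$ count. The paper retains FCC shells through index $16$ and BCC shells through index $64$, which makes the tail negligible. With these changes your scheme coincides with the paper's.
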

\begin{proof}[Computer-assisted proof]
For the first inequality, apply the rational kernel bounds of
Section~\ref{E:sec:kernels} to the finite FCC and BCC shells and add
the complete reference tails. For the second, use the corresponding
lower enclosure for the single kernel. The cutoffs, rational
evaluations and enclosure checks are given in
Appendix~\ref{app:elementary-revision}.
\end{proof}

We next prove Proposition~\ref{E:prop:cusp}.

\begin{proof}
We use only the endpoint estimates in Lemma~\ref{E:lem:cusp-inputs}.
Set $R=G(s,\pi q)/G(3/2,\pi q)\ge1$. The squared lengths of all
nonzero FCC vectors are at least $2^{1/3}>q$. By
Lemma~\ref{E:lem:ratio}, each primal FCC term at $s$ is bounded by
$R$ times its value at $3/2$. The dual terms decrease with $s$, so
they satisfy the same bound with factor $R$. Summing, we obtain
\[
 \HH_s(\FCC)\le R\HH_{3/2}(\FCC).
\]
Keeping only the two primal competitor vectors gives
\[
 \HH_s(L)\ge2G(s,\pi q)=2R G(3/2,\pi q)>0.114R>\HH_s(\FCC).
\]
Finally, $Y[e_1]=d_1$.
\end{proof}

The third and sixth reduction inequalities imply $y_1,y_2\le4/3$. If $d_1\ge71/100$, then $y_1^2y_2\ge(71/100)^3$, whence
\[
 y_1\ge\sqrt{\tfrac34(71/100)^3}>\frac{51}{100},
 \qquad y_2\ge\frac9{16}(71/100)^3>\frac15.
\]
Therefore the single compact rectangle
\begin{equation}
 K=[51/100,4/3]\times[1/5,4/3]\times[0,1/2]\times[0,1]\times[0,1/2]
 \label{E:eq:root-rectangle}
\end{equation}
contains every remaining reduced representative for $s\ge3/2$.
Points in $K$ that violate a reduction inequality may either be
excluded or retained for direct comparison. The intersection of $K$
with the closed reduction conditions and $d_1\ge71/100$ is compact
and contains FCC. Hence continuity and the cusp estimate give a
minimum of $\HH_s$ for each $s\ge3/2$. By
\eqref{E:eq:Ewald-gap} and \eqref{E:eq:finitepart-gap}, a minimum
also exists for $E(\cdot,s)$ when $s>3/2$, and for $C$.
It remains to determine the minimizing lattice classes.

\section{Local comparison for the theta energy}\label{sec:theta-local}\label{T:sec:local}

The main results of this section are the following two local comparison statements. Theorem~\ref{T:thm:local} gives a quantitative FCC neighbourhood, uniform in the relevant scale ranges, while Proposition~\ref{T:prop:bcclocal} controls a BCC neighbourhood at the self-dual transition. Their proofs have three steps: a two-shell estimate near FCC, a scale-uniform six-point estimate for large parameters, and finally the duality argument at BCC.

\begin{theorem}[Uniform quantitative FCC neighbourhood]\label{T:thm:local}
Let $A\in\calP$ and $u=\pi\rho\alpha$. Inequality \eqref{T:eq:localquant} holds in either of the following situations:
\begin{enumerate}[label=(\roman*),leftmargin=2em]
\item $u\ge39/10$ and $\|A-\Id\|_{\op}\le1/100$;
\item $\alpha\ge2$ and $\|A-\Id\|_{\op}\le1/25$.
\end{enumerate}
In either neighbourhood, equality of the two theta energies holds if and
only if $A=\Id$. The uniformity concerns the neighbourhood radius; the
unnormalized lower-bound coefficient $(u/50)e^{-u}$ depends on the scale.
\end{theorem}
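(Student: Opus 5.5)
The plan is to write $A=\Id+H$ with $H$ symmetric, $\|H\|_{\op}\le\delta$ ($\delta=1/100$ or $1/25$), and $\det(\Id+H)=1$. Then expand the energy difference shell by shell:
\[
 \Th(\alpha,A^{1/2}\FCC)-\Th(\alpha,\FCC)=\sum_{v\ne0}e^{-a|v|^2}\bigl(e^{-a\,v^THv}-1\bigr),\qquad a=\pi\alpha,
\]
since $|A^{1/2}v|^2=v^TAv=|v|^2+v^THv$.

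I would apply the pointwise inequality $e^{-x}-1\ge -x+\tfrac{x^2}{2}-\tfrac{x^3}{6}$ (or the exact $e^{-x}-1+x\ge \tfrac{x^2}{2}e^{-|x|}$) on the first two shells. The linear terms sum, by the isotropy \eqref{T:eq:isotropic}, to $-\tfrac{a n\rho N_n}{3}\tr H\,e^{-un}$ on shell $n$. The determinant constraint gives $0=\log\det(\Id+H)=\tr H-\tfrac12\tr H^2+O(\delta\norm{H}^2)$, so $-\tr H\ge -\tfrac12\norm{H}^2(1+O(\delta))$. This sign-definite correction produces exactly the $-aC_2$-type terms of the Hessian \eqref{T:eq:blocks}.

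The quadratic terms on shells $1,2$ are evaluated explicitly on the cubic decomposition. Diagonal and off-diagonal parts of $H$ decouple by symmetry, and one recovers $\mu_D^{(1)}+\mu_D^{(2)}$ and $\mu_O^{(1)}+\mu_O^{(2)}$ from \eqref{T:eq:hessfirst}--\eqref{T:eq:hesssecond}, weighted by $\tfrac12\norm{H}^2$ split between the two components. Multiplying by $e^u$ gives the diagonal coefficient $u(u-4)+4u(2u-1)e^{-u}$. This coefficient is only marginally positive near $u=39/10$, with $e^{-u}\in(1/50,1/49)$, and that is why the second shell is indispensable. The off-diagonal coefficient $2u(u-2)-4ue^{-u}$ is comfortable.

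The cubic remainders on shells $1,2$ are bounded by $\tfrac16 a^3|v|^6\delta\norm{H}^2$. Shells $n\ge3$ are bounded below using $e^{-x}-1+x\ge0$, so only the linear term remains, and that term is again controlled through $\tr H\le\tfrac12\norm{H}^2(1+O(\delta))$ together with $N_n<30n$ (Lemma~\ref{T:lem:shellbound}). This yields an error of order $u\,n^2e^{-un}\norm{H}^2$, summable and tiny relative to $e^{-u}$. Finally, $\|A-\Id\|_{\frob}=\norm{H}$, which gives the target $(u/50)e^{-u}\norm{H}^2$. Equality forces $H=0$.

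For case (i) on a bounded range $u\in[39/10,u_*]$, I would split into the subintervals indicated by Table~\ref{T:tab:constants} ($e^{102/25},e^{1071/250},e^{561/125}$) and check the resulting polynomial-in-$u$, $e^{-u}$ inequalities by monotonicity on each piece. For large $u$ the cubic error $u^3\delta\norm{H}^2$ outgrows the quadratic gain $u^2$ unless handled differently. There I would not Taylor expand in $a$, but use a convexity/exponential-moment bound on the six antipodal pairs of the first shell: the sum $\sum_{v\in\text{shell }1}e^{-a v^THv}$ is bounded below via the arithmetic--geometric mean and exponential-moment inequalities applied to the six values $x_v=a\,v^THv$, using $\sum x_v=2a\rho\tr H$. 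The gain is then of the form $\sum(e^{-x_v}-1+x_v)\ge c\sum x_v^2e^{-\max|x_v|}$. Case (ii), with the larger radius $1/25$ but $\alpha\ge2$ so $u\ge 7.9$, is handled by the same six-point argument, with the endpoint constants $e^{79/250},e^{64/25}$.

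The main obstacle I expect is this uniformity in $u\to\infty$. Here $|x_v|$ can be as large as $u\delta$, which is unbounded, so the factor $e^{-|x|}$ kills the naive quadratic gain. I would attempt to exploit the fact that the first-shell vectors span the off-diagonal and diagonal strains in pairs, so that $\sum_v x_v^2\gtrsim u^2\norm{H}^2$ while large negative $x_v$ produce exponentially large positive contributions $e^{|x_v|}$, which dominate. A case split on $\max_v(-x_v)\lessgtr1$ should give $\ge (u/50)e^{-u}\norm{H}^2$ in both regimes.
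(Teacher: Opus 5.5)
Your bounded-scale argument is essentially the paper's. Lemma~\ref{T:lem:twoshell} keeps shells one and two with a nonlinear quadratic lower bound; the paper uses $e^{-x}\ge1-x+\tfrac12e^{-M}x^2$ for $|x|\le M$, and your cubic Taylor bound also works. It bounds the shells $n\ge3$ linearly via isotropy and $N_n<30n$, and controls $\tr H$ through the determinant constraint of Lemma~\ref{T:lem:det}. (In the paper, case (ii) on $7.9\le u\le64$ is also done by this two-shell bound, which is where $e^{79/250}$ and $e^{64/25}$ enter. The moment argument is used only for $u\ge64$.)

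The gap is the unbounded range, which you correctly single out but do not resolve. Your stated bound $\sum_v(e^{-x_v}-1+x_v)\ge c\sum_vx_v^2e^{-\max|x_v|}$ is of exactly the non-uniform type you warn about. Your case split on $\max_v(-x_v)$ targets the harmless direction, since shortened contacts produce large gains. The dangerous direction is stretched contacts, $x_v\gg1$, where $e^{-x}-1+x$ grows only linearly. In your case $\max_v(-x_v)\le1$, the factor $e^{-\max_vx_v}$ is still present, and nothing in the proposal bounds or removes it. In the other case, ``exponentially large contributions dominate'' needs a quantitative comparison of $|x_v|$ with the linear loss $4u\tr H\approx2u\norm{H}^2$, and you do not give one. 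So the uniformity in $u$, which is the real content of the theorem beyond a bounded scale range, is not established.

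What is missing is a lower bound that grows at least linearly for large deviations, combined with smallness of the strain. The paper writes $H=\eta\Id+S$ with $\eta=\tr H/3$, so the first shell is exactly $12e^{-u}e^{-u\eta}M$. Here $M$ is the mean of six exponentials whose exponents $\tfrac u2S_{kk}\pm uS_{ij}$ sum to zero. Lemma~\ref{T:lem:mgf} gives $\log M\ge\frac1{13}\min(\sigma^2,\sigma)$ with $\sigma\ge u\norm{S}/\sqrt{12}$. Together with $\eta\le\frac15\norm{S}^2$ and $\norm{S}<1/50$, this yields $\log M\ge2u\eta$ for $u\ge64$ ($3u\eta/2$ in case (ii)). Hence the first-shell gain is at least $12u\eta e^{-u}$, the higher-shell loss is at most $3u\eta e^{-u}$, and $\eta\ge\norm{H}^2/7$ finishes.

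Alternatively, your own framework can be repaired with the elementary inequality $e^{-x}-1+x\ge\frac13\min(x^2,|x|)$, valid for all real $x$. Write $x_v=u\,b_v$ with $b_v=\rho^{-1}v^THv$, so $|b_v|\le\delta$. Then $\min(u^2b_v^2,u|b_v|)\ge\min(u,1/\delta)\,u\,b_v^2$. Since $\sum_vb_v^2\ge\norm{H}^2$, the first shell gives $\sum_v(e^{-x_v}-1+x_v)\ge\frac13\min(u,1/\delta)\,u\norm{H}^2$. The total linear loss over all shells is at most $uB_\delta(u)\norm{H}^2\le\frac94u\norm{H}^2$. The gain beats this with margin $u/50$ as soon as $\min(u,1/\delta)\ge7$, which covers $u\ge64$ in both (i) and (ii). Either way, the missing idea is that the large-deviation regime is controlled by at least linear growth in $|x_v|$ together with $|b_v|\le\delta$, not by the exponential growth of shortened vectors.
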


Theorem~\ref{T:thm:local} settles the FCC side of the local analysis. At the self-dual scale, however, the BCC lattice also has to be controlled because it is the dual symmetric configuration and lies close to the transition region. The next proposition gives the complementary BCC estimate; together, these two statements provide the local input used later in the global theta comparison.

\begin{proposition}[Quantitative BCC comparison at the transition]\label{T:prop:bcclocal}
Let $Y\in\calP$ and suppose
\begin{equation}\label{T:eq:bccball}
 \|\gBT Y^{-1}\gBT^T-\Id\|_{\op}\le\frac1{100}.
\end{equation}
For $1\le\alpha\le101/100$,
\begin{equation}\label{T:eq:bcccomparison}
 \Th(\alpha,Y)\ge\Th(\alpha,\BCC)\ge\Th(\alpha,\FCC).
\end{equation}
More precisely, put $C_B=A_B(Y)^{-1}$ and $u_* = \pi\rho/\alpha$. Then
\begin{align}\label{T:eq:bcc-quantitative}
 \Th(\alpha,Y)-\Th(\alpha,\FCC)
 \ge{}&\frac{\alpha^{-3/2}u_*e^{-u_*}}{50}
          \|C_B-I\|_{\frob}^2\notag\\
      &+\frac{\alpha-1}{200}.
\end{align}
Equality of the leftmost and rightmost energies in
\eqref{T:eq:bcccomparison} holds if and only if $\alpha=1$ and $Y=\YB$.
\end{proposition}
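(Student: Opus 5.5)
The plan is to transfer the FCC local estimate of Theorem~\ref{T:thm:local}(i) to BCC by Poisson duality, and then add a separate one-variable scale comparison between BCC and FCC. Let $L$ have Gram matrix $Y$. Its dual $L^*$ has Gram matrix $Y^{-1}$. Since $\gBT\Z^3=\BCC$ and $\gFT^T\gBT\in GL_3(\Z)$, the BCC physical basis dualizes to an FCC basis, $\gBT^{-T}$, whose Gram matrix is congruent to $\YF$. Hence $L^*$ is orthogonally equivalent to $C_B^{1/2}\FCC$ (up to an integral change of basis), with $C_B=\gBT Y^{-1}\gBT^T=A_B(Y)^{-1}\in\calP$. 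By \eqref{T:eq:poisson},
\[
 \Th(\alpha,Y)=\alpha^{-3/2}\Th(\alpha^{-1},C_B^{1/2}\FCC),\qquad
 \Th(\alpha,\BCC)=\alpha^{-3/2}\Th(\alpha^{-1},\FCC).
\]
The first step is to apply Theorem~\ref{T:thm:local}(i) at the scale $\alpha^{-1}$, where $u_*=\pi\rho/\alpha$. For $\alpha\le101/100$ we have $u_*\ge\pi\rho\cdot100/101>3.95\cdot100/101>39/10$, and hypothesis \eqref{T:eq:bccball} is exactly $\|C_B-I\|_{\op}\le1/100$. This yields
\[
 \Th(\alpha,Y)-\Th(\alpha,\BCC)\ge\frac{\alpha^{-3/2}u_*e^{-u_*}}{50}\|C_B-I\|_{\frob}^2,
\]
with equality only when $C_B=I$, i.e.\ $Y=\YB$.

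The second step is the scalar inequality $\Th(\alpha,\BCC)-\Th(\alpha,\FCC)\ge(\alpha-1)/200$ on $[1,101/100]$, with equality only at $\alpha=1$. Set $f(\alpha)=\Th(\alpha,\BCC)-\Th(\alpha,\FCC)$. By \eqref{T:eq:endpointtie}, $f(1)=0$, so by the mean value theorem it suffices to show $f'(\alpha)\ge1/200$ throughout the interval.

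Computing $f'$ is the main obstacle. We have
\[
 f'(\alpha)=\pi\Bigl(\sum_{v\in\FCC}|v|^2e^{-\pi\alpha|v|^2}-\sum_{w\in\BCC}|w|^2e^{-\pi\alpha|w|^2}\Bigr).
\]
To estimate it, I would differentiate the duality relation $\Th(\alpha,\BCC)=\alpha^{-3/2}\Th(\alpha^{-1},\FCC)$. This gives
\[
 f'(1)=-\tfrac32\Th(1,\FCC)+2\pi\sum_{v\in\FCC}|v|^2e^{-\pi|v|^2},
\]
which I expect to be a positive explicit number. Evaluating it takes the FCC shells $n\le12$ with rational exponential bounds, plus the tail bound $N_n<30n$ from Lemma~\ref{T:lem:shellbound}, in the style of Lemma~\ref{T:lem:CFbounds}.

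To make the bound uniform on $[1,101/100]$, I would bound $|f''|$ crudely by first-shell-dominated sums, again using shell counts and the constants of Table~\ref{T:tab:constants}, such as $e^{31/25}>10/3$ for the BCC tail, and then conclude $f'\ge f'(1)-0.01\sup|f''|\ge1/200$. If the margin turns out to be tight, the fallback is to subdivide the interval and use whole-interval enclosures of the two finite shell sums together with complete tails.

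Adding the two steps gives \eqref{T:eq:bcc-quantitative}, and both inequalities in \eqref{T:eq:bcccomparison} follow. For the equality case, $\Th(\alpha,Y)=\Th(\alpha,\FCC)$ forces both nonnegative terms to vanish. That requires $\alpha=1$ and $C_B=I$, i.e.\ $Y=\YB$; conversely, at $\alpha=1$ and $Y=\YB$ equality holds by \eqref{T:eq:endpointtie}.
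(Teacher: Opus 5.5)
Your proposal is essentially the paper's proof. The dual of the lattice with Gram matrix $Y$ is $C_B^{1/2}\FCC$. Theorem~\ref{T:thm:local}(i) is applied at the reciprocal scale $u_*>39/10$ and transferred back by Poisson summation. The BCC--FCC gap $(\alpha-1)/200$ then comes from integrating the derivative bound of Lemma~\ref{app:lem:endpoint-derivative} from $\alpha=1$, as in Lemma~\ref{T:lem:gap}. The equality analysis also matches.

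The one step that would fail as written is your primary route to $f'\ge 1/200$: the bound $f'\ge f'(1)-\tfrac1{100}\sup|f''|$ with a crude, first-shell-dominated bound on $|f''|$.

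\begin{itemize}
\item Numerically $f'(1)\approx 0.0056$, so the margin over $1/200$ is only about $6\cdot 10^{-4}$.
\item The two sums making up $f''$, namely $\pi^2\sum_{v\in\FCC}|v|^4e^{-\pi\alpha|v|^2}$ and $\pi^2\sum_{w\in\BCC}|w|^4e^{-\pi\alpha|w|^2}$, are each about $3.7$ near $\alpha=1$. They nearly cancel: $f''(1)\approx -0.014$.
\item Any bound on $-f''$ that drops or separately majorizes one of the sums costs about $0.04$ over $[1,101/100]$. That is far larger than the margin.
\end{itemize}

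So the $|f''|$ route works only if you enclose $f''$ on the whole interval as a difference of two accurately evaluated shell sums, with complete tails. Otherwise you must use your fallback. The fallback is exactly the paper's argument: one hundred cells of width $10^{-4}$, FCC shells through $n=32$, BCC shells through $n=64$, and a negative tail below $10^{-25}$. This gives $\mathscr D_j>5119/10^6>1/200$ on every cell.

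Your duality formula for $f'(1)$ is correct. It is itself a difference of two quantities near $1.85$, so it also needs enclosures to relative accuracy around $10^{-4}$. Twelve FCC shells comfortably suffice for that.
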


We proceed to the estimates entering these two statements.

\subsection{Two-shell comparison near FCC}
We begin with the determinant constraint. To distinguish the two
quadratic contributions, for a symmetric matrix $T$ write
\[
 D(T)=\sum_{i=1}^3T_{ii}^2,\qquad
 O(T)=\sum_{1\le i<j\le3}T_{ij}^2,
 \qquad \|T\|_{\frob}^2=D(T)+2O(T).
\]

\begin{lemma}[Trace control on the determinant-one surface]\label{T:lem:det}
Suppose $A=\Id+T\in\PP$ and $\|T\|_{\op}\le\delta<1$, with $\delta\ge0$. Then
\begin{equation}\label{T:eq:tracebounds}
 \frac{\|T\|_{\frob}^2}{2(1+\delta)}
 \le\tr T\le
 \frac{\|T\|_{\frob}^2}{2(1-\delta)}.
\end{equation}
Put $\eta=\tr T/3$ and $S=T-\eta\Id$. If $\delta<2/3$, then
\begin{equation}\label{T:eq:etabounds}
 0\le\eta\le\frac{\|S\|_{\frob}^2}{6-9\delta},
 \qquad \|S\|_{\frob}\le\sqrt3\,\delta.
\end{equation}
\end{lemma}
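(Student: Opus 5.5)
The whole argument runs through the eigenvalues. Since $A=\Id+T$ is symmetric positive definite with $\det A=1$, I diagonalize $T$ orthogonally with eigenvalues $t_1,t_2,t_3$. Then $\|T\|_{\op}=\max_i|t_i|\le\delta$, $\|T\|_{\frob}^2=\sum t_i^2$, $\tr T=\sum t_i$, and the constraint reads $\sum_i\log(1+t_i)=0$.

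\textbf{Trace bounds.} For $|t|\le\delta<1$, Taylor's theorem with Lagrange remainder gives
\[
 \log(1+t)=t-\frac{t^2}{2(1+\xi)^2}
\]
for some $\xi$ between $0$ and $t$. This yields
\[
 t-\frac{t^2}{2(1-\delta)^2}\le\log(1+t)\le t-\frac{t^2}{2(1+\delta)^2}.
\]
Summing and using $\sum\log(1+t_i)=0$ gives \eqref{T:eq:tracebounds}, but with the squared denominators $(1\pm\delta)^2$. That is weaker than claimed, so I need a sharper scalar inequality.

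For the lower bound I use the exact integral form
\[
 t-\log(1+t)=\int_0^t\frac{s}{1+s}\,ds .
\]
For $t\ge0$ the integrand is at least $s/(1+\delta)$, so $t-\log(1+t)\ge t^2/(2(1+\delta))$. For $t<0$ I substitute $s=-\sigma$; the integrand becomes $\sigma/(1-\sigma)\ge\sigma$, so $t-\log(1+t)\ge t^2/2\ge t^2/(2(1+\delta))$. In both cases the right-hand side is $t^2/(2(1+\delta))$.

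For the upper bound the same integral gives $t-\log(1+t)\le t^2/(2(1-\delta))$ for $t<0$. For $t\ge0$ it gives $t^2/2$, which is already below $t^2/(2(1-\delta))$.

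Summing over $i$ and using $\sum\log(1+t_i)=0$ yields \eqref{T:eq:tracebounds} exactly. In particular $\tr T\ge0$, so $\eta\ge0$.

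\textbf{Bounds on $\eta$ and $S$.} Next I take $S=T-\eta\Id$, which is traceless and commutes with $T$. Then $\|T\|_{\frob}^2=\|S\|_{\frob}^2+3\eta^2$. Inserting this into the upper bound $3\eta\le\|T\|_{\frob}^2/(2(1-\delta))$ gives
\[
 6(1-\delta)\eta\le\|S\|_{\frob}^2+3\eta^2 .
\]
The quadratic term is absorbed using a crude bound on $\eta$: since $\eta$ is the mean of eigenvalues bounded by $\delta$, we have $\eta\le\delta$, so $3\eta^2\le3\delta\eta$. Hence $(6-9\delta)\eta\le\|S\|_{\frob}^2$, which is the first half of \eqref{T:eq:etabounds} whenever $\delta<2/3$.

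The remaining bound $\|S\|_{\frob}\le\sqrt3\,\delta$ follows because $\|S\|_{\frob}^2=\|T\|_{\frob}^2-3\eta^2\le\|T\|_{\frob}^2\le3\delta^2$.

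The main obstacle is getting the sharp $1\pm\delta$ rather than $(1\pm\delta)^2$ denominators. The integral representation above handles this, with care needed only over the sign of $t_i$. Everything else is bookkeeping.
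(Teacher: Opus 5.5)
Your proof is correct and follows the paper's argument essentially line by line. The paper uses the same integral representation, written as $x-\log(1+x)=x^2\int_0^1 \frac{r}{1+rx}\,dr$ (your integral after the substitution $s=rx$), to get $\frac{x^2}{2(1+\delta)}\le x-\log(1+x)\le\frac{x^2}{2(1-\delta)}$; it then sums over eigenvalues using $\log\det A=0$ and absorbs $3\eta^2\le 3\delta\eta$ exactly as you do, so your initial Lagrange-remainder detour can simply be dropped.
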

\begin{proof}
For $|x|\le\delta$,
\[
 x-\log(1+x)=x^2\int_0^1\frac{r}{1+rx}\,dr,
\]
and therefore
\[
 \frac{x^2}{2(1+\delta)}\le x-\log(1+x)
 \le\frac{x^2}{2(1-\delta)}.
\]
Apply the scalar inequalities to the eigenvalues of $T$ and sum.
Since $\sum\log(1+x_i)=\log\det A=0$, this gives
\eqref{T:eq:tracebounds}, and in particular $\eta\ge0$.
We also have $\eta\le\delta$. Substituting
$\|T\|_{\frob}^2=\|S\|_{\frob}^2+3\eta^2$ in the upper trace
bound, we obtain
\[
 6(1-\delta)\eta\le\|S\|_{\frob}^2+3\eta^2
 \le\|S\|_{\frob}^2+3\delta\eta.
\]
Rearranging proves the first inequality in \eqref{T:eq:etabounds}.
The second follows from
$\|S\|_{\frob}^2\le\|T\|_{\frob}^2\le3\delta^2$.
This completes the proof.
\end{proof}

We next estimate the theta difference. The first two shells provide
the quadratic terms, while the remaining shells are controlled by a
linear lower bound. For $u\ge39/10$ and $0\le\delta<1$, define
\begin{align}\label{T:eq:Bcoeff}
 B_\delta(u)&=\frac{4+4e^{-u}+1/25}{2(1-\delta)},\notag\\
 C_D(u,\delta)&=\frac{u}{2}e^{-\delta u}
      +4u e^{-(1+2\delta)u}-B_\delta(u),\\
 C_O(u,\delta)&=2u e^{-\delta u}-2B_\delta(u).\notag
\end{align}

\begin{lemma}[Nonlinear two-shell lower bound]\label{T:lem:twoshell}
Let $A=\Id+T$ satisfy the assumptions of Lemma~\ref{T:lem:det}. If $u=\pi\rho\alpha\ge39/10$, then
\begin{equation}\label{T:eq:twoshell}
 e^u\bigl(\Th(\alpha,A^{1/2}\FCC)-\Th(\alpha,\FCC)\bigr)
 \ge u\bigl(C_D(u,\delta)D(T)+C_O(u,\delta)O(T)\bigr).
\end{equation}
\end{lemma}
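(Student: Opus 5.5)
The plan is to expand the theta difference shell by shell. A linear term controlled by the trace, plus a nonnegative quadratic remainder, should reproduce exactly the coefficients in \eqref{T:eq:Bcoeff}. Write $a=\pi\alpha$, so $a\rho=u$. Since $A^{1/2}\FCC$ consists of the vectors $A^{1/2}v$ with $|A^{1/2}v|^2=|v|^2+v^TTv$, we have
\[
 \Th(\alpha,A^{1/2}\FCC)-\Th(\alpha,\FCC)=\sum_{v\ne0}e^{-a|v|^2}\bigl(e^{-a\,v^TTv}-1\bigr).
\]
For every $v$, $|v^TTv|\le\delta|v|^2$ because $\|T\|_{\op}\le\delta$. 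I will use the scalar inequality $e^{-x}\ge1-x+\tfrac12x^2e^{-|x|}$, which follows from the Taylor remainder. On the first two shells I keep the quadratic term, with $|x|\le\delta u$ and $|x|\le2\delta u$ respectively. On all later shells I use only $e^{-x}\ge1-x$ and drop the quadratic term.

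\textbf{Linear terms.} By the isotropy \eqref{T:eq:isotropic} of Lemma~\ref{T:lem:shellbound}, the $n$th shell contributes
\[
 -a\sum_{|v|^2=n\rho}v^TTv=-\tfrac{u}{3}nN_n\tr T.
\]
After multiplying by $e^u$, the total linear term is
\[
 -\tfrac{u}{3}\tr T\sum_n nN_nq^{n-1},\qquad q=e^{-u}.
\]
The first two shells, with $N_1=12$ and $N_2=6$, give $4+4q$. For the tail I use $N_n<30n$, which gives at most $10\sum_{n\ge3}n^2q^{n-1}$. With $q<1/49$ by \eqref{T:eq:basicnumbers}, this is about $0.039<1/25$. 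I expect this tail estimate to be the main (though routine) obstacle: it is the only place where the infinitely many shells enter, and it must be done with rational bounds. Finally, Lemma~\ref{T:lem:det} gives $0\le\tr T\le(D(T)+2O(T))/(2(1-\delta))$. Since the coefficient of $\tr T$ is negative, the linear part is bounded below by $-uB_\delta(u)\bigl(D(T)+2O(T)\bigr)$. This accounts for the $-B_\delta$ in $C_D$ and the $-2B_\delta$ in $C_O$.

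\textbf{Quadratic terms.} On the first shell, $v=\rho^{-1}(\pm1,\pm1,0)$ and its permutations, so $v^TTv=\rho^{-2}(T_{ii}+T_{jj}\pm2T_{ij})$. Summing squares over the four sign choices for each pair $i<j$ gives
\[
 4(T_{ii}+T_{jj})^2+16T_{ij}^2.
\]
Moreover $\sum_{i<j}(T_{ii}+T_{jj})^2=D(T)+(\tr T)^2\ge D(T)$. With $a^2\rho^{-4}/2=u^2/(2\rho^6)=u^2/8$, the first shell therefore contributes at least $u^2e^{-\delta u}\bigl(\tfrac12D(T)+2O(T)\bigr)$ after multiplication by $e^u$. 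On the second shell, $v=\pm2\rho^{-1}e_i$, so $v^TTv=4\rho^{-2}T_{ii}$, and the quadratic term is $2u^2T_{ii}^2$ per vector. Counting two vectors for each $i$ and multiplying by $e^{u}\cdot e^{-2u}e^{-2\delta u}$ gives $4u^2e^{-(1+2\delta)u}D(T)$.

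Collecting the linear and quadratic contributions and factoring out $u$ yields exactly $u\bigl(C_D(u,\delta)D(T)+C_O(u,\delta)O(T)\bigr)$, which is \eqref{T:eq:twoshell}.
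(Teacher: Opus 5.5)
Your proof is correct and follows the paper's argument essentially step for step. You keep the Taylor quadratic term with weight $e^{-\delta u}$ (resp.\ $e^{-2\delta u}$) on the first two shells, and use only the linear bound plus the isotropy identity \eqref{T:eq:isotropic} on all later shells, with tail estimate $10\sum_{n\ge3}n^2q^{n-1}<1/25$ for $q\le1/49$. You then discard $(\tr T)^2$ and apply the upper trace bound of Lemma~\ref{T:lem:det} to turn $-u(4+4e^{-u}+1/25)\tr T$ into $-uB_\delta(u)\|T\|_{\frob}^2$. The only step left to make rigorous is the tail evaluation, which is routine: the closed form $10q^2(9-11q+4q^2)/(1-q)^3$ is increasing in $q$ and equals $52685/1354752<1/25$ at $q=1/49$.
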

\begin{proof}
We treat the first shell, the second shell and the remainder in turn.
For a first-shell vector, put $b_v=\rho^{-1}v^TTv$. Its twelve values
are
\[
 \frac{T_{ii}+T_{jj}}2+T_{ij},\qquad
 \frac{T_{ii}+T_{jj}}2-T_{ij},\qquad i<j,
\]
each repeated twice. They satisfy $|b_v|\le\delta$ and
\begin{equation}\label{T:eq:firstmoments}
 \sum b_v=4\tr T,\qquad
 \sum b_v^2=D(T)+(\tr T)^2+4O(T).
\end{equation}
The scalar inequality
\begin{equation}\label{T:eq:scalarquad}
 e^{-x}\ge1-x+\tfrac12e^{-M}x^2\qquad(|x|\le M)
\end{equation}
follows from Taylor's integral remainder. Thus the first-shell change, after multiplication by $e^u$, is at least
\begin{equation}\label{T:eq:firstlower}
 -4u\tr T+\frac{u^2}{2}e^{-\delta u}
       \bigl(D(T)+(\tr T)^2+4O(T)\bigr).
\end{equation}
On the second shell, the normalized changes of squared length are
$2T_{ii}$, each occurring twice. Applying the same scalar inequality,
we obtain the lower bound
\begin{equation}\label{T:eq:secondlower}
 -4ue^{-u}\tr T+4u^2e^{-(1+2\delta)u}D(T).
\end{equation}

It remains to estimate the higher shells. We use
$e^{-x}\ge1-x$ and the shell identity \eqref{T:eq:isotropic}.
The contribution of each such shell is bounded below by
\[
 -\frac{u}{3}nN_n e^{-(n-1)u}\tr T.
\]
The coefficient from all shells $n\ge3$ is controlled by
\begin{align}\label{T:eq:Atail}
 \frac13\sum_{n\ge3}nN_nq^{n-1}
 &\le10\sum_{n\ge3}n^2q^{n-1}\\
 &=10\frac{q^2(9-11q+4q^2)}{(1-q)^3}<\frac1{25}.\notag
\end{align}
Indeed, $q\le1/49$ and evaluation of the middle expression at $q=1/49$ gives $52685/1354752<1/25$.

Combining \eqref{T:eq:firstlower}, \eqref{T:eq:secondlower} and
\eqref{T:eq:Atail}, we discard the nonnegative term containing
$(\tr T)^2$ and then use the upper bound for $\tr T$ in
\eqref{T:eq:tracebounds}. The coefficients of $D(T)$ and $O(T)$
are $uC_D$ and $uC_O$, respectively. This is
\eqref{T:eq:twoshell}.
\end{proof}

\begin{proposition}[A one-percent FCC ball, bounded scales]\label{T:prop:smallballbounded}
Let $A\in\PP$ and $u=\pi\rho\alpha$. If $39/10\le u\le64$ and $\|A-\Id\|_{\op}\le1/100$, then
\begin{equation}\label{T:eq:localquant}
 \Th(\alpha,A^{1/2}\FCC)-\Th(\alpha,\FCC)
 \ge\frac{u}{50}e^{-u}\|A-\Id\|_{\frob}^2.
\end{equation}
\end{proposition}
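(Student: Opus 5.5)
The plan is to apply Lemma~\ref{T:lem:twoshell} with $\delta=1/100$ and reduce \eqref{T:eq:localquant} to two scalar inequalities in $u$. Write $T=A-\Id$. Then $\|T\|_{\frob}^2=D(T)+2O(T)$, and the right-hand side of \eqref{T:eq:localquant} multiplied by $e^u$ equals $\frac{u}{50}\bigl(D(T)+2O(T)\bigr)$. By \eqref{T:eq:twoshell}, it therefore suffices to prove, for all $39/10\le u\le64$,
\[
 C_D(u,1/100)\ge\frac1{50},\qquad C_O(u,1/100)\ge\frac1{25}.
\]
Both $D(T)$ and $O(T)$ are nonnegative, so these coefficientwise bounds imply the claim.

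\textbf{The off-diagonal coefficient.} Here $C_O=2ue^{-u/100}-2B_{1/100}(u)$, and $B_{1/100}(u)\le(4+4/49+1/25)/(2\cdot 99/100)<2.09$. The function $ue^{-u/100}$ increases on $[0,100]$, so on $[39/10,64]$ it is at least $3.9e^{-0.039}>3.75$. Hence $C_O>7.5-4.18>1/25$. This bound is comfortable.

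\textbf{The diagonal coefficient.} This is the main obstacle, because the first-shell diagonal curvature is only $u/2$ against $B\approx2.06$. We have
\[
 C_D=\frac u2e^{-u/100}+4ue^{-1.02u}-B_{1/100}(u).
\]
The map $u\mapsto \frac u2e^{-u/100}$ is increasing on the whole range, since its maximum is at $u=100$. Its minimum is therefore at $u=39/10$, where it is about $1.8750$. The second-shell term $4ue^{-1.02u}$ at $u=3.9$ is about $4\cdot3.9\cdot e^{-3.978}\approx0.292$. Meanwhile $B\le(4+4e^{-3.9}+0.04)/1.98\approx2.0612$. At the left endpoint this gives $C_D\approx0.106>1/50$.

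The second-shell term decays as $u$ grows, so the lower bound cannot simply be taken at the endpoint. Instead I would split $[39/10,64]$ into subintervals, following the subinterval constants in Table~\ref{T:tab:constants}: $e^{102/25}<60$, $e^{1071/250}<75$, $e^{561/125}<100$. On each subinterval $[u_0,u_1]$:
\begin{itemize}
\item bound $\frac u2e^{-u/100}$ below by its value at $u_0$, by monotonicity;
\item bound $4ue^{-1.02u}$ below by $4u_0e^{-1.02u_1}$, using the tabulated upper bound for $e^{1.02u_1}$;
\item bound $B$ above using $e^{-u}\le e^{-u_0}$.
\end{itemize}
Once $u\ge4.1$, the first-shell term alone, about $2.05+$, combined with $B\le2.06$ is not quite enough. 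The second-shell term is still needed until $u\approx4.5$; beyond $u\approx4.5$, $\frac u2e^{-u/100}\ge2.15$ exceeds $B+1/50\approx2.08$. Above roughly $u=4.49$ the single monotone bound therefore finishes the proof up to $u=64$. There $\frac u2 e^{-u/100}\ge 32e^{-0.64}>16$, and in fact the first-shell term increases throughout the range.

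\textbf{Equality.} If $A\ne\Id$, then $\|T\|_{\frob}>0$ and the inequality is strict. The rational verification of the tabulated exponentials is as in Appendix~\ref{app:numerical}.
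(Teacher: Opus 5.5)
Your proposal follows the paper's proof: Lemma~\ref{T:lem:twoshell} with $\delta=1/100$, the uniform bound $B_{1/100}\le 102/49$, and monotonicity of $ue^{-u/100}$ for $C_O$; for $C_D$, the subintervals $[3.9,4]$, $[4,4.2]$, $[4.2,4.4]$ (the tabulated constants are $e^{1.02u_1}$ at $u_1=4,4.2,4.4$), followed by the first-shell term alone on $[4.4,64]$. Your commentary has arithmetic slips that you should fix — $B_{1/100}(3.9)=(4.04+4e^{-3.9})/1.98\approx2.081$ rather than $2.061$ (so ``$B\le2.06$'' fails for $u<4.6$), and $\frac u2e^{-u/100}\approx1.97$ at $u=4.1$ rather than $2.05$ — but they are harmless, since $2.2e^{-0.044}-102/49>1/50$ means the single monotone bound already works from $u=4.4$ and no extra interval up to $4.49$ is needed.
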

\begin{proof}
By Lemma~\ref{T:lem:twoshell}, it remains to bound the two scalar
coefficients. Taking $\delta=1/100$ and using $u\ge39/10$, we have
\[
 B_\delta(u)\le\frac{4+4/49+1/25}{2(99/100)}=\frac{102}{49}.
\]
We estimate the positive terms in $C_D$ on the four intervals of
Table~\ref{T:tab:diagonal-bounds}. Each lower bound exceeds
$102/49+1/50$.
\begin{table}[!htbp]
\centering
\caption{Rational bounds for the positive diagonal terms in the bounded-scale FCC estimate.}\label{T:tab:diagonal-bounds}
\begin{tabular}{@{}ll@{}}
\toprule
Range of $u$ & Lower bound for $\frac u2e^{-u/100}+4ue^{-51u/50}$\\
\midrule
$[3.9,4]$ & $\frac{39}{20}\frac{24}{25}+\frac{78/5}{60}$\\[2pt]
$[4,4.2]$ & $2\frac{479}{500}+\frac{16}{75}$\\[2pt]
$[4.2,4.4]$ & $\frac{21}{10}\frac{239}{250}+\frac{84/5}{100}$\\[2pt]
$[4.4,64]$ & $\frac{11}{5}\frac{239}{250}$\\
\bottomrule
\end{tabular}
\end{table}

For the first three rows use $e^{-x}\ge1-x$ and, respectively,
\[
 e^{4.08}<60,\qquad e^{4.284}<75,\qquad e^{4.488}<100.
\]
For the last interval, $u e^{-u/100}$ increases on $[4.4,64]$.
We may therefore use its left endpoint and discard the second positive
term. These four cases give $C_D\ge1/50$ throughout the required
range.

To bound the off-diagonal coefficient, use the increase of
$u e^{-u/100}$ on $[3.9,64]$. This gives
\[
 C_O\ge2\frac{39}{10}\left(1-\frac{39}{1000}\right)
          -2\frac{102}{49}>3.
\]
Combining the two coefficient bounds with
Lemma~\ref{T:lem:twoshell}, we obtain
$u(C_DD+C_OO)\ge(u/50)(D+2O)$. This proves the proposition.
\end{proof}

\subsection{A fixed FCC neighbourhood at large scales}
We now turn to the unbounded scale range. A fixed-order Taylor
estimate is insufficient here because $u\|T\|$ need not remain
bounded. We overcome this difficulty by the following estimate for
six exponentials, which does not require small exponents.

\begin{lemma}[Six-point moment bound]\label{T:lem:mgf}
Let $x_1,\ldots,x_6\in\R$ have mean zero. Define $\sigma\ge0$ and $M$ by
\[
 \sigma^2=\frac16\sum_{j=1}^6x_j^2,
 \qquad M=\frac16\sum_{j=1}^6e^{x_j}.
\]
Then
\begin{equation}\label{T:eq:mgf}
 \log M\ge\frac1{13}\min(\sigma^2,\sigma).
\end{equation}
\end{lemma}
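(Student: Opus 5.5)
The plan is to combine a pointwise lower bound for $e^x$ (effective for small and moderate $\sigma$) with the contribution of the largest $x_j$ alone (effective for large $\sigma$), and to match the two regimes at an explicit threshold. Note that a pure Taylor argument cannot work for large $\sigma$, since it only gives $\log M\gtrsim\log\sigma$, while the claim requires linear growth.

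\emph{Step 1 (pointwise inequality).} First I would verify the scalar bound
\[
 e^x\ge1+x+\frac{x^2}{2+|x|}\qquad(x\in\R).
\]
For $x\ge0$ it follows from $e^x\ge1+x+x^2/2$. For $x<0$, put $t=-x>0$ and $f(t)=e^{-t}-1+t-t^2/(2+t)$. Then $f(0)=0$ and $t^2/(2+t)=t-2+4/(2+t)$, so
\[
 f(t)=e^{-t}+1-\frac{4}{2+t},\qquad f'(t)=\frac{4}{(2+t)^2}-e^{-t}.
\]
Since $(1+t/2)^2\le e^{t}$, we get $f'\ge0$, hence $f\ge0$. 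Averaging over $j$ and using $\sum x_j=0$ gives
\[
 M\ge1+\frac16\sum_{j=1}^6\frac{x_j^2}{2+|x_j|}.
\]
Next, $x_j^2\le\sum_k x_k^2=6\sigma^2$, so $|x_j|\le\sqrt6\,\sigma$. Bounding each denominator by $2+\sqrt6\,\sigma$ yields
\[
 \log M\ge\log\bigl(1+y\bigr),\qquad y=\frac{\sigma^2}{2+\sqrt6\,\sigma}.
\]

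\emph{Step 2 (small and moderate $\sigma$).}
\begin{itemize}
\item For $\sigma\le1$: here $y\ge\sigma^2/(2+\sqrt6)$ and $y\le1/(2+\sqrt6)<1/4$. Then $\log(1+y)\ge y/(1+y)$ gives $\log M\ge\sigma^2/\bigl((2+\sqrt6)\cdot\tfrac54\bigr)>\sigma^2/13$.
\item For $1\le\sigma\le\sigma_0$, with $\sigma_0=5$: the function $\sigma\mapsto\log(1+y(\sigma))-\sigma/13$ is compared directly. At $\sigma=1$ it is about $0.20-0.08$; at $\sigma=5$ it is about $1.0-0.38$. On the intermediate range I would use a short grid of rational points together with monotonicity of $y$ in $\sigma$. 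On each subinterval $[\sigma_k,\sigma_{k+1}]$ it suffices to check $\log(1+y(\sigma_k))\ge\sigma_{k+1}/13$, which becomes a rational check through the bounds on $e^x$ already collected in \eqref{T:eq:exp-rational}.
\end{itemize}

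\emph{Step 3 (large $\sigma$).} For large $\sigma$ I would use the largest entry instead. For mean-zero data with $n=6$ points, Samuelson's inequality gives $\max_j x_j\ge\sigma/\sqrt{n-1}=\sigma/\sqrt5$. It follows from Cauchy--Schwarz: if $m=\max x_j$ is attained at $x_1$, then
\[
 x_1^2=\Bigl(\sum_{j\ge2}x_j\Bigr)^2\le5\sum_{j\ge2}x_j^2=5\bigl(6\sigma^2-x_1^2\bigr),
\]
which bounds $|x_1|$ from above. To get a lower bound on the maximum, apply the same Cauchy--Schwarz argument to the negative parts: the positive parts carry the same total mass as the negative parts, so the maximum must be large. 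Then
\[
 M\ge\tfrac16e^{\sigma/\sqrt5},\qquad \log M\ge\frac{\sigma}{\sqrt5}-\log6,
\]
and this is at least $\sigma/13$ as soon as $\sigma\bigl(1/\sqrt5-1/13\bigr)\ge\log 6$, i.e.\ for $\sigma\ge5$ (using $1/\sqrt5-1/13>0.37$ and $\log6<1.8$).

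The main obstacle I expect is the Samuelson-type lower bound on $\max_j x_j$ in Step~3, which needs a careful argument rather than the upper-bound inequality written above. I would prove it directly as follows. Suppose all $x_j\le m$. Write $x_j=m-w_j$ with $w_j\ge0$; mean zero gives $\sum_j w_j=6m$. Hence
\[
 6\sigma^2=\sum_j x_j^2=\sum_j w_j^2-6m^2\le\Bigl(\sum_j w_j\Bigr)^2-6m^2=30m^2,
\]
so $m\ge\sigma/\sqrt5$. The remaining risk is the grid verification on $[1,5]$ in Step~2, which should be routine.
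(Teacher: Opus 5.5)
Your proof is correct. Its large-$\sigma$ step is the same as the paper's: both keep only the largest entry $m$, prove $\sigma^2\le5m^2$, and use $\log M\ge\sigma/\sqrt5-\log6$. The paper gets $\sigma^2\le5m^2$ by averaging $(m-x_j)(x_j+5m)\ge0$; you get it via $w_j=m-x_j\ge0$ and $\sum w_j^2\le(\sum w_j)^2$. The vague Cauchy--Schwarz remark before your $w_j$ computation can be deleted, since that computation is the complete argument.

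The real difference is in the small and moderate range. The paper separates signs. At most five entries are positive, and both parts have the same total mass $P$, so $\sum_{x_j<0}x_j^2\le P^2\le5\sum_{x_j>0}x_j^2$, whence $\sum_{x_j>0}x_j^2\ge\sigma^2$. Then $e^x\ge1+x$ on the negative entries and $e^x\ge1+x+x^2/2$ on the positive ones give the uniform bound $M\ge1+\sigma^2/12$. The inequality $\log(1+y)\ge y/(1+y)$ then covers $\sigma\le1$, and $1\le\sigma\le8$ via $(\sigma-1)(\sigma-12)\le0$.

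Your inequality $e^x\ge1+x+x^2/(2+|x|)$, valid for all real $x$, needs no sign separation or counting of positive entries. It gives a much better constant near $\sigma=0$ (about $1/(2+\sqrt6)$ rather than $1/12$), at the cost of slower growth in $\sigma$. That cost is harmless, because the max-entry bound already suffices from $\sigma\approx4.9$ on, so your switch at $\sigma=5$ (instead of the paper's $8$) is fine.

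Finally, the grid check on $[1,5]$ is unnecessary. The same inequality $\log(1+y)\ge y/(1+y)$ you used for $\sigma\le1$ gives $\log M\ge\sigma^2/(\sigma^2+\sqrt6\,\sigma+2)$. So it suffices that the convex quadratic $\sigma^2-(13-\sqrt6)\sigma+2$ is negative at $\sigma=1$ and at $\sigma=5$, which is immediate.
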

\begin{proof}
If all $x_j$ are zero, the conclusion is immediate. Otherwise,
let $P$ be the sum of the positive $x_j$. At most five entries are
positive, and the zero-sum condition gives
\[
 \sum_{x_j<0}x_j^2\le P^2
 \le5\sum_{x_j>0}x_j^2,
\]
Hence $\sum_{x_j>0}x_j^2\ge\sigma^2$. Apply $e^x\ge1+x$ when
$x$ is negative and $e^x\ge1+x+x^2/2$ when $x$ is positive.
After summation, the linear terms cancel, and we obtain
\begin{equation}\label{T:eq:mgfquad}
 M\ge1+\frac{\sigma^2}{12}.
\end{equation}
Consequently, for $\sigma\le1$,
\[
 \log M\ge\frac{\sigma^2}{12+\sigma^2}\ge\frac{\sigma^2}{13}.
\]
When $1\le\sigma\le8$, this lower bound is at least $\sigma/13$,
as follows from $(\sigma-1)(\sigma-12)\le0$.

To treat the remaining range, let $m=\max x_j$. The zero-sum
condition places every $x_j$ in $[-5m,m]$. Averaging
$(m-x_j)(x_j+5m)\ge0$ gives $\sigma^2\le5m^2$. It follows that
\[
 \log M\ge m-\log6\ge\frac{\sigma}{\sqrt5}-\log6.
\]
For $\sigma\ge8$, use $1/\sqrt5>4/9$ and $\log6<2$ to obtain
\[
 \log M>\frac{4\sigma}{9}-2\ge\frac{\sigma}{13}.
\]
The three ranges cover all possible variances. This completes the proof.
\end{proof}

\begin{proposition}[A one-percent FCC ball, unbounded scales]\label{T:prop:smallballlarge}
Let $A\in\PP$ and $u=\pi\rho\alpha$. Inequality \eqref{T:eq:localquant} holds when $u\ge64$ and $\|A-\Id\|_{\op}\le1/100$.
\end{proposition}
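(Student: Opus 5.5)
The plan is to keep the first FCC shell exactly, with no Taylor expansion in $u$, and to control it by the six-point moment bound of Lemma~\ref{T:lem:mgf}. The shells $n\ge2$ are handled by a crude linear lower bound, which is affordable because $e^{-(n-1)u}\le e^{-64}$. Write $A=\Id+T$ with $\delta=\|T\|_{\op}\le1/100$, and put $\eta=\tr T/3$ and $S=T-\eta\Id$ as in Lemma~\ref{T:lem:det}. That lemma gives $0\le\eta\le\|S\|_{\frob}^2/(6-9\delta)$, $\|S\|_{\frob}\le\sqrt3\,\delta$, and $\|T\|_{\frob}^2=\|S\|_{\frob}^2+3\eta^2\le(1+10^{-4})\|S\|_{\frob}^2$.

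\emph{First shell.} The twelve first-shell vectors form six antipodal pairs. The normalized changes $b_v=\rho^{-1}v^TTv$ therefore take six values $b_1,\dots,b_6$, each twice. By \eqref{T:eq:firstmoments} their mean is $\eta$. Set $x_j=-u(b_j-\eta)$; these have mean zero. The normalized first-shell change is then
\[
 e^{u}\cdot 2\sum_{j=1}^6\bigl(e^{-u(1+b_j)}-e^{-u}\bigr)
 =12\bigl(e^{-u\eta}M-1\bigr),\qquad M=\tfrac16\textstyle\sum_j e^{x_j}.
\]
Lemma~\ref{T:lem:mgf} gives $\log M\ge\frac1{13}\min(\sigma^2,\sigma)$. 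The moment identity \eqref{T:eq:firstmoments} applied to $S$, which is traceless, gives
\[
 \sum_{j=1}^6(b_j-\eta)^2=\tfrac12\bigl(D(S)+4O(S)\bigr)\ge\tfrac12\|S\|_{\frob}^2,
\]
and hence $\sigma\ge u\|S\|_{\frob}/\sqrt{12}$.

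\emph{Absorbing the trace shift.} Next I compare $u\eta$ with $\frac1{13}\min(\sigma^2,\sigma)$. From the bound on $\eta$ we have $u\eta\le u\|S\|_{\frob}^2/5.9$. In the quadratic regime,
\[
 \frac{\sigma^2}{13}\ge\frac{u^2\|S\|_{\frob}^2}{156},
\]
which dominates $2u\eta$ once $u\ge64$. In the linear regime,
\[
 \frac{\sigma}{13}\ge\frac{u\|S\|_{\frob}}{47},
\]
while $u\eta\le u\|S\|_{\frob}\cdot\sqrt3\,\delta/5.9<0.003\,u\|S\|_{\frob}$. In both regimes $\log M-u\eta\ge\frac1{26}\min(\sigma^2,\sigma)$. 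Then $e^{y}-1\ge y$ bounds the first-shell gain below by $\frac{12}{26}\min(\sigma^2,\sigma)$. Using $\|S\|_{\frob}\le\sqrt3\,\delta$ in the linear regime, this is at least a constant times $u\|S\|_{\frob}^2$. The constants come out as $\ge u\|S\|_{\frob}^2/20$, and in the quadratic case $\ge u^2\|S\|_{\frob}^2/340\ge u\|S\|_{\frob}^2/6$.

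\emph{Higher shells.} For $n\ge2$ I use $e^{-x}\ge1-x$ together with the isotropy identity \eqref{T:eq:isotropic}. As in \eqref{T:eq:Atail}, the total normalized loss is at most
\[
 \frac u3\,\tr T\sum_{n\ge2}nN_ne^{-(n-1)u}.
\]
For $u\ge64$ this is smaller than $10^{-20}u\|T\|_{\frob}^2$, using $\tr T\le\|T\|_{\frob}^2/(2(1-\delta))$. Subtracting it, and using $\|T\|_{\frob}^2\le(1+10^{-4})\|S\|_{\frob}^2$, gives $e^u(\Th(\alpha,A^{1/2}\FCC)-\Th(\alpha,\FCC))\ge\frac u{50}\|T\|_{\frob}^2$, which is \eqref{T:eq:localquant}. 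The inequality is strict unless $S=0$. When $S=0$, the determinant constraint forces $T=0$.

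The main obstacle I expect is the trace-shift step: the factor $e^{-u\eta}$ is not small when $u$ is unbounded. The argument depends on two facts. First, $\eta$ is quadratic in $\|S\|_{\frob}$ with a constant near $1/6$. Second, $\sigma$ is linear in $u\|S\|_{\frob}$, so the ratio of $u\eta$ to $\min(\sigma^2,\sigma)$ is $O(\delta)$ or $O(1/u)$. Pinning down the constant in $\sum_j(b_j-\eta)^2\ge\frac12\|S\|_{\frob}^2$ exactly is the delicate bookkeeping point.
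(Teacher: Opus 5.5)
Your proposal is correct and is essentially the paper's argument. It uses the same splitting $T=\eta\Id+S$, the exact first-shell factor $12(e^{-u\eta}M-1)$ with $M$ the mean of six exponentials, Lemma~\ref{T:lem:mgf} in the regimes $\sigma\le1$ and $\sigma\ge1$ (your comparisons there amount to the paper's key estimate $\log M\ge2u\eta$), and the linear isotropic bound for the shells $n\ge2$. The only difference is the final bookkeeping. The paper keeps the gain as $12(e^{u\eta}-1)\ge12u\eta$, measures the higher-shell loss against $3u\eta$, and finishes with the lower trace bound $\eta\ge\|T\|_{\frob}^2/7$. You instead convert the gain through $\sigma\ge u\|S\|_{\frob}/\sqrt{12}$ and $\|T\|_{\frob}^2\le(1+10^{-4})\|S\|_{\frob}^2$. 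Your intermediate constants $1/340$ and $1/20$ are looser than your estimates actually give, but they still suffice.
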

\begin{proof}
We separate the scalar and trace-free parts of the strain. Write
$A-\Id=T=\eta\Id+S$ as in Lemma~\ref{T:lem:det}. Then
\begin{equation}\label{T:eq:eta1percent}
 \eta\le\frac15\|S\|_{\frob}^2,
 \qquad \|S\|_{\frob}<\frac1{50}.
\end{equation}
The contribution of the first shell takes the exact form
\begin{equation}\label{T:eq:firstmgf}
 12e^{-u}e^{-u\eta}M,
 \qquad
 M=\frac16\sum_{k,\pm}
 \exp\left(\frac u2S_{kk}\pm uS_{ij}\right),
\end{equation}
where $k$ runs through $1,2,3$. For each $k$, the indices $i<j$
are the two complementary indices, so that
\[
 (k,i,j)=(1,2,3),\quad (2,1,3),\quad (3,1,2).
\]
The notation $\sum_{k,\pm}$ means that both signs are included for each
of these three triples. Thus the sum contains exactly six terms;
there is no additional independent summation over $i$ or $j$.
Explicitly,
\[
\begin{aligned}
 M=\frac16\Bigl[{}
 &e^{uS_{11}/2+uS_{23}}+e^{uS_{11}/2-uS_{23}}\\
 &+e^{uS_{22}/2+uS_{13}}+e^{uS_{22}/2-uS_{13}}\\
 &+e^{uS_{33}/2+uS_{12}}+e^{uS_{33}/2-uS_{12}}
 \Bigr].
\end{aligned}
\]
Thus $M$ is the mean of the six displayed exponentials. Their
exponents sum to zero: $\tr S=0$, and the two off-diagonal terms
cancel for each $k$. The corresponding variance is
\begin{equation}\label{T:eq:variance}
 \sigma^2=\frac{u^2}{12}\bigl(D(S)+4O(S)\bigr)
 \ge\frac{u^2}{12}\|S\|_{\frob}^2.
\end{equation}
The key estimate is $\log M\ge2u\eta$. We prove it in two cases.
If $\sigma\le1$, Lemma~\ref{T:lem:mgf} gives
\[
 \log M\ge\frac{u^2}{156}\|S\|_{\frob}^2
 \ge\frac25u\|S\|_{\frob}^2\ge2u\eta,
\]
since $64/156>2/5$. If $\sigma\ge1$, then
\[
 \log M\ge\frac{u\|S\|_{\frob}}{13\sqrt{12}}
 >\frac{u\|S\|_{\frob}}{52}
 \ge\frac25u\|S\|_{\frob}^2\ge2u\eta,
\]
where \eqref{T:eq:eta1percent} was used in the penultimate inequality.

By \eqref{T:eq:firstmgf}, this proves that the first-shell increase
is at least $12e^{-u}(e^{u\eta}-1)\ge12u\eta e^{-u}$.
For the higher shells, the linear convexity bound and
\eqref{T:eq:isotropic} show that the total possible decrease is at
most
\[
 u\eta e^{-u}\sum_{n\ge2}nN_nq^{n-1}.
\]
The coefficient satisfies
\begin{equation}\label{T:eq:highertail3}
 \sum_{n\ge2}nN_nq^{n-1}
 \le30\left(\frac{1+q}{(1-q)^3}-1\right)<3
 \qquad(q\le1/49).
\end{equation}
Subtracting this possible loss leaves an increase of at least
$9u\eta e^{-u}$. Finally, the lower trace estimate gives
\[
 \eta\ge\frac{\|T\|_{\frob}^2}{6(1+1/100)}
 \ge\frac17\|T\|_{\frob}^2,
\]
which yields a bound stronger than \eqref{T:eq:localquant} and
completes the proof.
\end{proof}

We now prove Theorem~\ref{T:thm:local}.

\begin{proof}
Combining Propositions~\ref{T:prop:smallballbounded} and
\ref{T:prop:smallballlarge} proves part (i).

For part (ii), we again separate bounded and unbounded scales.
Suppose first that $7.9\le u\le64$, and take $\delta=1/25$.
Then $B_\delta(u)<9/4$. Since $u e^{-u/25}$ is unimodal, its
minimum is attained at one of the endpoints. The estimates
\[
 e^{0.316}<\frac{11}{8},\qquad e^{2.56}<13
\]
imply $u e^{-u/25}>64/13$ throughout. It follows that
\[
 C_D>\frac{32}{13}-\frac94>\frac1{50},
 \qquad C_O>\frac{128}{13}-\frac92>\frac1{25}.
\]
Lemma~\ref{T:lem:twoshell} now gives \eqref{T:eq:localquant} on
this interval. The condition $\alpha\ge2$ implies $u>7.9$, as needed.

It remains to treat $u\ge64$. By Lemma~\ref{T:lem:det},
\[
 \eta\le\frac{25}{141}\|S\|_{\frob}^2<\frac9{50}\|S\|_{\frob}^2,
 \qquad\|S\|_{\frob}<\frac7{100}.
\]
We apply the argument of Proposition~\ref{T:prop:smallballlarge},
now comparing the logarithmic moment with $3u\eta/2$ rather than
$2u\eta$. For $\sigma\le1$, the required inequality follows from
$64/156>27/100$; for $\sigma\ge1$, it follows from
$1/52>(27/100)(7/100)$. Thus $\log M\ge3u\eta/2$.
The first-shell increase is at least $6u\eta e^{-u}$, while
\eqref{T:eq:highertail3} bounds the higher-shell loss by
$3u\eta e^{-u}$. Their difference is therefore at least
\[
 3u\eta e^{-u}\ge\frac{3u}{6(1+1/25)}e^{-u}\|T\|_{\frob}^2,
\]
which is again stronger than \eqref{T:eq:localquant}. The positive
quadratic bound gives the stated equality case.
\end{proof}

The same two-shell estimate allows the radius to depend on a bounded
scale interval. We record this consequence of
Lemma~\ref{T:lem:twoshell} for use in the exterior covering.

\begin{proposition}[Parameter-dependent neighbourhood test]\label{T:prop:flexible}
Let $0\le\delta<1$ and $39/10\le u_-\le u_+$. Put
\begin{align}\label{T:eq:flex}
 M_\delta&=\min\{u_-e^{-\delta u_-},u_+e^{-\delta u_+}\},\notag\\
 b_\delta&=\frac{4+4e^{-u_-}+1/25}{2(1-\delta)}.
\end{align}
If
\begin{equation}\label{T:eq:flexconditions}
 \frac12M_\delta+4u_+e^{-(1+2\delta)u_+}-b_\delta>0,
 \qquad 2M_\delta-2b_\delta>0,
\end{equation}
then, with $\alpha=u/(\pi\rho)$, one has
$\Th(\alpha,A^{1/2}\FCC)>\Th(\alpha,\FCC)$ for every $u\in[u_-,u_+]$
and $A\in\PP\setminus\{\Id\}$ satisfying $\|A-\Id\|_{\op}\le\delta$.
\end{proposition}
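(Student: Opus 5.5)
The plan is to reduce Proposition~\ref{T:prop:flexible} to Lemma~\ref{T:lem:twoshell}. Fix $u\in[u_-,u_+]$ and $A=\Id+T\in\PP\setminus\{\Id\}$ with $\|T\|_{\op}\le\delta$. The hypothesis of Lemma~\ref{T:lem:det} then holds. Since $u\ge u_-\ge39/10$, the lemma applies and gives
\[
 e^u\bigl(\Th(\alpha,A^{1/2}\FCC)-\Th(\alpha,\FCC)\bigr)\ge u\bigl(C_D(u,\delta)D(T)+C_O(u,\delta)O(T)\bigr).
\]
It therefore suffices to show that $C_D(u,\delta)>0$ and $C_O(u,\delta)>0$ for every $u$ in the interval, and that $D(T)+O(T)>0$.

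The coefficient bounds come from monotonicity in $u$, handled term by term.

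\textbf{The term $B_\delta(u)$.} The factor $e^{-u}$ in its numerator decreases in $u$, so $B_\delta(u)\le b_\delta$ on $[u_-,u_+]$.

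\textbf{The term $u e^{-\delta u}$.} Its derivative is $(1-\delta u)e^{-\delta u}$, so it increases up to $1/\delta$ and decreases afterwards; for $\delta=0$ it is simply increasing. A unimodal function on an interval attains its minimum at an endpoint, so $u e^{-\delta u}\ge M_\delta$ on the whole interval.

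\textbf{The term $4ue^{-(1+2\delta)u}$.} It is positive, but not monotone in general. The stated condition uses its value at $u_+$, so I need
\[
 4ue^{-(1+2\delta)u}\ge 4u_+e^{-(1+2\delta)u_+}\qquad\text{for } u\in[u_-,u_+].
\]
This holds because the function is decreasing for $u\ge 1/(1+2\delta)$, and $u_-\ge39/10>1$.

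Combining the three bounds,
\[
 C_D\ge \tfrac12M_\delta+4u_+e^{-(1+2\delta)u_+}-b_\delta>0,\qquad C_O\ge 2M_\delta-2b_\delta>0,
\]
by \eqref{T:eq:flexconditions}.

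Finally, since $A\ne\Id$, we have $T\neq0$. Because $T$ is symmetric, $D(T)+2O(T)=\|T\|_{\frob}^2>0$, so at least one of $D(T)$ and $O(T)$ is positive. Both coefficients are positive and $u>0$, so the right-hand side of the two-shell inequality is strictly positive, which gives the strict comparison.

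I expect the main obstacle to be a subtle point in Lemma~\ref{T:lem:twoshell}. Its coefficient $B_\delta$ uses $e^{-u}$ and the tail bound $1/25$, which was proved for $q\le1/49$, i.e.\ $u\ge39/10$; this is exactly what the hypothesis $u_-\ge39/10$ guarantees. The remaining work is verifying the unimodality and monotonicity claims, which is routine.
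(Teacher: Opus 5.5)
Your proof is correct and follows essentially the same route as the paper. Both apply Lemma~\ref{T:lem:twoshell} and then bound the terms: $B_\delta(u)\le b_\delta$ because it decreases in $u$, $ue^{-\delta u}\ge M_\delta$ by unimodality, and $ue^{-(1+2\delta)u}$ is decreasing on $u\ge 39/10$, so it is at least its value at $u_+$. Finally, $D(T)+2O(T)=\|T\|_{\frob}^2>0$ gives strictness.
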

\begin{proof}
By unimodality, the minimum of $u e^{-\delta u}$ is the endpoint
minimum in \eqref{T:eq:flex}. Moreover, $u e^{-(1+2\delta)u}$
is decreasing for $u\ge39/10$, and $B_\delta(u)$ is decreasing.
Thus \eqref{T:eq:flexconditions} gives positive lower bounds for
both coefficients in \eqref{T:eq:twoshell}. Since $A\ne\Id$, at
least one of $D(T)$ and $O(T)$ is positive. The strict comparison
follows.
\end{proof}

Appendix~\ref{app:numerical} gives the finite applications of this
criterion. The proposition itself holds on the full parameter range
stated above.

\subsection{BCC and the self-dual endpoint}\label{T:sec:bcc-endpoint}
We finally consider BCC near the self-dual scale. To apply the local
FCC estimate by duality, we first compare the reference values as the
scale increases from $1$. Set
\begin{equation}\label{T:eq:gap}
 \Delta(\alpha)=\Th(\alpha,\BCC)-\Th(\alpha,\FCC).
\end{equation}
By \eqref{T:eq:endpointtie}, $\Delta(1)=0$.

\begin{lemma}[Endpoint derivative bound]\label{app:lem:endpoint-derivative}
For $1\le\alpha\le101/100$, the function
$\Delta(\alpha)=\Th(\alpha,\BCC)-\Th(\alpha,\FCC)$ satisfies
$\Delta'(\alpha)>1/200$.
\end{lemma}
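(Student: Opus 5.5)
The plan is to write $\Delta'$ as a difference of two explicit positive lattice sums, enclose both with finitely many shells plus rigorous tails, and handle the whole interval by monotonicity. Differentiating termwise (justified by the Gaussian majorant of Section~\ref{sec:cubic-shells}) gives
\[
 \Delta'(\alpha)=\pi\bigl(S_{\mathrm F}(\alpha)-S_{\mathrm B}(\alpha)\bigr),\qquad
 S_L(\alpha):=\sum_{v\in L}|v|^2e^{-\pi\alpha|v|^2}.
\]
Each $S_L$ is positive and strictly decreasing in $\alpha$, because $S_L'=-\pi\sum|v|^4e^{-\pi\alpha|v|^2}<0$. Hence for every $\alpha\in[1,101/100]$,
\[
 \Delta'(\alpha)\ge\pi\bigl(S_{\mathrm F}(101/100)-S_{\mathrm B}(1)\bigr).
\]
This reduces the lemma to a single scalar inequality, $S_{\mathrm F}(101/100)-S_{\mathrm B}(1)>1/(200\pi)$. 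That inequality holds uniformly on the interval, not only at its endpoints.

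To make the target transparent, I would first determine $\Delta'(1)$ by differentiating the Poisson identity \eqref{T:eq:poisson} with $L=\FCC$, $L^*=\BCC$ at $\alpha=1$. This gives $\Theta'(1,\BCC)=-\tfrac32\Th(1,\FCC)-\Theta'(1,\FCC)$. Combining it with \eqref{T:eq:endpointtie},
\[
 \Delta'(1)=2\pi S_{\mathrm F}(1)-\tfrac32\Th(1,\FCC),
\]
which involves only FCC shells. I would evaluate this first as a sanity check on the size of the margin. The monotone reduction loses roughly $0.01\cdot\pi\sum_v|v|^4e^{-\pi|v|^2}$ over both lattices. If $\Delta'(1)$ is comfortably larger than $1/200$ plus this loss, the reduction above suffices.

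For the enclosures I would proceed as follows.
\begin{itemize}
\item \emph{Lower bound for $S_{\mathrm F}(101/100)$.} Keep only the first few shells: $N_1=12$ at $|v|^2=\rho$, $N_2=6$ at $2\rho$, and so on, using the count of representations of $2n$ as a sum of three squares. Discarding the rest is legitimate since all terms are positive.
\item \emph{Upper bound for $S_{\mathrm B}(1)$.} Use the BCC shells $\rho^2 k/4$ for $k=3,4,8,11,12,\dots$, with counts $8,6,12,24,8,\dots$ read off from \eqref{E:eq:BCC}. Add a complete tail, bounding the number of BCC vectors with $|v|^2\le R$ by a crude lattice-point count analogous to Lemma~\ref{T:lem:shellbound}, and sum the resulting geometric-type series as in Lemma~\ref{T:lem:CFbounds}.
\item \emph{Rational evaluation.} The exponentials $e^{-\pi\rho^2k/4}$ and $e^{-1.01\pi\rho n}$ are reduced to rational comparisons through \eqref{T:eq:exp-rational} and the bounds $1.259<\rho<1.26$, $3.14<\pi<22/7$. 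The arithmetic is recorded in Appendix~\ref{app:numerical}.
\end{itemize}

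The main obstacle is the size of the margin. If the crude monotone bound $S_{\mathrm F}(1.01)-S_{\mathrm B}(1)$ loses too much, I would switch to a Taylor argument. This uses $\Delta'(\alpha)\ge\Delta'(1)-\tfrac1{100}\sup_{[1,1.01]}|\Delta''|$, together with
\[
 |\Delta''(\alpha)|\le\pi^2\sum_{v\in\FCC\cup\BCC}|v|^4e^{-\pi|v|^2},
\]
evaluated at $\alpha=1$ with shells plus tails. The exact FCC-only formula for $\Delta'(1)$ then supplies the main term with minimal loss. The difficulty is ensuring that the tail bounds, especially for the fourth-moment BCC sums, are tight enough that the final rational comparison clears $1/200$.
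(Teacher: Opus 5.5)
There is a genuine gap: both of your routes throw away the near cancellation between FCC and BCC on which the lemma rests, and as written neither proves it.

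\textbf{The main reduction is false.} The inequality $S_{\mathrm F}(101/100)-S_{\mathrm B}(1)>1/(200\pi)$ does not hold. Numerically $S_{\mathrm F}(1)\approx0.29490$ and $S_{\mathrm B}(1)\approx0.29311$, so $\Delta'(1)\approx0.00562$. Your Poisson formula $2\pi S_{\mathrm F}(1)-\tfrac32\Th(1,\FCC)$ is correct and gives the same value. The margin above $1/200$ is therefore only about $6\cdot10^{-4}$. But $S_{\mathrm F}(101/100)\approx0.28320$, so your lower bound $\pi\bigl(S_{\mathrm F}(101/100)-S_{\mathrm B}(1)\bigr)\approx-0.031$ is negative.

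\textbf{The Taylor fallback fails for the same reason.} We have $\sum_{v\in\FCC}|v|^4e^{-\pi|v|^2}\approx0.3801$ and $\sum_{w\in\BCC}|w|^4e^{-\pi|w|^2}\approx0.3787$. Your majorant gives $\tfrac1{100}\sup|\Delta''|\le\tfrac{\pi^2}{100}(0.3801+0.3787)\approx0.075$, more than ten times $\Delta'(1)$. In fact $\Delta''(1)=\pi^2(0.3787-0.3801)\approx-0.014$. Bounding the FCC and BCC variations separately is what destroys the argument. Even the signed one-cell bound $\Delta''(\alpha)\ge\pi^2\bigl(\sum_{\BCC}|w|^4e^{-1.01\pi|w|^2}-\sum_{\FCC}|v|^4e^{-\pi|v|^2}\bigr)\approx-0.17$ only yields $\Delta'\ge0.0039$.

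\textbf{What the paper does.} It keeps your monotone comparison but localizes it to the hundred cells $I_j=[1+j/10^4,\,1+(j+1)/10^4]$. On $I_j$ it bounds $\Delta'$ below by $\pi$ times the FCC sum at the right endpoint minus the BCC sum at the left endpoint. It retains FCC shells through $n=32$ and BCC shells through $n=64$, and adds the complete negative BCC tail. The loss is then about $\pi^2\cdot10^{-4}\cdot0.37\approx3.7\cdot10^{-4}$ per cell. The worst cell is $j=99$, where $\Delta'\approx0.00548$, and it still gives $\mathscr D_j>5119/10^6$. Cells substantially wider than $10^{-4}$ would fail for this zeroth-order scheme.

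\textbf{A one-cell alternative.} You could instead expand to second order at $\alpha=1$ and enclose $\Delta''(1)\approx-0.014$ as a signed quantity. Then bound $\Delta'''=\pi^3\bigl(\sum_{\FCC}|v|^6e^{-\pi\alpha|v|^2}-\sum_{\BCC}|w|^6e^{-\pi\alpha|w|^2}\bigr)$ below by the analogous signed monotone bound on $[1,101/100]$, which is about $-0.74$. That costs under $4\cdot10^{-5}$ and leaves $\Delta'\gtrsim0.0054$. Either fine subdivision or sign-retaining cancellation is indispensable.
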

\begin{proof}[Computer-assisted proof]
Differentiate the theta sums term by term and separate the retained
shells from the negative tail. This gives the lower sums in
\eqref{T:eq:gap-finite-rational}. Their outward rational evaluation
on the hundred closed intervals \eqref{T:eq:gapcells}, including
the complete negative tail, gives $\mathscr D_j>5119/10^6>1/200$.
Appendix~\ref{app:theta-local-data} contains the derivation and
recorded evaluations. Since the intervals cover $[1,101/100]$, the
inequality holds throughout the required range.
\end{proof}

\begin{lemma}[Endpoint gap]\label{T:lem:gap}
For $1\le\alpha\le101/100$,
\begin{equation}\label{T:eq:gapderivative}
 \Delta'(\alpha)>\frac1{200},
 \qquad
 \Delta(\alpha)\ge\frac{\alpha-1}{200}.
\end{equation}
The second inequality is strict when $\alpha>1$.
\end{lemma}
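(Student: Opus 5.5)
The derivative bound in \eqref{T:eq:gapderivative} is exactly Lemma~\ref{app:lem:endpoint-derivative}, so the first inequality needs no further work. I will only check that $\Delta$ is differentiable on $[1,101/100]$. Both theta series converge locally uniformly in $\alpha$ together with all their derivatives, by the Gaussian majorant recorded at the start of Section~\ref{sec:cubic-shells}. Hence $\Delta$ is $C^1$ (indeed real analytic) on $(0,\infty)$, and its derivative may be computed term by term. This is the form used in the computer-assisted lemma.

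For the second inequality I will integrate. The endpoint tie \eqref{T:eq:endpointtie}, which follows from the Poisson identity \eqref{T:eq:poisson} with $\BCC=\FCC^*$, gives $\Delta(1)=0$. For $1\le\alpha\le101/100$ the fundamental theorem of calculus then gives
\[
 \Delta(\alpha)=\int_1^\alpha\Delta'(\beta)\,d\beta\ge\frac{\alpha-1}{200}.
\]
Here I use that the whole segment $[1,\alpha]$ lies inside the interval on which Lemma~\ref{app:lem:endpoint-derivative} applies. The mean value theorem works equally well: $\Delta(\alpha)=(\alpha-1)\Delta'(\xi)$ for some $\xi\in(1,\alpha)$.

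Strictness comes for free. For $\alpha>1$ the lower bound on the derivative is strict, so the mean value form gives $\Delta(\alpha)=(\alpha-1)\Delta'(\xi)>(\alpha-1)/200$. At $\alpha=1$ both sides vanish, so equality holds there.

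There is no real obstacle, since all the analytic content has been moved into Lemma~\ref{app:lem:endpoint-derivative} and the interval evaluations of the appendix. The only points to state explicitly are the two needed above: term-by-term differentiability, and the fact that the derivative bound holds on the whole closed interval rather than only at the cell endpoints. The covering by the hundred closed cells in the appendix supplies the latter.
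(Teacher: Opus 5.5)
Your proof is correct and follows the same route as the paper. You quote the derivative bound from Lemma~\ref{app:lem:endpoint-derivative}, then integrate from the endpoint tie $\Delta(1)=0$ given by \eqref{T:eq:endpointtie}; the mean-value form supplies the strict inequality for $\alpha>1$, a point the paper leaves implicit.
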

\begin{proof}
By Lemma~\ref{app:lem:endpoint-derivative},
$\Delta'(\alpha)>1/200$ on the whole interval. Since $\Delta(1)=0$,
integration from $1$ to $\alpha$ gives the stated gap, with strict
inequality for $\alpha>1$.
\end{proof}

We can now combine the reference gap with the local FCC estimate.
Poisson summation gives the following comparison throughout a
neighbourhood of BCC.

We finally prove Proposition~\ref{T:prop:bcclocal}.

\begin{proof}
Choose the lattice with Gram matrix $Y$ in the form
$A_B(Y)^{1/2}\BCC$. Its dual is $A_B(Y)^{-1/2}\FCC$.
At the reciprocal scale, we have
\[
 \pi\rho\alpha^{-1}>\frac{79/20}{101/100}>\frac{39}{10}.
\]
Theorem~\ref{T:thm:local}(i), applied to $C_B$, gives
\[
 \Th(\alpha^{-1},L^*)-\Th(\alpha^{-1},\FCC)
 \ge \frac{u_*e^{-u_*}}{50}\|C_B-I\|_{\frob}^2.
\]
By Poisson summation, multiplying the left-hand side by
$\alpha^{-3/2}$ gives $\Th(\alpha,Y)-\Th(\alpha,\BCC)$.
Adding the gap in Lemma~\ref{T:lem:gap} proves
\eqref{T:eq:bcc-quantitative}, hence \eqref{T:eq:bcccomparison}.
Both terms in the lower bound are nonnegative. They vanish together
only when $\alpha=1$ and $C_B=I$, or equivalently $Y=\YB$.
Poisson summation gives equality at that point, completing the proof.
\end{proof}

This estimate distinguishes the local and global roles of BCC.
It remains a strict local minimum near the endpoint, but its value is
larger than the FCC value for scales just above $1$. The global
comparison near the self-dual scale must therefore treat both
neighbourhoods.

\section{Global comparison for the theta energy}\label{sec:theta-global}

In this section we complete the global theta comparison stated in the introduction. After the cusp and the local FCC--BCC regions have been removed, only a compact family of exterior lattice shapes remains. The proof has three steps: first, a convex continuation lemma reduces the unbounded scale range to a comparison at one scale; second, finite Gaussian sums give lower bounds on whole parameter boxes; third, the resulting boxes are assembled by a finite covering argument. No search for critical points is needed in the exterior region.

\subsection{Convex continuation in the scale}\label{T:sec:continuation}
We first treat the unbounded scale variable. Recall that
$\rho=2^{1/3}$ is the squared minimum of FCC. Subtracting the common
zero-vector contribution and multiplying by $e^{\pi\rho\alpha}>0$
does not change the sign of an energy difference. By
\eqref{T:eq:uCF}, the normalized FCC value is
\[
 C_{\FCC}(\alpha)
 =e^{\pi\rho\alpha}\bigl(\Th(\alpha,\FCC)-1\bigr)
 =12+\sum_{n\ge2}N_n e^{-\pi\rho\alpha(n-1)},
\]
where $N_n$ denotes the number of FCC vectors of squared length $n\rho$.
In particular,
\[
 C_{\FCC}'(\alpha)
 =-\pi\rho\sum_{n\ge2}(n-1)N_n
       e^{-\pi\rho\alpha(n-1)}<0,
 \qquad
 \lim_{\alpha\to\infty}C_{\FCC}(\alpha)=12.
\]
Since $C_{\FCC}$ decreases, it is enough to construct a finite
competitor lower bound which exceeds the reference at one scale and
increases thereafter. The following construction gives such a criterion.

Let $\xi=(\ell_1,\ell_2,a,b,c)$ denote the logarithmic Iwasawa
coordinates in \eqref{T:eq:iwasawa}--\eqref{T:eq:di}, where
$\ell_j=\log y_j$ for $j=1,2$. Write
\[
 Q_\xi(z)=z^TY(\xi)z,
 \qquad z\in\Z^3.
\]
Here $Q_\xi(z)$ is the squared length of the lattice vector indexed by
the integer column vector $z$. The shape $\xi$ is held fixed whenever
we differentiate with respect to $\alpha$.

Let $\mathcal S\subset\Z^3\setminus\{0\}$ be a finite symmetric set,
meaning that $\mathcal S=-\mathcal S$. Define
\begin{equation}\label{T:eq:normalizedfinite}
 P^{\Theta}_{\mathcal S}(\xi,\alpha)
 :=\sum_{z\in\mathcal S}
       \exp\bigl(\pi\alpha(\rho-Q_\xi(z))\bigr).
\end{equation}
We use this finite sum only as a lower bound for the normalized theta
function. Since every omitted term is positive, we have
\begin{equation}\label{T:eq:finitebelow}
 e^{\pi\rho\alpha}\bigl(\Th(\alpha,Y(\xi))-1\bigr)
 \ge P^{\Theta}_{\mathcal S}(\xi,\alpha).
\end{equation}
Thus the desired comparison follows once
$P^{\Theta}_{\mathcal S}(\xi,\alpha)>C_{\FCC}(\alpha)$.
The competitor tail is omitted only from a lower bound. In contrast,
the complete reference tail must be included when bounding FCC from
above.

A term in \eqref{T:eq:normalizedfinite} increases, is constant or
decreases according as $Q_\xi(z)<\rho$, $Q_\xi(z)=\rho$ or
$Q_\xi(z)>\rho$. The whole sum need not be increasing. Nevertheless,
its convexity shows that a positive derivative at one scale is
sufficient, as we prove next.

\begin{lemma}[A comparison test]\label{T:lem:continuation}
Fix a shape $\xi$, a scale $\alpha_0\ge1$, and a finite symmetric set
$\mathcal S\subset\Z^3\setminus\{0\}$. If
\begin{equation}\label{T:eq:continuationconditions}
 \begin{aligned}
 P^{\Theta}_{\mathcal S}(\xi,\alpha_0)
     &>C_{\FCC}(\alpha_0),\\
 \partial_\alpha P^{\Theta}_{\mathcal S}(\xi,\alpha_0)
     &>0,
 \end{aligned}
\end{equation}
then
\[
 \Th(\alpha,Y(\xi))>\Th(\alpha,\FCC)
 \qquad\text{for every }\alpha\ge\alpha_0.
\]
The conclusion holds throughout a closed shape box if both inequalities
in \eqref{T:eq:continuationconditions} hold throughout that box, using
one fixed set $\mathcal S$ on the box.
\end{lemma}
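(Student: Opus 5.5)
The plan is to use convexity of the finite sum in $\alpha$, together with the monotonicity of $C_{\FCC}$ recorded just before the lemma. Fix $\xi$ and write $f(\alpha)=P^{\Theta}_{\mathcal S}(\xi,\alpha)$. Each summand has the form $\exp(\pi\alpha(\rho-Q_\xi(z)))$, the exponential of an affine function of $\alpha$, so each summand is convex in $\alpha$. Hence $f$ is convex on $\R$, and $f'$ is nondecreasing. If $f'(\alpha_0)>0$, then $f'(\alpha)\ge f'(\alpha_0)>0$ for all $\alpha\ge\alpha_0$. Consequently $f$ is strictly increasing on $[\alpha_0,\infty)$, and $f(\alpha)\ge f(\alpha_0)$ there.

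Next I compare with the reference. The formula for $C_{\FCC}'$ displayed above shows $C_{\FCC}'<0$, so $C_{\FCC}$ is decreasing. For $\alpha\ge\alpha_0$ this gives
\[
 f(\alpha)\ge f(\alpha_0)>C_{\FCC}(\alpha_0)\ge C_{\FCC}(\alpha).
\]
Now \eqref{T:eq:finitebelow} yields
\[
 e^{\pi\rho\alpha}\bigl(\Th(\alpha,Y(\xi))-1\bigr)\ge f(\alpha)>C_{\FCC}(\alpha)=e^{\pi\rho\alpha}\bigl(\Th(\alpha,\FCC)-1\bigr).
\]
Dividing by $e^{\pi\rho\alpha}>0$ and adding $1$ gives the strict inequality for every $\alpha\ge\alpha_0$. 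The hypothesis $\alpha_0\ge1$ is not actually needed in this argument; it only matches the range where the lemma is used.

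For the box statement, I apply the pointwise argument at each $\xi$ in the closed box, with the same $\mathcal S$ and the same $\alpha_0$. The pointwise hypotheses hold at each such $\xi$ by assumption, so no uniformity argument is required.

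There is no real obstacle here. The only point to check is that convexity comes from the individual summands, which holds regardless of the sign of $\rho-Q_\xi(z)$. Then a positive slope at $\alpha_0$ persists for all larger scales, even though some terms in the sum decrease.
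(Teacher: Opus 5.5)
Your proof is correct and follows essentially the same route as the paper: convexity of the finite positive sum in $\alpha$ (the paper checks $\partial_\alpha^2 P^{\Theta}_{\mathcal S}\ge0$ termwise, while you note that each summand is the exponential of an affine function), persistence of the positive slope past $\alpha_0$, the chain $f(\alpha)\ge f(\alpha_0)>C_{\FCC}(\alpha_0)\ge C_{\FCC}(\alpha)$ from the monotonicity of $C_{\FCC}$, then \eqref{T:eq:finitebelow}, with the box version applied pointwise. Your remark that the hypothesis $\alpha_0\ge1$ is never used is also accurate.
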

\begin{proof}
Termwise differentiation of the finite sum gives
\begin{align*}
 \partial_\alpha P^{\Theta}_{\mathcal S}(\xi,\alpha)
 &=\pi\sum_{z\in\mathcal S}(\rho-Q_\xi(z))
       e^{\pi\alpha(\rho-Q_\xi(z))},\\
 \partial_\alpha^2 P^{\Theta}_{\mathcal S}(\xi,\alpha)
 &=\pi^2\sum_{z\in\mathcal S}(\rho-Q_\xi(z))^2
       e^{\pi\alpha(\rho-Q_\xi(z))}\ge0.
\end{align*}
It follows that $P^{\Theta}_{\mathcal S}(\xi,\cdot)$ is convex.
Its derivative is therefore nondecreasing, and the second inequality
in \eqref{T:eq:continuationconditions} makes the finite sum strictly
increasing on $[\alpha_0,\infty)$. Combining this fact with the
decrease of $C_{\FCC}$, for $\alpha\ge\alpha_0$ we obtain
\[
 \begin{aligned}
 e^{\pi\rho\alpha}\bigl(\Th(\alpha,Y(\xi))-1\bigr)
 &\ge P^{\Theta}_{\mathcal S}(\xi,\alpha)\\
 &\ge P^{\Theta}_{\mathcal S}(\xi,\alpha_0)\\
 &>C_{\FCC}(\alpha_0)\\
 &\ge C_{\FCC}(\alpha).
 \end{aligned}
\]
Multiplying by $e^{-\pi\rho\alpha}$ and adding back the common
zero-vector term proves the first assertion. Applying the argument at
each fixed $\xi$ proves the whole-box assertion. Notice that only the
finite positive sum is required to be convex, not the difference of
the two energies.
\end{proof}

The proof also gives a uniform quantitative margin. Suppose that
$\delta_{\mathcal B}$ and $d_{\mathcal B}$ are positive constants
such that, on a closed shape box $\mathcal B$,
\[
 \begin{aligned}
 P^{\Theta}_{\mathcal S}(\xi,\alpha_0)-C_{\FCC}(\alpha_0)
     &\ge\delta_{\mathcal B},\\
 \partial_\alpha P^{\Theta}_{\mathcal S}(\xi,\alpha_0)
     &\ge d_{\mathcal B}
 \end{aligned}
 \qquad(\xi\in\mathcal B).
\]
By the supporting-line inequality for a convex function,
\[
 P^{\Theta}_{\mathcal S}(\xi,\alpha)
 \ge P^{\Theta}_{\mathcal S}(\xi,\alpha_0)
       +d_{\mathcal B}(\alpha-\alpha_0).
\]
Combining this with \eqref{T:eq:finitebelow} and the decrease of
$C_{\FCC}$ yields
\[
 \Th(\alpha,Y(\xi))-\Th(\alpha,\FCC)
 \ge e^{-\pi\rho\alpha}
      \bigl(\delta_{\mathcal B}
             +d_{\mathcal B}(\alpha-\alpha_0)\bigr)>0
\]
for every $\xi\in\mathcal B$ and every $\alpha\ge\alpha_0$.

We take $\alpha_0=5$. The fixed FCC neighbourhood is already
controlled for every $\alpha\ge5$. On the complement, a strict value
comparison and a positive derivative at $5$ suffice by the lemma.
We derive the whole-box bounds in Section~\ref{T:sec:finite}; their
evaluation is recorded in Appendix~\ref{app:numerical}. The cutoff
is fixed independently of the shape, so no interchange of a limit
and minimization is used.

\subsection{Finite Gaussian bounds on parameter boxes}\label{T:sec:finite}

It remains to establish the finite comparisons outside the local
neighbourhoods. Following \cite[Section~4, pp.~16--17]{SS}, we
bound a finite positive sum and its variation on each parameter box.
For the Gaussian energy, we include the scale among the variables and
use both a mean-value bound and a one-sided second-order estimate.

We first specify the coordinates and the finite function to be
estimated. Write
\[
 x=(\xi,\alpha)=(\ell_1,\ell_2,a,b,c,\alpha),
 \qquad
 \mathcal B=
 I_{\ell_1}\times I_{\ell_2}\times I_a\times I_b\times I_c\times I_\alpha,
\]
where each $I_j$ is a closed bounded interval and
$I_\alpha\subset[1,\infty)$. Intervals of zero length are allowed; in
particular, $I_\alpha=\{5\}$ describes a five-dimensional shape box at
one fixed scale. The matrix $Y(\xi)$ is the determinant-one matrix in
\eqref{T:eq:iwasawa}--\eqref{T:eq:di}.

For $1\le\alpha\le5$, our aim is a strictly positive lower bound on
\[
 e^{\pi\rho\alpha}
 \bigl(\Th(\alpha,Y(\xi))-\Th(\alpha,\FCC)\bigr)
 \qquad\text{throughout }\mathcal B.
\]
At $\alpha=5$, the same value estimate is used together with a
separate lower bound for
$\partial_\alpha P^{\Theta}_{\mathcal S}(\xi,5)$.
Lemma~\ref{T:lem:continuation} then extends the comparison to every
$\alpha\ge5$.

The estimates below concern a single closed box. Their assembly into
a covering is carried out in Section~\ref{T:sec:assembly}, and the
arithmetic and covering checks are given in
Appendices~\ref{T:sec:arithmetic} and~\ref{T:sec:certificates}.
Thus the derivatives here control variation within the box; they do
not serve to locate critical points.

Recall the normalization $\rho=2^{1/3}$ and the finite competitor sum
$P^{\Theta}_{\mathcal S}$ in \eqref{T:eq:normalizedfinite}.
We use index sets drawn from the cube
\begin{equation}\label{T:eq:cube}
 \mathcal S_2=
 \{(m,n,k)\in\Z^3\setminus\{0\}:|m|,|n|,|k|\le2\}.
\end{equation}
There are $5^3-1=124$ vectors, forming $62$ opposite pairs
$\{z,-z\}$. Since $Q_\xi(z)=Q_\xi(-z)$, we may sum over one
representative of each pair and multiply by two.

Every symmetric subset $\mathcal S\subset\mathcal S_2$ gives a
valid competitor lower bound. We fix this set throughout the box so
that the centre value and derivative estimates apply to the same
function. For continuation, the set also remains fixed as $\alpha$
varies. Different shape boxes may use different sets; the selection
rule is recorded in Appendix~\ref{T:sec:arithmetic}.

On the reference side, the finite FCC sum and its tail satisfy
\[
 \begin{aligned}
 C_{\FCC}(\alpha)
 &=e^{\pi\rho\alpha}\bigl(\Th(\alpha,\FCC)-1\bigr),\\
 R_{12}(\alpha)
 &=\sum_{n=1}^{12}N_n e^{-(n-1)\pi\rho\alpha},\\
 0<C_{\FCC}(\alpha)-R_{12}(\alpha)&<\epsref,
 \qquad \epsref=2\cdot10^{-18},
 \end{aligned}
\]
by \eqref{T:eq:reftail}. Here $N_n$ is the number of FCC vectors of
squared length $n\rho$. Define the finite comparison function
\begin{equation}\label{T:eq:fbox}
 f_{\mathcal S}(\xi,\alpha)
 :=P^{\Theta}_{\mathcal S}(\xi,\alpha)-R_{12}(\alpha).
\end{equation}
Putting $u=\pi\rho\alpha$, positivity of the omitted competitor sum and
the upper bound for the reference tail give
\begin{equation}\label{T:eq:fboxlower}
 e^u\bigl(\Th(\alpha,Y(\xi))-\Th(\alpha,\FCC)\bigr)
 \ge f_{\mathcal S}(\xi,\alpha)-\epsref,
 \qquad \epsref=2\cdot10^{-18}.
\end{equation}
Indeed, subtract $C_{\FCC}$ from the full normalized competitor.
The competitor is at least $P^{\Theta}_{\mathcal S}$, while
$C_{\FCC}$ is at most $R_{12}+\epsref$. This gives precisely the
one-sided bound above.

Consequently, if a rigorously computed number $L_{\mathcal B}$ satisfies
\[
 f_{\mathcal S}(x)\ge L_{\mathcal B}>\epsref
 \qquad(x\in\mathcal B),
\]
then every point of the box satisfies
\[
 \Th(\alpha,Y(\xi))-\Th(\alpha,\FCC)
 \ge e^{-\pi\rho\alpha}(L_{\mathcal B}-\epsref)>0.
\]
We have therefore reduced the comparison on a whole box to a finite
lower-bound inequality. The constant $\epsref$ accounts only for
the reference tail; all rounding errors are enclosed separately.
Positivity justifies omitting the competitor tail.

To obtain the required box bound, we compute the derivatives
explicitly. For an integer column vector $z=(m,n,k)^T$, set
\begin{equation}\label{T:eq:pqt}
 p=m+an+bk,\qquad r=n+ck,\qquad
 T_1=d_1p^2,\quad T_2=d_2r^2,\quad T_3=d_3k^2.
\end{equation}
Thus $Q_\xi(z)=T_1+T_2+T_3$. The letters $p,r,T_i$ in these derivative formulas denote scalar quantities
associated with the fixed integer
vector $z$; in particular, $r$ is not a box radius. Define
$h_z(x)=\pi\alpha(\rho-Q_\xi(z))$. Differentiating the Iwasawa
formula gives
\begin{align}\label{T:eq:hgrad}
 \partial_{\ell_1}h_z
   &=-\frac{\pi\alpha}{3}(2T_1-T_2-T_3),&
 \partial_{\ell_2}h_z
   &=-\frac{\pi\alpha}{3}(T_1+T_2-2T_3),\notag\\
 \partial_a h_z&=-2\pi\alpha d_1pn,&
 \partial_b h_z&=-2\pi\alpha d_1pk,\\
 \partial_c h_z&=-2\pi\alpha d_2rk,&
 \partial_\alpha h_z&=\pi(\rho-Q_\xi(z)).\notag
\end{align}
For $\partial_j$ denoting differentiation with respect to the $j$th
coordinate of $x=(\ell_1,\ell_2,a,b,c,\alpha)$, we therefore have
\[
 \partial_j P^{\Theta}_{\mathcal S}(x)
 =\sum_{z\in\mathcal S}e^{h_z(x)}\partial_jh_z(x).
\]
The reference depends only on $\alpha$, and
\begin{align}\label{T:eq:refderivatives}
 R_{12}'(\alpha)
   &=-\pi\rho\sum_{n=1}^{12}(n-1)N_n e^{-(n-1)u},\notag\\
 R_{12}''(\alpha)
   &=(\pi\rho)^2\sum_{n=1}^{12}(n-1)^2N_n e^{-(n-1)u}.
\end{align}
Consequently,
$\partial_jf_{\mathcal S}=\partial_jP^{\Theta}_{\mathcal S}$
for $1\le j\le5$, whereas
$\partial_\alpha f_{\mathcal S}
=\partial_\alpha P^{\Theta}_{\mathcal S}-R_{12}'$.
These finite expressions provide the derivative enclosures directly,
without numerical differentiation.

We next derive three lower bounds for the same finite comparison
function. Let $\mathcal B=\prod_{j=1}^6[l_j,u_j]$ and choose a
computed centre $x_0\in\mathcal B$. For each coordinate, take a
nonnegative radius $r_j$ satisfying
\[
 r_j\ge\max\{x_{0,j}-l_j,\ u_j-x_{0,j}\}.
\]
Then $|x_j-x_{0,j}|\le r_j$ for every $x\in\mathcal B$. This
convention remains valid when the computed centre is not the exact
arithmetic midpoint. We also write these radii as
$r_{\ell_1},r_{\ell_2},r_a,r_b,r_c,r_\alpha$.

The first bound comes from direct interval evaluation of
$f_{\mathcal S}$ on $\mathcal B$. If its lower endpoint is
$L_{\rm int}$, then $f_{\mathcal S}(x)\ge L_{\rm int}$ on the
whole box. Repeated occurrences of the same variable may make this
bound too coarse. We therefore also use the following two derivative
estimates.

For the second bound, apply the fundamental theorem of calculus along
the segment from $x_0$ to $x$. It gives
\begin{equation}\label{T:eq:meanvalue}
 f_{\mathcal S}(x)\ge f_{\mathcal S}(x_0)
 -\sum_{j=1}^6r_j\sup_{\mathcal B}|\partial_jf_{\mathcal S}|.
\end{equation}
In a numerical evaluation, take an enclosed lower value $f_0^-$ at
$x_0$ and upper bounds $M_j\ge\sup_{\mathcal B}|\partial_jf_{\mathcal S}|$.
Then
\[
 L_{\rm mv}:=f_0^--\sum_{j=1}^6r_jM_j
\]
is a lower bound for $f_{\mathcal S}$ throughout the box. The subtraction controls all
variation away from the centre; positivity of the centre value alone
would not give a whole-box comparison.

For the third bound, we retain the first derivatives at the centre
and estimate only the possible negative part of the second variation.
The following lemma makes this one-sided argument precise.
\begin{lemma}[One-sided Taylor certificate]\label{T:lem:taylor}
Suppose $f_{\mathcal S}$ is $C^2$ on a neighbourhood of $\mathcal B$.
Let $K_{\mathcal B}\ge0$ satisfy, for every displacement $v$ with
$x_0+v\in\mathcal B$ and every $0\le\tau\le1$,
\begin{equation}\label{T:eq:secondonesided}
 \frac{d^2}{d\tau^2}f_{\mathcal S}(x_0+\tau v)
 \ge-K_{\mathcal B}.
\end{equation}
Then
\begin{equation}\label{T:eq:taylorbound}
 f_{\mathcal S}(x)\ge f_{\mathcal S}(x_0)
 -\sum_{j=1}^6r_j|\partial_jf_{\mathcal S}(x_0)|
 -\frac12K_{\mathcal B},\qquad x\in\mathcal B.
\end{equation}
\end{lemma}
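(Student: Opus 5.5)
The plan is to apply Taylor's theorem with integral remainder to the scalar function $\phi(\tau)=f_{\mathcal S}(x_0+\tau v)$ on $[0,1]$, where $x\in\mathcal B$ and $v=x-x_0$. Since $\mathcal B$ is a closed coordinate box, it is convex. Hence the whole segment $x_0+\tau v$, $0\le\tau\le1$, lies in $\mathcal B$, and $\phi$ is $C^2$ on a neighbourhood of $[0,1]$ because $f_{\mathcal S}$ is $C^2$ near $\mathcal B$. This gives
\[
 f_{\mathcal S}(x)=\phi(1)=\phi(0)+\phi'(0)+\int_0^1(1-\tau)\phi''(\tau)\,d\tau .
\]

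The two terms after $\phi(0)=f_{\mathcal S}(x_0)$ are bounded separately. For the first derivative, the chain rule gives $\phi'(0)=\sum_{j=1}^6 v_j\,\partial_jf_{\mathcal S}(x_0)$. The radius convention gives $|v_j|=|x_j-x_{0,j}|\le r_j$, so
\[
 \phi'(0)\ge-\sum_{j=1}^6 r_j|\partial_jf_{\mathcal S}(x_0)|.
\]
For the remainder, note that $x_0+v=x\in\mathcal B$, so hypothesis \eqref{T:eq:secondonesided} applies to this $v$ at every $\tau\in[0,1]$ and gives $\phi''(\tau)\ge-K_{\mathcal B}$. Since the weight $1-\tau$ is nonnegative, integrating yields
\[
 \int_0^1(1-\tau)\phi''(\tau)\,d\tau\ge-K_{\mathcal B}\int_0^1(1-\tau)\,d\tau=-\tfrac12K_{\mathcal B}.
\]
Adding the three pieces gives \eqref{T:eq:taylorbound}. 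Because $x\in\mathcal B$ was arbitrary, the bound holds on the whole box.

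The argument is elementary. The only points that need care are the following.
\begin{itemize}
\item The hypothesis is one-sided, so it must be applied to a nonnegative weight; this is why I use the integral form of the remainder rather than a Lagrange form with an unsigned constant.
\item The segment must stay inside $\mathcal B$ (by convexity), so that \eqref{T:eq:secondonesided} is available at every $\tau$.
\item The radii $r_j$ need only dominate the distances from the possibly off-centre computed point $x_0$. That is exactly the convention fixed before \eqref{T:eq:meanvalue}, so no midpoint assumption is needed.
\end{itemize}

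Degenerate intervals, such as $I_\alpha=\{5\}$, cause no difficulty: the corresponding $v_j$ and $r_j$ are simply zero.
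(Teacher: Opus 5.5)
Your proposal is correct and follows the paper's proof essentially verbatim: the same function $g(\tau)=f_{\mathcal S}(x_0+\tau v)$ on the segment inside the convex box, the same integral-remainder Taylor formula, the bound $|v_j|\le r_j$ on the linear term, and the one-sided curvature hypothesis against the nonnegative weight $1-\tau$ to get $-K_{\mathcal B}/2$. One side remark is slightly off but harmless: the Lagrange form $g(1)=g(0)+g'(0)+\tfrac12 g''(\xi)$ would also work with the one-sided bound, so the integral form is a convenience rather than a necessity.
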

\begin{proof}
Fix $x\in\mathcal B$ and set $v=x-x_0$,
$g(\tau)=f_{\mathcal S}(x_0+\tau v)$. The coordinate rectangle
contains the entire segment. Taylor's formula with integral remainder
gives
\[
 g(1)=g(0)+g'(0)+\int_0^1(1-\tau)g''(\tau)\,d\tau.
\]
The bound $|v_j|\le r_j$ makes the linear term at least
$-\sum_jr_j|\partial_jf_{\mathcal S}(x_0)|$.
The second-derivative assumption bounds the integral below by
$-K_{\mathcal B}/2$. Combining the two estimates proves the lemma.
\end{proof}
Here $v$ is the full displacement, not a unit direction. Thus
$K_{\mathcal B}$ already includes the coordinate radii; no further
squared-radius factor is to be inserted.

With outward upper bounds
$g_j\ge|\partial_jf_{\mathcal S}(x_0)|$ and a valid
$K_{\mathcal B}$, define
\[
 L_{\rm Tay}:=f_0^--\sum_{j=1}^6r_jg_j-\frac12K_{\mathcal B}.
\]
After rounding the lower-bound calculations downward, the three
independently valid bounds can be combined as
\[
 L_{\mathcal B}=\max\{L_{\rm int},L_{\rm mv},L_{\rm Tay}\}.
\]
It now suffices to check $L_{\mathcal B}>\epsref$.
The reference-tail allowance is subtracted once, after combining the
three valid lower bounds. All centre values, derivative bounds and
operations defining $L_{\mathcal B}$ require rigorous enclosures.

To complete the second-order estimate, we construct
$K_{\mathcal B}$ in Lemma~\ref{T:lem:taylor} so that
\eqref{T:eq:secondonesided} holds. In this calculation, primes mean
$d/d\tau$ along the affine path $x_0+\tau v$, not differentiation
with respect to $\alpha$ alone. Put
\[
 \begin{gathered}
 v=(v_{\ell_1},v_{\ell_2},v_a,v_b,v_c,v_\alpha),\\
 \eta_1=\frac{2v_{\ell_1}+v_{\ell_2}}3,\qquad
 \eta_2=\frac{-v_{\ell_1}+v_{\ell_2}}3,\qquad
 \eta_3=\frac{-v_{\ell_1}-2v_{\ell_2}}3.
 \end{gathered}
\]
These $\eta_i$ are constant along the path. The exact identities
\[
 d_i'=\eta_i d_i,\qquad d_i''=\eta_i^2d_i,\qquad
 p'=nv_a+kv_b,\qquad r'=kv_c,\qquad p''=r''=0
\]
give, with $Q=Q_{\xi(\tau)}(z)$,
\begin{align}\label{T:eq:qpath}
 Q'&=\eta_1T_1+\eta_2T_2+\eta_3T_3
       +2d_1pp'+2d_2rr',\notag\\
 Q''&=\eta_1^2T_1+\eta_2^2T_2+\eta_3^2T_3
       +4\eta_1d_1pp'+4\eta_2d_2rr'\\
 &\hspace{18mm}+2d_1(p')^2+2d_2(r')^2.\notag
\end{align}

Since $|v_j|\le r_j$, define the nonnegative bounds
\[
 \begin{gathered}
 \beta_1=\frac{2r_{\ell_1}+r_{\ell_2}}3,\qquad
 \beta_2=\frac{r_{\ell_1}+r_{\ell_2}}3,\qquad
 \beta_3=\frac{r_{\ell_1}+2r_{\ell_2}}3,\\
 P_1=|n|r_a+|k|r_b,\qquad R_1=|k|r_c.
 \end{gathered}
\]
Then $|\eta_i|\le\beta_i$, $|p'|\le P_1$, and $|r'|\le R_1$.
Let $P,R$ be verified nonnegative upper bounds for
$\sup_{\mathcal B}|p|$ and $\sup_{\mathcal B}|r|$, respectively.
These scalars depend on $z$ and are unrelated to
$P^{\Theta}_{\mathcal S}$ and $R_{12}$. An overbar below denotes a
verified upper bound for the indicated nonnegative quantity over the
whole box. Set
\begin{align}\label{T:eq:L1L2}
 L_{1,z}&=\beta_1\overline T_1+\beta_2\overline T_2
          +\beta_3\overline T_3
          +2\overline d_1PP_1+2\overline d_2RR_1,\notag\\
 L_{2,z}&=\beta_1^2\overline T_1+\beta_2^2\overline T_2
          +\beta_3^2\overline T_3\\
 &\quad+4\beta_1\overline d_1PP_1+4\beta_2\overline d_2RR_1
          +2\overline d_1P_1^2+2\overline d_2R_1^2.\notag
\end{align}
Taking absolute values in \eqref{T:eq:qpath}, we obtain
$|Q'|\le L_{1,z}$ and $|Q''|\le L_{2,z}$ uniformly over all
permitted displacements and all points on their segments.

Along the same path $\alpha'=v_\alpha$ and $\alpha''=0$. Hence
\[
 h_z''=-\pi(2v_\alpha Q'+\alpha Q''),\qquad
 |h_z''|\le\pi\bigl(2r_\alpha L_{1,z}
                         +\alpha_+L_{2,z}\bigr),
\]
where $\alpha_+=\sup I_\alpha$. For the exponential, the identity
\[
 (e^{h_z})''
 =e^{h_z}\bigl((h_z')^2+h_z''\bigr)
 \ge-e^{h_z}|h_z''|
\]
gives the required lower estimate. The square $(h_z')^2$ is
nonnegative and may be discarded when bounding negative curvature.
The reference term contributes
\[
 \frac{d^2}{d\tau^2}\bigl(-R_{12}(\alpha(\tau))\bigr)
 =-v_\alpha^2R_{12}''(\alpha(\tau)),
 \qquad R_{12}''\ge0.
\]
It follows that the choice
\begin{equation}\label{T:eq:Kbox}
 K_{\mathcal B}
 =\sum_{z\in\mathcal S}\overline{e^{h_z}}\,
       \pi\bigl(2r_\alpha L_{1,z}+\alpha_+L_{2,z}\bigr)
       +r_\alpha^2\sup_{\mathcal B}R_{12}''
\end{equation}
satisfies \eqref{T:eq:secondonesided}. To evaluate this majorant,
we replace $\pi$, the supremum and each positive factor by an upper
enclosure and round every operation upward.

The dependence on the radii is already explicit: $L_{1,z}$ is first
order, $L_{2,z}$ is second order, and $K_{\mathcal B}$ is second
order. At a fixed scale, $r_\alpha=0$, but
$\pi\alpha_+\overline{e^{h_z}}L_{2,z}$ remains to control shape
variation. In discarding $(h_z')^2$, we used only its nonnegative
sign. The term $h_z''$ and the negative reference curvature have both
been retained.

We now distinguish the two scale ranges. For
$I_\alpha\subset[1,5]$, the inequality
\[
 L_{\mathcal B}>\epsref
\]
gives strict comparison throughout the six-dimensional box by
\eqref{T:eq:fboxlower}. It supplies the exterior comparison in
Appendix~\ref{T:sec:certificates}; the remaining boxes are handled
by reduction, cusp exclusion or the local FCC and BCC estimates.

For a shape box $\mathcal B_\xi$ at $\alpha=5$, apply the same bounds
to $\mathcal B=\mathcal B_\xi\times\{5\}$. In addition, directly
enclose the finite sum
\[
 \partial_\alpha P^{\Theta}_{\mathcal S}(\xi,5)
 =\pi\sum_{z\in\mathcal S}(\rho-Q_\xi(z))
                  e^{5\pi(\rho-Q_\xi(z))}
\]
uniformly on $\mathcal B_\xi$, by \eqref{T:eq:hgrad}.
Its strict positivity together with $L_{\mathcal B}>\epsref$
establishes \eqref{T:eq:continuationconditions}. The argument in
Section~\ref{T:sec:continuation} then covers
$\mathcal B_\xi\times[5,\infty)$.
The derivative required here is that of $P^{\Theta}_{\mathcal S}$,
not $f_{\mathcal S}$. Indeed, $R_{12}'<0$, so positivity of
$\partial_\alpha f_{\mathcal S}$ by itself would not imply the
needed derivative inequality.

Appendix~\ref{T:sec:arithmetic} gives enclosures for the quantities
in these tests. If a sufficient inequality cannot be established on a
box, the box is subdivided; this does not imply a negative energy
difference. Appendix~\ref{T:sec:certificates} verifies that every
subdivision retains both children and that the final boxes cover the
required set. We combine these comparisons with the local and cusp
estimates in Section~\ref{T:sec:assembly}.

For later reference, we summarize the half-line criterion entirely in
terms of finite sums. By \eqref{T:eq:reftail},
\[
 C_{\FCC}(5)<R_{12}(5)+\epsref,
 \qquad
 R_{12}(5)=\sum_{n=1}^{12}N_n e^{-5\pi\rho(n-1)},
 \qquad
 \epsref=2\cdot10^{-18}.
\]
Thus the two finite tests on a shape box $\mathcal B$ are
\[
 \begin{aligned}
 \inf_{\xi\in\mathcal B}
   \bigl(P^{\Theta}_{\mathcal S}(\xi,5)-R_{12}(5)\bigr)
     &>\epsref,\\
 \inf_{\xi\in\mathcal B}
   \partial_\alpha P^{\Theta}_{\mathcal S}(\xi,5)
     &>0.
 \end{aligned}
\]
Both inequalities must hold on the whole box. The finite set
$\mathcal S$ is fixed on that box and throughout continuation in
$\alpha$, although a different box may use a different set.

The value $5$ is only a cutoff for this argument, not a transition
scale. Once the two tests hold, Lemma~\ref{T:lem:continuation}
extends the comparison to every $\alpha\ge5$. Together with the
uniform FCC neighbourhood, these finite comparisons at $5$ settle
the unbounded range without passing a limit through minimization.

\subsection{Finite coverings and the theta theorem}\label{T:sec:assembly}
It remains to combine the box comparisons. Let $\mathcal R_0$ be the
outward rectangular enclosure of $\mathcal R$ with endpoints given
in Appendix~\ref{T:sec:certificates}. It contains the compact
reduced set $\calK$. We first state the covering implication, and
then the finite comparison result used to apply it.

The preceding estimates are local to a single parameter box. To pass from these boxwise inequalities to the full compact exterior region, we use only a finite covering argument: every admissible point must lie in a box on which one of the established alternatives applies. This logical step is isolated in the following proposition.

\begin{proposition}[Local-to-global comparison by finite coverings]
\label{T:prop:computer}
Suppose each of the two reduced parameter sets considered below is
covered by finitely many closed boxes. On every box, assume either a
reduction exclusion, the cusp estimate, a valid local FCC or BCC
comparison, or the strict finite-sum inequality of
Section~\ref{T:sec:finite}. On a half-line box, the local comparison must
hold on the full half-line, or the exterior comparison at $\alpha=5$ must also
satisfy the scale-derivative condition in
Lemma~\ref{T:lem:continuation}.
Then the theta comparison holds on the whole reduced set. Equality is
possible only at FCC, and at BCC when $\alpha=1$.
\end{proposition}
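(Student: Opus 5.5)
The proof is a pure assembly argument, so I would organize it as a pointwise case analysis. First I reduce to $\alpha\ge1$. For $0<\alpha<1$, the Poisson identity \eqref{T:eq:poisson} and the bijection $[L]\mapsto[L^*]$ give $\Th(\alpha,L)-\Th(\alpha,\BCC)=\alpha^{-3/2}\bigl(\Th(\alpha^{-1},L^*)-\Th(\alpha^{-1},\FCC)\bigr)$, because $\BCC=\FCC^*$. Hence the BCC statement at $\alpha<1$ is exactly the FCC statement at $\alpha^{-1}>1$, with the equality case transported. So it suffices to show, for $\alpha\ge1$, that $\Th(\alpha,L)\ge\Th(\alpha,\FCC)$, with equality only at FCC, or additionally at BCC when $\alpha=1$.

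Next I fix $\alpha\ge1$ and a covolume-one class $[L]$. By the covering property of the modified Grenier domain, I choose a representative $Y(\xi)$ satisfying \eqref{T:eq:grenier}--\eqref{T:eq:abcbox}. If $d_1<2/3$, then $Q_\xi(e_1)=d_1<2/3$, and Proposition~\ref{T:prop:cusp} gives strict inequality. Otherwise $\xi\in\calK\subset\mathcal R\subset\mathcal R_0$, so the hypothesis supplies a box $\mathcal B$ of the cover with $\xi\in\mathcal B$. The box is either a six-dimensional box containing $(\xi,\alpha)$ with $I_\alpha\subset[1,5]$, or a half-line box $\mathcal B_\xi\times[5,\infty)$. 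I then go through the alternatives on that box.

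\begin{itemize}
\item \emph{Reduction exclusion.} The box contains no point of the reduced set, which contradicts $\xi$ being reduced, so this case cannot occur.
\item \emph{Cusp estimate.} Proposition~\ref{T:prop:cusp} gives strict inequality.
\item \emph{Local FCC comparison.} The verified containment gives $A_F(Y)$ with $\|A_F(Y)-I\|_{\op}$ within the relevant radius. The lattice is orthogonally equivalent to $A_F(Y)^{1/2}\FCC$, so Theorem~\ref{T:thm:local} or Proposition~\ref{T:prop:flexible} yields $\Th\ge\Th(\alpha,\FCC)$, with equality iff $A_F(Y)=I$, i.e.\ $L\cong\FCC$. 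On a half-line box I need the neighbourhood valid for all $\alpha\ge5$; this is Theorem~\ref{T:thm:local}(ii), since $\alpha\ge5\ge2$, or (i), since $u\ge39/10$.
\item \emph{Local BCC comparison.} This is only used with $I_\alpha\subset[1,101/100]$. Proposition~\ref{T:prop:bcclocal} gives $\Th(\alpha,Y)\ge\Th(\alpha,\FCC)$, with equality iff $\alpha=1$ and $Y=\YB$.
\item \emph{Finite-sum inequality, $I_\alpha\subset[1,5]$.} The bound $L_{\mathcal B}>\epsref$ and \eqref{T:eq:fboxlower} give a strict comparison.
\item \emph{Finite-sum inequality, half-line box.} The value and derivative tests at $5$ feed Lemma~\ref{T:lem:continuation}, which gives strict comparison for every $\alpha\ge5$.
\end{itemize}

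Since both $\Th(\alpha,\cdot)$ and orthogonal equivalence are invariant under change of basis, each conclusion transfers from the representative to $L$.

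For the equality clause, I observe that every alternative except the two local ones is strict. Equality can therefore only occur inside a local box, and there the equality characterizations above identify $L$ as FCC, or as BCC with $\alpha=1$. The identity \eqref{T:eq:endpointtie} shows BCC does attain equality at $\alpha=1$, which gives part~(3).

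The main obstacle is bookkeeping rather than analysis. I must make sure the boxes on which a local comparison is invoked really lie inside the local neighbourhoods measured in the physical strain $A_F$ or $A_B^{-1}$, and checking the centre alone does not suffice. I would handle this with the path-length bound \eqref{E:eq:metric-path} and the operator-norm bridges \eqref{eq:strain-log-bridge}--\eqref{eq:log-strain-bridge}, or directly via entrywise enclosures of $A_F(Y)-I$. I must also make sure that every half-line box carries either a scale-uniform local radius or the derivative certificate at $\alpha=5$. Given the hypotheses of the proposition, these are assumed verified, so the proof reduces to the case analysis above.
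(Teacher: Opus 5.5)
Your proposal is correct and matches the paper's proof. You fix a reduced point and a covering box containing it, note that the reduction-exclusion alternative cannot occur there, obtain strict inequality from the cusp and finite-sum alternatives (via Lemma~\ref{T:lem:continuation} on half-line boxes), and read off the equality cases from Theorem~\ref{T:thm:local}, Proposition~\ref{T:prop:flexible} and Proposition~\ref{T:prop:bcclocal}. The duality and cusp reductions you prepend are steps the paper places in the proof of Theorem~\ref{T:thm:main}; the paper otherwise only adds that a covering built by subdivision is exhaustive by finite induction, which you legitimately take as part of the hypothesis.
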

\begin{proof}
Fix a point in the reduced set and a covering box containing it.
The reduction-exclusion alternative is impossible at that point.
A cusp bound or finite-sum comparison gives strict inequality; for a
half-line box, Lemma~\ref{T:lem:continuation} extends the latter
comparison to all subsequent scales. Otherwise, the local inequality
follows from Theorem~\ref{T:thm:local},
Proposition~\ref{T:prop:flexible} or
Proposition~\ref{T:prop:bcclocal}, with the stated equality cases.
These alternatives exhaust the covering. When it is constructed by
subdivision, both closed children cover their parent, so finite
induction proves coverage, including every shared face.
\end{proof}

\begin{lemma}[Finite whole-box theta comparisons]\label{hyp:theta}
The reduced portions of $\mathcal R_0\times[1,5]$ and
$\mathcal R_0\times\{5\}$ admit finite closed-box coverings of the type
in Proposition~\ref{T:prop:computer}. Every box not excluded by reduction
or the cusp bound either lies in one of the stated local regions or
satisfies the strict lower bound
$L_{\mathcal B}>\varepsilon_{\rm ref}$ from
Section~\ref{T:sec:finite}. In the second covering, every local comparison
holds for all $\alpha\ge5$, and every exterior box additionally satisfies
$\inf_{\xi\in\mathcal B_\xi}\partial_\alpha
P^\Theta_{\mathcal S}(\xi,5)>0$.
\end{lemma}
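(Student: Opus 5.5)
This is a computer-assisted statement, so the plan is to build the two coverings by adaptive bisection and then certify every leaf with outward-rounded rational arithmetic.

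\textbf{Setting up the root boxes.} Start from the outward enclosure $\mathcal R_0$ of the rectangle $\mathcal R$ in \eqref{T:eq:rootmath}. Form two root boxes: $\mathcal R_0\times[1,5]$ in the six coordinates $(\ell_1,\ell_2,a,b,c,\alpha)$, and $\mathcal R_0\times\{5\}$ as a five-dimensional shape box.

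\textbf{Classifying a box.} Each box is tested against the alternatives of Proposition~\ref{T:prop:computer}, in increasing order of cost.
\begin{enumerate}[label=(\roman*)]
\item \emph{Reduction exclusion.} Evaluate the right-hand side of one of the six inequalities in \eqref{T:eq:grenier} by interval arithmetic. If its upper bound is $<1$ on the whole box, the box contains no reduced point.
\item \emph{Cusp exclusion.} If $\sup_{\mathcal B}d_1<2/3$, the vector $e_1$ has squared length below $2/3$, and Proposition~\ref{T:prop:cusp} applies for every $\alpha\ge1$.
\item \emph{Local regions.} Test whether the box lies inside a local neighbourhood.
  \begin{itemize}
  \item Enclose $A_F(Y)-I$ via \eqref{T:eq:AF}, using the explicit $\gFT^{-1}$, and bound its operator norm by the Frobenius norm. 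If the enclosure is $\le1/100$ (or $\le1/25$ when $\alpha\ge2$), Theorem~\ref{T:thm:local} applies; note it holds for all $u\ge39/10$, hence for the full half-line in the second covering. If only a box-dependent radius $\delta$ is available, check \eqref{T:eq:flexconditions} on the box's $u$-interval via Proposition~\ref{T:prop:flexible}.
  \item Near BCC with $\alpha\in[1,101/100]$, enclose $\gBT Y^{-1}\gBT^T-I$ using \eqref{T:eq:inverseY}. If its norm is $\le1/100$, Proposition~\ref{T:prop:bcclocal} applies.
  \end{itemize}
\item \emph{Finite-sum comparison.} Otherwise choose $\mathcal S\subset\mathcal S_2$, for instance the pairs $z$ with $\sup_{\mathcal B}Q_\xi(z)$ below a cutoff. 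Compute $L_{\mathcal B}=\max\{L_{\rm int},L_{\rm mv},L_{\rm Tay}\}$ from \eqref{T:eq:meanvalue}, Lemma~\ref{T:lem:taylor} and \eqref{T:eq:Kbox}, and test $L_{\mathcal B}>\varepsilon_{\rm ref}$.
\end{enumerate}
On the $\alpha=5$ covering, an exterior box additionally requires a direct interval lower bound for $\partial_\alpha P^\Theta_{\mathcal S}(\xi,5)$ that is positive.

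\textbf{Subdivision and coverage.} Any box failing all tests is bisected along its coordinate of largest weighted radius $r_jM_j$, and both closed children are retained. Coverage of $\calK$ then follows by finite induction, exactly as in the proof of Proposition~\ref{T:prop:computer}. Termination requires quantitative margins at every non-reference point.
\begin{itemize}
\item Away from FCC and BCC, the energy difference is strictly positive on the compact set. Theorem~\ref{T:thm:main} is what we are proving, so this positivity is only expected, not assumed; the certificate itself establishes it. As radii shrink, $L_{\rm Tay}$ converges to the centre value.
\item Near the references, the local balls have fixed radius in strain. So it suffices that boxes shrinking toward the FCC representative \eqref{T:eq:reps}, or toward the BCC representative when $\alpha\le101/100$, eventually fit inside those balls, which follows from continuity of $A_F$ and $A_B$.
\end{itemize}

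\textbf{Main obstacle.} The hardest part is the slab around $\alpha=1$ away from a BCC ball of radius only $1/100$. There the gap between the competitor and FCC is small, because BCC ties FCC at $\alpha=1$ and the BCC Hessian is soft. The Taylor certificate may then require very fine boxes. I would first try enlarging the admissible BCC region via a duality analogue of Proposition~\ref{T:prop:flexible}. Failing that, I would rely on the linear gap $(\alpha-1)/200$ from Lemma~\ref{T:lem:gap} together with the quantitative bound \eqref{T:eq:bcc-quantitative}, so that boxes just outside the ball still carry a margin of order $10^{-6}e^{-u}$ that the finite sums can certify.

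A secondary concern is the rounding discipline. Every exponential should be bounded via \eqref{T:eq:exp-rational}, and $\pi$ and $\rho$ enclosed rationally, so that all quantities are certified as the appendix records.
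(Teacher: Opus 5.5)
Your proposal is correct and follows essentially the same route as the paper. The paper likewise runs an adaptive closed-box bisection of $\mathcal R_0\times[1,5]$ and $\mathcal R_0\times\{5\}$, certifying each terminal box by one of the following:
\begin{itemize}
\item reduction exclusion or the cusp bound;
\item FCC containment via Theorem~\ref{T:thm:local} or Proposition~\ref{T:prop:flexible}, or BCC containment for $\alpha\in[1,101/100]$;
\item $L_{\mathcal B}>\varepsilon_{\rm ref}$, together with positivity of $\partial_\alpha P^\Theta_{\mathcal S}(\xi,5)$ for the same fixed $\mathcal S$;
\end{itemize}
with coverage obtained by retaining both children at every split, exactly as in Appendix~\ref{T:sec:certificates}.

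Two small cautions. In the $\alpha=5$ covering the required scale range is the whole half-line, not $\{5\pi\rho\}$; since Proposition~\ref{T:prop:flexible} holds only on bounded $u$-intervals, it cannot certify local boxes there, and only Theorem~\ref{T:thm:local} may be used. Also, \eqref{T:eq:bcc-quantitative} says nothing outside the BCC ball, so your remedy for the $\alpha\approx1$ slab is only a termination heuristic; the actual certificate there must still come from the finite sums.
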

\begin{proof}[Computer-assisted proof]
We apply the finite Gaussian bounds of Section~\ref{T:sec:finite},
including the upper reference-tail bound in every comparison.
Appendix~\ref{T:sec:arithmetic} gives their interval evaluation,
and Appendix~\ref{T:sec:certificates} records the two complete
subdivisions. The recorded re-evaluation checks the relevant inequality
on every terminal box: a reduction or cusp exclusion, local
containment, or a strict finite-sum bound. Each exterior half-line box
also satisfies the scale-derivative condition.

The covering check retains both children at every split and leaves no
unresolved region. Thus the terminal boxes cover both reduced
parameter sets, as required. Appendix~\ref{app:admissibility}
describes the checking method; the detailed sources and execution
records accompany the argument separately. Neither global
minimization theorem is assumed in these finite evaluations.
\end{proof}

We can now conclude the proof of Theorem~\ref{T:thm:main}.
\begin{proof}[Proof of Theorem~\ref{T:thm:main}]
We first assume $\alpha\ge1$ and choose a reduced representative of
$L$ as in Section~\ref{sec:common-geometry}. If $d_1\le2/3$,
Proposition~\ref{T:prop:cusp} gives strict comparison with FCC.
Otherwise the representative lies in $\calK\subset\mathcal R_0$,
and we distinguish the following two scale ranges.

For $1\le\alpha\le5$, the exterior comparison follows from
Lemma~\ref{hyp:theta} and Proposition~\ref{T:prop:computer}.
Theorem~\ref{T:thm:local} and Proposition~\ref{T:prop:bcclocal}
handle the two local neighbourhoods.

For $\alpha\ge5$, use the second covering in
Lemma~\ref{hyp:theta}. Its exterior inequalities extend to the
whole half-line by Lemma~\ref{T:lem:continuation}, while
Theorem~\ref{T:thm:local}(ii) applies on the fixed FCC
neighbourhood. The two ranges meet at $\alpha=5$.

It remains to check equality and the reciprocal scales. All exterior
and cusp comparisons are strict. The local estimates allow equality
only at FCC, or at BCC when $\alpha=1$; both endpoint classes attain
it by \eqref{T:eq:endpointtie}. For $0<\alpha<1$, apply the
comparison at $1/\alpha>1$ to $L^*$ and use
\eqref{T:eq:poisson}. Since $\FCC^*=\BCC$, equality holds exactly
when $L$ is orthogonally equivalent to BCC. This
completes the proof.
\end{proof}

\section{Ewald bounds and local Epstein estimates}\label{sec:epstein-local}

We begin the Epstein part with the two local estimates that will later be used in the global covering. The first gives a transparent third-order FCC neighbourhood, while the second retains the signed cubic and quartic terms and yields a larger uniform ball. The proof is organized accordingly: we first obtain rigorous Ewald-kernel bounds and complete tails, then control the Hessian and third derivative, and finally sharpen the expansion through fifth order.

\begin{theorem}[Local comparison from third-order bounds]\label{E:thm:third-ball}
For every $3/2\le s\le8$ and $X\in\Symzero$ with
$0<\norm X\le1/500$,
\begin{equation}\label{E:eq:third-ball}
 \HH_s(Y(X))-\HH_s(\YF)>\frac{\norm X^2}{200}.
\end{equation}
\end{theorem}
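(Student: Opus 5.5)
We write $f(t)=\HH_s(Y(tX))$ for $t\in[0,1]$, with $Y(X)=\gFE^Te^X\gFE$ as in \eqref{E:eq:exp-coordinates}, and expand to second order with a third-order Lagrange remainder:
\[
 f(1)-f(0)=f'(0)+\tfrac12f''(0)+\tfrac16f'''(\tau),\qquad \tau\in(0,1).
\]
Each summand of $\HH_s$ depends on $X$ through $Y[m]=(\gFE m)^Te^X(\gFE m)$ and $Y^{-1}[m]=(\gFE^{-T}m)^Te^{-X}(\gFE^{-T}m)$. Thus the primal sum runs over $v\in\FCC$ with quadratic form $v^Te^{tX}v$, and the dual sum runs over $w\in\BCC$ with form $w^Te^{-tX}w$.

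\textbf{Step 1: the linear term.} The first variation is $f'(0)=-\pi\sum_v G(s+1,\pi|v|^2)v^TXv+\pi\sum_w G(5/2-s,\pi|w|^2)w^TXw$, using \eqref{E:eq:G-derivatives}. It vanishes because every FCC and BCC shell has isotropic second moment (Lemma~\ref{T:lem:shellbound} for FCC; the same argument applies to BCC) and $\tr X=0$.

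\textbf{Step 2: the Hessian.} By cubic symmetry the Hessian is diagonal on the two invariant subspaces, as in \eqref{T:eq:blocks}. We therefore compute two coefficients $\mu_D(s),\mu_O(s)$, each obtained by replacing the Gaussian by the kernel. For a primal vector the second derivative is
\[
 \pi^2G(s+2,\pi|v|^2)(v^TXv)^2-\pi G(s+1,\pi|v|^2)v^TX^2v,
\]
and the dual analogue uses $G(7/2-s,\cdot)$ and $G(5/2-s,\cdot)$. We evaluate the first few FCC and BCC shells explicitly with rational enclosures of $G$, and bound the complete tails using Lemma~\ref{T:lem:shellbound}-type counts together with monotonicity of $G$ in $x$. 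The goal is $\min(\mu_D,\mu_O)\ge c_2$ uniformly for $s\in[3/2,8]$, with a margin such as $c_2\ge 1/50$. Uniformity in $s$ will come from evaluating on finitely many subintervals of $s$. On each subinterval we use the monotonicity of $G$ in $a$ to bound the positive terms below at one endpoint and the negative terms above at the other, so no pointwise-in-$s$ check is required.

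\textbf{Step 3: the third derivative.} For $\norm{tX}\le1/500$ we bound $|f'''(\tau)|\le c_3\norm X^3$ uniformly. Along the path, $\frac{d^k}{dt^k}v^Te^{tX}v$ is bounded by $|v|^2e^{\sqrt{2/3}/500}\opnorm X^k$, using \eqref{eq:log-strain-bridge}. Faà di Bruno then gives a sum of terms $G(s+j,\pi v^Te^{\tau X}v)$ for $j=1,2,3$ with explicit polynomial weights in $|v|^2$. The argument $v^Te^{\tau X}v$ is at least $e^{-\sqrt{2/3}/500}|v|^2$, and since $G$ is decreasing in $x$ we may majorize each term by the kernel at that slightly shrunk length. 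Summing over both lattices with shell bounds gives an explicit $c_3$ that holds uniformly for $s\le8$, using $G(s+j,\cdot)\le G(11,\cdot)$. The conclusion then follows from
\[
 f(1)-f(0)\ge\bigl(\tfrac{c_2}2-\tfrac{c_3}{6}\cdot\tfrac1{500}\bigr)\norm X^2>\tfrac{\norm X^2}{200}.
\]
The requirement is $c_3\le 3000(c_2-1/100)$, which leaves ample room.

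\textbf{Main obstacle.} I expect the hardest step to be the uniform lower bound on the two Hessian coefficients near $s=3/2$. At such low $s$ the dual terms are comparable to the primal ones and the first-shell contributions have mixed signs, as in the Gaussian $\mu_D^{(1)}$ near the self-dual scale. A crude tail bound may then swamp the margin. I would first try retaining two or three shells on each side with rational kernel enclosures on a fine $s$-grid, and fall back on finer subdivision near $s=3/2$ if needed. By comparison the third-order constant is harmless because the radius $1/500$ is tiny.
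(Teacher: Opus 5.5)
\textbf{Overall.} Your outline matches the paper's proof: vanishing first variation from shell isotropy, the two cubic Hessian coefficients bounded below via monotonicity of $G$ in its first argument, a Fa\`a di Bruno majorant for the third derivative using $q\ge e^{-cR}|v|^2$ and the decrease of $G$ in $x$, and then Taylor's formula as in Lemma~\ref{E:lem:third-criterion}. However, the way you close the argument fails.

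\textbf{Where it fails.} In Step~3 you take one constant $c_3$ valid for all $s\le8$, via $G(s+j,\cdot)\le G(11,\cdot)$. In the final display you pair it with one $c_2$, which must then be the minimum of the Hessian over $[3/2,8]$.
\begin{itemize}
\item Both quantities grow by more than an order of magnitude across $[3/2,8]$, because both are dominated by the first FCC shell through $G(s+2,\pi\rho)$ and $G(s+3,\pi\rho)$.
\item Your uniform third-derivative majorant is therefore of the size of its value at $s=8$, which is several hundred. The paper records $M_J=577$ on $[15/2,8]$.
\item The Hessian near $s=3/2$ is only about $0.04$. Table~\ref{tab:fresh-local-cells} gives $h_D^->0.0415$ on the first cell, and the coarser bound on $[3/2,2]$ is $m_J=23/1000$.
\item Your threshold is also off by a factor of two. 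The condition $c_2/2-c_3/3000>1/200$ means $c_3<1500(c_2-1/100)$, not $3000(c_2-1/100)$.
\end{itemize}
Even with the sharp value $c_2\approx0.0415$, this requires $c_3<47$. With a uniform $c_3$ of several hundred, the final inequality fails by more than a factor of ten. The radius $1/500$ is not small relative to the ratio of the third-derivative majorant to the Hessian.

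\textbf{The repair.} Compute $c_2$ and $c_3$ on the same exponent cell $J=[a,b]$: primal kernels at first argument $b+j$, dual kernels at $3/2-a+j$. Then check $m_J/2-M_JR/6>1/200$ cell by cell. This is exactly Lemma~\ref{app:lem:third-inputs}, with the thirteen half-unit cells of Table~\ref{E:tab:third-cells}; there $M_J$ is computed on the larger ball $R_0=1/100$.

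\textbf{The margin is thin, not ample.} Even after the repair, the bottom cell is tight.
\begin{itemize}
\item On $[3/2,2]$ the paper gets $23/2000-17/3000=7/1200$, against the required $1/200$.
\item The largest radius the criterion allows on that cell is $3(m_J-1/100)/M_J\approx0.0023$, so $1/500$ is essentially forced by the first cell.
\item A Hessian target such as $c_2=1/50$ would not suffice near $s=3/2$. It permits only $c_3<15$, while the third-derivative majorant there is already of order $15$ (the paper has $M_J=17$ on $[3/2,2]$).
\end{itemize}
So the real difficulty is the balance between the two constants on the low-exponent cells, not positivity of the Hessian alone. The third-order term is the binding constraint rather than a harmless one.
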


The preceding theorem gives a simple and transparent FCC neighbourhood using only the Hessian and a third-order remainder. Its radius is intentionally modest. To obtain a neighbourhood large enough for the global covering, we retain the signed cubic and quartic contributions and control the fifth-order remainder. This refinement leads to the stronger local statement below.

\begin{theorem}[Local comparison from signed coefficient bounds]\label{E:thm:low-local}
On every exponent cell in \eqref{E:eq:local-cells}, the bound
\eqref{E:eq:local-C} holds at the radius specified in
Table~\ref{E:tab:low-local}, with coefficient strictly above the lower bound
in that table. In particular, for every $3/2\le s\le8$ and
$X\in\Symzero$ with
$0<\norm X\le2/25$,
\begin{equation}\label{E:eq:uniform-low-ball}
 \HH_s(Y(X))-\HH_s(\YF)>\frac{\norm X^2}{20000}.
\end{equation}
The coefficient bounds include the full fifth-order remainder.
\end{theorem}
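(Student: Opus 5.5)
We expand $\phi(t)=\HH_s(Y(tX))$ along the geodesic $t\mapsto\gFE^Te^{tX}\gFE$ for $0\le t\le1$. Taylor's formula is taken to fifth order with integral remainder:
\[
 \HH_s(Y(X))-\HH_s(\YF)=\tfrac12\phi''(0)+\tfrac16\phi'''(0)+\tfrac1{24}\phi^{(4)}(0)+\int_0^1\tfrac{(1-t)^4}{24}\phi^{(5)}(t)\,dt,
\]
using $\phi'(0)=0$ by cubic symmetry, as for the theta Hessian. Each derivative is a finite combination of kernel derivatives $\partial_x^jG$, given by \eqref{E:eq:G-derivatives}, multiplied by polynomials in $m^TY_FX^km$ and in the dual analogue, where $Y^{-1}(X)=\gFE^{-1}e^{-X}\gFE^{-T}$. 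We treat the two Ewald pieces separately. On the primal side the terms have the form $(-1)^jG(s+j,\pi Y[m])$ times polynomials in $\pi\,m^T\gFE^TX^k\gFE m$. On the dual side they are the same with $3/2-s$ in place of $s$ and $-X$ in place of $X$.

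\textbf{Quadratic term.} By the cubic invariant decomposition of \cite[(21)]{SS}, $\phi''(0)=\mu_D(s)\norm{X_D}^2+\mu_O(s)\norm{X_O}^2$. We obtain two-sided enclosures of $\mu_D,\mu_O$ by summing the first few FCC and BCC shells exactly with the rational kernel bounds of Section~\ref{E:sec:kernels}, and bounding the rest by complete shell tails, using $N_n<30n$ (Lemma~\ref{T:lem:shellbound}) and $G(a,x)\le e^{-x}/(x-\max(a-1,0))$-type bounds. \textbf{Cubic and quartic terms.} The cubic term is a cubic invariant of $\Symzero$; under the octahedral group it is spanned by $\tr X_D^3$, $\det$-type $X_{12}X_{13}X_{23}$, and a mixed term. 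Rather than exploiting exact sign structure, we first bound $|\phi'''(0)|\le c_3(s)\norm X^3$ and $\phi^{(4)}(0)\ge -c_4(s)\norm X^4$. In each case we keep the sign of the dominant first-shell contribution, which is the point of the word "signed", and bound the remaining shells in absolute value.

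\textbf{Remainder.} We bound $|\phi^{(5)}(t)|\le c_5(s)\norm X^5$ uniformly for $\norm{tX}\le 2/25$. To do this, we enlarge every quadratic form by the operator bound \eqref{eq:log-strain-bridge}: $Y(tX)[m]\ge e^{-\sqrt{2/3}\,r}Y_F[m]$. This gives Gaussian majorants uniform along the path, with powers $|m^T\gFE^TX^k\gFE m|\le(\sqrt{2/3}r)^kY_F[m]$.

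We then work on each exponent cell of \eqref{E:eq:local-cells}, using the monotonicity of $G$ in $a$ and Lemma~\ref{E:lem:ratio} to obtain enclosures of every coefficient over the whole cell from its endpoints. We check $\tfrac12\min(\mu_D,\mu_O)-\tfrac16c_3r-\tfrac1{24}c_4r^2-\tfrac1{120}c_5r^3$ exceeds the tabulated coefficient at the tabulated radius $r$. Because every correction term increases with $r$, it suffices to verify this at the radius endpoint, which gives \eqref{E:eq:local-C} for all smaller $\norm X$. Taking the minimum over cells yields \eqref{E:eq:uniform-low-ball} with $1/20000$.

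\textbf{Main obstacle.} The hard step is the diagonal Hessian coefficient $\mu_D$ near $s=3/2$, where it is small. There the dual BCC terms $G(3/2-s,\cdot)$ with nonpositive first argument compete with the primal terms, so the margin $1/20000$ leaves little room. We would first try to obtain enough precision by retaining many shells exactly (up to squared length about $12\rho$) and refining the cells near $3/2$ before tightening the cubic bound.
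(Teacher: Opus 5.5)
There is a genuine gap in how you control the cubic and quartic terms. Your test uses $\tfrac12\min(h_D,h_O)$ together with direction-uniform bounds $|\phi'''(0)|\le c_3\norm X^3$ and $\phi^{(4)}(0)\ge -c_4\norm X^4$. This discards the structure the paper's proof depends on. The diagonal Hessian coefficient is much smaller than the off-diagonal one; on $[3/2,13/8]$ the recorded lower bounds are $h_D>0.0415$ and $h_O>0.2918$. The large cubic and quartic coefficients, however, live in off-diagonal and mixed directions.

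The paper writes $q_3=c_1p_{31}+c_2p_{32}+c_3p_{33}$ and $q_4=\sum_j\kappa_jp_{4j}$. It then uses the invariant bounds \eqref{E:eq:invariant-bounds} ($|p_{31}|\le D^3$, $|p_{32}|\le O^3/(3\sqrt3)$, $|p_{33}|\le 2DO^2$, $|p_{45}|\le 3D^2O^2$) to get $q_3\ge -C_1D^2-C_{23}O^2$ and $q_4\ge K_DD^2+K_OO^2$. So the small $h_D$ competes only with the purely diagonal coefficients $c_1$ and $\kappa_1$, keeping only negative parts; on the first cell $K_D=0$. The large coefficients are absorbed by $h_O$. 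This is the two-branch minimum in \eqref{E:eq:local-C-formula}. The coefficients themselves are obtained exactly from the few evaluations in \eqref{E:eq:quartic-recovery}, not from a supremum over directions.

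With your coarser test, Table~\ref{E:tab:low-local} is out of reach.
\begin{itemize}
\item On $[3/2,13/8]$ at $R=9/100$ one has $h_D/2\approx0.0208$ and $R^3M_5^+/120\approx0.0099$. To exceed $1/250$, your uniform cubic Taylor coefficient $\max_{\norm A=1}|q_3(A)|$ would have to be below about $0.077$, even with zero quartic loss.
\item The pure diagonal direction alone contributes $|c_1|$, recorded as $C_1<0.0712$. The off-diagonal bound $C_{23}<0.4106$ indicates that, unless that enclosure is very loose, $|q_3|$ is well above $0.077$ in some off-diagonal or mixed direction. For instance $p_{33}=4/(3\sqrt3)$ along a unit combination of $b_2$ and $b_3$.
\item Your uniform $c_4$ also charges the off-diagonal quartic loss ($K_O\approx-0.57$) to diagonal directions.
\item On $J_3$ and $J_{21}$, at radius $1/10$, even the paper's sharper coefficient clears $1/20000$ only by about $1.5\cdot10^{-4}$ and $3.6\cdot10^{-5}$. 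Any extra loss there is fatal.
\end{itemize}

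You also do not say how to compute the supremum over directions. If you do it with termwise absolute majorants as in \eqref{E:eq:M3-new}, the constants are of size $M_J/6\approx3$ on $[3/2,2]$. That gives radii of order $10^{-3}$, as in Theorem~\ref{E:thm:third-ball}, not $2/25$. Keeping the sign of a first shell does not help with a supremum over the unit sphere unless you have the invariant description.

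Accordingly, the real obstacle is not the precision of $h_D$, whose enclosures are already sharp. It is this structural balance between the weak diagonal Hessian and the higher-order terms. Replacing your single test by the branchwise coefficient $\mathcal C(J,R)$ of \eqref{E:eq:local-C-formula} repairs the argument.
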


We now establish the kernel and derivative estimates entering these theorems.

\subsection{Rational kernel bounds and complete tails}\label{E:sec:kernels}
We begin with the kernel $G$. Its Stieltjes representation will give
finite lower and upper bounds with controlled signs. For $k>0$ and
$x>0$, define
\[
 S_k(x)=\frac1{\Gamma(k)}\int_0^\infty\frac{u^{k-1}e^{-u}}{x+u}\,du.
\]
Fubini's theorem and $1/(x+u)=\int_0^\infty e^{-(x+u)v}\,dv$ imply
\begin{equation}
 S_k(x)=\int_0^\infty e^{-xv}(1+v)^{-k}\,dv=e^xG(1-k,x).
 \label{E:eq:stieltjes}
\end{equation}
Let $J_{N,k}$ be the $N\times N$ symmetric Jacobi matrix with diagonal entries $k+2j$, $0\le j\le N-1$, and adjacent off-diagonal entries $\sqrt{(j+1)(j+k)}$, $0\le j\le N-2$. Here $e_0=(1,0,\ldots,0)^T\in\R^N$, and $I$ in the following
resolvent is the $N\times N$ identity matrix. Define
\[
 L_{N,k}(x)=e_0^T(xI+J_{N,k})^{-1}e_0,\qquad L_{0,k}=0.
\]
In the scalar continued fraction, only the squared off-diagonal
entries $(j+1)(j+k)$ occur. Thus its evaluation requires no square
roots.

The gamma probability measure used below is
$d\mu_k(u):=\Gamma(k)^{-1}u^{k-1}e^{-u}\,du$ on $(0,\infty)$.
For a real number $w$ and an integer $j\ge0$, the rising factorial is
$(w)_0:=1$ and $(w)_j:=w(w+1)\cdots(w+j-1)$ for $j\ge1$.

\begin{lemma}[Finite lower and upper bounds]\label{E:lem:stieltjes}
For $x,k>0$ and $N\ge1$,
\begin{equation}
 L_{N,k}(x)\le S_k(x)\le\frac{1-kL_{N-1,k+1}(x)}x.
 \label{E:eq:stieltjes-bounds}
\end{equation}
The upper bound is the first resolvent entry of the same Jacobi matrix after its last diagonal entry is replaced by $N-1$.
\end{lemma}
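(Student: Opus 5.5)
The plan is to identify $S_k$ as the Stieltjes transform of the gamma probability measure $\mu_k$. Then $L_{N,k}$ is the $N$-th Padé (Gauss quadrature) approximant, and the upper bound is the Gauss--Radau approximant with node fixed at $0$. The sign of the remainder follows from the standard quadrature error formula applied to the completely monotone integrand $u\mapsto 1/(x+u)$.

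\textbf{Step 1: orthogonal polynomials.} The monic orthogonal polynomials for $d\mu_k$ are the rescaled Laguerre polynomials $L^{(k-1)}_j$. Their three-term recurrence has diagonal coefficients $k+2j$ and off-diagonal squares $(j+1)(j+k)$. Hence $J_{N,k}$ is the Jacobi matrix of $\mu_k$. The $N$-point Gauss rule with nodes $u_i$ (eigenvalues of $J_{N,k}$) and weights $w_i$ (squared first components of the eigenvectors) then satisfies
\[
 \sum_i\frac{w_i}{x+u_i}=e_0^T(xI+J_{N,k})^{-1}e_0=L_{N,k}(x).
\]

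\textbf{Step 2: the lower bound.} Set $f(u)=1/(x+u)$. The Gauss error equals $f^{(2N)}(\zeta)/(2N)!\cdot\int\pi_N^2\,d\mu_k$. Since $f^{(2N)}(u)=(2N)!/(x+u)^{2N+1}>0$, the error is nonnegative, which gives $L_{N,k}\le S_k$. To avoid invoking smoothness on an unbounded interval, I would instead use the exact identity
\[
 \frac1{x+u}-\frac{\pi_N(-x)^{-2}\pi_N(u)^2}{x+u}=\text{(Hermite interpolant of degree}\le 2N-1),
\]
which yields the remainder $\pi_N(-x)^{-2}\int \pi_N(u)^2/(x+u)\,d\mu_k>0$ directly.

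\textbf{Step 3: the upper bound.} Write
\[
 S_k(x)=\frac1x\int\Bigl(1-\frac{u}{x+u}\Bigr)d\mu_k=\frac1x\Bigl(1-\int\frac{u\,d\mu_k}{x+u}\Bigr).
\]
Since $u\,d\mu_k=k\,d\mu_{k+1}$, this equals $\bigl(1-kS_{k+1}(x)\bigr)/x$. Applying Step 2 to $S_{k+1}$ with $N-1$ points gives $S_{k+1}\ge L_{N-1,k+1}$, which is exactly the claimed upper bound, with $L_{0,k+1}=0$ giving the trivial bound $1/x$ when $N=1$.

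\textbf{Step 4: the Radau identification.} It remains to show that $(1-kL_{N-1,k+1}(x))/x$ equals the first resolvent entry of $J_{N,k}$ with last diagonal entry replaced by $N-1$. I would argue as follows. The modified matrix $\tilde J$ has $0$ as an eigenvalue; this is the Radau prescription, since the replacement value makes $\det\tilde J=0$ via the recurrence $\pi_N(0)$ relation. Its quadrature measure $\nu=w_0\delta_0+\sum w_i\delta_{u_i}$ satisfies $\int u\,d\nu=$ the $(N-1)$-point Gauss rule for $k\,d\mu_{k+1}$ by Christoffel's theorem. Therefore $\int (x+u)^{-1}d\nu=\bigl(1-\int u(x+u)^{-1}d\nu\bigr)/x$ matches. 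Checking the diagonal entry $N-1$ concretely is a continued-fraction computation: expand both sides as finite continued fractions with partial numerators $(j+1)(j+k)$ and compare last levels.

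This last verification is the step I expect to be the main obstacle. I would first try an induction on $N$ using the Schur complement of the last row, rather than quoting Christoffel's theorem.
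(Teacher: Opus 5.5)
Your Steps 1--3 prove both inequalities correctly. Step 3 is exactly the paper's argument: $u\,d\mu_k=k\,d\mu_{k+1}$ gives $xS_k+kS_{k+1}=1$, and one then applies the lower bound at order $N-1$.

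For the lower bound you take a different route. The paper never introduces nodes or weights. It integrates the pointwise inequality $\frac1{x+u}-2p(u)+(x+u)p(u)^2=\frac{(1-(x+u)p(u))^2}{x+u}\ge0$ for $\deg p<N$. Writing $p=\sum_j c_jP_j$ in the orthonormal basis gives $\int p\,d\mu_k=c_0$ and $\int(x+u)p^2\,d\mu_k=c^T(xI+J_{N,k})c$. Maximizing $2c_0-c^T(xI+J_{N,k})c$ then yields $e_0^T(xI+J_{N,k})^{-1}e_0$. This uses only that $J_{N,k}$ is the matrix of multiplication by $u$ on polynomials of degree $<N$.

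Your version additionally needs Gauss exactness to degree $2N-1$ and the Golub--Welsch identity $\sum_iw_i/(x+u_i)=e_0^T(xI+J_{N,k})^{-1}e_0$. In return it gives the exact error $\pi_N(-x)^{-2}\int\pi_N(u)^2(x+u)^{-1}\,d\mu_k$, which is well defined because the zeros of $\pi_N$ are positive. The two arguments are the same computation seen from two sides: at the optimal $p$ one has $1-(x+u)p(u)=\pi_N(u)/\pi_N(-x)$, so the slack in the paper's inequality is exactly your remainder.

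Step 4 is only a plan, so the second sentence of the lemma is not yet proved. The paper settles it by eliminating denominators in the finite continued fraction and inducting on $N$. Your Gauss--Radau route does work, and the check you expect to be the main obstacle is short.
\begin{enumerate}
\item[(a)] Changing only the last diagonal entry leaves $e_0^T\tilde J^me_0$ unchanged for $m\le2N-2$, since a walk from index $0$ needs $2N-1$ steps to use that entry. Hence the spectral measure $\nu$ of $\tilde J$ has mass one and reproduces the moments of $\mu_k$ up to degree $2N-2$.
\item[(b)] Expanding along the last row gives $\det(uI-\tilde J)=\pi_N(u)+(k+N-1)\pi_{N-1}(u)$. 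The recurrence $\pi_{j+1}(u)=(u-k-2j)\pi_j(u)-j(j+k-1)\pi_{j-1}(u)$ gives $\pi_j(0)=(-1)^j(k)_j$, so $\det\tilde J=0$. This is exactly where the value $N-1$ is forced.
\item[(c)] The atom of $\nu$ at $0$ drops out of $u\,d\nu$. The remaining $N-1$ nodes are simple, since $\tilde J$ is unreduced, and they carry a rule for $u\,d\mu_k=k\,d\mu_{k+1}$ exact to degree $2N-3$. Its node polynomial is therefore orthogonal, for $\mu_{k+1}$, to all polynomials of degree $\le N-2$. Since the weights are then fixed by a Vandermonde system, this rule must be $k$ times the $(N-1)$-point Gauss rule of $\mu_{k+1}$, whose nodes are positive.
\end{enumerate}
Your Step 1 identification then gives $\int\frac{u\,d\nu}{x+u}=kL_{N-1,k+1}(x)$. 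Hence $e_0^T(xI+\tilde J)^{-1}e_0=\int\frac{d\nu}{x+u}=\frac1x\bigl(1-kL_{N-1,k+1}(x)\bigr)$.

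You do not need Christoffel's theorem here, only the uniqueness of Gauss rules. This route also explains why the replacement value is $N-1$, which the continued-fraction induction verifies without motivating.
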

\begin{proof}
To prove the lower bound, let $p$ be a polynomial of degree less than
$N$. We use the identity
\[
 \frac1{x+u}-2p(u)+(x+u)p(u)^2
 =\frac{(1-(x+u)p(u))^2}{x+u}\ge0.
\]
Integrate this inequality against the gamma probability measure
and express the polynomial in its orthonormal basis. Maximizing the
resulting quadratic functional gives
$e_0^T(xI+J_{N,k})^{-1}e_0$, which proves the lower bound.
The matrix entries follow from the Laguerre recurrence, or from
Gram--Schmidt applied to $\int u^j\,d\mu_k=(k)_j$.
More explicitly, Rodrigues' expression
$u^{1-k}e^u(d/du)^j(e^{-u}u^{j+k-1})$ gives orthogonality after
$j$ integrations by parts. Comparing the leading and next coefficients
then yields the stated diagonal and off-diagonal entries.

The gamma measures obey $u\,d\mu_k=k\,d\mu_{k+1}$, so
\[
 xS_k(x)+kS_{k+1}(x)=1.
\]
Substituting the lower bound for $S_{k+1}$ at order $N-1$ gives
the upper bound. Eliminating the successive denominators in the finite
continued fraction gives the modified-Jacobi representation. For
$N=1$, this is $1/x$; the coefficients $(j+1)(j+k)$ then verify
the identity by induction. Hence both bounds are rational functions
of $k,x$. No unquantified quadrature error enters the argument.
\end{proof}

We now transfer these bounds to the Ewald kernel. If $a<1$, apply
Lemma~\ref{E:lem:stieltjes} with $k=1-a$ and multiply by $e^{-x}$.
For $a=1$, we have $G(1,x)=e^{-x}/x$. Larger first arguments are
handled by the recurrence
\begin{equation}
 G(a+1,x)=\frac{e^{-x}+aG(a,x)}x.
 \label{E:eq:G-recurrence}
\end{equation}
This follows by integration by parts. We apply the identity with
interval arithmetic, retaining the sign of every coefficient.
Since $G>0$, any negative lower endpoint may be replaced by zero.

To control the omitted shells, we also need a bound for large second
arguments. The defining integral gives
\begin{equation}
 0<G(a,x)\le\frac{e^{-x}}{x-\max(a-1,0)},\qquad x>\max(a-1,0),
 \label{E:eq:G-large-x}
\end{equation}
Indeed, substituting $t=1+v$ in the defining integral and using
$(1+v)^{a-1}\le e^{\max(a-1,0)v}$ gives this estimate.

We next estimate the shell multiplicities. Let $N_{\mathrm F}(n)$
count FCC vectors with $|v|^2=2^{1/3}n$, and let
$N_{\mathrm B}(n)$ count BCC vectors with $|v|^2=2^{-4/3}n$.
Then the bounds
\begin{equation}
 N_{\mathrm F}(n)\le30n,\qquad N_{\mathrm B}(n)\le18n
 \label{E:eq:shell-count}
\end{equation}
follow by fixing two integer coordinates and allowing at most two
choices for the third. For FCC, $|m|^2=2n$ gives the bound
$2(2\sqrt{2n}+1)^2<30n$. For BCC, use
$2(2\sqrt n+1)^2\le18n$. Imposing parity can only reduce these
counts.

For $a\in\R$, $x>0$ and $x\ge\max(a,0)$,
\begin{equation}
 G(a,x)\le(\max(a,0)+1)\frac{e^{-x}}x.
 \label{E:eq:G-shell-bound}
\end{equation}
For nonnegative $a$, the latter estimate is \cite[(35)]{SS}; the
nonpositive range follows by monotonicity in the first argument.

For $a\le1$, the integral proves the inequality directly. For
$a>1$, use \eqref{E:eq:G-recurrence} to reduce to $0<a_0\le1$.
Since $a\le x$, every positive recurrence factor is at most one,
and there are at most $a+1$ resulting terms. This proves the bound.

For $A>0$, $N\in\Z_{\ge0}$ and $k\in\Z_{\ge0}$,
\begin{equation}
 \sum_{n>N}n^ke^{-An}
 \le
 \frac{(N+1)^ke^{-A(N+1)}}{1-e^{-A}(1+1/(N+1))^k},
 \label{E:eq:geometric-tail}
\end{equation}
whenever the denominator is positive. The ratio of consecutive terms
is bounded by the constant appearing there, so comparison with a
geometric series proves the inequality. We check the positivity of
the denominator at every application.

In particular, let $N\in\Z_{\ge0}$, $A>0$ and assume
$A(N+1)\ge\max(a,0)$. Then \eqref{E:eq:G-shell-bound} is valid on every
omitted shell, and \eqref{E:eq:shell-count} gives the reference-tail bound
\begin{equation}
 \sum_{n>N}N_L(n)G(a,An)
 \le\frac{C_L(\max(a,0)+1)}{A}
 \frac{e^{-A(N+1)}}{1-e^{-A}},
 \label{E:eq:reference-tail-G}
\end{equation}
where $C_{\mathrm F}=30$ and $C_{\mathrm B}=18$.
For the reference value, the cutoffs are $N=16$ at FCC and $N=64$
at BCC, with $A$ equal to $\pi2^{1/3}$ and $\pi2^{-4/3}$,
respectively. Derivative estimates shift the kernel order, and local
deformations may change the Gaussian rate. In each such application,
the cutoff condition is checked with the actual order and rate.

\subsection{The first three variations at FCC}\label{E:sec:derivative-first}
We now turn to the local comparison. We first compute the vanishing
linear term and the two Hessian coefficients in the exponential
coordinates \eqref{E:eq:exp-coordinates}. At FCC, these are
$Y(X):=\gFE^Te^X\gFE$ for $X\in\Symzero$.
For a trace-free symmetric matrix $A$ of Frobenius norm one, set
\[
 Y(r)=\gFE^{T}e^{rA}\gFE,\qquad
 f_s(r)=\HH_s(Y(r))-\HH_s(\YF).
\]
A vector $v$ of the physical FCC lattice contributes through
$q_v(r)=v^Te^{rA}v$, and a vector $w$ of its dual contributes through
$q_w^*(r)=w^Te^{-rA}w$. In particular,
\begin{equation}
 q_v^{(j)}(r)=v^TA^je^{rA}v,\qquad
 (q_w^*)^{(j)}(r)=w^T(-A)^je^{-rA}w.
 \label{E:eq:radial-q-derivatives}
\end{equation}
The Gaussian estimates in Section~\ref{E:sec:ewald} justify
termwise differentiation uniformly on compact exponent intervals and
fixed shape balls. Differentiating at the reference lattice, we obtain
\begin{align}
 f_s'(0)={}&-\pi\sum_{v\in\FCC\setminus\{0\}}
 G(s+1,\pi|v|^2)v^TAv\nonumber\\
 &+\pi\sum_{w\in\B\setminus\{0\}}
 G(5/2-s,\pi|w|^2)w^TAw.
 \label{E:eq:explicit-first-variation}
\end{align}
On each cubic shell, sign changes make the mixed second moments
vanish, while coordinate permutations make the diagonal moments equal.
For a shell $S$ of squared length $q$ and cardinality $N$, this gives
\[
 \sum_{v\in S}vv^T=\frac{Nq}{3}I,
 \qquad \sum_{v\in S}v^TAv=\frac{Nq}{3}\tr A=0.
\]
Each sum in \eqref{E:eq:explicit-first-variation} therefore vanishes separately:
\begin{equation}
 f_s(0)=f_s'(0)=0\qquad(s\in\R).
 \label{E:eq:stationarity-direct}
\end{equation}
This proves stationarity in every tangent direction. The local
comparison now reduces to lower bounds for the two Hessian components
and control of the higher-order terms.

For the second variation, let $a$ be a scalar kernel argument and
$q(r)$ a positive function. Two differentiations give
\begin{equation}
 \frac{d^2}{dr^2}G(a,\pi q(r))
 =\pi^2G(a+2,\pi q(r))(q'(r))^2
  -\pi G(a+1,\pi q(r))q''(r).
 \label{E:eq:second-variation-direct}
\end{equation}
Let $E_{ij}:=e_ie_j^T$ be the matrix with entry $1$ in position $(i,j)$
and zero elsewhere. At $r=0$, use the two unit directions
\[
 A_D=\frac{\diag(-1,1,0)}{\sqrt2},\qquad
 A_O=\frac{E_{12}+E_{21}}{\sqrt2}.
\]
For a physical vector $v$, define
\[
 d_D(v)=\frac{(v_2^2-v_1^2)^2}{2},\qquad
 d_O(v)=2v_1^2v_2^2,\qquad
 m(v)=\frac{v_1^2+v_2^2}{2}.
\]
The two exact Hessian coefficients are
\begin{align}
 h_\kappa(s)={}&
 \sum_{v\in\FCC\setminus\{0\}}
 \bigl[\pi^2d_\kappa(v)G(s+2,\pi|v|^2)
       -\pi m(v)G(s+1,\pi|v|^2)\bigr]\nonumber\\
 &+\sum_{w\in\B\setminus\{0\}}
 \bigl[\pi^2d_\kappa(w)G(7/2-s,\pi|w|^2)
       -\pi m(w)G(5/2-s,\pi|w|^2)\bigr],
 \quad\kappa\in\{D,O\}.
 \label{E:eq:hessian-explicit-new}
\end{align}
The invariant decomposition proved in Section~\ref{E:sec:local2} implies that,
for $A=A_D^{\mathrm{part}}+A_O^{\mathrm{part}}$ with diagonal and off-diagonal
parts,
\[
 f_s''(0)=h_D(s)\norm{A_D^{\mathrm{part}}}^2
          +h_O(s)\norm{A_O^{\mathrm{part}}}^2.
\]
Thus lower bounds for $h_D$ and $h_O$ control the full Hessian.
Neither component may be omitted.

For a closed exponent interval $J=[a,b]$, monotonicity of $G$ in its first
argument gives, simultaneously for every $s\in J$,
\begin{align}
 h_\kappa(s)\ge{}&
 \sum_{v\in\FCC\setminus\{0\}}
 \bigl[\pi^2d_\kappa(v)G(a+2,\pi|v|^2)
       -\pi m(v)G(b+1,\pi|v|^2)\bigr]\nonumber\\
 &+\sum_{w\in\B\setminus\{0\}}
 \bigl[\pi^2d_\kappa(w)G(7/2-b,\pi|w|^2)
       -\pi m(w)G(5/2-a,\pi|w|^2)\bigr].
 \label{E:eq:hessian-cell-new}
\end{align}
To bound these sums from below, we retain the finite terms, discard
the omitted positive squares and subtract an upper bound for the
omitted negative part. Shell symmetry gives
$\sum_{v\in S}m(v)=Nq/3$, reducing the latter estimate to a scalar
radial sum. The tail bounds in Section~\ref{E:sec:kernels} then
apply.

It remains to bound the Taylor remainder throughout the
neighbourhood. A third differentiation gives
\begin{align}
 \frac{d^3}{dr^3}G(a,\pi q(r))={}&
 -\pi^3G(a+3,\pi q(r))(q'(r))^3\nonumber\\
 &+3\pi^2G(a+2,\pi q(r))q'(r)q''(r)
 -\pi G(a+1,\pi q(r))q'''(r).
 \label{E:eq:third-variation-direct}
\end{align}
Put $c=\sqrt{2/3}$. For a trace-free symmetric unit matrix,
$\opnorm A\le c$. Indeed, $a_1=-a_2-a_3$ implies
$a_1^2\le2(a_2^2+a_3^2)=2(1-a_1^2)$ for each eigenvalue.
Fix $R_0>0$ and set $\eta=e^{cR_0}$. By
\eqref{E:eq:radial-q-derivatives}, for $0\le r\le R_0$ we have
\[
 q(r)\ge\eta^{-1}|v|^2,\qquad
 |q^{(j)}(r)|\le c^j\eta|v|^2\quad(j=1,2,3).
\]
The signs of $A$ and $-A$ do not affect these absolute estimates. Since $G$
is decreasing in its second argument, a uniform third-derivative majorant on
$J=[a,b]$ is
\begin{align}
 M_3(J,R_0)={}&c^3\sum_{(L,\sigma)\in\{(\FCC,b),(\B,3/2-a)\}}
 \sum_{v\in L\setminus\{0\}}
 \Bigl[(\pi\eta|v|^2)^3G(\sigma+3,\pi\eta^{-1}|v|^2)\nonumber\\
 &\hspace{18mm}+3(\pi\eta|v|^2)^2G(\sigma+2,\pi\eta^{-1}|v|^2)
 +(\pi\eta|v|^2)G(\sigma+1,\pi\eta^{-1}|v|^2)\Bigr].
 \label{E:eq:M3-new}
\end{align}
Consequently, $|f_s'''(r)|\le M_3(J,R_0)$ for every unit $A$,
every $s\in J$ and every $0\le r\le R_0$. This is a bound for the
complete series. When it is evaluated by a finite sum, the positive
majorants for all omitted shells must be added.

\begin{lemma}[A third-order local criterion]\label{E:lem:third-criterion}
Suppose $h_D(s),h_O(s)\ge m_J>0$ for every $s\in J$ and
$|f_s'''(r)|\le M_J$ for every unit tangent direction and $0\le r\le R_0$.
For $0<R\le R_0$ satisfying $m_J/2-M_JR/6>0$, one has
\begin{equation}
 \HH_s(Y(X))-\HH_s(\YF)
 \ge\left(\frac{m_J}{2}-\frac{M_JR}{6}\right)\norm X^2,
 \quad s\in J,\quad\norm X\le R.
 \label{E:eq:third-criterion}
\end{equation}
\end{lemma}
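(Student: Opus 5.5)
Fix $s\in J$ and $X\in\Symzero$ with $0<\norm X\le R$; the case $X=0$ is trivial. Write $X=rA$ with $r=\norm X\in(0,R]$ and $A$ a trace-free symmetric matrix of Frobenius norm one. Then $Y(X)=\gFE^Te^{rA}\gFE=Y(r)$ in the radial notation, so the left-hand side of \eqref{E:eq:third-criterion} is exactly $f_s(r)$. The plan is a one-variable Taylor expansion of $f_s$ on $[0,r]$ with Lagrange (or integral) remainder. Termwise differentiation, and hence $f_s\in C^3$ on $[0,R_0]$, is justified by the locally uniform Gaussian majorants of Section~\ref{E:sec:ewald}.

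By \eqref{E:eq:stationarity-direct}, $f_s(0)=f_s'(0)=0$. For the second derivative we decompose $A=A_D^{\mathrm{part}}+A_O^{\mathrm{part}}$ into its diagonal and off-diagonal parts. The invariant Hessian decomposition (stated before \eqref{E:eq:hessian-cell-new}) gives
\[
 f_s''(0)=h_D(s)\norm{A_D^{\mathrm{part}}}^2+h_O(s)\norm{A_O^{\mathrm{part}}}^2\ge m_J\bigl(\norm{A_D^{\mathrm{part}}}^2+\norm{A_O^{\mathrm{part}}}^2\bigr)=m_J.
\]
The last equality holds because the two parts are Frobenius-orthogonal, so their squared norms sum to $\norm A^2=1$. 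Note that this uses both hypotheses $h_D,h_O\ge m_J$; a bound on one component alone would not suffice.

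Taylor's theorem gives
\[
 f_s(r)=\tfrac12 f_s''(0)r^2+\tfrac1{6}f_s'''(\tau)r^3
\]
for some $\tau\in(0,r)\subset[0,R_0]$. The hypothesis $|f_s'''|\le M_J$ along this unit direction then yields
\[
 f_s(r)\ge\Bigl(\tfrac{m_J}2-\tfrac{M_Jr}6\Bigr)r^2\ge\Bigl(\tfrac{m_J}2-\tfrac{M_JR}6\Bigr)\norm X^2,
\]
since $r\le R$ and $M_J\ge0$. Since the coefficient is positive by assumption, the bound is strict positivity for $X\ne0$.

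The only nonroutine point is that the derivative bound and the Hessian bound must be uniform over all unit directions $A$ simultaneously. The argument therefore relies on the invariant decomposition of the Hessian into the diagonal and off-diagonal blocks rather than on checking the two test directions $A_D,A_O$ alone. I would cite Section~\ref{E:sec:local2} for this step, and otherwise treat it as given, as the paper does.
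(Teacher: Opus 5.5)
Your proof is correct and follows the paper's argument. You write $X=rA$, use stationarity \eqref{E:eq:stationarity-direct} and the invariant Hessian decomposition to get $f_s''(0)\ge m_J$, and bound the third-order Taylor remainder by $M_Jr^3/6$ before replacing $r$ by $R$. The only differences are cosmetic: you use the Lagrange form of the remainder where the paper uses the integral form $\frac12\int_0^r(r-u)^2f_s'''(u)\,du$, and you state explicitly the step $f_s''(0)\ge m_J$, which the paper leaves implicit.
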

\begin{proof}
Write $X=rA$ with $\norm A=1$. By
\eqref{E:eq:stationarity-direct}, the constant and linear terms
vanish. Taylor's formula with integral remainder therefore gives
\[
 f_s(r)=\frac12 f_s''(0)r^2
       +\frac12\int_0^r(r-u)^2f_s'''(u)\,du
 \ge\frac{m_Jr^2}{2}-\frac{M_Jr^3}{6}.
\]
The estimate $r\le R$ gives the required coefficient. For $X=0$,
the assertion holds directly.
\end{proof}

\begin{lemma}[Third-order bounds]\label{app:lem:third-inputs}
For every row $J$ of Table~\ref{E:tab:third-cells}, the two Hessian
coefficients satisfy $h_D(s),h_O(s)\ge m_J$ for $s\in J$, and
$M_3(J,1/100)\le M_J$. The same row satisfies
$m_J/2-M_J/3000>1/200$.
\end{lemma}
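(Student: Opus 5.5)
This is a finite, computer-assisted verification on each exponent cell $J=[a,b]$ listed in Table~\ref{E:tab:third-cells}. On each cell three numbers must be certified: a lower bound $m_J$ for both Hessian coefficients, an upper bound $M_J$ for the third-derivative majorant $M_3(J,1/100)$, and the scalar margin $m_J/2-M_J/3000>1/200$. The margin is the coefficient of Lemma~\ref{E:lem:third-criterion} at $R=1/500$. Since $M_3(J,1/100)$ already carries $R_0=1/100$, the same $M_J$ can be used at the smaller radius. The plan is to make every quantity a finite rational computation plus an explicitly bounded tail, so that no unquantified error enters.

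\textbf{Hessian coefficients.} Start from the cellwise bound \eqref{E:eq:hessian-cell-new}, whose kernel orders are fixed endpoint values uniform in $s\in J$.
\begin{itemize}
\item Keep the FCC shells up to $|v|^2=16\rho$ and the BCC shells up to the analogous cutoff $N=64$, using the explicit shell vectors for the quantities $d_\kappa$ and $m$.
\item Bound each positive term $G(\cdot,\cdot)$ from below and each negative term from above. Use the Jacobi resolvent bounds of Lemma~\ref{E:lem:stieltjes} when the first argument is below one, and the recurrence \eqref{E:eq:G-recurrence} in interval arithmetic otherwise.
\item Discard the omitted positive $d_\kappa$ terms.
\item For the omitted negative terms, use the shell identity $\sum_S m=Nq/3$. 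Then apply the tail bound \eqref{E:eq:reference-tail-G}, with the kernel order shifted to $b+1$ (primal) or $5/2-a$ (dual) and the extra factor $n$ absorbed through \eqref{E:eq:geometric-tail}. Check the cutoff condition $A(N+1)\ge\max(\text{order},0)$ in each case.
\end{itemize}
This gives certified lower bounds for $h_D$ and $h_O$ on $J$. Take $m_J$ to be their minimum, rounded down.

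\textbf{Third-derivative majorant.} Evaluate \eqref{E:eq:M3-new} with $\eta=e^{c/100}$, replaced by a rational upper bound. For the finite shells, upper-bound each $G(\sigma+j,\pi\eta^{-1}|v|^2)$ by the same rational methods. For the omitted shells the rate is $A=\pi\eta^{-1}\rho$ (respectively $\pi\eta^{-1}2^{-4/3}$), and the polynomial prefactors $(\pi\eta|v|^2)^j$ grow like $n^j$. I would combine \eqref{E:eq:G-shell-bound} with the multiplicity bounds \eqref{E:eq:shell-count}, reducing each tail to $\sum_{n>N}n^{j}e^{-An}$ with $j\le3$, and control that sum by \eqref{E:eq:geometric-tail}.

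\textbf{Main obstacle.} The hardest part is the dual (BCC) side near $s=3/2$. There the order $\sigma+3=9/2-a$ is large while the Gaussian rate $\pi 2^{-4/3}$ is small. The shell bound \eqref{E:eq:G-shell-bound} then needs $A(N+1)\ge\sigma+3$, and the polynomial factors make the tail decay slowly. I would handle this by raising the BCC cutoff where necessary, and by splitting $[3/2,8]$ into enough cells that $m_J$ is not too pessimistic. The latter matters because the lower bound takes its positive parts at $a$ and its negative parts at $b$. The cells in Table~\ref{E:tab:third-cells} are taken as given; the remaining check is the rational inequality $m_J/2-M_J/3000>1/200$ on each row, done in outward-rounded arithmetic as described in Appendix~\ref{app:numerical}.
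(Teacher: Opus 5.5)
Your plan is the paper's proof. The paper applies the rational kernel bounds and complete shell tails of Section~\ref{E:sec:kernels} to the cellwise Hessian lower sums \eqref{E:eq:hessian-cell-new} and the majorant \eqref{E:eq:M3-new} on the thirteen closed cells, then checks $m_J/2-M_J/3000>1/200$ by rational substitution; the least value is $7/1200$, on $[3/2,2]$.

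Your ``main obstacle'' is overstated. With the BCC cutoff $N=64$ one has $A(N+1)\approx 80$, against a dual kernel order of at most $3$, so the dual tail is not delicate. The genuinely tight case is the first row, where the margin is only $7/1200$ against the required $6/1200$.
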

\begin{proof}[Computer-assisted proof]
Apply the rational kernel bounds and complete tails of
Section~\ref{E:sec:kernels} to the lower Hessian sums
\eqref{E:eq:hessian-cell-new} and the remainder majorant
\eqref{E:eq:M3-new}. The thirteen closed-interval evaluations in
Appendix~\ref{app:epstein-local-data} give
Table~\ref{E:tab:third-cells}. The last inequality is then a
rational substitution; its least left-hand side is $7/1200$.
\end{proof}

\begin{table}[ht]\centering
\caption{Numerical recording table of Lemma \ref{app:lem:third-inputs}. The Hessian bounds hold at FCC;
$M_J$ holds throughout the ball of radius $1/100$. The conclusion uses
radius $1/500$.}\label{E:tab:third-cells}
\begin{tabular}{ccc}\toprule
Exponent interval&$m_J$&$M_J$\\\midrule
$[3/2,2]$&$23/1000$&$17$\\
$[2,5/2]$&$30/1000$&$19$\\
$[5/2,3]$&$39/1000$&$21$\\
$[3,7/2]$&$51/1000$&$26$\\
$[7/2,4]$&$67/1000$&$32$\\
$[4,9/2]$&$89/1000$&$41$\\
$[9/2,5]$&$122/1000$&$54$\\
$[5,11/2]$&$171/1000$&$74$\\
$[11/2,6]$&$244/1000$&$105$\\
$[6,13/2]$&$358/1000$&$154$\\
$[13/2,7]$&$537/1000$&$233$\\
$[7,15/2]$&$824/1000$&$362$\\
$[15/2,8]$&$1295/1000$&$577$\\\bottomrule
\end{tabular}
\end{table}

We now prove Theorem~\ref{E:thm:third-ball}.

\begin{proof}
By Lemma~\ref{app:lem:third-inputs}, the required Hessian bounds
and third-derivative estimates hold on each of the thirteen closed
intervals, with $R_0=1/100$. Taking $R=1/500$, the same lemma gives
$m_J/2-M_JR/6>1/200$. Lemma~\ref{E:lem:third-criterion} proves
the comparison on each interval. Their union includes every exponent
in the stated range, including the shared endpoints.
\end{proof}

The criterion also gives an explicit radius on any interval for
which the constants are known. If $M_J>0$, the choice
$0<R\le\min\{R_0,3m_J/(2M_J)\}$ gives coefficient $m_J/4$.
If $M_J=0$, every $0<R\le R_0$ gives coefficient $m_J/2$.
We next improve these radii by retaining the signed higher-order
terms. The exterior comparisons remain separate and are proved in
Sections~\ref{E:sec:exponent-certificate} and~\ref{E:sec:power-cert}.

\subsection{Signed invariant estimates and larger neighbourhoods}\label{E:sec:local2}\label{app:invariants}
To enlarge the neighbourhood, we determine the homogeneous Taylor
terms from cubic invariance. The physical FCC and BCC lattices in
\eqref{E:eq:FCC}--\eqref{E:eq:BCC} are invariant under coordinate
sign changes and permutations. For a radial lattice sum whose first
variation is justified by local normal convergence, these symmetries
make the gradient a scalar matrix. Its restriction to the trace-free
tangent space vanishes.

We use the orthonormal tangent basis specified immediately after (21)
in \cite[Section~3, pp.~13--14]{SS}:
\[
 b_1=\diag(-1,1,0)/\sqrt2,\quad
 b_2=\diag(-1,-1,2)/\sqrt6,
\]
\[
 b_3=(E_{12}+E_{21})/\sqrt2,\quad
 b_4=(E_{13}+E_{31})/\sqrt2,\quad
 b_5=(E_{23}+E_{32})/\sqrt2.
\]
Write $X=\sum x_jb_j$, so $\norm X^2=\sum x_j^2$, and put
\[
 D^2=x_1^2+x_2^2,\qquad O^2=x_3^2+x_4^2+x_5^2.
\]
Choose $g_0=\gFE$ at FCC and $g_0=\gBE$ at BCC. For
$F_s(X)=\HH_s(g_0^Te^Xg_0)$, write
$F_s(X)-F_s(0)=q_2(X;s)+q_3(X;s)+q_4(X;s)+O(\norm X^5)$,
where the terms are homogeneous in the tangent coordinates. We suppress
$s$ in the coefficient notation. By the invariant spaces in
\cite[Section~3, (21)]{SS}, the quadratic and cubic terms have the
form
\begin{align}
 q_2&=\tfrac12h_DD^2+\tfrac12h_OO^2,\\
 q_3&=c_1p_{31}+c_2p_{32}+c_3p_{33},
 \label{E:eq:low-invariants}
\end{align}
where
\begin{align*}
 p_{31}&=-3x_1^2x_2+x_2^3,\qquad p_{32}=x_3x_4x_5,\\
 p_{33}&=\sqrt3x_1(x_4^2-x_5^2)+x_2(2x_3^2-x_4^2-x_5^2).
\end{align*}
The quartic invariants are
\[
 p_{41}=D^4,\quad p_{42}=D^2O^2,\quad p_{43}=O^4,\quad
 p_{44}=x_3^4+x_4^4+x_5^4,
\]
\[
 p_{45}=3x_1^2x_3^2+2\sqrt3x_1x_2(x_4^2-x_5^2)
       +x_2^2(-x_3^2+2x_4^2+2x_5^2),
\]
and $q_4=\sum_{j=1}^5\kappa_jp_{4j}$. The basis polynomials are
those in \cite[(21)]{SS}, but the coefficients here belong to
$\HH_s$ on the stated exponent intervals, not to the height
expansion in that reference. We evaluate them in the specified
physical frame and tangent basis. Under an orthogonal change of frame,
both lattice and tangent matrices must be conjugated. Norms are
preserved, whereas the signed coefficient tables must be read in
their original coordinates.

On $D^2+O^2=1$,
\begin{equation}
 |p_{31}|\le D^3,\quad
 |p_{32}|\le\frac{O^3}{3\sqrt3},\quad
 |p_{33}|\le2DO^2,\quad |p_{45}|\le3D^2O^2.
 \label{E:eq:invariant-bounds}
\end{equation}
The first bound follows from the triple-angle formula, and the second
from the arithmetic--geometric mean inequality. For the third, view
the polynomial as a linear expression in $(x_1,x_2)$; its coefficient
vector has norm at most $2O^2$. For the fourth, the coefficients of
$x_3^2,x_4^2,x_5^2$ are quadratic forms in $(x_1,x_2)$ whose
eigenvalues lie between $-1$ and $3$. Each is therefore bounded in
absolute value by three times $D^2$.

For completeness, we verify that these polynomials span the invariant
spaces. We write them in physical coordinates and use the sign-change
and permutation symmetries. This also explains why finitely many
coefficient evaluations determine the full Taylor terms.

Write $a_i=X_{ii}$ and $b_{ij}=X_{ij}$, where $a_1+a_2+a_3=0$.
A diagonal sign change acts on the three off-diagonal entries by signs
whose product is one. An invariant monomial therefore has all three
off-diagonal exponents even or all three odd. Coordinate permutations act
simultaneously on the diagonal entries and on the opposite off-diagonal
positions.

The basis in Section~\ref{E:sec:local2} satisfies
\[
 D^2=\sum_i a_i^2,\qquad O^2=2\sum_{i<j}b_{ij}^2.
\]
Its cubic and mixed quartic polynomials have the physical-coordinate forms
\begin{align}\label{eq:physical-invariants}
 p_{31}&=\sqrt6\sum_i a_i^3,
 &p_{32}&=2\sqrt2\,b_{12}b_{13}b_{23},\notag\\
 p_{33}&=2\sqrt6\sum_{\{i,j,k\}=\{1,2,3\},\,i<j}a_kb_{ij}^2,
 &p_{45}&=3D^2O^2-12\sum_{\{i,j,k\}=\{1,2,3\},\,i<j}a_k^2b_{ij}^2.
\end{align}
Each sum on the second line has three terms, one for each unordered pair
$i<j$ and its complementary index $k$. Direct substitution of $X=\sum x_jb_j$
proves these identities, and hence invariance of the displayed polynomials.
The remaining quartics are $D^4$, $D^2O^2$, $O^4$ and
$4\sum_{i<j}b_{ij}^4$.

We count dimensions by the degree in the off-diagonal entries.
At degree two, the diagonal and off-diagonal quadratic sums give
two independent invariants. At degree three, there is one symmetric
diagonal cubic, one product of the three off-diagonal entries and one
contraction of the diagonal entries with off-diagonal squares.
The trace relation leaves one invariant of each type.

At degree four, the purely diagonal space has dimension one. For a
mixed term $b_{ij}^2$, the diagonal coefficient is symmetric in
$a_i,a_j$. Using $a_i+a_j=-a_k$, it is spanned by $\sum a_i^2$
and $a_k^2$, giving two mixed invariants. The purely off-diagonal
terms are symmetric quadratics in the three squares and give two
more. A term $b_{12}b_{13}b_{23}$ times an invariant diagonal linear
form vanishes by the trace relation. The three dimensions are
therefore $2,3,5$.

The evaluations in \eqref{E:eq:quartic-recovery} show that the five
quartics are independent; the three cubic evaluations give the
corresponding independence at degree three. Hence the recovery
formulas determine the full invariant polynomials. Their symbolic
checks are recorded in Appendix~\ref{app:elementary-revision}.

We now compute the coefficients. Fix $A\in\Symzero$ and define
$\nu_j(v)=v^TA^jv$ for $j\ge0$, so $\nu_0(v)=|v|^2$.
These are vector moments, not the Taylor polynomials $q_j$.
For a primary vector the squared length is $v^Te^{rA}v$; for a
dual vector, replace $A$ by $-A$. At $r=0$, the successive kernel
derivatives are
\begin{align}
 \frac{d^2}{dr^2}G(a,\pi q(r))={}&
 \pi^2G(a+2,\pi \nu_0)\nu_1^2-\pi G(a+1,\pi \nu_0)\nu_2,\\
 \frac{d^3}{dr^3}G(a,\pi q(r))={}&
 -\pi^3G(a+3,\pi \nu_0)\nu_1^3
 +3\pi^2G(a+2,\pi \nu_0)\nu_1\nu_2-\pi G(a+1,\pi \nu_0)\nu_3,\\
 \frac{d^4}{dr^4}G(a,\pi q(r))={}&
 \pi^4G(a+4,\pi \nu_0)\nu_1^4
 -6\pi^3G(a+3,\pi \nu_0)\nu_1^2\nu_2\nonumber\\
 &+\pi^2G(a+2,\pi \nu_0)(3\nu_2^2+4\nu_1\nu_3)
 -\pi G(a+1,\pi \nu_0)\nu_4.
 \label{E:eq:coefficient-derivatives}
\end{align}
Divide by the appropriate factorial to obtain Taylor coefficients. All moments are finite integer sums multiplied by explicitly enclosed algebraic constants.

The two Hessian coefficients are obtained in the directions $b_2$
and $b_3$. For the cubic term, evaluate at $b_2$,
$b_3+b_4+b_5$ and $b_2+b_3$; this gives
$c_1,c_2,c_1+2c_3$. For the quartic term, evaluate at $b_2$,
$b_3$, $b_3+b_4$, $b_1+b_3$ and $b_2+b_3$, and denote the
values by $z_0,\ldots,z_4$. Solving for the coefficients yields
\begin{align}
 \kappa_1&=z_0,&\kappa_3&=(z_2-2z_1)/2,&\kappa_4&=z_1-\kappa_3,\\
 \kappa_5&=(z_3-z_4)/4,&\kappa_2&=z_4-\kappa_1-\kappa_3-\kappa_4+\kappa_5.
 \label{E:eq:quartic-recovery}
\end{align}
Together with \eqref{E:eq:invariant-bounds}, these coefficient
identities control the Taylor polynomial in every unit tangent
direction, rather than only at the evaluation points.

We shall apply these formulas on the cells
\eqref{E:eq:local-cells}. Lemma~\ref{E:lem:signed-inputs} states
the uniform coefficient bounds, and
Appendix~\ref{app:epstein-local-data} records their evaluation,
including the signed terms and complete remainders.

It remains to control the fifth-order remainder. If $\tr A=0$ and
$\norm A=1$, every eigenvalue of $A$ is bounded in absolute value
by $c=\sqrt{2/3}$. Indeed, $a_1=-a_2-a_3$ implies
$a_1^2\le2(1-a_1^2)$. Thus, for $0\le r\le R$, we have
\[
 |q^{(j)}(r)|\le c^je^{cR}|v|^2,\qquad q(r)\ge e^{-cR}|v|^2.
\]
For a compact exponent cell $J$ and $R>0$, define
\[
 M_5(J,R)=\sup_{\substack{s\in J,\ A\in\Symzero,\ \norm A=1\\0\le r\le R}}
 \left|\frac{d^5}{dr^5}F_s(rA)\right|.
\]
Set $\eta=e^{cR}$ and
$(\beta_1,\ldots,\beta_5)=(1,15,25,10,1)$.
Applying the repeated chain-rule argument of
\cite[(34)--(36)]{SS} at order five, uniformly for $J$, gives
\begin{equation}
 M_5(J,R)\le c^5\sum_{L\in\{\text{primary,dual}\}}\ \sum_{v\in L\setminus\{0\}}
 \sum_{j=1}^5\beta_j(\pi\eta|v|^2)^j
 G(a_L^++j,\pi\eta^{-1}|v|^2),
 \label{E:eq:M5}
\end{equation}
where $a_L^+$ is the largest scalar kernel argument over the exponent cell.

We also account for the terms omitted in the finite evaluation;
the details are in Appendix~\ref{app:numerical}. In a Hessian
lower bound, omitted positive squares may be discarded. For unit
Hessian directions, shell symmetry gives
$\sum\nu_2=N_L(n)|v|^2/3$, so \eqref{E:eq:shell-count} and
\eqref{E:eq:G-shell-bound} bound the omitted negative part.
Cubic and quartic errors are bounded in absolute value by their
chain-rule expressions. The interpolation directions are not all
unit vectors: a squared norm $d$ contributes a factor $d^{k/2}$
to an order-$k$ error. For \eqref{E:eq:M5}, a shell with
$|v|^2=\omega n$ contributes at most
\[
 C_Lc^5\sum_{j=1}^5\beta_j
 \frac{(\pi\eta\omega)^j(\max(a_L^++j,0)+1)}{\pi\eta^{-1}\omega}
 n^je^{-\pi\eta^{-1}\omega n}.
\]
We sum this majorant by \eqref{E:eq:geometric-tail}, checking the
condition on the first omitted argument and the kernel order before
each use. The resulting estimates therefore control the full series,
not only its retained shells.

We can now combine the quadratic, cubic and quartic estimates with
the remainder. Let $h_D^-,h_O^-$ be lower Hessian bounds and take
$C_1\ge|c_1|$ and $C_{23}\ge|c_2|/(3\sqrt3)+2|c_3|$.
Set
\begin{align*}
 K_D&=\min(\kappa_1^-,0),\\
 K_O&=\min(\kappa_2^-,0)+\min(\kappa_3^-,0)+\min(\kappa_4^-,0)-3|\kappa_5|_+.
\end{align*}
For a unit tangent direction, \eqref{E:eq:invariant-bounds} yields
\begin{equation}
 F_s(rA)-F_s(0)\ge r^2\mathcal C(J,R),\qquad 0\le r\le R,
 \label{E:eq:local-C}
\end{equation}
where all coefficient bounds are uniform for $s\in J$, $M_5^+$ is an
upper bound for $M_5(J,R)$, and
\begin{equation}
 \mathcal C(J,R)=
 \min\left\{\frac{h_D^-}{2}-RC_1+R^2K_D,
             \frac{h_O^-}{2}-RC_{23}+R^2K_O\right\}
 -\frac{R^3M_5^+}{120}.
 \label{E:eq:local-C-formula}
\end{equation}
Indeed, on a unit direction, $D^2+O^2=1$ and
\[
 \begin{split}
 q_3&\ge-C_1D^2-C_{23}O^2,\\
 q_4&\ge K_DD^2+K_OO^2.
 \end{split}
\]
For the cubic estimate, use $D^3\le D^2$, $O^3\le O^2$ and
$DO^2\le O^2$. For the quartic estimate, use $D^4\le D^2$,
$D^2O^2\le O^2$, $O^4\le O^2$ and
$\sum_{j=3}^5x_j^4\le O^2$, keeping positive contributions when
they improve the bound. Since $K_D,K_O\le0$, replacing $r$ by
$R$ in the negative terms preserves a lower bound.
The Taylor remainder is at least $-M_5^+r^5/120$.
Dividing by $r^2$ and using $r\le R$ proves
\eqref{E:eq:local-C-formula}. At $r=0$, the comparison holds
directly.

The exponent intervals used in the following bounds are
\begin{equation}
 J_j=[3/2+j/8,\,3/2+(j+1)/8],\qquad 0\le j\le51.
 \label{E:eq:local-cells}
\end{equation}
\begin{lemma}[Uniform signed FCC coefficient bounds]
\label{E:lem:signed-inputs}
For every closed cell $J_j$ in \eqref{E:eq:local-cells}, let $R_j$ and
$c_j$ be the radius and lower bound assigned by
Table~\ref{E:tab:low-local}. The coefficient
$\mathcal C(J_j,R_j)$ defined in \eqref{E:eq:local-C-formula} satisfies
$\mathcal C(J_j,R_j)>c_j$, with the complete fifth-order remainder and
all omitted-shell bounds included.
\end{lemma}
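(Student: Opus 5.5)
This is a computer-assisted statement, so the plan is to certify, cell by cell, the finitely many rational quantities that enter $\mathcal C(J_j,R_j)$ in \eqref{E:eq:local-C-formula}, with outward rounding throughout.

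\emph{Step 1: kernel bounds on a cell.} Fix a cell $J_j=[a,b]$. Every coefficient is a lattice sum of terms $G(\sigma+k,\pi|v|^2)$, where $\sigma=s$ on the primal FCC side and $\sigma=3/2-s$ on the dual BCC side. Since $G$ increases in its first argument, each term with positive sign is bounded below by evaluating at the left end of the relevant argument range, and each term with negative sign is bounded above at the right end; this is exactly as in \eqref{E:eq:hessian-cell-new}. For the signed cubic and quartic coefficients I will not track signs termwise. Instead I will enclose each $G$ factor in the interval $[G(\sigma^-+k,\cdot),G(\sigma^++k,\cdot)]$. The kernel values come from the Jacobi-resolvent bounds of Lemma~\ref{E:lem:stieltjes} combined with the recurrence \eqref{E:eq:G-recurrence}, evaluated in rational interval arithmetic. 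Squared lengths involve $\rho$, which I enclose by $1.259<\rho<1.26$ and its refinements.

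\emph{Step 2: the coefficients.} With these enclosures I will evaluate:
\begin{itemize}
\item the moments $\nu_j(v)$ for the finitely many retained FCC and BCC shells, taken in the directions $b_2,b_3$ for $h_D,h_O$;
\item the three cubic test directions, which give $c_1,c_2,c_1+2c_3$;
\item the five quartic directions, which give $z_0,\dots,z_4$ through \eqref{E:eq:coefficient-derivatives}, divided by $k!$.
\end{itemize}
From the quartic values I solve \eqref{E:eq:quartic-recovery} for $\kappa_1,\dots,\kappa_5$ with interval arithmetic.

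\emph{Step 3: omitted shells.} The truncated shells are handled as described before the lemma. In the Hessian lower bounds, omitted positive squares are dropped, and the omitted negative part is bounded by $\sum\nu_2=N_L(n)|v|^2/3$ together with \eqref{E:eq:shell-count}, \eqref{E:eq:G-shell-bound} and \eqref{E:eq:reference-tail-G}. The cubic and quartic tails are bounded in absolute value by their chain-rule majorants, scaled by $d^{k/2}$ for non-unit interpolation directions and summed with \eqref{E:eq:geometric-tail}. The cutoff condition $A(N+1)\ge\max(a,0)$ must be checked with the shifted kernel order.

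\emph{Step 4: remainder and assembly.} I will bound $M_5(J_j,R_j)$ by \eqref{E:eq:M5}. This uses $\eta=e^{cR_j}$ with $c=\sqrt{2/3}$, finite shells plus the geometric tail, and $e^{\pm cR_j}$ enclosed via \eqref{T:eq:exp-rational}. I then assemble $C_1$, $C_{23}$, $K_D$, $K_O$ and take the minimum in \eqref{E:eq:local-C-formula}, rounding down, and check that it exceeds $c_j$. The uniform statement is then immediate: every cell has $R_j\ge2/25$ and $c_j>1/20000$, presumably by the design of Table~\ref{E:tab:low-local}, and \eqref{E:eq:local-C} is monotone in the radius for $r\le R_j$.

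\emph{Main obstacle.} I expect the lowest cells, near $s=3/2$, to be the hardest. There $h_D$ is small (about $0.023$ by Table~\ref{E:tab:third-cells}), while $R_j$ must still reach about $2/25$. The quantity $\frac{h_D^-}{2}-RC_1$ is then delicate, and the crude enclosures of $G$ over an exponent interval of width $1/8$ may lose too much. The first remedy I would try is to split such cells further, since the lemma's conclusion holds on any subdivision. I would also use more Jacobi levels $N$ and more retained shells, keeping the BCC cutoff at $N=64$. If the margin is still insufficient, I would reduce $R_j$ on those cells only, as permitted by the table.
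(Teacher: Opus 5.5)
Your plan is the paper's proof: on each closed cell you enclose $h_D,h_O$, the signed cubic coefficients and the $\kappa_i$ via \eqref{E:eq:coefficient-derivatives} and \eqref{E:eq:quartic-recovery}, add the omitted-shell majorants and the bound \eqref{E:eq:M5}, and substitute into \eqref{E:eq:local-C-formula}, which is what Table~\ref{tab:fresh-local-cells} records. One correction: shrinking $R_j$ is not an admissible fallback, because the lemma asserts the bound at the tabulated radii and the FCC alternative in the low-exponent coverings uses exactly those radii; also, the tightest cells are the radius-$1/10$ ones (e.g.\ $J_{21}$, where $\mathcal C_{\rm F}^-\approx8.6\cdot10^{-5}$ against $c_j=5\cdot10^{-5}$), not those near $s=3/2$, where $h_D^-\approx0.0415$ (your $0.023$ is the cruder $m_J$) and $R_j=9/100$.
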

\begin{proof}[Computer-assisted proof]
The identities \eqref{E:eq:coefficient-derivatives} and
\eqref{E:eq:quartic-recovery} determine the two Hessian coefficients
and all signed cubic and quartic coefficients. We enclose them on
each complete cell, add the tail majorants in \eqref{E:eq:M5} and
substitute in \eqref{E:eq:local-C-formula}.
The evaluations are specified in
Appendix~\ref{app:epstein-local-data}, with all fifty-two records
retained in Appendix~\ref{app:fresh-execution}. They give the
strict bounds stated in the table. Each bound holds on a closed
exponent interval, not merely at sampled values.
\end{proof}

\begin{table}[ht]\centering
\caption{Certified FCC radii and coefficient lower bounds. At a shared endpoint both adjacent cells apply.}\label{E:tab:low-local}
\begin{tabular}{ccc}\toprule
Exponent range&Radius&Lower bound for $\mathcal C$\\\midrule
$[3/2,15/8]$&$9/100$&$1/250$\\
$[15/8,17/4]$&$1/10$&$1/20000$\\
$[17/4,53/8]$&$9/100$&$1/200$\\
$[53/8,8]$&$2/25$&$7/50$\\\bottomrule
\end{tabular}
\end{table}

We now prove Theorem~\ref{E:thm:low-local}.

\begin{proof}
Combine Lemma~\ref{E:lem:signed-inputs}, the invariant estimates
\eqref{E:eq:invariant-bounds} and the complete remainder
\eqref{E:eq:M5}. They give
\eqref{E:eq:local-C}--\eqref{E:eq:local-C-formula} on all
fifty-two cells. Every radius in Table~\ref{E:tab:low-local} is
at least $2/25$, and every corresponding coefficient exceeds
$1/20000$. Restricting to this common ball proves
\eqref{E:eq:uniform-low-ball}. Cubic symmetry gives stationarity
at the centre, and the quadratic bound is strict for $X\ne0$.
The comparison outside these balls requires the exterior estimates.
\end{proof}

\section{Low-exponent comparisons on parameter intervals}\label{sec:epstein-low-comparisons}

The principal local conclusion of this section is the BCC exclusion below; together with the FCC estimates of Section~\ref{sec:epstein-local}, it removes the two delicate symmetric regions for all $3/2\le s\le8$. Away from these regions, the exponent is treated as an interval variable. We first prove an upper-curvature interpolation inequality for signed energy differences, then convert it into a whole-box exclusion test, and finally apply the same signed expansion at BCC.

\begin{theorem}[BCC exclusion from finite coefficient bounds]\label{E:thm:bcc-ball}
For every $3/2\le s\le8$ and every covolume-one lattice $L$
admitting a Gram representative $Y_{\mathrm B}(X)$ with
$X\in\Symzero$ and $\norm X\le1/10$,
\begin{equation}\label{E:eq:BCC-neighborhood}
 \HH_s(L)-\HH_s(\FCC)>\frac1{2500}.
\end{equation}

\end{theorem}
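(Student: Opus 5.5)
We split the BCC--FCC difference into a reference gap and a local BCC variation:
\[
 \HH_s(Y_{\mathrm B}(X))-\HH_s(\FCC)
 =\bigl[\HH_s(\BCC)-\HH_s(\FCC)\bigr]
 +\bigl[\HH_s(Y_{\mathrm B}(X))-\HH_s(\YB)\bigr].
\]
The first bracket depends only on $s$. We would bound it from below uniformly on $[3/2,8]$ by a positive constant $g^-$, and show that the second bracket is at least $-\epsilon$ on the ball $\norm X\le1/10$, with $g^--\epsilon>1/2500$.

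\emph{Step 1 (reference gap).} By \eqref{E:eq:Ewald-gap}, $\HH_s(\BCC)-\HH_s(\FCC)$ equals $\pi^{-s}\Gamma(s)[E(\BCC,s)-E(\FCC,s)]$, so it is positive wherever the Epstein gap is. Each value is a sum of primal and dual $G$-terms over finitely many FCC and BCC shells. We would evaluate both on closed exponent cells (for instance the cells $J_j$ of \eqref{E:eq:local-cells}) using the Stieltjes bounds of Lemma~\ref{E:lem:stieltjes}, monotonicity of $G$ in $a$, and the complete reference tails \eqref{E:eq:reference-tail-G}. The rule is to use lower enclosures for BCC and upper enclosures for FCC, with cutoffs $N=64$ and $N=16$. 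On each cell, monotonicity in $s$ of each kernel gives a uniform lower bound $g_j^-$ by evaluating every term at the unfavourable cell endpoint. Alternatively, one can combine endpoint values with the upper-curvature interpolation announced for this section. The gap is smallest near $s=3/2$, where we would refine the cells.

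\emph{Step 2 (local variation at BCC).} BCC is also cubically symmetric, so the first variation of $F_s(X)=\HH_s(\gBE^Te^X\gBE)$ vanishes by the same shell-moment argument as \eqref{E:eq:stationarity-direct}. Hence
\[
 F_s(X)-F_s(0)=q_2+q_3+q_4+O(\norm X^5),
\]
with the invariant forms of Section~\ref{E:sec:local2}. We do \emph{not} need BCC to be a local minimum. It suffices to bound the Taylor expansion from below by
\[
 r^2\min\Bigl\{\tfrac{h_D^-}{2}-RC_1+R^2K_D,\ \tfrac{h_O^-}{2}-RC_{23}+R^2K_O\Bigr\}-\frac{R^3M_5^+}{120}r^2,\qquad r=\norm X\le R=1/10,
\]
and to allow this coefficient to be negative. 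The loss is then at most $R^2$ times the negative part of that coefficient. The coefficients $h_D,h_O,c_i,\kappa_i$ are computed from \eqref{E:eq:coefficient-derivatives} and \eqref{E:eq:quartic-recovery}, now over the BCC primal shells and FCC dual shells with the roles of $s$ and $3/2-s$ swapped. The remainder majorant is \eqref{E:eq:M5}, taken with $\eta=e^{cR}$ and tails via \eqref{E:eq:geometric-tail}. If the BCC Hessian is positive on the whole range, which is plausible from Theorem~\ref{T:thm:main} and the critical-point results, the loss is $0$ on most cells and only $\mathcal C$ must be checked.

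\emph{Step 3.} Combining the two steps cell by cell gives $g_j^--R^2\max(0,-\mathcal C_j)>1/2500$. This yields \eqref{E:eq:BCC-neighborhood} for every representative $Y_{\mathrm B}(X)$. Since $\HH_s$ depends only on the lattice class, the bound holds for $L$.

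The main obstacle is Step 1 near $s=3/2$. There the BCC--FCC gap of $\HH_{3/2}$ is small: the height difference is of order $10^{-3}$. The kernel enclosures must be sharp enough (large Jacobi order $N$ and fine cells) that the gap, minus the possible Hessian loss and the fifth-order remainder at radius $1/10$, still exceeds $1/2500$. We would first try narrow cells near $3/2$ and increase $N$ until the margin closes.
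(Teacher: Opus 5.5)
This is essentially the paper's proof: on each cell $J_j$ it uses $\HH_s(Y_{\mathrm B}(X))-\HH_s(\YF)\ge d_J+\min\{\mathcal C_{\mathrm B}(J_j,1/10),0\}/100$. Here $d(s)=\HH_s(\BCC)-\HH_s(\FCC)$, and $\mathcal C_{\mathrm B}$ is the signed coefficient \eqref{E:eq:local-C-formula} taken in the frame $\gBE$ (BCC primal, FCC dual, no local minimality assumed), exactly as in your Steps~2--3.

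For the centre gap the paper takes your ``alternative'', not termwise monotonicity. It uses endpoint lower bounds plus Lemma~\ref{E:lem:interpolation}, with $d''\le\partial_s^2\HH_s(\BCC)$ because the FCC curvature is nonnegative, so that $d_J=\min\{d(a)_-,d(b)_-\}-M_J(b-a)^2/8$ loses only second order in the cell width. Evaluating each kernel at its unfavourable endpoint instead loses the width times $\sum|\partial_aG|$ over both lattices. On a cell of width $1/8$ at $s=3/2$ the first FCC shell alone contributes about $2\cdot10^{-3}$, far more than the gap itself (recorded lower bound $0.00043$ on the first cell). That route is still valid, but only with much finer cells than the $J_j$.
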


Theorem~\ref{E:thm:bcc-ball} removes a full neighbourhood of BCC. The remaining low-exponent region is neither close to FCC nor to BCC, so a local Taylor argument is no longer the appropriate tool. We therefore pass to direct comparisons on shape--exponent boxes, with the exponent treated continuously rather than sampled at isolated values.

We next develop the interpolation estimate used on the exterior boxes.

\subsection{Interpolation and whole-box bounds}\label{E:sec:exponent-certificate}\label{app:low-boxes}

The following elementary lemma is the basis of the interval
comparison. It uses only an upper second-derivative bound; the signed
energy difference need not be convex.

\begin{lemma}[An upper-curvature interpolation bound]\label{E:lem:interpolation}
Let $a<b$ and $f\in C^2([a,b])$, and suppose $f''\le M$, where $M\ge0$. If $f(a)\ge L_a$ and $f(b)\ge L_b$, then, for $t=(s-a)/(b-a)$,
\begin{equation}
 f(s)\ge (1-t)L_a+tL_b-\frac{M(b-a)^2}{2}t(1-t).
 \label{E:eq:interpolation}
\end{equation}
Put $K=M(b-a)^2/2$. The minimum of the right side on $0\le t\le1$ is
\begin{equation}
 \mathscr M(L_a,L_b,K)=
 \begin{cases}
 \min(L_a,L_b),&K=0,\\
 L_a,&L_b-L_a\ge K,\\
 L_b,&L_a-L_b\ge K,\\
 \dfrac{L_a+L_b}{2}-\dfrac K4-\dfrac{(L_a-L_b)^2}{4K},&\text{otherwise}.
 \end{cases}
 \label{E:eq:quadratic-min}
\end{equation}
\end{lemma}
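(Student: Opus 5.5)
The plan is to compare $f$ with the chord through the endpoint values and to control the gap using only the upper bound on $f''$. Put $\ell(s)=(1-t)f(a)+tf(b)$ with $t=(s-a)/(b-a)$, and define the auxiliary function
\[
 g(s)=f(s)-\ell(s)+\frac M2(s-a)(b-s).
\]
Then $g(a)=g(b)=0$. Since $\ell$ is affine and $\frac{d^2}{ds^2}\bigl[(s-a)(b-s)\bigr]=-2$, we have $g''=f''-M\le0$. Hence $g$ is concave on $[a,b]$, and a concave function that vanishes at both endpoints is nonnegative on the interval. This gives
\[
 f(s)\ge\ell(s)-\frac M2(s-a)(b-s)=(1-t)f(a)+tf(b)-\frac{M(b-a)^2}{2}t(1-t),
\]
because $(s-a)(b-s)=(b-a)^2t(1-t)$. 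Both weights $1-t$ and $t$ are nonnegative, so we may replace $f(a),f(b)$ by the lower bounds $L_a,L_b$. This proves \eqref{E:eq:interpolation}. No convexity of $f$ is used, only $f''\le M$.

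For \eqref{E:eq:quadratic-min}, minimize $\phi(t)=L_a+(L_b-L_a)t-Kt(1-t)$ over $[0,1]$, with $K\ge0$. If $K=0$, $\phi$ is affine and its minimum is $\min(L_a,L_b)$. If $K>0$, $\phi$ is a convex quadratic with $\phi'(t)=(L_b-L_a)-K+2Kt$, so the unconstrained minimizer is
\[
 t^*=\frac{K-(L_b-L_a)}{2K}.
\]
We have $t^*\le0$ exactly when $L_b-L_a\ge K$, and then the minimum over $[0,1]$ is $\phi(0)=L_a$. Likewise $t^*\ge1$ exactly when $L_a-L_b\ge K$, giving $\phi(1)=L_b$. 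Otherwise $t^*\in(0,1)$, and substituting $t^*$ into $\phi$ gives the value $\tfrac{L_a+L_b}{2}-\tfrac K4-\tfrac{(L_a-L_b)^2}{4K}$. A short way to see this is to complete the square:
\[
 \phi(t)=Kt^2+(L_b-L_a-K)t+L_a,\qquad \min\phi=L_a-\frac{(L_b-L_a-K)^2}{4K}.
\]
Expanding $(L_b-L_a-K)^2$ then yields the displayed formula.

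The first two cases overlap only when $K=0$, which is handled separately, so the case list is consistent. No step is a real obstacle. The only point needing care is the sign bookkeeping in the case distinction, together with checking that the boundary cases $t^*=0$ or $t^*=1$ agree with the interior formula there, which completing the square confirms.
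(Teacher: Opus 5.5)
Your proof is correct and follows the paper's argument. Your auxiliary function $g$ is exactly $f-Ms^2/2$ minus its endpoint chord, so its concavity gives the same chord bound, and your case analysis through the stationary point $t^*$ of the convex quadratic matches the paper's treatment of the minimum.
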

\begin{proof}
Since $f(s)-Ms^2/2$ is concave, it lies above its endpoint chord.
Rearranging this inequality proves \eqref{E:eq:interpolation}.
The right-hand side is a quadratic polynomial with coefficient $K$
on $t^2$. If its derivative has one sign throughout the interval,
the minimum is at the corresponding endpoint; these are the second
and third cases of \eqref{E:eq:quadratic-min}. Otherwise, evaluating
at its stationary point gives the final case. When the curvature
bound is zero, the first case follows directly.
\end{proof}

We apply Lemma~\ref{E:lem:interpolation} to a finite competitor sum
minus the complete FCC reference. Fix $Y\in\PP$ and set
$R(s):=\HH_s(\YF)$.

For $N\in\{1,2\}$, choose exactly one representative of each pair
$\{m,-m\}$ in $([-N,N]^3\cap\Z^3)\setminus\{0\}$, and denote the
resulting set by $\mathcal V_N$. Define
\begin{equation}
 P_N(Y,s)=2\sum_{m\in\mathcal V_N}
 [G(s,\pi Y[m])+G(3/2-s,\pi Y^{-1}[m])].
 \label{E:eq:finite-ewald}
\end{equation}
Positivity gives $\HH_s(Y)\ge P_N(Y,s)$. The choices $N=1$ and
$2$ give thirteen and sixty-two opposite-pair representatives,
respectively. Define
\[
 D_N(Y,s)=P_N(Y,s)-R(s),\qquad R(s)=\HH_s(\YF).
\]
The reference $R$ requires an upper bound for its tail. The
competitor $P_N$ is already a lower bound, so its omitted terms
require no subtraction.

By $\HH_s(Y)-\HH_s(\YF)\ge D_N(Y,s)$, it suffices to give a
positive lower bound for $D_N$. Lemma~\ref{E:lem:interpolation}
reduces this to two endpoint lower bounds and an upper bound for
$\partial_s^2D_N$, each valid throughout the shape box.
We derive the required formulas next; their interval evaluation is
given in Appendix~\ref{app:numerical}.

We begin with the curvature estimate. For $t\ge1$, we use
\begin{equation}
 (\log t)^2\le t+t^{-1}-2.
 \label{E:eq:log2-bound}
\end{equation}
Indeed, put $t=e^u$ and apply $2\sinh(u/2)\ge u$.
Substitution in the kernel integral yields
\begin{equation}
 0\le\partial_a^2G(a,x)\le
 K(a,x):=G(a+1,x)-2G(a,x)+G(a-1,x).
 \label{E:eq:K-curvature}
\end{equation}
The last expression has the positive integral representation
\[
 K(a,x)=\int_1^\infty e^{-xt}t^{a-1}(t+t^{-1}-2)\,dt.
\]
It is increasing in $a$ and decreasing in $x$. Here $K(a,x)$ denotes
the curvature kernel, whereas the plain symbol $K$ in
\eqref{E:eq:root-rectangle} denotes the compact shape rectangle.
Fix an exponent interval $[a,b]$ and a shape box $B$. For each retained
integer vector $m$, write $Q_m(Y):=Y[m]$ and $R_m(Y):=Y^{-1}[m]$;
these forms are distinct from the scalar reference $R(s)$.
If $Q_m^-,R_m^->0$ bound them below on $B$, then throughout $B\times[a,b]$,
\begin{equation}
 \partial_s^2P_N(Y,s)\le
 M_P(B,[a,b]):=2\sum_{m\in\mathcal V_N}
 [K(b,\pi Q_m^-)+K(3/2-a,\pi R_m^-)].
 \label{E:eq:competitor-curvature}
\end{equation}
The value $G(a-1,x)$ can be obtained from
\eqref{E:eq:G-recurrence}. For the two maximal first arguments
used here, the divisor $a-1$ is nonzero. Its reciprocal is enclosed
with its actual sign, including when it is negative.

To sharpen the upper bound for the signed curvature, we subtract a
lower bound for the reference curvature. For $h>0$, we have
\begin{equation}
 B_h(a,x):=\frac{G(a,x)-2G(a-h,x)+G(a-2h,x)}{h^2}
 \le\partial_a^2G(a,x).
 \label{E:eq:backward-curvature}
\end{equation}
This follows from $(1-t^{-h})^2/h^2\le(\log t)^2$ in the
integral representation; the difference quotient itself is
nonnegative. Let $\mathcal W_{\mathrm F}$ and
$\mathcal W_{\mathrm B}$ be finite unions of nonzero FCC and BCC
shells. By parameter monotonicity and positivity, we obtain
\begin{align}
 R''(s)\ge M_R([a,b]):={}&
 \sum_{v\in\mathcal W_{\mathrm F}}B_h(a,\pi|v|^2)
 +\sum_{w\in\mathcal W_{\mathrm B}}B_h(3/2-b,\pi|w|^2),
 \label{E:eq:reference-curvature}
\end{align}
with arbitrary finite shell sets. We use $h=1/128$ in the
evaluation. Positivity permits a negative computed lower endpoint
to be replaced by zero, and permits the omitted reference-curvature
terms to be discarded from this lower bound.

It follows that
\begin{equation}
 \partial_s^2D_N(Y,s)\le
 M(B,[a,b]):=\max\{0,M_P(B,[a,b])-M_R([a,b])\}.
 \label{E:eq:signed-curvature}
\end{equation}
We evaluate $M_P$ from above and $M_R$ from below.
Their difference removes the common curvature contribution and
sharpens the estimate without imposing any sign on the actual
second derivative of the energy difference.

It remains to estimate the two endpoint values on a whole shape
box. For $m=(i,j,k)$, put $Q=Y[m]$ and $R_m=Y^{-1}[m]$.
The Iwasawa representation \eqref{T:eq:iwasawa} gives
\begin{align}
 Q={}&d_1(i+aj+bk)^2+d_2(j+ck)^2+d_3k^2,\\
 R_m={}&d_1^{-1}i^2+d_2^{-1}(j-ai)^2
       +d_3^{-1}(k-cj+(ac-b)i)^2.
 \label{E:eq:explicit-forms}
\end{align}
Each nonzero integer vector gives a strictly positive lower bound
on a compact shape box. For $Q$, consider the last nonzero
coordinate of $(i,j,k)$; the corresponding triangular term is
unshifted and positive. For $R_m$, use the first nonzero
coordinate instead. This supplies a positive term without treating
the square of an interval containing zero as strictly positive.

Set $q_1=d_1(i+aj+bk)^2$, $q_2=d_2(j+ck)^2$, $q_3=d_3k^2$, and let $v_1,v_2,v_3$ be the three corresponding inverse-form terms. Direct differentiation yields
\begin{align}
 Q_{y_1}&=(2q_1-q_2-q_3)/(3y_1),&
 Q_{y_2}&=(q_1+q_2-2q_3)/(3y_2),\\
 (R_m)_{y_1}&=(-2v_1+v_2+v_3)/(3y_1),&
 (R_m)_{y_2}&=(-v_1-v_2+2v_3)/(3y_2).
 \label{E:eq:y-derivatives}
\end{align}
With $u=i+aj+bk$, $v=j+ck$, $u^*=j-ai$, $v^*=k-cj+(ac-b)i$,
\begin{align}
 Q_a&=2d_1uj,&Q_b&=2d_1uk,&Q_c&=2d_2vk,\\
 (R_m)_a&=-2id_2^{-1}u^*+2cid_3^{-1}v^*,&
 (R_m)_b&=-2id_3^{-1}v^*,&
 (R_m)_c&=-2u^*d_3^{-1}v^*.
 \label{E:eq:shear-derivatives}
\end{align}
The finite-sum gradient is consequently
\begin{equation}
 \partial_jP_N(Y,s)=-2\pi\sum_{m\in\mathcal V_N}
 [G(s+1,\pi Q)Q_j+G(5/2-s,\pi R_m)(R_m)_j].
 \label{E:eq:finite-gradient}
\end{equation}
We first sum the interval terms and then bound the absolute value.
This preserves any cancellation available from the enclosures.
Bounding each summand separately would still be valid, but weaker.

Let $\xi^0$ be a representable point in a rectangular shape box.
If its coordinate intervals are $[a_j,b_j]$, choose
$r_j\ge\max\{\xi_j^0-a_j,b_j-\xi_j^0\}$. If
$|\partial_jP_N(Y,p)|\le A_{j,p}$ on the box, the mean-value estimate gives
\begin{equation}
 P_N(Y,p)-R(p)\ge
 (P_N(\xi^0,p))_- -R(p)_+-\sum_{j=1}^5r_jA_{j,p}
 =:L_p,\qquad p=a,b.
 \label{E:eq:endpoint-box}
\end{equation}
The centre is not required to be reduced. It still defines a positive
determinant-one matrix, and the coordinate segment remains in the
box, which is all that the mean-value estimate needs.

\begin{proposition}[A single rigorous six-dimensional exclusion test]\label{E:prop:cell-test}
Suppose $L_a,L_b$ satisfy \eqref{E:eq:endpoint-box}, and $M$ is an upper bound in \eqref{E:eq:signed-curvature}. If
\begin{equation}
 \mathscr M\left(L_a,L_b,\frac{M(b-a)^2}{2}\right)>0,
 \label{E:eq:low-cell-certificate}
\end{equation}
then $\HH_s(Y)>\HH_s(\YF)$ for every $Y\in B$ and every $s\in[a,b]$.
\end{proposition}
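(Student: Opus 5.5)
The proof fixes $Y\in B$ and studies the one-variable function $f(s):=D_N(Y,s)=P_N(Y,s)-R(s)$ on $[a,b]$. It then applies Lemma~\ref{E:lem:interpolation} to $f$ and finishes with the positivity of the Ewald summands.

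The first step is regularity. Both $P_N(Y,\cdot)$ and $R=\HH_\cdot(\YF)$ are $C^2$ in $s$. The finite sum is smooth because each kernel is; for $R$ we use the local normal convergence of the Ewald series and its $s$-derivatives noted in Section~\ref{E:sec:ewald}. Hence $f\in C^2([a,b])$.

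Next we collect the three inputs of the lemma. By \eqref{E:eq:endpoint-box}, applied at $p=a$ and $p=b$ with the mean-value argument along the coordinate segment from $\xi^0$ inside $B$, we get $f(a)\ge L_a$ and $f(b)\ge L_b$. By \eqref{E:eq:competitor-curvature} and \eqref{E:eq:reference-curvature}, for every $s\in[a,b]$,
\[
 \partial_s^2P_N(Y,s)\le M_P(B,[a,b]),\qquad R''(s)\ge M_R([a,b]).
\]
The first bound uses $\partial_a^2G\le K$ together with the monotonicity of $K$, which increases in its first argument and decreases in its second, evaluated at the extreme exponents and at the lower bounds $Q_m^-,R_m^-$. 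The second uses $B_h\le\partial_a^2G$ together with the monotonicity of $B_h$ in $a$. The point to check here is that $B_h(a,x)$ is nondecreasing in $a$: its integrand is $e^{-xt}t^{a-1}(1-t^{-h})^2/h^2$, which is increasing in $a$ for $t\ge1$. The omitted shells of $R''$ are nonnegative since $\partial_a^2G\ge0$, so they may be dropped. Subtracting the two bounds gives $f''\le M$ with $M\ge0$, as in \eqref{E:eq:signed-curvature}.

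Lemma~\ref{E:lem:interpolation} with $K=M(b-a)^2/2$ then gives, for all $s\in[a,b]$,
\[
 f(s)\ge\min_{0\le t\le1}\Bigl[(1-t)L_a+tL_b-Kt(1-t)\Bigr]=\mathscr M(L_a,L_b,K)>0,
\]
where the last inequality is the hypothesis \eqref{E:eq:low-cell-certificate}.

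To conclude, note that every summand of $\HH_s(Y)$ in \eqref{E:eq:H} is positive, so the finite subsum satisfies $\HH_s(Y)\ge P_N(Y,s)$. Therefore
\[
 \HH_s(Y)-\HH_s(\YF)\ge D_N(Y,s)>0.
\]
The step needing the most care is the uniformity of the curvature bounds over the whole box and interval. The endpoint monotonicities in both kernel arguments have to be verified, since the dual kernel order is $3/2-s$. The real monotonicities in the lemma are all that is needed for this; the rounding of enclosures does not enter the logical argument.
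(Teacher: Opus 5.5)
Your proof is correct and follows the paper's argument: fix $Y\in B$, apply Lemma~\ref{E:lem:interpolation} to $D_N(Y,\cdot)$ using the box-uniform endpoint bounds $L_a,L_b$ and the curvature bound $M$, and then pass to the full Ewald sum via $\HH_s(Y)\ge P_N(Y,s)$. Your extra checks are accurate and only make explicit what the paper's three-line proof takes from the preceding derivations: the $C^2$ regularity of the reference, the monotonicity of $K$ and $B_h$ in both arguments (including the reversed dual order $3/2-s$), and $M\ge0$.
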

\begin{proof}
For each fixed $Y$, apply Lemma~\ref{E:lem:interpolation} to
$D_N(Y,\cdot)$. The endpoint and curvature bounds are uniform in
the box, so the stated positive minimum applies at every point.
Finally, $\HH_s(Y)\ge P_N(Y,s)$ transfers the strict inequality
to the complete Ewald sum.
\end{proof}

A simpler comparison may be used first. Retain only
$\mathcal V_1$, bound its positive kernels using maximal form
lengths and the appropriate opposite parameter endpoints, and
compare with $\max\{R(a),R(b)\}$. This reference bound follows
from $R''\ge0$. If neither sufficient test proves positivity,
the box is subdivided or treated by a local estimate. Failure of a
test does not determine the sign of the full energy difference.

\subsection{A uniform BCC exclusion}\label{E:sec:bcc-local}
We now apply the local expansion and interpolation estimate near
BCC. Use $Y_{\mathrm B}(X)=\gBE^Te^X\gBE$ from
Section~\ref{sec:common-geometry}. Write
$\mathcal C_{\mathrm B}(J,R)$ for \eqref{E:eq:local-C-formula}
with BCC as the primary lattice and FCC as the dual lattice, taking
all coefficients in the frame $\gBE$.

We now prove Theorem~\ref{E:thm:bcc-ball}.

\begin{proof}
The evaluations in Appendix~\ref{app:epstein-local-data} give a
lower bound strictly greater than $1/2500$ for
\eqref{E:eq:BCC-cell} on every exponent cell. We show how these
bounds imply the neighbourhood comparison. Apply the expansion of
Section~\ref{E:sec:local2} at BCC in the frame $\gBE$, with
FCC as the dual lattice. The dual contribution uses $-A$ in every
odd derivative. Thus the signed coefficients must be evaluated in
this assignment; they are not obtained by simply reversing the FCC
coefficients. We allow negative Hessian directions and do not assume
local minimality of BCC.

Set $d(s)=\HH_s(\B)-\HH_s(\FCC)$. On a cell $J=[a,b]$,
finite-shell evaluations give lower bounds at both endpoints after
subtracting the complete negative reference tail. Since
$\partial_s^2\HH_s(\FCC)\ge0$, the BCC curvature alone is an
upper bound for $d''$. Bound it by $M_J$ using
\eqref{E:eq:K-curvature} and the positive tail estimate, and put
\[
 d_J=\min\{d(a)_-,d(b)_-\}-M_J(b-a)^2/8.
\]
Lemma~\ref{E:lem:interpolation} then gives $d(s)\ge d_J$
throughout the cell. This step uses no all-scale theta comparison
between FCC and BCC.

Combining the centre gap with the complete local perturbation
estimate at $R=1/10$, we obtain
\begin{equation}
 \HH_s(Y_{\mathrm B}(X))-\HH_s(\YF)
 \ge d_J+\min\{\mathcal C_{\mathrm B}(J,R),0\}R^2.
 \label{E:eq:BCC-cell}
\end{equation}
The recorded finite bounds make the right-hand side of
\eqref{E:eq:BCC-cell} strictly greater than $1/2500$ on each
cell. The cells cover the entire exponent interval, and the local
bound covers every stated deformation. This proves the theorem.
\end{proof}

We have thus excluded the full BCC neighbourhood from the minimum
comparison. The estimate includes all tangent directions and does
not depend on classifying the BCC critical point.

\section{Analytic local estimates at higher exponents}\label{sec:epstein-high-local}

For the higher exponents we use two complementary local estimates. The first is valid on the entire half-line $s\ge20$ and is based on the first shell and a six-variable exponential inequality; the second covers the intermediate range $8\le s\le20$ by a finite-order expansion with an explicit remainder. These are the local inputs used in the global power-sum comparison of Section~\ref{sec:epstein-global}.

\begin{theorem}[A fixed local ball on the half-line]\label{E:thm:local20}
For every $s\ge20$ and $X\in\Symzero$ with $0<\norm X\le7/50$,
\begin{equation}
 E(Y(X),s)>E(\FCC,s).
 \label{E:eq:local20}
\end{equation}
When additionally $s\norm X\le26$, the normalized gap is greater than
$s^2\norm X^2/600$.
\end{theorem}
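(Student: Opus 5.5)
Directly on the Epstein energy, mirroring the large-scale theta argument of Proposition~\ref{T:prop:smallballlarge}. Write $Y(X)=\gFE^Te^X\gFE$ and normalize by the first FCC shell: set
\[
 \mathcal E(X,s):=\rho^{s}E(Y(X),s)=\sum_{v\in\FCC\setminus\{0\}}\bigl(v^Te^Xv/\rho\bigr)^{-s}.
\]
The first shell consists of twelve vectors $v$ with $|v|^2=\rho$. The higher shells have normalized lengths $n\ge2$, and their total contribution to $\mathcal E(0,s)$ is at most $\sum_{n\ge2}30n\,n^{-s}$, which is astronomically small for $s\ge20$ (of order $2^{-19}$). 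The plan is to show that the first-shell increase dominates the worst possible higher-shell decrease.

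\emph{First shell.} For $v$ in the first shell, put $x_v=\log(v^Te^Xv/\rho)$, so the shell contributes $\sum_v e^{-sx_v}$. By Jensen's inequality applied to the unit vector $\hat v=v/|v|$, we have $x_v\ge \hat v^TX\hat v=:y_v$. These twelve values come in six opposite pairs and take the form $\tfrac12(X_{ii}+X_{jj})\pm X_{ij}$; since $\tr X=0$, this becomes $-\tfrac12X_{kk}\pm X_{ij}$. They have mean zero and variance $\sigma^2=\tfrac1{12}(D(X)+4O(X))\ge\tfrac1{12}\norm X^2$.

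Using only the lower bound $x_v\ge y_v$ would give $e^{-sx_v}\le e^{-sy_v}$, which points the wrong way. So I will expand more carefully. Write $x_v=y_v+\epsilon_v$ with
\[
 0\le\epsilon_v\le \tfrac12\bigl(\hat v^TX^2\hat v-y_v^2\bigr)+O(\norm X^3)
\]
(the second-order term of the cumulant of $e^X$ along $\hat v$), and bound $\epsilon_v\le c_0\norm X^2$ explicitly using $\opnorm X\le\sqrt{2/3}\,\norm X\le 0.115$. The first-shell contribution is then at least $12e^{-sc_0\norm X^2}M$, where $M=\tfrac1{6}\sum e^{-sy_v}$ over the six representatives (both signs occur, as in \eqref{T:eq:firstmgf}). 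By Lemma~\ref{T:lem:mgf}, applied to the mean-zero numbers $-sy_v$,
\[
 \log M\ge\tfrac1{13}\min(s^2\sigma^2,s\sigma).
\]
This must beat $sc_0\norm X^2$. In the regime $s\sigma\le1$ it does so because $s^2/156$ exceeds $sc_0$ once $s\ge20$, provided $c_0$ is small. In the regime $s\sigma\ge1$ it does so because $s\norm X/(13\sqrt{12})\ge sc_0\norm X^2$ when $\norm X\le 7/50$ and $c_0\lesssim0.5$. Here I will need the sharp constant $c_0$, and I expect this to be the main obstacle: the radius $7/50$ is presumably tuned so that $c_0\cdot 7/50<1/(13\sqrt{12})\approx0.022$. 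If that fails, I will exploit the fact that the $\epsilon_v$ are themselves correlated with $y_v^2$, and absorb part of $\epsilon_v$ into the variance term rather than bounding it uniformly.

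\emph{Higher shells.} For $n\ge2$, each normalized length satisfies $v^Te^Xv/\rho\ge n e^{-\sqrt{2/3}\norm X}\ge 0.89\,n$. Hence the total higher-shell contribution, and a fortiori its possible decrease, is at most $\sum_{n\ge2}30n(0.89n)^{-s}$. For $s\ge20$ this is negligible compared with the first-shell gain $12(e^{\log M-sc_0\norm X^2}-1)$, except near $X=0$, where the gain is only quadratic. Near the origin I will instead use the linear convexity bound $(1+t)^{-s}\ge1-st$ together with the isotropy \eqref{T:eq:isotropic}. This gives a higher-shell loss of order $s\,\eta\sum_{n\ge2}nN_n n^{-s}$, where $\eta$ is the scalar-trace part. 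Since $\tr X=0$, that part is $O(\norm X^2)$ and is multiplied by a factor $2^{-s}$, so it is absorbed by the gain of order $s^2\norm X^2/156$.

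\emph{Quantitative statement.} When $s\norm X\le26$, we have $s\sigma\le26/\sqrt{12}\approx7.5\le8$, so by the proof of Lemma~\ref{T:lem:mgf} we may use $\log M\ge s^2\sigma^2/(12+s^2\sigma^2)$ directly. Subtracting $sc_0\norm X^2$ and the tail loss, and using $e^{t}-1\ge t$, yields a normalized gap of at least $12\cdot s^2\norm X^2/(12\cdot(12+56)) $ minus small terms, which exceeds $s^2\norm X^2/600$ after routine constant checks.
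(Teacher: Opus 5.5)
\textbf{There is a genuine gap in the first-shell estimate.} Your skeleton is the right one: isolate the twelve contacts, write $x_v=y_v+\epsilon_v$ with $\epsilon_v\ge0$ by Jensen, pay a uniform factor $e^{-sc_0\norm X^2}$, and win it back from the mean-zero six-point average $M$. Your higher-shell estimate is also fine.

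The problem is the step you flag as the ``main obstacle.'' It is not a matter of tuning; it fails with the tools you propose.

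\emph{The constant $c_0$ is exactly $1/4$.} It cannot be taken smaller: for $X=\frac{r}{\sqrt2}\diag(1,-1,0)$ and $\hat v=(e_1+e_2)/\sqrt2$ one has $y_v=0$ and $\epsilon_v=\log\cosh(r/\sqrt2)=r^2/4+O(r^4)$. Conversely, $c_0=1/4$ does hold. The second derivative of $\tau\mapsto\log(\hat v^Te^{\tau X}\hat v)$ is a weighted variance of the eigenvalues of $X$, and their range is at most $\sqrt2\norm X$.

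\emph{Why this breaks every regime.} For the same diagonal $X$ one has $\sigma^2=\norm X^2/12$ exactly.
\begin{itemize}
\item In your regime $s\sigma\le1$, Lemma~\ref{T:lem:mgf} gives only the first-shell lower bound $12\exp\bigl(s\norm X^2(s/156-1/4)\bigr)$. This is below $12\le\Phi_s(\YF)$ for every $20\le s<39$. Even the sharper $M\ge1+s^2\sigma^2/12$ from inside that proof needs $s>36$.
\item In the regime $s\sigma\ge1$, your own requirement $c_0\cdot\frac7{50}<\frac1{13\sqrt{12}}\approx0.022$ fails, since the left side is $0.035$.
\item In the quantitative part, the term you treat as small is $12sc_0\norm X^2=3s\norm X^2$. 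It exceeds your gain $s^2\norm X^2/68$ unless $s>204$.
\end{itemize}
The suggested fallback (exploiting the correlation of $\epsilon_v$ with $y_v^2$) is not yet an argument.

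\textbf{What is missing.} Lemma~\ref{T:lem:mgf} loses a factor of about six against the true behaviour $\log M\approx s^2\sigma^2/2$. It was built for the theta problem, where the competing scalar term carries a small coefficient. With the sharp loss $e^{-s\norm X^2/4}$, you need a six-point bound $M\ge1+c\,s^2\sigma^2$ with $c>3/20$, valid for arbitrary rather than only small mean-zero entries.

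This is what the paper supplies. Lemma~\ref{E:lem:exp6} proves $\sum_{i=1}^6e^{x_i}\ge6+\frac15\sum_ix_i^2$. Combine it with $\sum_{u\in\mathcal U}(u^TZu)^2\ge\frac12\norm Z^2$, where $Z=sX$ and $t=s\norm X$. This gives $P_s\ge e^{-t^2/(4s)}(12+t^2/5)$, and hence $P_s-12\ge t^2\bigl(\frac15-\frac3s-\frac{t^2}{20s}\bigr)\ge\frac3{1625}t^2$ for $t\le26$. The requirement $\frac15>\frac3s$, i.e.\ $s>15$, is where the restriction to $s\ge20$ comes from.

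For $t>26$ the paper drops moment bounds altogether and uses contact shortening (Lemma~\ref{E:lem:shortening}). Some $u\in\mathcal U$ has $u^TXu\le-\norm X/\sqrt{42}$, so $\Phi_s(Y(X))\ge2(1-r/14)^{-s}\ge2e^{t/14}>12.01$. This exceeds the reference bound $12+32\cdot2^{-s}$.

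Without these two ingredients, or equivalents, your argument does not close for $20\le s<36$.
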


The half-line estimate is strongest when the exponent is large, where the first shell dominates. It does not by itself cover the finite interval down to $s=8$. On that interval we instead use a finite-order local expansion with an explicit remainder, which gives the following complementary estimate.

\begin{theorem}[Uniform local comparison for $8\le s\le20$]\label{E:thm:local8}
For every $8\le s\le20$ and $X\in\Symzero$ with $0<r=\norm X\le1/25$,
\begin{equation}
 \Phi_s(Y(X))-\Phi_s(\YF)>\frac{s^2r^2}{25}.
 \label{E:eq:local8}
\end{equation}
\end{theorem}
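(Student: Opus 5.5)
Here I take $\Phi_s$ to be the first-shell normalization of the Epstein energy, $\Phi_s(Y)=\rho^{s}E(Y,s)$. This normalization is my assumption, and it is the one under which a gap of order $s^2r^2$ is natural. I would prove the theorem by a radial Taylor expansion of the power sum itself, not of the Ewald sum. For $8\le s\le20$ the first FCC shell carries almost all of the variation, and the power kernel has explicit derivatives. Write $X=rA$ with $A\in\Symzero$ and $\norm A=1$, and set $f(r)=\Phi_s(Y(rA))-\Phi_s(\YF)$. Each lattice vector contributes $\rho^s q_v(r)^{-s}$ with $q_v(r)=v^Te^{rA}v$, so by \eqref{E:eq:radial-q-derivatives} all derivatives are polynomials in the moments $\nu_j=v^TA^jv$. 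The shell identity \eqref{T:eq:isotropic} gives $f(0)=f'(0)=0$, exactly as in \eqref{E:eq:stationarity-direct}. It then remains to bound $f''(0)$ from below and $f'''$ on $[0,R]$ from above, with $R=1/25$.

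\textbf{Hessian.} Differentiating twice gives the contribution $s(s+1)\nu_1^2q^{-s-2}-s\nu_2q^{-s-1}$ per vector. On the first shell ($|v|^2=\rho$) I would compute this exactly in the two invariant directions $A_D$ and $A_O$. The $s(s+1)$ terms dominate: they give diagonal and off-diagonal coefficients of size roughly $s(s+1)\rho^{-2}\cdot(\text{const})$, minus $O(s)$ corrections. All shells $n\ge2$ contribute at most $s\,\rho^{s}\sum_n nN_n(n\rho)^{-s-1}/3$ in the negative direction. With $N_n\le30n$ and $n^{-s}\le 2^{-8}$-type decay, this is a small multiple of $s$. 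So I expect $h_D,h_O\ge m\,s^2$ with an explicit $m$ (around $1/5$) uniformly on $[8,20]$, checked on a few exponent cells.

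\textbf{Remainder.} I would use the bounds $q(r)\ge e^{-cr}|v|^2$ and $|q^{(j)}|\le c^je^{cr}|v|^2$ with $c=\sqrt{2/3}$, as in \eqref{E:eq:M3-new} but with the power kernel. These give
\[
|f'''(r)|\le c^3\rho^{s}\sum_{v\ne0}\bigl[s(s+1)(s+2)\eta^{3+s+3}+3s(s+1)\eta^{2+s+2}+s\,\eta^{1+s+1}\bigr]|v|^{-2s},\qquad \eta=e^{cR}.
\]
Here the higher-shell tail is summed by \eqref{E:eq:geometric-tail}, and the factor $\eta^{s}\le e^{20c/25}$ stays bounded. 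Lemma~\ref{E:lem:third-criterion} then yields $f(r)\ge(m s^2/2-M_3R/6)r^2$.

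\textbf{Main obstacle.} The remainder scales like $s^3$ while the Hessian scales like $s^2$, so the ratio is governed by $sR\le 4/5$, which is not small. A crude third-order bound will likely fail near $s=20$. If it does, I would first split $[8,20]$ into short cells and use cell-dependent constants. Failing that, I would treat the first shell exactly, as in the six-point argument of Proposition~\ref{T:prop:smallballlarge}. Its contribution is $\rho^s\sum(\rho+r\,\cdot)^{-s}$, a convex function of the twelve length changes, and it can be bounded below via Jensen plus a quadratic variance term (Lemma~\ref{T:lem:mgf}-style) without any Taylor truncation. The third-order remainder would then be applied only to shells $n\ge2$, whose relative weight is at most $2^{-8}$, and where $sR$ is harmless. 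The reference-side bound from Theorem~\ref{E:thm:local20} can be reused for these tail constants.
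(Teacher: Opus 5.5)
\textbf{The third-order route.} This fails on the whole interval $[8,20]$, not only near $s=20$. The smallest Hessian coefficient is essentially the first-shell value in the direction $\diag(-1,1,0)/\sqrt2$. There $f''(0)\approx s(s+1)-4s=s(s-3)$, so $f''(0)/2\approx20$ at $s=8$ and $\approx170$ at $s=20$.

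Your third-derivative majorant, summed over the lattice, equals
\[
c^3\Phi_s(\YF)\bigl[s(s+1)(s+2)\eta^{s+6}+3s(s+1)\eta^{s+4}+s\eta^{s+2}\bigr],
\]
with $c^3\Phi_s(\YF)>6.5$ and $\eta=e^{c/25}$. This gives $M_3R/6\approx64$ at $s=8$ and $M_3R/6\approx1.06\cdot10^3$ at $s=20$. So the coefficient in Lemma~\ref{E:lem:third-criterion} is negative everywhere on $[8,20]$.

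Splitting into exponent cells cannot help: the radius $1/25$ is prescribed, and the bound already fails at each fixed exponent. A two-sided bound on $f'''$ along the segment also pays for the fourth-order growth, whose dominant $s^4$ part is nonnegative. The paper avoids this by keeping signs through order four. Lemma~\ref{E:lem:relative-power} gives:
\begin{itemize}
\item $q_2\ge s(s-10/3)/2$;
\item $|q_3|\le\frac7{10}sq_2$, from the explicit first-shell cubic coefficients \eqref{E:eq:power-cubic-coeff} and the invariant bounds \eqref{E:eq:invariant-bounds};
\item $q_4\ge-sq_2$, by discarding the positive $\nu_1^4$ and $\nu_2^2$ terms and controlling $\nu_1\nu_3$ and $\nu_4$ by Cayley--Hamilton.
\end{itemize}
Only the fifth derivative is bounded crudely, by $K_js^5$. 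Then $\frac12(1-\frac{10}{3s})(1-\frac7{10}sr-sr^2)-K_js^3r^3/120>1/25$ on three exponent cells.

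\textbf{The fallback.} As you specified it, this also fails on the lower part of the range. Lemma~\ref{T:lem:mgf}, and likewise Lemma~\ref{E:lem:exp6}, loses a constant factor in the quadratic gain. Write $\ell_u=u^TAu$ and $y_u=u^Te^{rA}u-1$ for the twelve unit first-shell vectors $u$. The exact second-order balance on the first shell is a gain $\frac{s(s+1)}2r^2\sum_u\ell_u^2\ge\frac{s(s+1)}2r^2$. Against it stands an unavoidable loss $12s\bar y\approx2sr^2$ from the mean shift $\bar y=\frac13(\tr e^{rA}-3)$, which comes from $\sum_uu^TA^2u=4$.
\begin{itemize}
\item With Lemma~\ref{T:lem:mgf} the gain is only about $s^2r^2/13$. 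This beats $2sr^2$ only for $s>26$, or only for $s>39$ if the shift is handled through $s\log(u^Te^{rA}u)\le sr\ell_u+sr^2/4$ as in the proof of Theorem~\ref{E:thm:local20}.
\item With Lemma~\ref{E:lem:exp6} the gain is about $s^2r^2/5$, still insufficient for $s$ below about $12.5$.
\end{itemize}

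The fallback idea does work, and gives a shorter alternative to the paper's proof, if you use the sharp convexity constant of $y\mapsto(1+y)^{-s}$. Apply Taylor's formula about $\bar y$ and use $\sum_u(y_u-\bar y)=0$, together with $\bar y\le\frac{r^2}6e^{cr}$, $\sum_u\ell_u^2\ge1$ and $\sum_u(y_u-\bar y-r\ell_u)^2\le\frac23r^4e^{2cr}$. This yields
\[
 \sum_u(1+y_u)^{-s}-12\ge r^2\Bigl[\frac{s(s+1)}2e^{-(s+2)cr}\bigl(1-cre^{cr}\bigr)^2-2se^{cr}\Bigr].
\]
The bracket decreases in $r$; at $r=1/25$ it is about $7.7$ for $s=8$ and $54$ for $s=20$. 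That is comfortably above $s^2/25$ plus the tail-shell loss $\frac s3\sup T_s$, estimated as in Theorem~\ref{E:thm:local20}.

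As written, however, neither of your two routes proves the stated bound.
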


The two theorems overlap in purpose but use different mechanisms. We begin with the half-line estimate, where the shortest FCC vectors already force the desired growth; afterwards we return to the bounded interval $[8,20]$, where a more accurate expansion is required.

We first prove the half-line estimate.

\subsection{A fixed FCC neighbourhood on the exponent half-line}\label{E:sec:high-local}
We begin with the shortest FCC vectors. Their six directions reduce
the first-shell estimate to an inequality for six exponentials.
Let
\[
 \mathcal U=\{(e_i+e_j)/\sqrt2,(e_i-e_j)/\sqrt2:i<j\}
\]
be the six unoriented FCC contact directions.

\begin{lemma}[A six-variable exponential inequality]\label{E:lem:exp6}
If $x_1,\ldots,x_6\in\R$ and $x_1+\cdots+x_6=0$, then
\begin{equation}
 \sum_{i=1}^6e^{x_i}\ge6+\frac15\sum_{i=1}^6x_i^2.
 \label{E:eq:exp6}
\end{equation}
\end{lemma}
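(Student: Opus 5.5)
The plan is to reduce the inequality to a handful of one-variable inequalities by a Lagrange-multiplier argument. Put
\[
 \varphi(x)=e^x-1-x-\tfrac15x^2 .
\]
Because $\sum x_i=0$, the linear terms in $\sum_i\varphi(x_i)$ cancel, so \eqref{E:eq:exp6} is equivalent to
\[
 \Phi(x):=\sum_{i=1}^6\varphi(x_i)\ge0
\]
on the hyperplane $H=\{x_1+\cdots+x_6=0\}$.

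First I would show that $\Phi$ attains a minimum on $H$. If $x\in H$ and $|x|\to\infty$, then $m=\max_i x_i$ satisfies $m\ge|x|/\sqrt{30}$, by the same zero-sum argument as in Lemma~\ref{T:lem:mgf}. The term $e^{m}$ therefore dominates the quadratic and linear parts. Since $\varphi$ is bounded below on $(-\infty,0]$ by a multiple of $-x^2$, we get $\Phi\to\infty$, so the minimum exists.

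At a minimizer, the Lagrange condition gives $\varphi'(x_i)=\lambda$ for all $i$, where $\varphi'(x)=e^x-1-\tfrac25x$. Since $\varphi''(x)=e^x-\tfrac25$ changes sign exactly once, $\varphi'$ is strictly convex. Hence each level set $\{\varphi'=\lambda\}$ has at most two points. The minimizing vector therefore takes at most two distinct values: $a$ with multiplicity $k$ and $b$ with multiplicity $6-k$, where $ka+(6-k)b=0$ and $0\le k\le6$. The cases $k=0$ and $k=6$ force $x=0$, where equality holds.

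For $1\le k\le5$, write $a=-(6-k)t$ and $b=kt$ with $t\in\R$. The claim reduces to the five one-variable inequalities
\[
 g_k(t):=k\,e^{-(6-k)t}+(6-k)\,e^{kt}-6-\tfrac15\,k(6-k)\,6\,t^2\ge0,\qquad t\in\R,\ 1\le k\le5.
\]
Replacing $t$ by $-t$ exchanges $k$ and $6-k$, so it suffices to take $t\ge0$ for all five values of $k$. Each $g_k$ vanishes to second order at $t=0$. The quadratic Taylor coefficient is
\[
 \tfrac12k(6-k)\cdot6-\tfrac65k(6-k)=\tfrac{9}{5}k(6-k)>0,
\]
which gives positivity near the origin.

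For larger $|t|$, I would drop the decaying exponential, which is positive, and use the lower bound
\[
 e^{y}\ge1+y+\tfrac{y^2}{2}+\tfrac{y^3}{6}
\]
on the growing exponential for the appropriate sign. This reduces each case to a polynomial inequality on a half-line. The remaining bounded window is handled by rational evaluation of the exponentials, as with the bounds in Table~\ref{T:tab:constants}.

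The main obstacle is the case in which one large negative entry is balanced by five small positive ones. In the variable $x$ this is $k=1$ with $x_1=-5t$, $t>0$, and the check becomes
\[
 e^{-5t}+5e^{t}-6\ge6t^2 .
\]
The reason is that $e^{x}-1-x$ falls below $x^2/5$ once $x\lesssim-3.8$. The compensation must then come from the five positive entries, and this is where the constant $1/5$ is nearly sharp. I would split this case at a moderate value such as $t=1$. On $[0,1]$ I would use the Taylor bound for $e^{t}$ together with $e^{-5t}\ge1-5t+\tfrac{25}{2}t^2-\tfrac{125}{6}t^3$. For $t\ge1$ I would use $5e^{t}\ge5(1+t+t^2/2+t^3/6)$, which already exceeds $6+6t^2$ there.
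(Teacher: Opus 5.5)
Your Lagrange-multiplier reduction is sound and takes a genuinely different route from the paper. However, the one-variable check you single out as the main case fails as you specified it.

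A minor point first: strict convexity of $\varphi'$ comes from $\varphi'''(x)=e^x>0$. The single sign change of $\varphi''$ shows instead that $\varphi'$ is strictly decreasing and then strictly increasing, which also gives at most two points per level set. The following steps are all correct: coercivity (via $\varphi(x)\ge -x^2/5$, from $e^x\ge1+x$), the first-order condition on the hyperplane, the two-value parametrization, the symmetry $(k,t)\mapsto(6-k,-t)$, and the coefficient $\frac95k(6-k)$.

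The failing step is $k=1$ on $[0,1]$. Combining $e^{t}\ge1+t+\frac{t^2}{2}+\frac{t^3}{6}$ with $e^{-5t}\ge1-5t+\frac{25}{2}t^2-\frac{125}{6}t^3$ gives exactly
\[
 g_1(t)\ge 9t^2-20t^3 ,
\]
which is negative on $(9/20,1]$. No sharper bound for $e^t$ can rescue this. At $t=1$ the cubic truncation of $e^{-5t}$ equals $-37/3$, while $e^{-5}<10^{-2}$; even with $5e^{t}$ kept exact, the lower bound is about $-10.7$.

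The fix is to split lower. On $[0,2/5]$ one has $t^2(9-20t)\ge t^2$. On $[2/5,\infty)$, discard $e^{-5t}$ and use your cubic bound for $5e^t$; this gives
\[
 p(t)=-1+5t-\tfrac72t^2+\tfrac56t^3,\qquad p'(t)=5-7t+\tfrac52t^2>0,\qquad p(2/5)=\tfrac{37}{75}>0,
\]
where $p'>0$ because its discriminant is $-1$. So your half-line argument was right; only the window was mis-specified.

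The other cases need no numerical window at all:
\begin{itemize}
\item For $k=2$, the two cubic bounds give $\frac{72}{5}t^2-16t^3>0$ for $0<t<9/10$. Discarding $2e^{-4t}$ gives $-2+8t-\frac85t^2+\frac{16}{3}t^3$, which is increasing and positive for $t\ge1/2$.
\item For $k=3,4,5$, the cubic bounds applied to both exponentials give $\frac{81}{5}t^2$, $\frac{72}{5}t^2+16t^3$ and $9t^2+20t^3$, respectively.
\end{itemize}

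With this repair your proof is complete, and it is worth comparing it with the paper's. The paper avoids minimizers altogether:
\begin{itemize}
\item It shows $\varphi\ge0$ on $[-3,\infty)$.
\item It merges all entries below $-3$ into one, using $h(a)+h(b)>h(a+b)$ for $a,b>3$, where $h(t)=\varphi(-t)$.
\item It spreads the positive mass over five slots by Jensen, since $\varphi$ is increasing and convex on $[0,\infty)$.
\end{itemize}
This leads directly to your extremal configuration $(-T,T/5,\dots,T/5)$ with $T\ge3$, i.e.\ your $g_1$ for $t\ge3/5$. Its closing cubic $-1+T-\frac{7}{50}T^2+\frac1{150}T^3$ is exactly your $p$ with $T=5t$.

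Your route finds the extremal family systematically, without having to guess it, at the price of an existence argument and five scalar checks. The paper's route needs only one inequality, and only on the range $T\ge3$, so the small-$t$ window never arises.
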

\begin{proof}
Put $g(t)=e^t-1-t-t^2/5$ for $t\in\R$ and
$h(t)=g(-t)$ for $t\ge0$. On $[0,\infty)$, the function $g$
is increasing and convex, since $g''(t)=e^t-2/5>0$ and
$g'(0)=0$. We first show that $h(t)\ge0$ on $0\le t\le3$.
For $t\le3/2$, Taylor's formula gives
$h(t)\ge t^2(3/10-t/6)\ge0$. For $3/2\le t\le3$, discard
$e^{-t}$ and use $t-1-t^2/5\ge0$.

If every negative $x_i$ has absolute value at most $3$, summing
the preceding bounds proves the inequality. Otherwise, let $T$ be
the sum of the absolute values of the negative entries whose
absolute values exceed $3$. For $a,b>3$, we have
\[
 h(a)+h(b)-h(a+b)=\frac25ab-(1-e^{-a})(1-e^{-b})>0.
\]
Repeatedly applying this inequality bounds their total contribution
below by $h(T)$. All other negative entries give nonnegative
contributions. The positive entries sum to at least $T$, and there
are at most five of them. Adding zero entries and applying Jensen's
inequality therefore gives at least $5g(T/5)$. Combining these
bounds, we obtain
\begin{align*}
 h(T)+5g(T/5)
 &=e^{-T}+5e^{T/5}-6-\frac6{25}T^2\\
 &\ge-1+T-\frac7{50}T^2+\frac1{150}T^3>0\qquad(T\ge3).
\end{align*}
The cubic on the right is positive at $3$. Its derivative,
$1-7T/25+T^2/50$, is positive because its discriminant is negative.
This proves \eqref{E:eq:exp6} in the remaining case.
\end{proof}

We next apply the exponential inequality to the FCC contacts.
For a trace-free symmetric matrix $Z$, direct summation gives
\begin{equation}
 \sum_{u\in\mathcal U}u^TZu=0,
 \quad
 \sum_{u\in\mathcal U}(u^TZu)^2
 =\frac12\sum_i Z_{ii}^2+2\sum_{i<j}Z_{ij}^2\ge\frac12\norm Z^2.
 \label{E:eq:contact-moments}
\end{equation}
Lemma~\ref{E:lem:exp6} therefore implies
\begin{equation}
 2\sum_{u\in\mathcal U}e^{-u^TZu}\ge12+\frac{\norm Z^2}{5}.
 \label{E:eq:V-improved}
\end{equation}

\begin{lemma}[Contact shortening]\label{E:lem:shortening}
For every $X\in\Symzero$ there is $u\in\mathcal U$ with
$u^TXu\le-\norm X/\sqrt{42}$. Consequently, if $0<r=\norm X\le7/50$, the lattice $e^{X/2}\FCC$ has a vector of squared length less than
\begin{equation}
 \lam(1-r/14).
 \label{E:eq:shortening}
\end{equation}
\end{lemma}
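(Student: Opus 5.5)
The plan is to work with the six values $x_u=u^TXu$, $u\in\mathcal U$. By \eqref{E:eq:contact-moments} applied to $Z=X$, these values sum to zero. Rather than only using their second moment, I will use the explicit pairing structure of the contact directions.

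\textbf{Step 1: pairing.} For the triples $(k,i,j)$ with $\{i,j,k\}=\{1,2,3\}$ and $i<j$, the two directions $(e_i\pm e_j)/\sqrt2$ give
\[
 x_{\pm}=\tfrac12(X_{ii}+X_{jj})\pm X_{ij}=-\tfrac12X_{kk}\pm X_{ij},
\]
where the trace-free condition was used.

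\textbf{Step 2: extremal bound on $\norm X$.} Put $m=-\min_{u\in\mathcal U}x_u$; then $m\ge0$ because the six values sum to zero. For each $k$, the smaller value of the pair is $-\tfrac12X_{kk}-|X_{ij}|\ge-m$. This gives
\[
 |X_{ij}|\le m-\tfrac12X_{kk},
\]
and in particular $X_{kk}\le2m$. Since $\sum_kX_{kk}=0$, it also gives $X_{kk}\ge-4m$. Therefore
\[
 \norm X^2=\sum_kX_{kk}^2+2\sum_{i<j}X_{ij}^2
 \le\sum_k\bigl(\tfrac32X_{kk}^2-2mX_{kk}+2m^2\bigr)
 =\tfrac32\sum_kX_{kk}^2+6m^2.
\]
It remains to maximize the convex function $\sum_kX_{kk}^2$ over the polytope $\sum_kX_{kk}=0$, $-4m\le X_{kk}\le2m$. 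The maximum is attained at a vertex, namely a permutation of $(2m,2m,-4m)$, and equals $24m^2$. Hence $\norm X^2\le42m^2$, which gives $m\ge\norm X/\sqrt{42}$ and proves the first assertion.

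\textbf{Step 3: shortening.} The first-shell vector $v=\sqrt{\lam}\,u$ of $\FCC$ has image $e^{X/2}v$ in $e^{X/2}\FCC$, with squared length $\lam\,u^Te^Xu$. Expand $u$ in an orthonormal eigenbasis of $X$, with eigenvalues $\lambda_\ell$ and weights $w_\ell^2$ summing to one. Then
\[
 u^Te^Xu=1+u^TXu+\sum_\ell w_\ell^2\bigl(e^{\lambda_\ell}-1-\lambda_\ell\bigr).
\]
As in \eqref{eq:log-strain-bridge}, each eigenvalue satisfies $|\lambda_\ell|\le cr$ with $c=\sqrt{2/3}$. So the remainder is at most $e^{cr}-1-cr\le\tfrac12c^2r^2e^{cr}=\tfrac13r^2e^{cr}$. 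Choosing $u$ from Step 2, it then suffices to show
\[
 \frac r3e^{cr}<\frac1{\sqrt{42}}-\frac1{14}\qquad(0<r\le7/50).
\]
The left side increases in $r$, so one rational check at $r=7/50$ finishes. There the left side is about $0.052$ and the right side about $0.083$, and this can be verified with the exponential bounds \eqref{T:eq:exp-rational}. Strictness follows from $r>0$.

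\textbf{Main obstacle.} The only delicate point is obtaining the sharp constant $\sqrt{42}$. The crude second-moment argument (the $[-m,5m]$ range bound) gives only $\sqrt{60}$, which is too weak for the margin used later. The pairing in Step 2 is what recovers $42$, with equality at $\diag(2,2,-4)$. Everything after that is a one-variable check.
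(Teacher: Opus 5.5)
Your proof is correct and follows essentially the paper's route. Both arguments use the pairing $-X_{kk}/2\pm X_{ij}$ with $m=-\min_u u^TXu$ and a convex maximization giving $\norm X^2\le 42m^2$: the paper substitutes $k_k=m-X_{kk}/2$ and maximizes $\sum k_k^2$ over a simplex, which is your triangle $(2m,2m,-4m)$ in other coordinates. Both then finish with a second-order Taylor bound and a single numerical check at $r=7/50$. The only difference is that your remainder $\tfrac13 r^2e^{\sqrt{2/3}\,r}$ uses the trace-free eigenvalue bound, so it is slightly sharper than the paper's $\tfrac12 r^2e^r$ and leaves more margin.
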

\begin{proof}
Set $\mu=-\min_u u^TXu$. For the pair complementary to $k$,
the two contact forms are $-X_{kk}/2\pm X_{ij}$. Thus
$k_k:=\mu-X_{kk}/2\ge0$, $\sum k_k=3\mu$ and
$|X_{ij}|\le k_k$. It follows that
\[
 \norm X^2\le4\sum(\mu-k_k)^2+2\sum k_k^2
 \le42\mu^2.
\]
For the selected contact vector,
\[
 u^Te^Xu\le1-r/\sqrt{42}+e^rr^2/2<1-r/14.
\]
For the last inequality, use $re^r/2\le7/86$ on the stated
interval and
$1/\sqrt{42}-7/86>2/13-7/86=81/1118>1/14$.
This proves the shortening estimate.
\end{proof}

\begin{lemma}[FCC reference-tail bounds]\label{E:lem:power-reference}
With the normalization $\Phi_s(Y)=\lam^sE(Y,s)$,
\begin{equation}
 \Phi_s(\YF)\le12+32\,2^{-s}\qquad(s\ge4).
 \label{E:eq:reference-tail}
\end{equation}
\end{lemma}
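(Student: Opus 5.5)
We need $\Phi_s(\YF)=\lam^s E(\FCC,s)=\sum_{n\ge1}N_n n^{-s}$ with $N_1=12$, $N_2=6$, since FCC vectors have squared length $n\rho=n\lam$ and thus $|v|^{-2s}\lam^s=n^{-s}$. The plan is to keep the first shell exactly and bound everything else by $2^{-s}$ times a constant that does not increase in $s$.

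Write
\[
 \Phi_s(\YF)-12=\sum_{n\ge2}N_n n^{-s}=2^{-s}\sum_{n\ge2}N_n(2/n)^{s}.
\]
Each factor $(2/n)^s$ with $n\ge2$ is nonincreasing in $s$, so the sum $T(s):=\sum_{n\ge2}N_n(2/n)^s$ is nonincreasing. It therefore suffices to prove $T(4)\le32$, or any bound at $s=4$ that is at most $32$.

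For the evaluation at $s=4$, use the exact shell counts for small $n$, given by the number of representations of $2n$ as a sum of three squares: $N_2=6$, $N_3=24$, $N_4=12$, $N_5=24$, $N_6=8$, and so on. For the remainder, use the elementary bound $N_n<30n$ from Lemma~\ref{T:lem:shellbound}. The tail is then controlled by
\[
 \sum_{n>K}30n\,(2/n)^4=480\sum_{n>K}n^{-3}\le\frac{240}{K^2}.
\]
If one used the crude bound $30n$ from $n=2$ onward, the resulting constant $480(\zeta(3)-1)\approx97$ would be too large. So I would retain the exact counts up to a moderate $K$, say $K=10$, where the tail is at most $2.4$. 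The exact part then contributes $6+24(2/3)^4+12(1/2)^4+24(2/5)^4+8(1/3)^4+\dots$. The first five terms give about $6+4.74+0.75+0.61+0.10=12.2$, and the terms for $n=7,\dots,10$ add a few tenths. The total is about $15$, comfortably below $32$.

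The only step needing care is to enumerate the counts $N_n$ for $n\le10$ rigorously, that is, the number of representations of $2n$ by three squares. If the margin felt thin I would enlarge $K$ instead. Since there is a factor of two to spare, no further refinement is needed, and monotonicity in $s$ transfers the bound from $s=4$ to all $s\ge4$.
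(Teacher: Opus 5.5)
Your proposal is correct and is essentially the paper's proof. Both keep the first shell and bound $\sum_{n\ge2}N_nn^{-4}$ by $2$ (equivalently, your $T(4)\le32$), using exact shell counts plus $N_n<30n$ with an integral tail. Both then use that every normalized squared length beyond the first shell is at least $2$, which gives the factor $2^{-(s-4)}$. The only difference is that the paper keeps exact counts just for $n=2,3,4$ and bounds the rest by $30\int_4^\infty t^{-3}\,dt$. That already gives a total below $2$, so your cutoff $K=10$ is more than is needed.
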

\begin{proof}
The first shell contributes $12$. For the next three shells, the
multiplicities are $6,24,12$. At exponent $4$, the complete
contribution beyond the first shell is less than
\[
 \frac6{16}+\frac{24}{81}+\frac{12}{256}
 +30\int_4^\infty t^{-3}\,dt<2.
\]
Every normalized squared length beyond the first shell is at least
two. Hence increasing the exponent from $4$ to $s$ reduces this
tail by at least the factor $2^{-(s-4)}$. The stated reference
bound follows.
\end{proof}

We now prove Theorem~\ref{E:thm:local20}.

\begin{proof}
Set $r=\norm X$, $A=X/r$, $Z=sX$ and $t=sr=\norm Z$.
We estimate the first shell in terms of this scaled radius. For a
unit vector $u$, the second derivative of
$f(v)=\log(u^Te^{vZ}u)$ is a weighted variance of the eigenvalues
of $Z$. Their range is at most $\sqrt2\norm Z$. A random variable
supported in $[a,b]$ has variance at most $(b-a)^2/4$, since
its variance is bounded by its mean squared distance from
$(a+b)/2$. Thus $0\le f''\le t^2/2$, and integration gives
\[
 s\log(u^Te^{Z/s}u)\le u^TZu+t^2/(4s).
\]
We distinguish two cases. If $t\le26$, the first-shell contribution
$P_s$ satisfies
\[
 P_s\ge e^{-t^2/(4s)}(12+t^2/5).
\]
Since $t\le\min(26,7s/50)$, we have
$t^2/(4s)\le1183/1300<1$. Applying $e^{-x}\ge1-x$ to this
nonnegative first-shell lower bound and using
\eqref{E:eq:V-improved}, we obtain
\begin{equation}
 P_s-12\ge t^2\left(\frac15-\frac3s-\frac{t^2}{20s}\right)
 \ge\frac3{1625}t^2.
 \label{E:eq:high-first-gap}
\end{equation}
To prove the last inequality, first take $20\le s\le1300/7$ and
use $t^2\le49s^2/2500$. The coefficient
$1/5-3/s-49s/50000$ is concave, so its minimum is at an endpoint.
The smaller endpoint value is $3/1625$, attained at $1300/7$.
Beyond this point, use $t^2\le676$. The resulting coefficient
$1/5-184/(5s)$ is increasing and has the same value at the
joining endpoint.

We next estimate the remaining shells. Let $T_s(r)$ be their
normalized contribution along $Y(rA)$, where $\norm A=1$.
For $q(r)=v^Te^{rA}v$, the matrices $A$ and $e^{rA}$ commute,
so $q''(r)\le\opnorm A^2q(r)\le(2/3)q(r)$. Hence
\[
 (q^{-s})''\ge-(2s/3)q^{-s}.
\]
On the ball of radius $7/50$, the tail is bounded by
$32e^{-(\log2-7/60)s}<32e^{-11s/20}$, by
\eqref{E:eq:reference-tail} and $\log2>2/3$.
Its first derivative vanishes at the centre by cubic symmetry.
Integrating the lower second-derivative bound twice gives
\[
 T_s(r)-T_s(0)\ge-\frac{32}{3s}e^{-11s/20}t^2
 >-\frac{t^2}{61440}\qquad(s\ge20).
\]
The last estimate uses $e^{11}>2^{15}$. Combining the tail loss
with \eqref{E:eq:high-first-gap} leaves a gap greater than
$t^2/600$.

It remains to treat $t>26$. In this case, the shortened contact
from Lemma~\ref{E:lem:shortening} gives
\[
 \Phi_s(Y(X))>2(1-r/14)^{-s}\ge2e^{t/14}>2e^{13/7}>12.01.
\]
The reference bound \eqref{E:eq:reference-tail} is less than
$12.01$ for $s\ge20$. The last exponential comparison follows
from its positive Taylor polynomial of degree four. Thus the
remaining annulus is also excluded, and the theorem is proved.
\end{proof}

\subsection{The intermediate exponent range}\label{E:sec:local8}
We now treat $8\le s\le20$. We use the normalization
$\Phi_s(Y)=\lam^sE(Y,s)$ from
Lemma~\ref{E:lem:power-reference}. Here $q_j(A)$ denotes the
coefficient of $r^j$ in the radial Taylor expansion at FCC:
\[
 \Phi_s(Y(rA))=\Phi_s(\YF)+q_2(A)r^2+q_3(A)r^3+q_4(A)r^4+\cdots,
 \qquad \norm A=1.
\]
We suppress the dependence of $q_j$ on $s$. These are the
coefficients of the normalized inverse-power energy, not those of
the Ewald sum in Section~\ref{E:sec:local2}. Normal convergence
for $s\ge8$ justifies all differentiations below.

\begin{lemma}[Relative low-order variations]\label{E:lem:relative-power}
For every $s\ge8$ and every $A\in\Symzero$ with $\norm A=1$,
\begin{equation}
 q_2\ge\frac{s(s-10/3)}2,\qquad
 |q_3|\le\frac7{10}s q_2,\qquad q_4\ge-sq_2.
 \label{E:eq:relative-power}
\end{equation}
\end{lemma}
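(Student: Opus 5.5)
We expand $\Phi_s(Y(rA))=\sum_{v\in\FCC\setminus\{0\}}\bigl(q_v(r)/\lam\bigr)^{-s}$ with $q_v(r)=v^Te^{rA}v$. We then split the sum into the first shell and the higher shells. The first shell dominates by a factor of order $2^{s}$, and the other shells enter only as relative corrections. Write $w=v/|v|$ and $\mu_j(v)=w^TA^jw$. Then $q_v(r)/|v|^2=1+\mu_1r+\mu_2r^2/2+\mu_3r^3/6+\cdots$, and
\[
 (q_v/|v|^2)^{-s}=1-s\mu_1r+\Bigl(\tfrac{s(s+1)}2\mu_1^2-\tfrac s2\mu_2\Bigr)r^2+\cdots .
\]
The cubic and quartic coefficients are polynomials in $s$ of degrees $3$ and $4$ in $\mu_1,\dots,\mu_4$. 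For a shell of normalized length $n$, each coefficient is multiplied by $n^{-s}$.

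\emph{Step 1 ($q_2$).} The first-shell directions are the six contact directions $\mathcal U$, each counted twice. By \eqref{E:eq:contact-moments}, $\sum_{\mathcal U}\mu_1^2\ge\tfrac12$, and by shell isotropy $\sum_{u}\mu_2=\tfrac{6}{3}\tr A^2=2$. So the first shell contributes at least $2\bigl[\tfrac{s(s+1)}2\cdot\tfrac12-\tfrac s2\cdot2\bigr]=\tfrac{s(s+1)}2-2s=\tfrac{s(s-3)}2$. For the higher shells, discard the positive $\mu_1^2$ terms. Each shell has $\sum\mu_2=N_n/3$, so they cost at most $\tfrac s6\sum_{n\ge2}N_nn^{-s}\le\tfrac s6\cdot32\cdot2^{-s}$, using the bound from the proof of Lemma~\ref{E:lem:power-reference}. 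At $s=8$ this is $\le s/48$, which is well below the available slack $s/3\cdot\tfrac12=s/6$. This gives $q_2\ge s(s-10/3)/2$.

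\emph{Step 2 ($q_3$ and $q_4$ relative to $q_2$).} On the first shell, use $|\mu_j|\le\opnorm{A}^j\le c^j$ with $c=\sqrt{2/3}$, and write the cubic coefficient as $-\tfrac{s(s+1)(s+2)}6\mu_1^3+\tfrac{s(s+1)}2\mu_1\mu_2-\tfrac s6\mu_3$. The key point is to bound $\sum_u|\mu_1|^3\le c\sum_u\mu_1^2$. Together with $\sum\mu_1^2\ge\frac12$ and $q_2\gtrsim\tfrac{s^2}2\sum\mu_1^2$, this bounds the leading term $\tfrac{s^3}{6}c\sum\mu_1^2$ by about $\tfrac{c}{3}s\,q_2\approx0.27\,s q_2$. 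The lower-order pieces need care at $s=8$: either absorb them into $q_2$ via its exact form $\sum_u[\tfrac{s(s+1)}2\mu_1^2-\tfrac s2\mu_2]$, or bound them by $O(s^2)\le O(s^{-1})\cdot s q_2$. The tails cost $O(s^32^{-s})$, which is negligible for $s\ge8$.

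For $q_4$, the leading term $\tfrac{s^4}{24}\mu_1^4$ is nonnegative. The main negative term is $-\tfrac{s^3}{4}\mu_1^2\mu_2$, bounded by $\tfrac{s^3}4c^2\sum\mu_1^2\approx\tfrac{s}{3}q_2$. The remaining terms are handled as in the cubic case, giving $q_4\ge-sq_2$.

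\emph{Main obstacle.} The difficulty is making the lower-order $s$-terms uniformly controlled down to $s=8$, where $q_2\ge s(s-10/3)/2$ is only about $2.3\,s^2$ and the ratios $(s+1)/s$ and $(s+2)/s$ are not close to $1$. I would first organize every coefficient as (polynomial in $s$) times (moment sums over $\mathcal U$). I would then use the pointwise inequalities $|\mu_1|\le c$ and $0\le\mu_2\le c^2$ (note $\mu_2\ge\mu_1^2$), and verify the resulting scalar polynomial inequalities in $s$ on $[8,\infty)$ by checking monotonicity after dividing by $s^3$ or $s^4$. If the constant $7/10$ proves tight, I would split off the direction-dependent ratio $\sum\mu_2/\sum\mu_1^2$ as a separate variable and maximize over it.
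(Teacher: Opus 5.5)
Your Step 1 is correct and is exactly the paper's argument: the first shell gives at least $\frac s2[(s+1)-4]=\frac{s(s-3)}2$, and the higher shells cost at most $\frac s6T_s(0)\le\frac s{48}$.

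\textbf{The gap is in Step 2.} Your ratios $0.27\,sq_2$ and $\frac s3q_2$ rest on $q_2\approx\frac{s^2}2M_2$, where $M_2=\sum_v n^{-s}\mu_1(v)^2\ge1$ is the weighted sum over all nonzero FCC vectors. The exact formula is $q_2=\frac s2\bigl[(s+1)M_2-F_s/3\bigr]$ with $F_s=\sum_v n^{-s}\approx12$. At $s=8$ in a diagonal direction ($M_2=1$) this gives $sq_2\approx160$, not $256$.

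\emph{Cubic.} Your bound $\frac{s(s+1)(s+2)}6\,cM_2\approx98$ for the $\mu_1^3$ term already uses $0.61\,sq_2$. That leaves about $14$ for $\frac{s(s+1)}2\sum\mu_1\mu_2-\frac s6\sum\mu_3$ plus the tails. Pointwise bounds on the first shell give $36\sum|\mu_1|\mu_2\le36\cdot4c\approx118$, and still about $59$ with Cauchy--Schwarz.

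\emph{Quartic.} The term $\frac{s(s+1)(s+2)}6M_2=120M_2$ is $0.75\,sq_2$ at $M_2=1$. Bounding $\frac{s(s+1)}6\sum n^{-s}\mu_1\mu_3$ pointwise by $12c^4F_s\approx64$ exceeds the remaining $\approx39$.

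So neither $|q_3|\le\frac7{10}sq_2$ nor $q_4\ge-sq_2$ follows at $s=8$ by your route. The fallback ``$O(s^2)\le O(s^{-1})\,sq_2$'' is only asymptotic. Reparametrizing by $\sum\mu_2/\sum\mu_1^2$ does not help either, since $\sum_{\mathcal U}\mu_2=2$ is constant. Pointwise moment bounds discard exactly the cancellations the constants depend on: in a diagonal direction $|\mu_1|\le c/2$ on the contacts, and the true first-shell cubic is only about $0.23\,sq_2$.

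\textbf{What is missing is the invariant structure}, which the paper uses for both terms.

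\emph{Cubic.} Write the first-shell contribution exactly as $c_1p_{31}+c_2p_{32}+c_3p_{33}$, with $c_1=\frac{\sqrt6s^2(s+9)}{72}$, $c_2=\frac{\sqrt2s(3s+1)}2$, $c_3=\frac{\sqrt6s(2s^2+s+3)}{24}$. By \eqref{E:eq:invariant-bounds} it is bounded by $c_1D^2+\bigl(\frac{c_2}{3\sqrt3}+2c_3\bigr)O^2$. Compare this coefficientwise with the first-shell quadratic $\frac{s(s-3)}2D^2+s(s-1)O^2$. This yields $|q_3^{\mathrm{first}}|\le\frac{3s}5q_2^{\mathrm{first}}$ for $s\ge8$ via two scalar polynomial inequalities. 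The tail, bounded using $|q^{(j)}|\le c^jq$ and $T_s(0)\le\frac18$, fits in the remaining $\frac s{10}q_2$.

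\emph{Quartic.} Use the Cayley--Hamilton identity $A^3=\frac12A+\frac13\tr(A^3)I$. Together with $\sum_{\mathrm{shell}}\mu_1=0$, it gives $\sum n^{-s}\mu_1\mu_3=M_2/2\ge0$, so that term can simply be dropped. The same identity gives $\tr A^4=\frac12$, hence $\sum n^{-s}\mu_4=F_s/6$. With $\mu_2\le\frac23$ you get $q_4\ge-\frac{s(s+1)(s+2)}6M_2-\frac{sF_s}{144}$. Inserting the exact formula for $q_2$ reduces $q_4\ge-sq_2$ to $2(s^2-1)M_2\ge F_s\bigl(s+\frac1{24}\bigr)$. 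This holds for $s\ge8$ because $M_2\ge1$ and $F_s\le13$.
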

\begin{proof}
We prove the quadratic, cubic and quartic estimates in order.
For each base vector, put $n=|v|^2/\lam$, $u=v/|v|$,
$\ell=u^TAu$ and $m=u^TA^2u$. Let $F_s=\sum n^{-s}$ and
$M_2=\sum n^{-s}\ell^2$. Cubic symmetry gives
$\sum n^{-s}m=F_s/3$, and therefore
\[
 q_2=\frac s2[(s+1)M_2-F_s/3].
\]
The first shell contributes at least $s(s-3)/2$. The remaining
shells contribute at least $-sT_s(0)/6$, while
\eqref{E:eq:reference-tail} gives $T_s(0)\le1/8<1$.
Combining them proves the quadratic estimate.

For the cubic estimate, we again separate the first shell and the
tail. Its two quadratic coefficients are $s(s-3)/2$ and
$s(s-1)$. Expanding the twelve contact terms gives the cubic
coefficients
\begin{equation}
 \frac{\sqrt6s^2(s+9)}{72},\qquad
 \frac{\sqrt2s(3s+1)}2,\qquad
 \frac{\sqrt6s(2s^2+s+3)}{24}
 \label{E:eq:power-cubic-coeff}
\end{equation}
relative to $p_{31},p_{32},p_{33}$. By the invariant bounds in
Section~\ref{E:sec:local2}, the absolute value of the first-shell
cubic is at most $(3s/5)$ times its quadratic contribution.
Using $\sqrt6<5/2$, the two required scalar comparisons reduce to
\[
 191s\ge873,\qquad 66s^2-441s-275\ge0,
\]
which hold for $s\ge8$.

For the omitted shells, differentiate three times and use
$|q^{(j)}|\le c^j q$. This yields
\[
 |q_{3,\mathrm{tail}}|\le\frac5{54}s(s^2+6s+6)T_s(0),\qquad c^3<5/9.
\]
We also have $q_{2,\mathrm{first}}\le q_2+sT_s(0)/6$.
Since $T_s(0)\le1/8$ and $s\ge8$, the total error beyond
$(3s/5)q_2$, after division by $s^3$, is at most
\[
 \frac1{640}+\frac{295}{13824}<\frac7{240}.
\]
The quadratic estimate in \eqref{E:eq:relative-power} gives
$sq_2/10\ge7s^3/240$. Adding this allowance proves the required
cubic bound.

Finally, consider the quartic term. For a lower bound we may discard
the positive leading term and the $m^2$ term. The Cayley--Hamilton
identity for trace-free $3\times3$ matrices gives
\[
 A^3=\tfrac12A+\tfrac13\tr(A^3)I,\qquad\tr(A^4)=\tfrac12.
\]
It follows that the sum of $\ell(u^TA^3u)$ over weighted shells is $M_2/2\ge0$, while the sum of $u^TA^4u$ is $F_s/6$. Since $m\le2/3$,
\[
 q_4\ge-\frac{s(s+1)(s+2)}6M_2-\frac{sF_s}{144}.
\]
Using $M_2\ge1$, $F_s\le13$ and
$2(s^2-1)\ge13(s+1/24)$ for $s\ge8$, we substitute in the
formula for $q_2$ and obtain $q_4\ge-sq_2$.
This completes all three estimates.
\end{proof}

We now prove Theorem~\ref{E:thm:local8}.

\begin{proof}
It remains to combine the relative bounds with the fifth-order
remainder. Put $c=\sqrt{2/3}$. Repeated differentiation bounds
the absolute fifth derivative of a radial summand by
\[
 c^5s(s^4+20s^3+120s^2+250s+150)q^{-s}.
\]
The polynomial follows from the fifth-order chain rule, with
coefficients $(1,15,25,10,1)$, applied to the rising factorials
$(s)_m$. For $r\le1/25$ and $s\le20$, the complete normalized
energy is at most $13e^{csr}<26$, since $csr<2/3<\log2$.
Together with $c^5<4/11$, this bounds the fifth derivative by
$K_js^5$ on the three intervals of Table~\ref{E:tab:local8}.

Taylor's theorem and Lemma~\ref{E:lem:relative-power} give
\[
 \frac{\Phi_s(Y(rA))-\Phi_s(\YF)}{s^2r^2}
 \ge\frac12\left(1-\frac{10}{3s}\right)
 \left(1-\frac7{10}sr-sr^2\right)-\frac{K_js^3r^3}{120}.
\]
For $s\in[a,b]$ and $r\le R=1/25$, bound the first factor
below at $a$, the second below at $b$, and the subtracted term
above using $b$. The rational lower bounds in the table all exceed
$1/25$. To obtain the fifth-derivative constants, evaluate the
decreasing function
$26(4/11)(1+20/s+120/s^2+250/s^3+150/s^4)$ at the left endpoints.
Table~\ref{E:tab:local8} gives these endpoint comparisons, and
Appendix~\ref{app:elementary-revision} records their arithmetic
checks. The three intervals cover the required range, proving the
theorem.
\end{proof}

\begin{table}[ht]\centering
\caption{Analytic local subcases covering $[8,20]$.}\label{E:tab:local8}
\begin{tabular}{crr}\toprule
Interval&$K_j$&Lower normalized quadratic coefficient\\\midrule
$[8,12]$&56&$255857/1875000$\\
$[12,16]$&35&$63917/562500$\\
$[16,20]$&27&$463/10000$\\\bottomrule
\end{tabular}
\end{table}

\section{Global Epstein comparison and consequences}\label{sec:epstein-global}

We now state the two global conclusions proved in this section. Together they cover the full exponent range $s\ge3/2$ for the regularized Ewald energy and $s>3/2$ for the Epstein zeta function. The proof is divided into two parts: convex finite-power comparisons treat the exterior high-exponent region, while the finite coverings from the low-exponent analysis are combined with the local FCC and BCC estimates.

\begin{theorem}[Regularized comparison on the full low-exponent interval]\label{E:thm:global-low}
For every covolume-one lattice $L\subset\R^3$ and every
$3/2\le s\le8$,
\[
 \HH_s(L)\ge\HH_s(\FCC),
\]
with equality if and only if $L$ is orthogonally equivalent to $\FCC$.
\end{theorem}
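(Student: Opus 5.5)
The proof assembles results already established into a finite covering argument, in the same way as the proof of Theorem~\ref{T:thm:main}. Fix $s\in[3/2,8]$ and a covolume-one lattice $L$, and choose a reduced representative $Y$ satisfying \eqref{T:eq:grenier}--\eqref{T:eq:abcbox}. If $d_1<71/100$, Proposition~\ref{E:prop:cusp} gives $\HH_s(L)>\HH_s(\FCC)$ strictly. Otherwise $Y$ lies in the compact rectangle $K$ of \eqref{E:eq:root-rectangle}, intersected with the closed reduction conditions. The task is therefore to cover $K\times[3/2,8]$ by finitely many closed shape--exponent boxes. On each box exactly one certified alternative must hold:
\begin{enumerate}[label=(\alph*),leftmargin=2em]
\item the box violates a reduction inequality or has $d_1<71/100$;
\item every point of the box admits an FCC representative $\gFE^Te^X\gFE$ with $\norm X\le 2/25$, so Theorem~\ref{E:thm:low-local} applies;
\item every point admits a BCC representative $Y_{\mathrm B}(X)$ with $\norm X\le1/10$, so Theorem~\ref{E:thm:bcc-ball} applies;
\item the exclusion test of Proposition~\ref{E:prop:cell-test} holds with $N=1$ or $N=2$.
\end{enumerate}

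Containment in (b) and (c) will be certified as follows. Take the centre of the box. Bound the strain $C-I$ there, where $C=\rho^2M_F^TYM_F/4$ at FCC or $\rho^{-2}B_0^TYB_0$ at BCC. Convert this to a bound on $\norm{\log C}$ by \eqref{eq:strain-log-bridge}. Then add the path-length bound \eqref{E:eq:metric-path} from the centre to every point of the box, using the triangle inequality in the invariant metric. Checking the centre alone is not enough, as remarked after \eqref{E:eq:metric-path}.

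For (d), the inputs are the endpoint bounds $L_a,L_b$ from \eqref{E:eq:endpoint-box}, computed with the rational kernel bounds of Lemma~\ref{E:lem:stieltjes}, the recurrence \eqref{E:eq:G-recurrence}, and the complete FCC reference tail \eqref{E:eq:reference-tail-G}. The curvature bound is $M$ from \eqref{E:eq:signed-curvature}. The exponent cells can be taken to be the $J_j$ of \eqref{E:eq:local-cells}, or subdivisions of them. Boxes that fail every test are bisected, keeping both closed children, so that induction preserves coverage of shared faces.

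Given such a covering, the inequality follows pointwise. Alternatives (a) and (d) give strict inequality, as does (c), since its bound is $>1/2500$. Alternative (b) gives strict inequality unless $X=0$. Hence equality forces $Y$ to be a Gram matrix of $\FCC$, and conversely equality obviously holds at $\FCC$. The same reasoning covers $s=3/2$, where $\HH_{3/2}$ is the height-type functional; no Epstein series is evaluated at the pole.

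The main obstacle is certifying the covering near the boundary of the local balls, above all close to BCC and close to FCC at the lowest exponents. There the gap $\HH_s(Y)-\HH_s(\YF)$ is small. The finite sums $P_N$ with $N\le2$ lose their omitted positive terms, and the Lipschitz terms $r_jA_{j,p}$ in \eqref{E:eq:endpoint-box} force fine subdivision. I would first try $N=2$ with exponent cells of width $1/8$ near $s=3/2$. Where the box is close to a reference lattice, I would rely on the enlarged radii in Table~\ref{E:tab:low-local}, around $9/100$ to $1/10$, rather than the uniform $2/25$. Only if that fails would I refine the shape boxes adaptively. The success of the whole argument rests on this computer-assisted finite covering, recorded in the appendix.
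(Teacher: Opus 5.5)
Your proposal is correct and follows essentially the same route as the paper. After reduction and the uniform cusp exclusion of Proposition~\ref{E:prop:cusp}, the paper invokes the computer-assisted covering of Lemma~\ref{hyp:epstein}(i) of $K\times[3/2,2]$, $K\times[2,4]$ and $K\times[4,8]$; every terminal box is resolved by reduction or cusp exclusion, by the FCC or BCC local estimates (Theorems~\ref{E:thm:low-local} and~\ref{E:thm:bcc-ball}, with containment certified from the centre strain plus the path-length bound), or by the interpolation test of Proposition~\ref{E:prop:cell-test}, exactly as you describe. The differences are only cosmetic: the paper also allows a simpler direct finite-sum versus reference-upper-bound exclusion, and ``exactly one alternative'' should read ``at least one''.
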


This first theorem completes the regularized comparison through the difficult low-exponent range, including the endpoint $s=3/2$. For $s\ge8$ the argument changes character: the local estimates of Section~\ref{sec:epstein-high-local} are combined with convex finite power sums on the exterior region. The next theorem records the resulting high-exponent comparison.

\begin{theorem}[Global comparison for every exponent at least eight]\label{E:thm:global8}
For every covolume-one lattice $L\subset\R^3$ and every
real $s\ge8$,
\[
 E(L,s)\ge E(\FCC,s),
\]
with equality if and only if $L$ is orthogonally equivalent to $\FCC$.
\end{theorem}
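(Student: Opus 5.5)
We prove Theorem~\ref{E:thm:global8} by the same local--exterior splitting as for theta, now in the normalized power sum $\Phi_s(Y)=\lam^sE(Y,s)$. Multiplying by $\lam^s>0$ does not affect the comparison. We use the reduced representative of Section~\ref{sec:reduced-representatives}.

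The cusp is removed first. For $s\ge8$, if $d_1$ is small, then the vector $e_1$ and its negative give $\Phi_s(Y)\ge2(\lam/d_1)^s$. By Lemma~\ref{E:lem:power-reference}, $\Phi_s(\YF)\le12+32\cdot2^{-s}<12.2$, so the comparison is strict as soon as $(\lam/d_1)^8\ge6.1$, i.e.\ for $d_1\le\lam\cdot6.1^{-1/8}$. Outside this region the remaining shapes lie in a compact box, as in \eqref{E:eq:root-rectangle}.

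The FCC neighbourhood is handled by Theorems~\ref{E:thm:local8} and~\ref{E:thm:local20}, which give strict comparison for $0<\norm X\le1/25$ when $8\le s\le20$, and for $0<\norm X\le7/50$ when $s\ge20$. A reduced point is "local" if the containment bound \eqref{E:eq:metric-path} together with \eqref{E:eq:log-norm-bound}, applied to the FCC strain $\rho^2M_F^TYM_F/4$, places the whole shape box inside the relevant ball. Equality then forces $X=0$.

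For the exterior, fix a finite symmetric set $\mathcal S\subset\mathcal S_2$ and put
\[
 P_{\mathcal S}(\xi,s)=\sum_{z\in\mathcal S}\bigl(\lam/Q_\xi(z)\bigr)^s\le\Phi_s(Y(\xi)).
\]
As a function of $s$, this is a sum of exponentials $e^{s\log(\lam/Q)}$ and hence convex in $s$. The reference $\Phi_s(\YF)$ is nonincreasing in $s$, since every normalized FCC length is at least $1$; its tail is bounded by Lemma~\ref{E:lem:power-reference}. The analogue of Lemma~\ref{T:lem:continuation} therefore holds: if on a shape box $P_{\mathcal S}(\xi,s_0)>12+32\cdot2^{-s_0}$ (or the sharper shell-truncated reference bound) and $\partial_sP_{\mathcal S}(\xi,s_0)>0$, the comparison holds for all $s\ge s_0$.

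We would apply this test at $s_0=20$ on boxes outside the $7/50$ ball. For the bounded range $[8,20]$ we use interval boxes in $(\xi,s)$. On each such box, a lower bound for $P_{\mathcal S}$ comes from mean-value or one-sided Taylor certificates, built from the gradient formulas \eqref{T:eq:hgrad} adapted to $Q^{-s}$. It is compared with an upper bound for the reference obtained from finitely many FCC shells plus the complete tail \eqref{E:eq:geometric-tail}. Convexity in $s$ allows endpoint interpolation on exponent cells without curvature terms on the competitor side, because a convex function is bounded below by any supporting line. The reference is bounded by its maximum over the cell, which sits at the left endpoint by monotonicity. The finite covering then closes the proof as in Proposition~\ref{T:prop:computer}.

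The main obstacle is the exterior region adjacent to FCC at large $s$. There a nearby competitor's minimal length is close to $\lam$, so the margin of $P_{\mathcal S}$ over the reference is tiny and the derivative condition can fail near the boundary of the $7/50$ ball. We would address this first with Lemma~\ref{E:lem:shortening}. Any deformation outside the ball has a contact vector of normalized squared length at most $1-r/14$, so $P_{\mathcal S}\ge2(1-r/14)^{-s}$ grows exponentially in $s$. This gives both the value and the derivative inequality at $s_0=20$ uniformly on the annulus. If a box still fails, we subdivide it or enlarge $\mathcal S$. The equality case follows because every exterior and cusp comparison is strict and the local theorems are strict for $X\ne0$.
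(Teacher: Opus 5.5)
Your skeleton matches the paper's proof. It has cusp exclusion (your cutoff $d_1\le\lam\,6.1^{-1/8}$ is stronger than the paper's $71/100$ and is valid) and the FCC balls of Theorems~\ref{E:thm:local8} and~\ref{E:thm:local20}. It uses convex finite power sums, tested by supporting lines on unit exponent cells in $[8,20]$ and by a value-plus-nonnegative-derivative test at $s=20$ (Lemma~\ref{E:lem:power-certificates}). It closes with a finite covering (Lemma~\ref{hyp:epstein}(ii)). One small correction: on an exponent cell you need a whole-box lower bound for $\partial_sP_{\mathcal S}$ at the left endpoint, as with the paper's $d_k$ or $f_B'(k)$. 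Lower bounds for the two endpoint values alone do not bound a convex function from below in between.

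The step you propose for the main obstacle, however, fails, for three reasons.
\begin{enumerate}
\item[(a)] Lemma~\ref{E:lem:shortening} is proved only for $r\le7/50$. Its estimate $u^Te^Xu\le1-r/\sqrt{42}+e^rr^2/2$ is no longer below $1$ once $r\approx0.24$, so it says nothing about deformations well outside the ball.
\item[(b)] Even just outside the ball, a single shortened pair is far too weak at $s_0=20$. For $r$ near $7/50$ it gives $2(1-r/14)^{-20}=2(99/100)^{-20}\approx2.45$, whereas $\Phi_{20}(\YF)>12$. In Theorem~\ref{E:thm:local20} this pair is used only when $sr>26$, which at this radius means $s>1300/7$. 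For $sr\le26$ that proof needs the whole twelve-contact sum via Lemma~\ref{E:lem:exp6}.
\item[(c)] A lower bound $P_{\mathcal S}\ge2(1-r/14)^{-s}$ by an increasing function gives no lower bound for $\partial_sP_{\mathcal S}(\xi,20)$. That derivative contains the negative terms $b^s\log b$ from the lengthened contacts.
\end{enumerate}

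Consequently, next to the $7/50$ ball, both tests at $s=20$ can only come from whole-box evaluation of the full thirteen-pair sum against $U_{20}=12+2^{-15}$, with all enclosures and tails included. This is exactly what the paper's computer-assisted covering certifies. Your fallback ``subdivide or enlarge $\mathcal S$'' is therefore not a backup but the actual argument in that region. As in the paper, the proof rests there on the certified covering of Lemma~\ref{hyp:epstein}(ii), and your analytic annulus shortcut cannot replace it.
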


Theorems~\ref{E:thm:global-low} and~\ref{E:thm:global8} meet at $s=8$ and together give the desired comparison on the whole convergent range. We now prove them in the reverse order of difficulty: first we establish the high-exponent exterior criteria, where convexity in $s$ is available, and then we combine the low-exponent covering with the local FCC--BCC estimates.

We turn to the comparison criteria needed to prove these statements.

\subsection{Convex power-sum comparisons}\label{E:sec:power-cert}
We begin with an upper bound for the normalized FCC reference.
Summing the exact shell counts through $n=32$ and estimating the
remaining tail, we obtain
\begin{equation}
 \Phi_8(\YF)
 \le \sum_{n=1}^{32}N_{\mathrm F}(n)n^{-8}
       +30\int_{32}^{\infty}x^{-7}\,dx
 < U_8:=12.02735485.
 \label{E:eq:U8}
\end{equation}
Every term in this bound is rational. For the tail, use
$N_{\mathrm F}(n)\le30n$ and compare
$\sum_{n\ge33}n^{-7}$ with the integral of the decreasing function.
The normalized FCC squared lengths are positive integers, so
$\Phi_s(\YF)$ is nonincreasing in $s$. Consequently, $U_8$ is
an upper bound for every $s\ge8$. For $s\ge20$, the sharper
reference-tail estimate gives
\begin{equation}
 \Phi_s(\YF)\le U_{20}:=12+\frac1{32768}.
 \label{E:eq:U20}
\end{equation}
We also check the cusp directly for the power-sum comparison.
If the first Iwasawa factor satisfies $d_1<71/100$, the opposite
pair of corresponding basis vectors gives
\[
 \Phi_s(Y)\ge2(\lam/d_1)^s>2(125/71)^s.
\]
The rational comparisons
\[
 2(125/71)^8>U_8,\qquad2(125/71)^{20}>U_{20}
\]
therefore exclude the cusp throughout the respective parameter
ranges. We may use the same compact rectangle as in the low-exponent
argument. This step does not use the numerical height theorem.

To compare on an entire exponent interval, we use convexity of a
finite positive power sum. Let $\mathcal V$ contain one vector from
each opposite pair of nonzero integer vectors in $[-1,1]^3$; thus
$|\mathcal V|=13$. For a shape box $B$, choose upper bounds
\[
 a_m=\frac{\lambda_-}{Q_m^+}>0,\qquad
 \lambda_-\le\lam,\quad Q_m^+\ge Y[m]\ \ (Y\in B).
\]
The finite function
\begin{equation}
 f_B(s)=2\sum_{m\in\mathcal V}a_m^s
 \label{E:eq:fixed-base-sum}
\end{equation}
is then a lower bound for $\Phi_s(Y)$ at every point of $B$.
Moreover, it is convex in the exponent, since
\begin{equation}
 f_B''(s)=2\sum_m a_m^s(\log a_m)^2\ge0.
 \label{E:eq:exponent-convexity}
\end{equation}
The bases need not lie on the same side of one. The square of
the logarithm makes each second-derivative term nonnegative.

\begin{lemma}[Three exponent comparison criteria]\label{E:lem:power-certificates}
Let $I\subset(3/2,\infty)$ be the exponent range in question, and suppose
$U\ge\Phi_s(\YF)$ for every $s\in I$. Let $p,k\in I$.
Every conclusion below is restricted to exponents in $I$.
\begin{enumerate}\renewcommand{\labelenumi}{(\roman{enumi})}
\item If a subset of the bases satisfies $a_m\ge1$ and the sum of its weights at $s=p$ is greater than $U$, the box is excluded for every $s\ge p$.
\item If $f_B(p)>U$ and $f_B'(p)\ge0$, the box is excluded for every $s\ge p$.
\item If $f_B(k)+\min\{0,f_B'(k)\}>U$, the box is excluded for every $s\in[k,k+1]$.
\end{enumerate}
Each assertion remains valid when the two displayed quantities are replaced by rigorous lower bounds.
\end{lemma}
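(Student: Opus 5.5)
The three criteria rest on one chain of inequalities: for every $Y\in B$ and every $s\in I$,
\[
 \Phi_s(Y)\ \ge\ f_B(s)\ \ge\ (\text{the weight sum in (i)}),\qquad
 U\ \ge\ \Phi_s(\YF).
\]
The first inequality holds because $\Phi_s(Y)=\lam^sE(Y,s)$ is a sum of positive terms $(\lam/Y[m])^s$. Restricting to $\pm m$ with $m\in\mathcal V$ and using $\lambda_-/Q_m^+\le\lam/Y[m]$, hence $a_m^s\le(\lam/Y[m])^s$ for $s>0$, gives $f_B(s)\le\Phi_s(Y)$. Dropping further terms of $f_B$ only lowers it. So in each case it suffices to show that the relevant finite lower bound exceeds $U$ at every exponent in the claimed range. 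Strictness then gives $\Phi_s(Y)>\Phi_s(\YF)$, hence $E(Y,s)>E(\FCC,s)$, since $\lam^s>0$.

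For (i), each term $a_m^s$ with $a_m\ge1$ is nondecreasing in $s$. For $s\ge p$ in $I$, the subset sum at $s$ is therefore at least its value at $p$, which exceeds $U\ge\Phi_s(\YF)$.

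For (ii), I use the convexity \eqref{E:eq:exponent-convexity}: $f_B'$ is nondecreasing, so $f_B'(s)\ge f_B'(p)\ge0$ for $s\ge p$. Thus $f_B(s)\ge f_B(p)>U$.

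For (iii), I use the supporting-line inequality for the convex function $f_B$. For $s\in[k,k+1]$,
\[
 f_B(s)\ge f_B(k)+f_B'(k)(s-k)\ge f_B(k)+\min\{0,f_B'(k)\},
\]
because $0\le s-k\le1$. The right side exceeds $U$ by hypothesis.

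Finally, replacing $f_B(p)$, $f_B(k)$ or the subset weight sum by rigorous lower bounds is harmless, since each enters only through the inequality "$>U$". The derivative quantities need more care. In (ii), a lower bound $\ge0$ for $f_B'(p)$ still forces $f_B'(p)\ge0$. In (iii), $\min\{0,\cdot\}$ is monotone, so a lower bound for $f_B'(k)$ yields a lower bound for $\min\{0,f_B'(k)\}$. The only point to watch is that $U$ bounds the reference at every exponent used, which is the hypothesis $U\ge\Phi_s(\YF)$ on $I$, and that every conclusion is restricted to $I$. There is no real obstacle here; the lemma is elementary once positivity and convexity are in place.
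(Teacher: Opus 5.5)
Your proposal is correct and matches the paper's proof: positivity gives $\Phi_s(Y)\ge f_B(s)$, monotonicity of $a_m^s$ for $a_m\ge1$ gives (i), and convexity of $f_B$ through the supporting-line inequality gives (ii) and (iii). You also spell out why replacing the displayed quantities by rigorous lower bounds is legitimate, using the monotonicity of $\min\{0,\cdot\}$ and the sign of the derivative bound, which the paper leaves implicit.
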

\begin{proof}
For (i), each base power in the specified subset is nondecreasing,
so its strict comparison persists. For (ii) and (iii), apply the supporting-line
inequality to the convex function in \eqref{E:eq:fixed-base-sum}:
\[
 f_B(s)\ge f_B(k)+(s-k)f_B'(k).
\]
On $[k,k+1]$, the minimum of this affine lower bound is the
quantity in (iii). If the derivative at $p$ is nonnegative,
convexity keeps it nonnegative above $p$, proving
(ii). In each case, $\Phi_s(Y)\ge f_B(s)$ because all omitted
terms are positive. This proves the lemma.
\end{proof}

For $[8,20]$, we apply the interval test with
$k=8,9,\ldots,19$. These twelve intervals cover the whole range.
If the derivative lower bound becomes nonnegative after the earlier
intervals have been treated, (ii) gives the remaining half-line.
For $s\ge20$, only (i) and (ii) are needed. Each application proves
a comparison on an interval or half-line, not merely at its endpoints.

Near FCC, the separate box maxima of $Y[m]$ may give a bound
that is too weak. We therefore estimate the finite sum before taking
absolute values of its derivatives, retaining cancellation between
the terms. Set
\[
 F_k(Y)=2\sum_{m\in\mathcal V}\left(\frac{\lam}{Y[m]}\right)^k,
 \qquad
 D_k(Y)=2\sum_{m\in\mathcal V}\left(\frac{\lam}{Y[m]}\right)^k
                   \log\frac{\lam}{Y[m]}.
\]
Here $D_k(Y)$ is the derivative with respect to the exponent,
not a shape derivative. For an Iwasawa coordinate $\xi_j$, put
$Q_m=Y[m]$ and $b_m=\lam/Q_m$. Direct differentiation gives
\begin{align}
 \partial_jF_k(Y)
 &=-2k\sum_m b_m^k\frac{\partial_jQ_m}{Q_m},\nonumber\\
 \partial_jD_k(Y)
 &=-2\sum_m b_m^k(k\log b_m+1)
                 \frac{\partial_jQ_m}{Q_m}.
 \label{E:eq:power-mixed-gradients}
\end{align}
The derivatives $\partial_jQ_m$ are given in
Section~\ref{app:low-boxes}. We first sum the interval expressions
in \eqref{E:eq:power-mixed-gradients} and then bound their absolute
values. In this way, the available cancellation is retained.

Let $c$ lie in the box, and let $r_j$ bound the distance from its
$j$th coordinate to both endpoints. These are the half-widths for
an exact midpoint; for a rounded centre, they are enlarged outward.
The mean-value theorem, applied with the enclosed centre values and
shape derivatives, gives
\begin{align}
 m_k&=(F_k(c))_- -\sum_jr_j\sup_B|\partial_jF_k|,\nonumber\\
 d_k&=(D_k(c))_- -\sum_jr_j\sup_B|\partial_jD_k|.
 \label{E:eq:power-mean-bounds}
\end{align}
Hence $F_k(Y)\ge m_k$ and $D_k(Y)\ge d_k$ for every
$Y\in B$. Fixing $Y$ and applying convexity in the exponent,
we conclude that
\begin{equation}
  m_k+\min\{0,d_k\}>U_8
 \label{E:eq:power-mvt-certificate}
\end{equation}
proves the strict comparison throughout the box for
$k\le s\le k+1$. If $d_k\ge0$, the same test covers every
$s\ge k$. Only the finite positive competitor sum is required to
be convex; it is compared with a constant upper bound for the
reference. No convexity of an Epstein-energy difference is assumed.

\subsection{Assembly of the Epstein theorem}\label{E:sec:assembly}\label{sec:finite-comparisons}\label{app:comparison-lemmas}
We next combine the finite comparisons with the local estimates.
For the low-exponent range, consider the three closed products
\begin{equation}
 K\times[3/2,2],\qquad K\times[2,4],\qquad K\times[4,8],
 \label{E:eq:three-root-products}
\end{equation}
\begin{lemma}[Finite whole-box Epstein comparisons]\label{hyp:epstein}
The following bounds and finite coverings hold.
\begin{enumerate}[label=\textup{(\roman*)},leftmargin=2em]
\item The signed local coefficient bounds of
Lemma~\ref{E:lem:signed-inputs} hold on all fifty-two cells, and the BCC
scalar bound \eqref{E:eq:BCC-cell} exceeds $1/2500$ on each cell.
Each of the three products \eqref{E:eq:three-root-products} is covered
by finitely many closed boxes. Every reduced point in such a box is
covered by the cusp estimate, by the FCC or BCC local inequality on its
entire exponent interval, or by one of the strict finite-sum comparison
criteria in Section~\ref{E:sec:exponent-certificate}.
\item The compact shape rectangle $K$ also has two finite coverings
for the ranges $8\le s\le20$ and $s\ge20$. Every reduced point is
covered by a cusp estimate, by the FCC local bound of radius $1/25$ or
$7/50$, respectively, or by a strict power-sum comparison from
Section~\ref{E:sec:power-cert} valid on the entire assigned exponent
range.
\end{enumerate}
\end{lemma}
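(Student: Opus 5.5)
This lemma is a computer-assisted covering statement, so the proof I would give is a certification procedure rather than an analytic argument. For part (i), the first ingredient is already isolated. The signed coefficient bounds on the fifty-two cells $J_j$ are exactly Lemma~\ref{E:lem:signed-inputs}. The BCC scalar bound is obtained by enclosing, on each cell, the endpoint gaps $d(a)_-,d(b)_-$ and the curvature majorant $M_J$, and the signed BCC coefficient $\mathcal C_{\mathrm B}(J,1/10)$ with the fifth-order majorant \eqref{E:eq:M5}. Each of these uses the rational Stieltjes bounds of Lemma~\ref{E:lem:stieltjes}, the recurrence \eqref{E:eq:G-recurrence}, and the complete tails \eqref{E:eq:reference-tail-G} and \eqref{E:eq:geometric-tail}. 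I then check that the right side of \eqref{E:eq:BCC-cell} exceeds $1/2500$ in outward-rounded rational arithmetic.

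The coverings of $K\times[3/2,2]$, $K\times[2,4]$ and $K\times[4,8]$ are produced by recursive bisection of the five shape coordinates and the exponent. On each box I attempt the following tests in order.
\begin{enumerate}[label=(\alph*),leftmargin=2em]
\item Certify that some closed inequality in \eqref{T:eq:grenier} fails on the entire box, or that $d_1<71/100$ holds throughout, which invokes Proposition~\ref{E:prop:cusp}.
\item Certify local containment. Bound $\|\log C\|_{\mathrm F}$ at the centre via \eqref{E:eq:log-norm-bound}, add the path-length bound \eqref{E:eq:metric-path}, and check that the result is at most $2/25$ for the FCC frame $\gFE$, or at most $1/10$ for the BCC frame $\gBE$. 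On the box's exponent interval, Theorem~\ref{E:thm:low-local} or Theorem~\ref{E:thm:bcc-ball} then applies.
\item Try the crude $\mathcal V_1$ test against $\max\{R(a),R(b)\}$.
\item Try Proposition~\ref{E:prop:cell-test} with $N=1$, then $N=2$, using the mean-value endpoint bounds \eqref{E:eq:endpoint-box} and the signed curvature \eqref{E:eq:signed-curvature}.
\end{enumerate}
If every test fails, I split the box and keep both children. The covering claim then follows by finite induction, as in Proposition~\ref{T:prop:computer}, since every parent is the union of its closed children.

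Part (ii) is handled the same way on $K$ alone, with the exponent treated analytically. For $8\le s\le20$, each shape box is excluded by the power cusp bound $2(125/71)^s>U_8$, by containment in the $\gFE$-ball of radius $1/25$ (Theorem~\ref{E:thm:local8}), or by criterion \eqref{E:eq:power-mvt-certificate} on each $[k,k+1]$ for $k=8,\dots,19$. Once some $d_k\ge0$, Lemma~\ref{E:lem:power-certificates}(ii) covers the rest of the range. For $s\ge20$ I use the radius-$7/50$ ball of Theorem~\ref{E:thm:local20}, the bound $U_{20}$, and criteria (i) and (ii) of Lemma~\ref{E:lem:power-certificates} at $p=20$.

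The main obstacle is the annulus just outside the certified FCC balls at the lowest exponents, near $s=3/2$. There the energy gap is of order $10^{-4}\|X\|_{\mathrm F}^2$, so the mean-value losses in \eqref{E:eq:endpoint-box} must be smaller than this gap. This forces very small boxes and requires summing interval gradients before taking absolute values. I would first try to widen the radius on the problematic cells, using the larger radii in Table~\ref{E:tab:low-local}. Failing that, I would replace the mean-value endpoint bound by a one-sided Taylor bound analogous to Lemma~\ref{T:lem:taylor}, which keeps the gradient at the centre. A secondary difficulty is keeping the reference tail for $R(s)$ rigorous uniformly on each exponent interval; I would handle this by using the endpoint-monotone kernel arguments exactly as in \eqref{E:eq:hessian-cell-new}.
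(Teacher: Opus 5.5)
Your proposal follows essentially the same route as the paper. Both use recursive closed-box subdivision of $K\times[3/2,2]$, $K\times[2,4]$, $K\times[4,8]$, and of $K$ for $s\ge 8$. Each terminal box is certified by one of the following: failure of a reduction inequality, the cusp bound, whole-box containment in the FCC/BCC logarithmic balls (via \eqref{E:eq:log-norm-bound}, \eqref{E:eq:metric-path} and the triangle inequality), the direct $\mathcal V_1$ test, the interpolation criterion \eqref{E:eq:low-cell-certificate}, or the power-sum tests of Lemma~\ref{E:lem:power-certificates} and \eqref{E:eq:power-mvt-certificate}. Coverage then follows by finite induction over retained closed children.

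The differences are only in which sufficient tests you try. You use a uniform FCC radius $2/25$ instead of the paper's cell-dependent radii, you omit the fixed-base test, and you keep a one-sided Taylor bound as a fallback. None of these changes the logic of the argument.
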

\begin{proof}[Computer-assisted proof]
For the local bounds, use the calculations in
Lemma~\ref{E:lem:signed-inputs} and Theorem~\ref{E:thm:bcc-ball},
with the full remainders and closed-cell evaluations given in
Appendix~\ref{app:epstein-local-data}. On each of the three
low-exponent products, the exterior tests are the direct finite-sum
bounds and \eqref{E:eq:low-cell-certificate}. Their evaluation on
all terminal boxes, together with the exact matching of each
interrupted subdivision to its complete remaining collection of boxes,
is recorded in Appendix~\ref{E:sec:execution}. The three final
collections of unresolved boxes are empty.

For the higher exponents, apply
Lemma~\ref{E:lem:power-certificates} and
\eqref{E:eq:power-mvt-certificate}, including the full FCC reference
tails. The local inputs are Theorems~\ref{E:thm:local8}
and~\ref{E:thm:local20}; the two completed covering checks are
recorded in Appendix~\ref{app:power-records}. Each subdivision
retains both closed children, whose union is their parent. Finite
induction therefore proves the covering assertions. Together with
the verified whole-box inequalities, this proves the lemma; the
covering counts alone would not suffice.
\end{proof}

\begin{table}[ht]
\centering
\caption{Parameter ranges and the proved inputs to the global theorems.
The local estimates and the exterior comparisons have distinct roles.}
\label{tab:proof-dependencies}
\begin{tabular}{@{}p{.21\textwidth}p{.35\textwidth}p{.35\textwidth}@{}}
\toprule
Range & local estimates & exterior comparisons\\
\midrule
$1\le\alpha\le5$ & FCC estimate; BCC endpoint estimate &
$\mathcal R_0\times[1,5]$ comparison\\[3pt]
$\alpha\ge5$ & Fixed FCC strain ball &
$\mathcal R_0$ comparison at $\alpha=5$ and scale derivative\\[3pt]
$3/2\le s\le8$ & Signed FCC and BCC coefficient bounds &
Three product comparisons over $K$\\[3pt]
$8\le s\le20$ & Analytic FCC ball of radius $1/25$ &
One shape comparison with interval power tests\\[3pt]
$s\ge20$ & Analytic FCC ball of radius $7/50$ &
One shape comparison with half-line power tests\\
\bottomrule
\end{tabular}
\end{table}
These finite bounds give the global comparison on the overlapping exponent ranges.

We now prove Theorem~\ref{E:thm:global-low}.

\begin{proof}
Choose a reduced representative as in
Section~\ref{sec:common-geometry}. By
Proposition~\ref{E:prop:cusp}, it remains to consider representatives
in $K$. Lemma~\ref{hyp:epstein}(i) gives the exterior comparisons
on the three products \eqref{E:eq:three-root-products}, while
Theorems~\ref{E:thm:low-local} and~\ref{E:thm:bcc-ball} give the
local comparisons. These ranges meet at their endpoints and cover
$[3/2,8]$. All comparisons are strict away from FCC, and equality
holds at FCC. This proves the theorem.
\end{proof}

We next prove Theorem~\ref{E:thm:global8}.

\begin{proof}
We divide the proof into the two exponent ranges. For
$8\le s\le20$, Lemma~\ref{hyp:epstein}(ii) supplies the exterior
inequalities of Section~\ref{E:sec:power-cert}, and
Theorem~\ref{E:thm:local8} treats the FCC neighbourhood. Together
with the cusp estimate, these cover every reduced representative,
with strict comparison away from FCC. For $s\ge20$, use the second
covering in the same lemma and the local estimate of
Theorem~\ref{E:thm:local20}. Lemma~\ref{E:lem:power-certificates}
extends the exterior inequalities to the whole half-line. The ranges
meet at $20$, giving the assertion and its equality case.
\end{proof}

\begin{proof}[Proof of Theorem~\ref{E:thm:main}]
By Lemma~\ref{hyp:epstein}, the required exterior comparisons
hold. For $3/2<s\le8$, apply Theorem~\ref{E:thm:global-low} and
use the positive multiplier $\pi^{-s}\Gamma(s)$ in
\eqref{E:eq:Ewald-gap}. This yields \eqref{E:eq:main-all}, with
strict inequality away from FCC. For $s\ge8$, the same conclusion
follows from Theorem~\ref{E:thm:global8}. The two ranges overlap at
$s=8$ and cover every $s>3/2$, proving the minimization statement
and the equality characterization.

At $s=3/2$, apply Theorem~\ref{E:thm:global-low} directly to
$\HH_{3/2}$. The identity \eqref{E:eq:height-finitepart} then
transfers the comparison and its equality case to $C(L)$. Thus the
endpoint is treated through the regular Ewald sum, without evaluating
the divergent defining series.
\end{proof}

The preceding arguments cover the full convergence range, with the
two parts meeting at $s=8$. The same Ewald representation gives the
separate comparison for the critical finite part.

\subsection{Scaling and Gaussian mixtures}\label{sec:consequences}
We finish with several consequences. Let $L$ have covolume $V>0$
and set $\widetilde L=V^{-1/3}L$. The theta scaling identity is
\[
 \Th(\alpha,L)=\Th(\alpha V^{2/3},\widetilde L).
\]
Applying Theorem~\ref{T:thm:main}, we find that the transition in
the covolume-$V$ class occurs at $\alpha=V^{-2/3}$. The minimizing
lattices on the corresponding sides are $V^{1/3}\FCC$ and
$V^{1/3}\BCC$. The scaling identity itself does not depend on the
minimization theorem.

\begin{corollary}[Any prescribed covolume]\label{E:cor:volume}
If $L\subset\R^3$ has $\covol(L)=V>0$ and $s>3/2$, then
\[
 E(L,s)\ge V^{-2s/3}E(\FCC,s),
\]
with equality precisely for orthogonal images of $V^{1/3}\FCC$.
\end{corollary}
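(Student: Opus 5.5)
This is a scaling reduction to Theorem~\ref{E:thm:main}. Set $\widetilde L=V^{-1/3}L$, which has covolume one. Since every vector of $L$ is $V^{1/3}$ times a vector of $\widetilde L$, the defining series \eqref{E:eq:Epstein} gives term by term
\[
 E(L,s)=\sum_{v\in\widetilde L\setminus\{0\}}|V^{1/3}v|^{-2s}
 =V^{-2s/3}E(\widetilde L,s),
\]
and this is valid because the series converges absolutely for $s>3/2$.

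Apply Theorem~\ref{E:thm:main} to $\widetilde L$. This gives $E(\widetilde L,s)\ge E(\FCC,s)$. Multiplying by the positive factor $V^{-2s/3}$ yields $E(L,s)\ge V^{-2s/3}E(\FCC,s)$.

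For the equality case, the positive factor preserves equality. So equality holds exactly when $E(\widetilde L,s)=E(\FCC,s)$. By the equality statement of Theorem~\ref{E:thm:main}, this happens if and only if $\widetilde L=U\FCC$ for some orthogonal $U$. That is equivalent to $L=V^{1/3}U\FCC=U(V^{1/3}\FCC)$, since the scalar dilation commutes with $U$. Conversely, every such $L$ has covolume $V$ and attains equality by the same scaling identity.

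There is no real obstacle. The only points to check are that dilation maps the covolume-one class bijectively onto the covolume-$V$ class, and that it commutes with orthogonal maps, so that orthogonal equivalence is preserved in both directions.
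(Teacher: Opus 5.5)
Your proposal is correct and is essentially the paper's proof: rescale to the covolume-one lattice $V^{-1/3}L$, apply Theorem~\ref{E:thm:main}, and transfer both the inequality and the equality case through the identity $E(L,s)=V^{-2s/3}E(V^{-1/3}L,s)$. You also check explicitly that dilation commutes with orthogonal maps, which the paper leaves implicit.
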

\begin{proof}
By Theorem~\ref{E:thm:main}, the unit-covolume lattice
$V^{-1/3}L$ satisfies the FCC comparison. The scaling law for
\eqref{E:eq:Epstein} gives the stated inequality and equality case.
\end{proof}

For the critical finite part, the residue at covolume $V$ is
$2\pi/V$. With $\widetilde L=V^{-1/3}L$, expand
$E(L,s)=V^{-2s/3}E(\widetilde L,s)$ at $s=3/2$ to obtain
\begin{equation}\label{eq:finitepart-volume}
 C_V(L):=\lim_{s\downarrow3/2}
 \left(E(L,s)-\frac{2\pi/V}{s-3/2}\right)
 =\frac1V\left(C(\widetilde L)-\frac{4\pi}{3}\log V\right).
\end{equation}
The logarithmic correction is constant within the covolume-$V$
class, but cannot be omitted from the scaling formula. Thus the
critical comparison associated with Theorem~\ref{E:thm:main}
agrees with the unit-covolume comparison, and its minimizer is
$V^{1/3}\FCC$ up to isometry. The identity follows directly from
scaling.

Finally, we apply Theorem~\ref{T:thm:main} to positive Gaussian
mixtures. Let $\mu$ be a nonzero positive Borel measure supported
in $[1,\infty)$, and suppose that
\[
 \mathcal E_\mu(L)=\int_{[1,\infty)}(\Th(\alpha,L)-1)\,d\mu(\alpha)
\]
is finite at the reference lattice and at every competitor for
which equality is considered. A competitor of infinite energy
satisfies the comparison in the extended sense. Integrating
Theorem~\ref{T:thm:main}, we obtain
$\mathcal E_\mu(L)\ge\mathcal E_\mu(\FCC)$.
If $\mu((1,\infty))>0$, the inequality is strict for every non-FCC
lattice: its integrand difference is positive at each $\alpha>1$
and therefore has positive integral on a set of positive measure.
If $\mu$ is supported on $\{1\}$, the nonzero mass gives precisely
the two equality classes FCC and BCC. The case $\mu=0$, in which
all lattice energies agree, was excluded at the outset.

For a nonzero positive measure supported in $(0,1]$, the
reciprocal-scale comparison gives the corresponding result with BCC
as reference. The minimizer is unique when $\mu((0,1))>0$.
These arguments give no conclusion when the support crosses the
transition.

These conclusions concern Bravais lattices of fixed covolume.
They neither classify all critical points nor compare arbitrary
periodic configurations. In particular, they do not assert FCC
optimality for all completely monotone radial potentials, whose
Gaussian representations may involve scales on both sides of $1$.

For the same reason, one cannot obtain
Theorem~\ref{E:thm:main} simply by integrating the FCC theta
comparison over all scales. The Mellin integral includes
$0<\alpha<1$, where BCC is the theta minimizer. The separate
Epstein argument is therefore needed; likewise, a height minimum
alone does not give a pointwise theta comparison.

\bigskip

\noindent
{\bf Acknowledgements.}

The research of S.~Luo is partially supported by the National Natural
Science Foundation of China under Grant Nos.~12261045 and~12001253 and by
the Jiangxi Jieqing Fund under Grant No.~20242BAB23001. The research of
J.~Wei is partially supported by the General Research Fund of Hong Kong (No. 14303125) ``On Fujita equation in the critical or supercritical regime''.  The authors use AI (Chatgpt) to assist the computations and language organization.


\bigskip
\nopagebreak
\appendix
\raggedbottom

\section{Verification of the finite inequalities}\label{app:numerical}

The purpose of this appendix is to establish the finite inequalities invoked in Lemmas~\ref{hyp:theta} and~\ref{hyp:epstein}. The conclusion needed in the main text is simple: every terminal parameter box satisfies one of the rigorous comparison alternatives, and the terminal boxes exhaust the corresponding reduced domain. The verification is organized in three stages. We first record the local and tail estimates entering the terminal tests; next we explain the error bounds used in their evaluation; finally we assemble the complete finite coverings for the theta and Epstein comparisons. Routine facts about interval arithmetic are used without further comment unless a specific remainder or one-sided bound is needed for the proof.

\subsection{Auxiliary estimates and local bounds}\label{app:admissibility}\label{def:certificate}\label{app:verification-ledger}\label{app:constant-checks}\label{T:app:shells}\label{app:theta-local-data}\label{app:kernel-evaluation}\label{app:epstein-local-data}

We begin by collecting the inequalities needed below. As in the main
text, subscripts $-$ and $+$ denote lower and upper bounds, respectively,
and $|z|_+$ denotes an upper bound for $|z|$. All bounds hold on the
stated closed parameter intervals, with the omitted tails and Taylor
remainders included.

For the theta energy, the required estimates are the exponential bounds
in Table~\ref{T:tab:constants}, the diagonal bounds in
Table~\ref{T:tab:diagonal-bounds}, and the reference and derivative tails
\eqref{T:eq:reftail} and \eqref{T:eq:Bderivtail}. In particular, the
endpoint comparison is reduced to
\[
 \mathscr D_j>\frac{5119}{10^6}>\frac1{200},\qquad 0\le j\le99.
\]
For the Epstein energy, the cusp estimate uses
\eqref{E:eq:cusp-values}. The third-order local comparison follows once
we have, on every interval $J$ of Table~\ref{E:tab:third-cells},
\[
 h_D(s),h_O(s)\ge m_J,\quad M_3(J,1/100)\le M_J,\quad
 \frac{m_J}{2}-\frac{M_J}{3000}>\frac1{200}\qquad(s\in J).
\]
For each interval $J_j$ in \eqref{E:eq:local-cells}, let $R_j$ and
$c_j$ be the radius and lower coefficient of
Table~\ref{E:tab:low-local}. The signed local estimates require
\[
 \mathcal C(J_j,R_j)>c_j,\qquad
 d_{J_j}+\min\{\mathcal C_{\mathrm B}(J_j,1/10),0\}/100>1/2500.
\]
Here $R_j\ge2/25$ and $c_j\ge1/20000$. The coefficient estimates
include the fifth-order remainder and all omitted shells. For
$8\le s\le20$, the lower coefficients in
Table~\ref{E:tab:local8} must exceed $1/25$.

We next recall the inequalities used outside the local neighbourhoods.
For theta, put $f_{\mathcal S}=P^\Theta_{\mathcal S}-R_{12}$, and
let $L_{\mathcal B}$ be a lower bound on the box $\mathcal B$ obtained
in Section~\ref{T:sec:finite}. It suffices to prove that
\[
 L_{\mathcal B}>\varepsilon_{\rm ref},\qquad
 \varepsilon_{\rm ref}=2\cdot10^{-18}.
\]
At $\alpha=5$, we also require
\[
 \inf_{\xi\in\mathcal B_\xi}
 \partial_\alpha P^\Theta_{\mathcal S}(\xi,5)>0.
\]
By Lemma~\ref{T:lem:continuation}, these two inequalities give the
comparison for all $\alpha\ge5$. The derivative is taken on the
finite competitor sum, not on its difference from the reference.

For a low-exponent Epstein box $B\times[a,b]$, the interpolation
criterion is
\[
 \mathscr M\bigl(L_a,L_b,M(B,[a,b])(b-a)^2/2\bigr)>0,
\]
where the endpoint and curvature bounds are those of
Section~\ref{app:low-boxes}. Alternatively, a direct lower bound for
the finite competitor sum may be compared with an upper bound for the
complete FCC reference. At higher exponents, we use
\[
 m_k+\min\{0,d_k\}>U_8\quad(k=8,\ldots,19),
 \qquad f_B(20)>U_{20},\quad f_B'(20)\ge0,
\]
or the other sufficient conditions in
Lemma~\ref{E:lem:power-certificates}. The evaluation of these bounds
is given below; their use on the complete comparison domains is
explained in Appendix~\ref{T:sec:certificates}.

We first compute the shell coefficients. For FCC, we count the integer
solutions of $j_1^2+j_2^2+j_3^2=2n$ with even coordinate sum. For BCC,
we count the solutions of $j_1^2+j_2^2+j_3^2=n$ with equal coordinate
parities. All vectors needed for $n\le32$ in the first case and
$n\le64$ in the second lie in $[-8,8]^3$. This gives
Tables~\ref{T:tab:fcc-counts} and~\ref{T:tab:bcc-counts}; the
unlisted BCC coefficients in the stated range are zero.

\begin{table}[!htbp]
\centering
\caption{FCC shell counts through index $32$.}\label{T:tab:fcc-counts}
\begin{tabular}{@{}r|rrrrrrrr@{}}
\toprule
$n$ & 1&2&3&4&5&6&7&8\\
$N_n$&12&6&24&12&24&8&48&6\\
\midrule
$n$&9&10&11&12&13&14&15&16\\
$N_n$&36&24&24&24&72&0&48&12\\
\midrule
$n$&17&18&19&20&21&22&23&24\\
$N_n$&48&30&72&24&48&24&48&8\\
\midrule
$n$&25&26&27&28&29&30&31&32\\
$N_n$&84&24&96&48&24&0&96&6\\
\bottomrule
\end{tabular}
\end{table}

\begin{table}[!htbp]
\centering
\caption{Nonzero BCC shell counts through index $64$.}\label{T:tab:bcc-counts}
\begin{tabular}{@{}rr@{\qquad}rr@{\qquad}rr@{}}
\toprule
$n$&$M_n$&$n$&$M_n$&$n$&$M_n$\\
\midrule
3&8&19&24&43&24\\
4&6&20&24&44&24\\
8&12&24&24&48&8\\
11&24&27&32&51&48\\
12&8&32&12&52&24\\
16&6&35&48&56&48\\
&&36&30&59&72\\
&&40&24&64&6\\
\bottomrule
\end{tabular}
\end{table}

We now prove the endpoint derivative estimate of
Lemma~\ref{app:lem:endpoint-derivative}. Put $\lambda=\rho^2/4$ and
recall that
\[
 M_n=\#\{z\in\Z^3:|z|^2=n,\ z_1\equiv z_2\equiv z_3\pmod2\}.
\]
Differentiating the shell expansions, we obtain
\begin{equation}\label{T:eq:gapseries}
 \Delta'(\alpha)=\pi\sum_{n\ge1}N_n\rho n e^{-\pi\alpha\rho n}
       -\pi\sum_{n\ge1}M_n\lambda n e^{-\pi\alpha\lambda n}.
\end{equation}
For a lower bound, retain the FCC terms through $n=32$ and the BCC
terms through $n=64$. The remaining FCC terms are positive. Since
$M_n\le30n$, $\pi\lambda<2$ and $e^{-\pi\lambda}<3/10$, the
absolute value of the remaining negative part is at most
\begin{equation}\label{T:eq:Bderivtail}
 60\sum_{n\ge65}n^2(3/10)^n<10^{-25}.
\end{equation}
Indeed, for $0<q<1$ and every integer $m\ge1$,
\[
 \sum_{n\ge m}n^2q^n
 =\frac{q^m\bigl(m^2+(-2m^2+2m+1)q+(m-1)^2q^2\bigr)}{(1-q)^3}
,
\]
so the last bound is a rational inequality.

To estimate the finite part, divide $[1,101/100]$ into
\begin{equation}\label{T:eq:gapcells}
 I_j=[a_j,b_j]=\left[1+\frac{j}{10000},1+\frac{j+1}{10000}\right],
 \qquad 0\le j\le99.
\end{equation}
Let $t_-\le\pi\rho\le t_+$ and $w_-\le\pi\lambda\le w_+$ be
the positive rational bounds constructed in
Appendix~\ref{app:elementary-revision}. By monotonicity of the
exponential, $\Delta'(\alpha)\ge\mathscr D_j$ for $\alpha\in I_j$,
where
\begin{align}\label{T:eq:gap-finite-rational}
 \mathscr D_j={}&t_-\sum_{n=1}^{32}N_n n e^{-t_+nb_j}
   -w_+\sum_{n=1}^{64}M_n n e^{-w_-na_j}-10^{-25}.
\end{align}
Substitution of the shell counts and the exponential bounds gives
\[
 \mathscr D_j>\frac{5119}{10^6}>\frac1{200}
 \qquad(0\le j\le99).
\]
The intervals $I_j$ cover $[1,101/100]$. Hence
$\Delta'(\alpha)>1/200$ on this whole interval, as required.
The same exponential bounds give the diagonal estimates in
Table~\ref{T:tab:diagonal-bounds}.

We next consider the Ewald sums. The rational kernel estimates and
complete shell-tail bounds are given in Section~\ref{E:sec:kernels}.
For the local coefficients we use the upper resolvent formula; for the
exterior sums we use its equivalent form with a modified terminal
diagonal. When several nearby second arguments occur, it is convenient
to use the following Taylor expansion.

Fix $a\in\R$. For $1/2\le x<48$, take the cell
$[j/8,(j+1)/8]$ containing $x$, set $x_0=(j+1/2)/8$, and put
$h=x-x_0$. Then $|h|\le1/16$, and, for $k=0,1,2$,
\begin{equation}
 G(a+k,x)=\sum_{r=0}^{6}\frac{(-h)^r}{r!}G(a+k+r,x_0)+\mathcal R_k,
 \label{E:eq:kernel-Taylor}
\end{equation}
with
\begin{equation}
 |\mathcal R_k|\le\frac{|h|^7}{7!}G(a+k+7,j/8).
 \label{E:eq:kernel-Taylor-error}
\end{equation}
This follows from \eqref{E:eq:G-derivatives} and the decrease of
$G$ in its second argument. We evaluate both the coefficients and the
remainder by the rational kernel bounds. Outside this range we use
those bounds directly, or \eqref{E:eq:G-large-x} for large second
arguments. A positive term may be discarded from a lower bound, but
its contribution is retained in every upper bound. The cusp estimates
\eqref{E:eq:cusp-values} follow from the same bounds and the full
reference tails; their finite evaluation is specified in
Appendix~\ref{app:elementary-revision}.

It remains to evaluate the local Epstein coefficients of
Lemmas~\ref{app:lem:third-inputs} and~\ref{E:lem:signed-inputs}.
We first treat the third-order bound. Applying the kernel and tail
estimates to \eqref{E:eq:hessian-cell-new} and \eqref{E:eq:M3-new}
gives $h_D(s),h_O(s)\ge m_J$ and $M_3(J,1/100)\le M_J$ on each
of the thirteen intervals in Table~\ref{E:tab:third-cells}.
For every row, rational calculation then gives
\[
 \frac{m_J}{2}-\frac{M_J}{3000}>\frac1{200}.
\]
The least coefficient is $7/1200$, attained in the first row.
The third-order criterion therefore yields the local comparison on
$[3/2,8]$.

For the signed estimate, consider the fifty-two intervals in
\eqref{E:eq:local-cells}. We bound the positive Hessian terms using
the lower first-argument endpoint of $G$, and the negative terms using
the upper endpoint. We retain the signs of the cubic and quartic
coefficients and use the exact norms of the coefficient-extraction
directions. Adding the fifth-order and shell-tail bounds and applying
\eqref{E:eq:local-C-formula} gives the FCC estimate. Interchanging the
primal and dual lattices gives \eqref{E:eq:BCC-cell}; no local
minimality assumption at BCC is needed.

For example, on $[3/2,13/8]$ with radius $9/100$, the recorded bounds
are $h_D>0.0415483$, $h_O>0.2918662$, $C_1<0.0711547$,
$C_{23}<0.4106263$, $K_D=0$, $K_O>-0.5670370$ and
$M_5<1626.8394$. These numbers summarize the full enclosures used in
the comparison. Table~\ref{tab:fresh-local-cells} gives the resulting
local lower bounds on all fifty-two intervals.

\subsection{Error bounds for the finite evaluations}\label{T:sec:arithmetic}\label{E:sec:arithmetic}

We record only the error bounds that enter the comparison arguments. All
finite sums are evaluated with outward enclosures, and every analytic
truncation error is included with the appropriate sign. Thus a positive
lower bound obtained below is a lower bound for the exact quantity.

For the Gaussian terms, a fixed range reduction followed by Taylor's
formula gives an analytic remainder less than $10^{-17}$; after the
arithmetic and coefficient errors are included, the total allowance in
the primary evaluation is less than $10^{-14}$. A second evaluation at
higher precision gives a total allowance below $2^{-55}$ on the same
parameter boxes. The second calculation is used only as a consistency
check.

The determinant-one condition in the Iwasawa parametrization is exact.
Hence the local FCC and BCC tests reduce to bounds for the corresponding
strain matrices. Entrywise estimates and the maximum absolute row-sum
bound give the operator-norm controls required by the local theorems. All
scale and cusp cutoffs are chosen on the conservative side of the ranges
proved in the main text.

For the Epstein sums, the incomplete-gamma kernels are controlled by the
rational inequalities and recurrences of Section~\ref{E:sec:kernels}.
Nearby kernel values are bounded by \eqref{E:eq:kernel-Taylor} and
\eqref{E:eq:kernel-Taylor-error}, while \eqref{E:eq:G-large-x} controls
the large-argument range. The logarithms occurring in the power-sum
comparison are enclosed by a convergent expansion with an explicit
remainder. Together with the shell-tail estimates, these bounds give the
rigorous one-sided inequalities used in every terminal box.

The exact arithmetic realization of these bounds is recorded in the
reproducibility material. It is not needed in the mathematical exposition
beyond the error estimates stated above.

\subsection{Verification of the finite comparisons}\label{T:sec:certificates}\label{E:sec:execution}\label{E:sec:exterior2}\label{app:power-records}\label{app:comparison-checks}

We now apply the preceding estimates to the finite coverings used in
Lemmas~\ref{hyp:theta} and~\ref{hyp:epstein}. The argument has two parts:
we verify a rigorous comparison on every terminal box, and we verify that
these boxes exhaust the initial parameter region. Finite induction then
gives the desired inequality on the whole region, including its boundary.

We begin with the theta energy. Let $b_0>1/3$ be the slightly enlarged
rational endpoint used in the covering and set
\[
 \mathcal R_0=[-1,b_0]\times[-2,b_0]\times[0,1/2]\times[0,1]\times[0,1/2].
\]
The precise rational value of $b_0$ is irrelevant to the analysis; its
only purpose is to make the closed covering contain the endpoint $1/3$.
By the cusp estimate, it is enough to consider
\begin{equation}\label{T:eq:compactroot}
 \mathcal R_0\times[1,5],
\end{equation}
and the corresponding shape rectangle $\mathcal R_0$ at $\alpha=5$.
The latter comparison extends to every $\alpha\ge5$ by
Lemma~\ref{T:lem:continuation}.

For each theta box, we establish one of the following alternatives:
\begin{enumerate}[label=\textup{(\arabic*)},leftmargin=2em]
\item A right-hand side of \eqref{T:eq:grenier} is strictly less than
$1$ throughout the box. The box then contains no reduced representative.
\item The upper bound for $d_1$ is at most the chosen cusp threshold,
so Proposition~\ref{T:prop:cusp} applies.
\item The strain bound satisfies Theorem~\ref{T:thm:local} or the
conditions of Proposition~\ref{T:prop:flexible}.
\item The scale lies in $[1,101/100]$ and the dual strain satisfies
Proposition~\ref{T:prop:bcclocal}. This alternative is needed only
for the bounded scale range.
\item The lower bound of Section~\ref{T:sec:finite} exceeds
$\varepsilon_{\rm ref}$. For the half-line comparison, we also prove
$\inf_{\xi\in\mathcal B_\xi}\partial_\alpha
P^\Theta_{\mathcal S}(\xi,5)>0$.
\end{enumerate}
All five alternatives are whole-box estimates. The local radius and
scale thresholds are taken on the conservative side specified in
Appendix~\ref{T:sec:arithmetic}. If none applies, the box is subdivided.
Derivative bounds guide this choice, but only the resulting inequalities
and the complete covering enter the proof.

Table~\ref{T:tab:trees} gives the two complete theta coverings. The
recorded binary64 and extended-precision evaluations verify every
comparison on these same boxes.

\begin{table}[!htbp]
\centering
\caption{Finite coverings for the theta comparison.}\label{T:tab:trees}
\begin{tabular}{@{}lrr@{}}
\toprule
Quantity & $1\le\alpha\le5$ & $\alpha\ge5$\\
\midrule
Total tree nodes & 4,177,809 & 2,648,251\\
Terminal boxes & 2,088,905 & 1,324,126\\
Outside-domain terminals & 317,616 & 170,918\\
Cusp terminals & 9 & 2\\
FCC-neighbourhood terminals & 219,824 & 349,539\\
BCC-neighbourhood terminals & 1,868 & 0\\
Strict-comparison terminals & 1,549,588 & 803,667\\
Unresolved boxes & 0 & 0\\
Maximum subdivision depth & 72 & 53\\
\bottomrule
\end{tabular}
\end{table}

For the two ranges, the recorded lower bounds for the normalized
exterior difference in \eqref{T:eq:fboxlower} exceed, respectively,
\begin{equation}\label{T:eq:certmargins}
 2.5\cdot10^{-10},\qquad \frac1{20}.
\end{equation}
The extended-precision bounds exceed $2.55\cdot10^{-10}$ and
$0.05054$. Every exterior box at $\alpha=5$ also has a strictly
positive scale derivative. On the local FCC boxes the radius is valid
for the entire half-line; no BCC box is used there. These estimates,
together with the complete coverings, give
Lemma~\ref{hyp:theta} by Proposition~\ref{T:prop:computer}.

We next consider $3/2\le s\le8$. The three domains are the closed
products in \eqref{E:eq:three-root-products}, with the shape rectangle
$K$ enclosed outward. Each box contains five shape intervals and one
exponent interval, so the exponent remains variable throughout the
comparison. For each box, one of the following holds:
\begin{enumerate}\renewcommand{\labelenumi}{(\roman{enumi})}
\item one of the six reduction inequalities fails throughout the box;
\item $d_1<71/100$ throughout the box;
\item the box lies in an FCC neighbourhood on its full exponent interval;
\item it lies in the BCC neighbourhood of Theorem~\ref{E:thm:bcc-ball};
\item a lower bound for the finite competitor sum exceeds an upper
bound for the complete FCC reference;
\item the interpolation bound \eqref{E:eq:low-cell-certificate}
is strictly positive.
\end{enumerate}
For (iii), take the smallest radius among the local cells meeting the
box's exponent interval, including their common endpoints. Earlier
components use the separately verified radius $1/25$; the later,
larger neighbourhood estimates also imply this smaller-radius bound.
The shape matrices have determinant one identically by
\eqref{T:eq:iwasawa}--\eqref{T:eq:di}, and local containment is checked
on the whole box using the metric estimates.

The three coverings have four, three and three components, respectively.
Every component begins with precisely the rectangles left unresolved
by its predecessor. The final comparison uses the direct rational
Gauss--Radau bounds, independently of the Taylor acceleration used to
find the subdivision. Exact rational reconstruction accounts for all
components and their boundary faces. The resulting counts are given
in Table~\ref{E:tab:low-trees}.

\begin{table}[ht]\centering
\caption{Finite coverings on the three low-exponent ranges. All continuation components are included.}\label{E:tab:low-trees}
\begin{tabular}{lrrr}\toprule
Quantity&$[3/2,2]$&$[2,4]$&$[4,8]$\\\midrule
Visited nodes&1,178,225&2,711,385&3,661,475\\
Binary splits&589,112&1,355,692&1,830,737\\
Terminal cells&589,113&1,355,693&1,830,738\\
Reduction exclusions&73,828&167,962&253,030\\
Cusp exclusions&1&16&37\\
FCC-neighbourhood cells&8,282&22,667&36,514\\
BCC-neighbourhood exclusions&1,057&1,106&469\\
Direct energy exclusions&834&1,076&11,827\\
Curvature-interpolation exclusions&505,111&1,162,866&1,528,861\\
Exponent-coordinate splits&23,109&133,361&164,887\\
Unresolved cells&0&0&0\\
\bottomrule\end{tabular}
\end{table}

The recorded exterior lower bounds exceed $8.6\cdot10^{-11}$,
$5.4\cdot10^{-10}$ and $6.6\cdot10^{-10}$ on the three ranges,
respectively. These are summaries of the individual whole-box bounds;
the full enclosures, rather than the displayed decimal margins, are
used in each comparison.

The local coefficients are evaluated separately on all fifty-two FCC
and BCC cells, with their complete remainders. Substitution in
\eqref{E:eq:local-C-formula} and \eqref{E:eq:BCC-cell} supplies the
neighbourhood estimates. Combining these estimates with the strict
exterior inequalities and the three complete coverings proves
Lemma~\ref{hyp:epstein}\textup{(i)}. The verification uses the finite
comparison criteria, not the global minimization theorem as an input.

It remains to treat $s\ge8$. We use the same reduced shape rectangle
$K$, and apply the power-sum criteria of
Section~\ref{E:sec:power-cert}. For $8\le s\le20$, the local FCC
radius is $1/25$; for $s\ge20$, it is $7/50$, as proved in
Theorems~\ref{E:thm:local8} and~\ref{E:thm:local20}.
Outside these neighbourhoods, every reduced box satisfies a cusp bound
or a strict power-sum inequality, with the complete reference tail
included. Neither covering needs a local BCC estimate.

\begin{table}[ht]\centering
\caption{Finite coverings for the higher-exponent comparisons.}\label{E:tab:power-trees}
\begin{tabular}{lrr}\toprule
Quantity&$[8,20]$&$[20,\infty)$\\\midrule
Visited nodes&310,789&511,943\\
Binary splits&155,394&255,971\\
Terminal leaves&155,395&255,972\\
Reduction-empty leaves&25,238&47,301\\
Cusp leaves&0&0\\
FCC-ball leaves&4,760&10,841\\
Energy-comparison leaves&125,397&197,830\\
BCC-ball leaves&0&0\\
Maximum tree depth&44&37\\
Unresolved boxes&0&0\\\bottomrule
\end{tabular}
\end{table}

For $8\le s\le20$, the mean-value refinement is used on $114,238$
exterior boxes; the remaining boxes satisfy the fixed-base estimates.
The recorded lower margins exceed $1.37\cdot10^{-7}$ above $U_8$
and $2.20\cdot10^{-6}$ above $U_{20}$, respectively. The
supporting-line inequalities hold on each complete unit interval in
$[8,20]$. At $20$, convexity and the nonnegative exponent derivative
extend the strict comparison to all $s\ge20$.

All final boxes satisfy one of these estimates, and rational
reconstruction exhausts both initial domains. The finite covering
argument therefore proves Lemma~\ref{hyp:epstein}\textup{(ii)}.
This completes the verification of the finite comparison lemmas.
The complete numerical and covering records are retained separately;
the numerical evaluations share some elementary routines and are not
claimed as independent formal verifications.

\subsection{Recorded scalar and local bounds}\label{app:elementary-revision}\label{T:app:reproduce}\label{T:app:source}\label{app:complete-sources}\label{sec:status}\label{app:fresh-execution}

We finish by collecting the numerical inequalities that are used as
inputs to the preceding arguments. Exact evaluation of the shell sums
gives
\[
 \mathscr D_j>\frac{5119}{10^6}\qquad(0\le j\le99),
\]
which is the derivative input in Lemma~\ref{T:lem:gap}. The kernel and
reference-tail bounds also give
\[
 \HH_{3/2}(\FCC)<\frac{11336}{10^5},
 \qquad 2G(3/2,71\pi/100)>\frac{114}{10^3},
\]
which are precisely the inequalities required in
Proposition~\ref{E:prop:cusp}. The other scalar checks are the inequalities
already displayed in the local theta and Epstein tables.

The parameter coverings are complete. Their terminal counts are
$2\,088\,905$ and $1\,324\,126$ for the two theta ranges;
$589\,113$, $1\,355\,693$ and $1\,830\,738$ for the three low-exponent
Epstein ranges; and $155\,395$ and $255\,972$ for the two higher-exponent
ranges. In each case the unresolved collection is empty. These numbers
record the size of the finite covering; the proof itself uses the
whole-box inequalities and the exhaustion of the initial domain.

The local calculations are carried out on
\[
 J_j=[3/2+j/8,3/2+(j+1)/8],\qquad 0\le j\le51.
\]
Table~\ref{tab:fresh-local-cells} gives lower bounds for the two
FCC Hessian coefficients, the signed FCC coefficient, and the BCC
exclusion gap. The FCC radius is listed in each row; the BCC radius
is $1/10$ throughout. For a box whose exponent interval meets several
cells, we use the smallest of their radii, as in
Appendix~\ref{E:sec:execution}.

\begingroup\footnotesize
\setlength{\tabcolsep}{4pt}
\begin{longtable}{@{}rcrrrrr@{}}
\caption{Local lower bounds on the exponent intervals $J_j$. The BCC radius is $1/10$.}\label{tab:fresh-local-cells}\\
\toprule
$j$ & $J_j$ & $R_{\rm F}$ & $h_D^-$ & $h_O^-$ & $\mathcal C_{\rm F}^-$ & BCC gap$^-$\\\midrule
\endfirsthead
\multicolumn{7}{l}{Table~\thetable\ (continued)}\\
\toprule
$j$ & $J_j$ & $R_{\rm F}$ & $h_D^-$ & $h_O^-$ & $\mathcal C_{\rm F}^-$ & BCC gap$^-$\\\midrule
\endhead
\bottomrule\endfoot
0 & $[3/2,13/8]$ & $9/100$ & 0.04154836 & 0.29186624 & 0.00448721 & 0.00043258 \\
1 & $[13/8,7/4]$ & $9/100$ & 0.04369226 & 0.29561330 & 0.00512027 & 0.00052184 \\
2 & $[7/4,15/8]$ & $9/100$ & 0.04597939 & 0.30009689 & 0.00575688 & 0.00061417 \\
3 & $[15/8,2]$ & $1/10$ & 0.04842491 & 0.30534722 & 0.00020069 & 0.00071014 \\
4 & $[2,17/8]$ & $1/10$ & 0.05104536 & 0.31139997 & 0.00059458 & 0.00081038 \\
5 & $[17/8,9/4]$ & $1/10$ & 0.05385878 & 0.31829677 & 0.00096445 & 0.00091554 \\
6 & $[9/4,19/8]$ & $1/10$ & 0.05688498 & 0.32608554 & 0.00130945 & 0.00102634 \\
7 & $[19/8,5/2]$ & $1/10$ & 0.06014571 & 0.33482113 & 0.00162814 & 0.00114353 \\
8 & $[5/2,21/8]$ & $1/10$ & 0.06366492 & 0.34456583 & 0.00191840 & 0.00126793 \\
9 & $[21/8,11/4]$ & $1/10$ & 0.06746903 & 0.35539013 & 0.00217735 & 0.00140044 \\
10 & $[11/4,23/8]$ & $1/10$ & 0.07158726 & 0.36737351 & 0.00240128 & 0.00154203 \\
11 & $[23/8,3]$ & $1/10$ & 0.07605195 & 0.38060532 & 0.00258547 & 0.00169376 \\
12 & $[3,25/8]$ & $1/10$ & 0.08089895 & 0.39518587 & 0.00272407 & 0.00185681 \\
13 & $[25/8,13/4]$ & $1/10$ & 0.08616811 & 0.41122757 & 0.00280992 & 0.00203246 \\
14 & $[13/4,27/8]$ & $1/10$ & 0.09190371 & 0.42885632 & 0.00283429 & 0.00222212 \\
15 & $[27/8,7/2]$ & $1/10$ & 0.09815507 & 0.44821299 & 0.00278670 & 0.00242735 \\
16 & $[7/2,29/8]$ & $1/10$ & 0.10497716 & 0.46945521 & 0.00265455 & 0.00264989 \\
17 & $[29/8,15/4]$ & $1/10$ & 0.11243136 & 0.49275928 & 0.00242279 & 0.00289164 \\
18 & $[15/4,31/8]$ & $1/10$ & 0.12058625 & 0.51832248 & 0.00207348 & 0.00315472 \\
19 & $[31/8,4]$ & $1/10$ & 0.12951856 & 0.54636555 & 0.00158531 & 0.00344150 \\
20 & $[4,33/8]$ & $1/10$ & 0.13931420 & 0.57713564 & 0.00093297 & 0.00375460 \\
21 & $[33/8,17/4]$ & $1/10$ & 0.15006952 & 0.61090955 & 0.00008646 & 0.00409694 \\
22 & $[17/4,35/8]$ & $9/100$ & 0.16189265 & 0.64799750 & 0.02244433 & 0.00447178 \\
23 & $[35/8,9/2]$ & $9/100$ & 0.17490509 & 0.68874737 & 0.02350441 & 0.00488273 \\
24 & $[9/2,37/8]$ & $9/100$ & 0.18924346 & 0.73354961 & 0.02455680 & 0.00533387 \\
25 & $[37/8,19/4]$ & $9/100$ & 0.20506160 & 0.78284270 & 0.02558738 & 0.00582972 \\
26 & $[19/4,39/8]$ & $9/100$ & 0.22253289 & 0.83711952 & 0.02657821 & 0.00637535 \\
27 & $[39/8,5]$ & $9/100$ & 0.24185292 & 0.89693454 & 0.02750676 & 0.00697642 \\
28 & $[5,41/8]$ & $9/100$ & 0.26324257 & 0.96291210 & 0.02834488 & 0.00763931 \\
29 & $[41/8,21/4]$ & $9/100$ & 0.28695153 & 1.03575580 & 0.02905766 & 0.00837112 \\
30 & $[21/4,43/8]$ & $9/100$ & 0.31326231 & 1.11625932 & 0.02960196 & 0.00917984 \\
31 & $[43/8,11/2]$ & $9/100$ & 0.34249495 & 1.20531881 & 0.02992474 & 0.01007444 \\
32 & $[11/2,45/8]$ & $9/100$ & 0.37501227 & 1.30394706 & 0.02996097 & 0.01106498 \\
33 & $[45/8,23/4]$ & $9/100$ & 0.41122607 & 1.41328980 & 0.02963117 & 0.01216276 \\
34 & $[23/4,47/8]$ & $9/100$ & 0.45160415 & 1.53464442 & 0.02883839 & 0.01338049 \\
35 & $[47/8,6]$ & $9/100$ & 0.49667852 & 1.66948150 & 0.02746464 & 0.01473246 \\
36 & $[6,49/8]$ & $9/100$ & 0.54705473 & 1.81946956 & 0.02536656 & 0.01623474 \\
37 & $[49/8,25/4]$ & $9/100$ & 0.60342278 & 1.98650363 & 0.02237023 & 0.01790542 \\
38 & $[25/4,51/8]$ & $9/100$ & 0.66656962 & 2.17273808 & 0.01826484 & 0.01976487 \\
39 & $[51/8,13/2]$ & $9/100$ & 0.73739371 & 2.38062468 & 0.01279521 & 0.02183600 \\
40 & $[13/2,53/8]$ & $9/100$ & 0.81692178 & 2.61295637 & 0.00565265 & 0.02414467 \\
41 & $[53/8,27/4]$ & $2/25$ & 0.90632829 & 2.87291799 & 0.14596636 & 0.02671996 \\
42 & $[27/4,55/8]$ & $2/25$ & 1.00695807 & 3.16414490 & 0.15550193 & 0.02959468 \\
43 & $[55/8,7]$ & $2/25$ & 1.12035243 & 3.49079096 & 0.16532427 & 0.03280577 \\
44 & $[7,57/8]$ & $2/25$ & 1.24827969 & 3.85760718 & 0.17533440 & 0.03639490 \\
45 & $[57/8,29/4]$ & $2/25$ & 1.39277051 & 4.27003313 & 0.18539619 & 0.04040902 \\
46 & $[29/4,59/8]$ & $2/25$ & 1.55615913 & 4.73430300 & 0.19532669 & 0.04490101 \\
47 & $[59/8,15/2]$ & $2/25$ & 1.74113132 & 5.25756879 & 0.20488415 & 0.04993049 \\
48 & $[15/2,61/8]$ & $2/25$ & 1.95078031 & 5.84804378 & 0.21375329 & 0.05556459 \\
49 & $[61/8,31/4]$ & $2/25$ & 2.18867207 & 6.51516942 & 0.22152708 & 0.06187893 \\
50 & $[31/4,63/8]$ & $2/25$ & 2.45892150 & 7.26980988 & 0.22768431 & 0.06895859 \\
51 & $[63/8,8]$ & $2/25$ & 2.76628145 & 8.12447890 & 0.23156201 & 0.07689929 \\
\end{longtable}
\endgroup
The displayed lower bounds are rounded down to eight decimal places;
the calculations use the full enclosures. The listed radii are the
exact rational values used in those calculations. In particular, the
FCC coefficient exceeds $1/20000$ and the BCC gap exceeds $1/2500$
on every cell, giving the uniform bounds used in the main text.

The complete subdivisions, coefficient enclosures and evaluation
records accompany the paper and form part of its computer-assisted
argument; the tables summarize these data but do not replace them.
The conclusions concern Bravais lattices of fixed covolume, not all
critical points or arbitrary periodic configurations, and do not
assert a comparison for every continued Epstein exponent below the
pole. The recorded calculations are distinct from an independent
mathematical audit or a proof-assistant formalization.

The entries in Table~\ref{tab:fresh-local-cells} are downward lower bounds
for the exact local coefficients, with all shell and remainder terms
included. In particular the FCC coefficient is everywhere larger than
$1/20000$, while the BCC exclusion gap is everywhere larger than
$1/2500$. Together with the complete coverings above, these estimates
finish the finite verification required in Lemmas~\ref{hyp:theta} and
\ref{hyp:epstein}.

\end{document}